\documentclass[a4paper]{amsart}

\usepackage[backrefs,lite]{amsrefs}

\usepackage{amssymb}              
\usepackage{lmodern,bm}              
\usepackage{stmaryrd}
\usepackage{mathrsfs}

\makeatletter
\DeclareFontEncoding{LS1}{}{}
\DeclareFontSubstitution{LS1}{stix}{m}{n}
\DeclareMathAlphabet{\skr}{LS1}{stixscr}{m}{n}
\makeatother

\usepackage[all]{xy}
\usepackage{tikz}

\usetikzlibrary{calc}
\usetikzlibrary{decorations.markings}
\usetikzlibrary{positioning,arrows}


\usepackage{longtable}

\CompileMatrices

\setcounter{tocdepth}{1}

\newtheorem{theorem}{Theorem}[section]

\newtheorem{lemma}[theorem]{Lemma}
\newtheorem{proposition}[theorem]{Proposition}
\newtheorem{corollary}[theorem]{Corollary}

\theoremstyle{definition}

\newtheorem{definition}[theorem]{Definition}
\newtheorem{construction}[theorem]{Construction}

\newtheorem{example}[theorem]{Example}
\newtheorem{remark}[theorem]{Remark}

\newtheorem{summary}[theorem]{Summary}

\newtheorem{algorithm}[theorem]{Algorithm}

\numberwithin{equation}{theorem}


\def\vector2#1#2{\left(\begin{array}{c} #1 \\ #2 \end{array}\right)}
\def\Cl{{\rm Cl}}

\def\LL{{\mathbb L}}

\def\KK{{\mathbb K}}
\def\TT{{\mathbb T}}
\def\ZZ{{\mathbb Z}}
\def\RR{{\mathbb R}}

\def\QQ{{\mathbb Q}}
\def\PP{{\mathbb P}}

\def\div{{\rm div}}
\def\quot{/\!\!/}
\def\mal{\! \cdot \!}

\def\conv{{\rm conv}}

\def\im{{\rm im}}

\def\bangle#1{{\langle #1 \rangle}}

\def\Aut{{\rm Aut}}

\def\GL{{\rm GL}}

\def\WDiv{{\rm WDiv}}
\def\GL{{\rm GL}}

\def\Pic{{\rm Pic}}

\def\cone{{\rm cone}}

\def\pr{{\rm pr}}
\def\id{{\rm id}}

\def\lcm{{\rm lcm}}
\def\im{{\rm im}}

\def\trop{{\rm trop}}

\DeclareMathOperator{\image}{\mathrm{im}}
\DeclareMathOperator{\Eff}{\mathrm{Eff}}
\DeclareMathOperator{\Mov}{\mathrm{Mov}}
\DeclareMathOperator{\Ample}{\mathrm{Ample}}
\DeclareMathOperator{\SAmple}{\mathrm{SAmple}}

\DeclareMathOperator{\vol}{\mathrm{vol}}

\title[Log del Pezzo surfaces with torus action]{Classifying log del Pezzo surfaces \\ with torus action}

\author[Daniel H\"attig, J\"urgen Hausen, Justus Springer]{Daniel H\"attig, J\"urgen Hausen, Justus Springer}

\address{Mathematisches Institut, Universit\"at T\"ubingen,
Auf der Morgenstelle 10, 72076 T\"ubingen, Germany}
\email{daniel.haettig@uni-tuebingen.de}

\address{Mathematisches Institut, Universit\"at T\"ubingen,
Auf der Morgenstelle 10, 72076 T\"ubingen, Germany}
\email{juergen.hausen@uni-tuebingen.de}

\address{Mathematisches Institut, Universit\"at T\"ubingen,
Auf der Morgenstelle 10, 72076 T\"ubingen, Germany}
\email{justus.springer@uni-tuebingen.de}

\subjclass[2010]{14L30,14J26}

\sloppy

\begin{document}

\begin{abstract}
We consider log del Pezzo surfaces coming with a
non-trivial torus action.
Such a surface is $1/k$-log canonical if it allows
a resolution of singularities with discrepanies
all greater or equal to $1/k-1$.
We provide a concrete classification algorithm
for fixed $k$ and give explicit results for
$k=1$, $2$ and $3$.
This comprises in particular the cases
of Gorenstein index $1$, $2$ and $3$.
\end{abstract}

\maketitle

\section{Introduction}

A~\emph{log del Pezzo surface} is a del Pezzo
surface $X$ with at most log terminal singularities,
that means that all discrepancies of some resolution of
singularities $X' \to X$ are greater than $-1$.
The log del Pezzo surfaces are all
rational~\cite[Prop.~3.6]{Nak}
and they form an intensely studied class of rational
surfaces.
Without posing further conditions on the singularities,
the class of log del Pezzo surfaces is unbounded.
Alexeev~\cite{Al} observed in 1994 that for any given
$\varepsilon > 0$ the class of \emph{$\varepsilon$-log canonical}
del Pezzo surfaces, i.e. those admitting only discrepancies
at most equal to $\varepsilon-1$, is bounded.

In the present article we focus on log del Pezzo
surfaces that come with a non-trivial effective torus
action.
This class is much more accessible than the general one
but still unbounded and rich enough to serve as an interesting
example class and testing ground.
Our aim is to provide explicit and feasible classification
procedures for the cases $\varepsilon = 1/k$ with a positive
integer $k$.
We distinguish between the toric log del Pezzo surfaces,
where we have a two-dimensional acting torus
$\KK^* \times \KK^*$ and the non-toric log del Pezzo surfaces
coming with a $\KK^*$-action.
We provide explicit results for the cases $k=1,2,3$,
where the case $k=1$ is part of $k=2$ etc..
The following gives a concrete impression.

\begin{theorem}
\label{introthm1}
There are $129\,904$ families of $\frac{1}{3}$-log canonical
del Pezzo $\KK^*$-sur\-faces, $48\,032$ of them toric
and $81\,872$ non-toric.
According to the Picard number $\rho$, the numbers of families
are distributed as follows:

\begin{center}
\begin{tabular}{c|c|c|c|c|c|c|c|c|c|c|c|c}
\scriptsize
$\rho$
&
\scriptsize
$1$
&
\scriptsize
$2$
&
\scriptsize
$3$
&
\scriptsize
$4$
&
\scriptsize
$5$
&
\scriptsize
$6$
& 
\scriptsize
$7$
&
\scriptsize
$8$
&
\scriptsize
$9$
&
\scriptsize
$10$
&
\scriptsize
$11$
&
\scriptsize
$12$ 
\\
\hline
\scriptsize
toric
&
\scriptsize
$355$
&
\scriptsize
$3983$
&
\scriptsize
$13454$
&
\scriptsize
$17791$
&
\scriptsize
$9653$
&
\scriptsize
$2465$
&
\scriptsize
$292$
&
\scriptsize
$37$
&
\scriptsize
$1$
&
\scriptsize
$1$
&
---
&
--- 
\\
\hline
\scriptsize
non-toric
&
\scriptsize
$318$
&
\scriptsize
$3632$
&
\scriptsize
$15174$
&
\scriptsize
$26654$
&
\scriptsize
$22648$
&
\scriptsize
$10024$
&
\scriptsize
$2731$
&
\scriptsize
$577$
&
\scriptsize
$100$
&
\scriptsize
$11$
&
\scriptsize
$2$
&
\scriptsize
$1$
\\
\hline
\scriptsize
in total
&
\scriptsize
$673$
&
\scriptsize
$7615$
&
\scriptsize
$28628$
&
\scriptsize
$44445$
&
\scriptsize
$32301$
&
\scriptsize
$12489$
&
\scriptsize
$3023$
&
\scriptsize
$614$
&
\scriptsize
$101$
&
\scriptsize
$12$
&
\scriptsize
$2$
&
\scriptsize
$1$
\end{tabular} 
\end{center}
\end{theorem}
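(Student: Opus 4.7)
The strategy is to translate the classification problem into a finite combinatorial enumeration, which we run separately for the toric and non-toric cases at $k=3$. Both subclasses admit a well-known combinatorial description, and by Alexeev's boundedness the $\frac{1}{3}$-log canonical condition forces the relevant combinatorial invariants into a bounded region, so each enumeration terminates.

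For the toric case, log del Pezzo toric surfaces correspond bijectively, up to $\GL_2(\ZZ)$-equivalence, to LDP polygons: lattice polygons $P \subset \RR^2$ with primitive vertices and $0$ in the interior, the associated fan being spanned by the vertex rays. The $\frac{1}{k}$-log canonical property translates via Hirzebruch--Jung resolution of each $2$-cone into a sharp inequality on the continued-fraction data of consecutive primitive vertices, equivalently a bound on the lattice distance from the origin to each edge. This confines the vertex coordinates to an a priori bounded region and also bounds the vertex count, which is $\rho+2$. Enumerating such polygons up to unimodular equivalence and sorting by $\rho$ yields the toric row of the table.

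For the non-toric case, a rational projective $\KK^*$-surface is described by a defining matrix $P$ whose columns are arranged in arms attached to the special $\KK^*$-fixed points; its singularities, its Picard number, and the anticanonical class can all be read off from $P$ by explicit formulas. The $\frac{1}{3}$-log canonical condition becomes an explicit inequality on every toric and every elliptic singularity of the minimal resolution, while the del Pezzo property becomes positivity of an intersection number computable from $P$. Combined, these inequalities bound the format of $P$ (number and lengths of arms, Picard number) and then the individual entries, reducing the classification to a finite search.

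The main obstacle is isomorphism and keeping the search feasible: the admissible operations on $P$ (permutations of columns within each arm, permutations of arms, swapping the two special fixed points, and lattice isomorphisms of the degree map) must be killed by a canonical normal form so that each isomorphism class is hit exactly once. My plan is to fix a lexicographic normal form, enumerate formats by increasing $\rho$, and build the entries of each candidate $P$ column by column with branch-and-bound, pruning any partial $P$ that already violates the $\frac{1}{3}$-log canonical bound or that can no longer be completed into normal form. Running this procedure for $k=3$ and tabulating the surviving surfaces by $\rho$ produces the non-toric row of the table; the totals in the last row follow by column-wise summation.
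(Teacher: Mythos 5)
Your overall architecture---translate both cases into a finite combinatorial enumeration, run it for $k=3$, and tabulate by Picard number---is the same as the paper's, but the two steps you treat as routine are precisely where the proof lives, and as written they contain genuine gaps. The central one is the source of the effective bounds. You appeal to Alexeev's boundedness to place ``the relevant combinatorial invariants in a bounded region,'' but Alexeev's theorem is not effective: it yields no explicit bound on the entries of a defining matrix $P$, so it cannot certify that a branch-and-bound search is complete when it halts, and pruning partial matrices that already violate the $1/3$-log canonical condition does not by itself force termination. Even in the toric case the needed bound is not a direct consequence of the log canonicity inequality on each cone; the paper obtains it by first contracting to one of the finitely many \emph{minimal} almost $k$-hollow polygons (Proposition~\ref{prop:min-k-hollow}) and then applying Minkowski's theorem to control how far any further vertex can lie (Lemma~\ref{lem:minkowski}, Proposition~\ref{prop:almost-k-hollow}). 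In the non-toric case the corresponding step is substantially harder: the del Pezzo and $1/3$-log canonical inequalities do \emph{not} directly bound the $l_{ij}$ and $d_{ij}$. The paper first shows that every such surface contracts onto a combinatorially minimal one, classifies those explicitly (Theorems~\ref{thm:surfclass1} and~\ref{thm:combminclass}) to extract a uniform positive lower bound $\alpha$ on $\min(\skr{d}^{+},-\skr{d}^{-})$, and only then bounds the entries via a Minkowski-type estimate on the arms of the anticanonical complex (Lemma~\ref{lem:mink2}, Theorem~\ref{thm:kstar-effective-bounds}), together with the bound $r+1\le 4k$ on the number of arms (Proposition~\ref{prop:armbounds}). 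Your proposal contains no substitute for this chain of arguments.

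The second gap concerns the isomorphism problem. A lexicographic normal form on matrices only counts equivalence classes of defining data; to count isomorphism classes of \emph{surfaces} you must also prove that two pairs $(A,P)$ and $(A',P')$ yield isomorphic surfaces if and only if they are related by the admissible operations. This is Proposition~\ref{prop:isochar} in the paper, and it is not formal: it uses that the graded Cox ring determines the surface and that any two $\KK^*$-actions on a non-toric rational surface are conjugate in its automorphism group. Without this equivalence, a matrix-level normal form could over- or under-count families, so the resulting table would not be a theorem about surfaces.
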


Every log del Pezzo $\KK^*$-surface of Gorenstein
index~$\iota$ is $1/k$-log canonical for all
$k \ge \iota$.
Thus, our classification comprises in particular
Huggenberger's classification of the Gorenstein
del Pezzo $\KK^*$-surfaces~\cite[Sec.~5.6]{Hug},
see also~\cite{IlMiTr}. Note that due
to~\cite[Thm.~2.2]{HiWa}, Gorenstein del Pezzo
surfaces are automatically log terminal.
Moreover, we find all log-del Pezzo $\KK^*$-surfaces of
Gorenstein indices $\iota = 2, 3$ within our classification
and thus complement S\"u\ss's results~\cite{Su},
obtained on the case of Picard number one in this
setting.

\begin{corollary}
\label{introcor1}
The non-zero numbers of families of log del Pezzo $\KK^*$-surfaces
of Gorenstein index $\iota = 1,2,3$ and given Picard number
$\rho$ are given as follows:
\begin{center}
\begin{tabular}{c|c|c|c|c|c|c|c|c|c|c|c|c}
\scriptsize
$\rho$
&
\scriptsize
$1$
&
\scriptsize
$2$
&
\scriptsize
$3$
&
\scriptsize
$4$
&
\scriptsize
$5$
&
\scriptsize
$6$
& 
\scriptsize
$7$
&
\scriptsize
$8$
&
\scriptsize
$9$
&
\scriptsize
$10$
&
\scriptsize
$11$
&
\scriptsize
$12$ 
\\
\hline
\scriptsize
$\iota=1$
&
\scriptsize
$18$
&
\scriptsize
$19$
&
\scriptsize
$10$
&
\scriptsize
$3$
&
---
&
---
&
---
&
---
&
---
&
---
&
---
&
--- 
\\
\hline
\scriptsize
$\iota=2$
&
\scriptsize
$17$
&
\scriptsize
$36$
&
\scriptsize
$30$
&
\scriptsize
$16$
&
\scriptsize
$7$
&
\scriptsize
$4$
&
\scriptsize
$1$
&
\scriptsize
$1$
&
---
&
---
&
---
&
---
\\
\hline
\scriptsize
$\iota=3$
&
\scriptsize
$54$
&
\scriptsize
$162$
&
\scriptsize
$185$
&
\scriptsize
$105$
&
\scriptsize
$59$
&
\scriptsize
$32$
&
\scriptsize
$16$
&
\scriptsize
$9$
&
\scriptsize
$5$
&
\scriptsize
$2$
&
\scriptsize
$1$
&
\scriptsize
$1$
\end{tabular} 
\end{center}
\end{corollary}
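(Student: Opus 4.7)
The plan is to derive the three rows of the table by filtering the classification of Theorem~\ref{introthm1} with respect to the Gorenstein index. The observation preceding the corollary asserts that every log del Pezzo $\KK^*$-surface of Gorenstein index $\iota$ is $1/k$-log canonical for all $k \ge \iota$; in particular, for $\iota \in \{1,2,3\}$ such a surface is $1/3$-log canonical, hence must appear among the $129\,904$ families enumerated in Theorem~\ref{introthm1}. The task thus reduces to computing $\iota(X)$ for every family on that list and tabulating the outcome according to the Picard number.

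To compute $\iota(X)$, I would use the combinatorial data underlying the classification. A log del Pezzo $\KK^*$-surface is encoded by a generator matrix together with, in the toric case, a defining fan; from this one reads off both the divisor class group $\Cl(X)$ and the class $[K_X] \in \Cl(X)$ of the canonical divisor. The Gorenstein index is then the order of the image of $[K_X]$ in the finite quotient $\Cl(X)/\Pic(X)$. In the toric subcase, the same number may be computed as the least common multiple of the denominators of the coordinates of the anticanonical support forms $u_\sigma$ on the maximal cones~$\sigma$. Both descriptions make $\iota(X)$ algorithmically immediate from the data produced for Theorem~\ref{introthm1}.

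With the filter in place, one scans the $129\,904$ families, records the pair $(\iota(X), \rho(X))$, and reads off the counts. To be sure that no families are missed, one notes that $1/k$-log canonical with $k$ a positive integer forces all discrepancies to be at least $1/k-1 > -1$, so every surface of Gorenstein index $\iota \le 3$ is log terminal and therefore already contained in the classified list. Thus the resulting frequencies exhaust the families of Gorenstein indices $1$, $2$ and $3$, and entries outside the displayed ranges of $\rho$ are zero.

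The main obstacle is essentially bookkeeping rather than mathematics: one must verify that $\iota(X)$ is constant on each equivalence class used in Theorem~\ref{introthm1} to define a family, and that the computation of $[K_X]$ and of the quotient $\Cl(X)/\Pic(X)$ is implemented consistently across the two cases (toric and non-toric) treated by the algorithm. Since the Gorenstein index is an isomorphism invariant, the first point is automatic, and Corollary~\ref{introcor1} follows as the tabulation of a distinguished subset of the main classification.
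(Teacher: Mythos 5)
Your proposal takes essentially the same route as the paper: Corollary~\ref{introcor1} is obtained by filtering the $1/3$-log canonical classification of Theorem~\ref{introthm1} according to the Gorenstein index, which is computed from the defining data via the local Gorenstein indices of Propositions~\ref{prop:kstar-gorind-1} and~\ref{prop:kstar-gorind-2} together with Corollary~\ref{cor:gorind} (equivalently, as the order of $[\mathcal{K}_X]$ in $\Cl(X)/\Pic(X)$, as you suggest), using that index $\iota$ implies $1/\iota$-log canonicity. The only quibble is that your completeness argument is slightly circular as phrased; it is unnecessary anyway, since log terminality is part of the definition of a log del Pezzo surface and hence a hypothesis of the corollary.
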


Further data on the classification will be presented in the
later sections and, more importantly, we provide a searchable
database~\cite{HaHaSp} containing the defining data of all
the surfaces and important invariants, such as Gorenstein
index, anticanonical self intersection and much more.
For instance, storing the minimal discrepancies of the canonical
resolutions allows us to figure out the \emph{jumping places},
that means the $1 \le \alpha \le 3$, at which the number of
$\alpha^{-1}$-log canonical surfaces gets incremented.

\goodbreak

\begin{corollary}
\label{introcor2}
The function $\QQ_{\ge 1} \to \ZZ$ sending $\alpha$
to the number $\nu(\alpha)$ of families of $\alpha^{-1}$-log
canonical del Pezzo surfaces with torus action has precisely
$32$ jumping places in the intervall $[1,3]$, as listed the
table below:

\begin{center}  
\begin{tabular}{c|c|c|c|c|c|c|c|c}
\scriptsize $\alpha$
  &
    \scriptsize $1$
  &
    \scriptsize $3/2$
  &
    \scriptsize $5/3$
  &
    \scriptsize $7/4$
  &
    \scriptsize $9/5$
  &
    \scriptsize $11/6$
  &
    \scriptsize $13/7$
  &
    \scriptsize $15/8$
\\
\hline
\scriptsize $\nu(\alpha)$ & \scriptsize $50$ & \scriptsize $117$ & \scriptsize $223$ & \scriptsize $290$ & \scriptsize $320$ & \scriptsize $333$ & \scriptsize $337$ & \scriptsize $338$
\\
\hline  
\hline                                                                        
\scriptsize $\alpha$ & \scriptsize $2$ & \scriptsize $11/5$ & \scriptsize $7/3$ & \scriptsize $17/7$ & \scriptsize $5/2$ & \scriptsize $23/9$ & \scriptsize $13/5$ & \scriptsize $8/3$,
\\
\hline  
\scriptsize $\nu(\alpha)$ & \scriptsize $1742$ & \scriptsize $1896$ & \scriptsize $3777$ & \scriptsize $3812$ & \scriptsize $10889$ & \scriptsize $10902$ & \scriptsize $13910$ &
\scriptsize $14185$
\\
\hline  
\hline
  \scriptsize $\alpha$ & \scriptsize $19/7$ & \scriptsize $11/4$ & \scriptsize $25/9$ & \scriptsize $14/5$ & \scriptsize $31/11$ & \scriptsize $17/6$ & \scriptsize $37/13$ & \scriptsize $43/15$
  \\
  \hline
  \scriptsize $\nu(\alpha)$ & \scriptsize $15173$ & \scriptsize $23095$ & \scriptsize $23516$ & \scriptsize $23518$ & \scriptsize $23679$ & \scriptsize $26357$ & \scriptsize $26407$ & \scriptsize $26416$
\\
\hline
\hline
\scriptsize $\alpha$ & \scriptsize $23/8$ & \scriptsize $49/17$ & \scriptsize $29/10$ & \scriptsize $35/12$ & \scriptsize $41/14$ & \scriptsize $47/16$ & \scriptsize $53/18$ & \scriptsize $3$
\\
\hline
\scriptsize $\nu(\alpha)$ & \scriptsize $27392$ & \scriptsize $27393$ & \scriptsize $27743$ & \scriptsize $27852$ & \scriptsize $27880$ & \scriptsize $27887$ & \scriptsize $27889$ & \scriptsize $129904$                                                                                                                                                                               
\end{tabular} 
\end{center}
\end{corollary}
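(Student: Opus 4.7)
The plan is to reduce Corollary \ref{introcor2} to a database query over the classification underlying Theorem \ref{introthm1}, with the minimum discrepancy of the canonical resolution serving as the intermediate invariant.

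The first step is a translation. A del Pezzo $\KK^*$-surface $X$ is $\alpha^{-1}$-log canonical precisely when the minimum discrepancy $\delta(X) \in \QQ$ of its canonical resolution satisfies $\delta(X) \ge \alpha^{-1} - 1$. Hence $\alpha_0 \in [1,3]$ is a jumping place of $\nu$ if and only if some family $X$ in the classification realizes $\delta(X) = \alpha_0^{-1} - 1$; equivalently, the set of jumping places in $[1,3]$ is the image of $\{\delta(X) \ge -2/3\}$ under $\delta \mapsto 1/(\delta+1)$, and the value $\nu(\alpha_0)$ is the cardinality of $\{X : \delta(X) \ge \alpha_0^{-1} - 1\}$.

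The second step is to compute $\delta(X)$ for each of the $129\,904$ families. In the toric case this is a Hirzebruch--Jung continued fraction expansion at every fixed point: the discrepancies are read off directly from the refined fan. In the non-toric $\KK^*$-case, the Cox ring description provided in the paper gives explicit local models at the two elliptic fixed points and at the hyperbolic fixed points, which are toric singularities again resolved by Hirzebruch--Jung data. Taking the minimum over all exceptional divisors yields $\delta(X)$, which is stored alongside the other invariants in the database~\cite{HaHaSp}.

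The third step is purely tabular: collect the distinct values of $\delta(X) \ge -2/3$ actually attained, translate them to $\alpha_0 = 1/(\delta(X)+1) \in [1,3]$, sort ascendingly, and for each $\alpha_0$ count the number of families with $\delta(X) \ge \alpha_0^{-1} - 1$. Reading off the claimed $32$ jumping places and their counts is then a matter of displaying the result. The main obstacle is not algebraic but computational: one must trust that the normal form in the classification algorithm genuinely separates isomorphism classes -- otherwise double counting would corrupt every row of the table -- and that the discrepancy routine behaves uniformly over several hundred thousand resolutions. Once the classification of Theorem \ref{introthm1} and a single, verified implementation of the canonical resolution are in place, the corollary follows.
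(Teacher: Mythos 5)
Your proposal matches the paper's own derivation: the corollary is obtained exactly as you describe, by storing the minimal discrepancy of the canonical resolution for each of the $129\,904$ families in the database~\cite{HaHaSp} (computed via the toric/Hirzebruch--Jung local models at the fixed points, equivalently via the anticanonical complex of Theorem~\ref{prop:ak-properties}) and then tabulating the attained values $\alpha_0 = 1/(\delta(X)+1)$ together with the counts $\#\{X : \delta(X) \ge \alpha_0^{-1}-1\}$. The only cosmetic point is that the image of $\{\delta(X) \ge -2/3\}$ under $\delta \mapsto 1/(\delta+1)$ must still be intersected with $[1,3]$, since smooth families or those with $\delta(X)>0$ contribute to every $\nu(\alpha)$ without creating jumping places there.
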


The basic idea behind our classification is to build
up algorithmically the class of all $1/k$-log canonical
del Pezzo $\KK^*$-surfaces from a suitable smaller
and more accessible starting class.
We now outline the procedures in the toric and and
in the non-toric case.

In the toric case, we benefit from the well-known
correspondence between toric del Pezzo surfaces
and two-dimensional lattice polygons primitive
vertices having the origin as an interior point.
Our starting class consists of $\PP_1 \times \PP_1$
together with the weighted projective planes $\PP_{1,1,\alpha}$,
where $\alpha = 1, \ldots, 2k$.
The $1/k$-log canonicity of these surfaces is reflected
by the fact that the corresponding lattice polygons
\[
\conv(\pm e_1, \, \pm e_2),
\qquad
\conv(-\alpha e_1-e_2, \, e_1, \, e_2), 
\quad
\alpha = 1, \ldots, 2k
\]
are \emph{almost $k$-hollow}, i.e. have only the origin
as a $k$-fold interior lattice point.
We show that any almost $k$-hollow lattice polygon
after applying a suitable unimodular
transformation contains one from the above starting
class and moreover fits into a disk of radius
\[
R \ = \ k^2 \sqrt{4k^2+4k+2}.
\]
This allows us to build up almost $k$-hollow lattice
polygons from the starting class by successively
adding new vertices taken from the disk of radius $R$.
A suitable refining of the searching space for
the new vertices speeds up the procedure and, after
removing redundant entries, we obtain a list of
representatives for almost $k$-hollow lattice polygons
up to unimodular equivalence; see Algorithm~\ref{algo:khollow}.

The case of non-toric log del Pezzo $\KK^*$-surfaces
is the major effort.
Here, the approach via the Cox ring initiated
in~\cite{HaSu,HaHe}
provides us with a concrete working environment
allowing in particular explicit implementation.
Similar as with the lattice polygon for the
toric del Pezzo surface, we can also in the
non-toric case encode  $1/k$-log
canonicity in a combinatorial way, this time via
the \emph{anticanonical complex} introduced
in~\cite{BeHaHuNi}, see also~\cite{HiWr,HaMaWr}.
For producing a suitable starting class in
the classification process, we work
with the following simple, raw minimality concept:
a normal complete surface $X$ is
\emph{combinatorially minimal}
if it does not allow any normal birational contraction
of a curve; see~\cite{Ha} and also~\cite{KeLe} for
a recent application.
In order to see that combinatorial minimality is
suitable for our purposes, we first want to see
that every $1/k$-log canonical del Pezzo
$\KK^*$-surface $X$ can be contracted to a
minimal one by a sequence
$$X = X_1 \to \ldots \to X_q = X_{\min}$$
of explicitly understandable contractions
$X_i \to X_{i+1}$ of rational $1/k$-log canonical
del Pezzo $\KK^*$-surfaces.
This is done in Propositions~\ref{prop:kstarcontractdecomp}
to~\ref{prop:dplusdminuscontract}.
It turns out that, even when starting with a non-toric
$\KK^*$-surface, we may end up with a toric surface
$X_{\min}$. In this situation, the toric classification
performed before provides in particular all possible $X_{\min}$.
The non-toric combinatorially minimal $1/k$-log
canonical del Pezzo $\KK^*$-surfaces are classified
in Section~\ref{sec:combmin}.
For any $k$, Theorem~\ref{thm:surfclass1} provides
bounds for the defining data of the non-toric
combinatorially minimal $1/k$-log canonical
del Pezzo $\KK^*$-surfaces.
Moreover, in Theorem~\ref{thm:combminclass}, we
perform the explicit classification of the combinatorially
minimal case for $k=1,2,3$.
Finally, in Section~\ref{sec:classall}, we produce a suitable starting
class by merging the toric and the non-toric combinatorially
minimal ones and produce bounds for the building process;
see Proposition~\ref{prop:toricrep} and
Theorem~\ref{thm:kstar-effective-bounds}.
Algorithm~\ref{algo:classall} then is able to produce
the full classification of all non-toric $1/k$-log
canonical del Pezzo $\KK^*$-surfaces.
Theorem~\ref{thm:kstardelpezzo} presents the key data for $k=1,2,3$.

Besides establishing the classification procedures,
our aim is to provide a consistent presentation of the necessary
theory on rational surfaces with torus action, taking into
account also introductory aspects. Parts of the basic
material can be found as well spread over the literature,
see for instance~\cite[Sec.~5.4]{ArDeHaLa} and~\cite{Hug}.
For convenience, we develop the theory in coherent notation
and also provide the full proofs for some of the central facts.

\tableofcontents


\section{A quick reminder on toric geometry}
\label{section:toric-varieties}

We briefly gather general background from toric geometry
that will be used in the subsequent sections. As detailed
introductory texts, we refer to \cite{CoLiSc,Dan,Ful}.
Toric geometry, initiated by Demazure's work \cite{Dem}
on the Cremona group, connects algebraic geometry with
combinatorics and has rapidly become a rich and intensely
studied interplay of these disciplines.

The objects on the side of algebraic geometry
are the \emph{standard $n$-torus}, i.e. the $n$-fold
direct product $\mathbb{T}^{n} = \mathbb{K}^* \times
\ldots \times \mathbb{K}^*$, and \emph{toric varieties},
i.e. open embeddings $\mathbb{T}^n \subseteq Z$ into
normal varieties $Z$ such that the multiplication on
$\mathbb{T}^{n} \times \mathbb{T}^{n} \to \mathbb{T}^{n}$
extends to a morphical action $\mathbb{T}^{n} \times Z
\to Z$. A \emph{toric morphism} between toric varieties
$\mathbb{T}^{m} \subseteq Y$ and $\mathbb{T}^{n}
\subseteq Z$ is a morphism $Y \to Z$ that restricts to a
homomorphism $\mathbb{T}^{m} \to \mathbb{T}^{n}$ of tori.
Usually we just write $Z$, $Y$, etc. for toric varieties
and $\varphi \colon Y \to Z$, etc. for toric morphisms.

On the combinatorial side we use the following
terminology. A \emph{lattice fan} in $\mathbb{Z}^{n}$ is
a finite collection $\Sigma$ of pointed, convex,
polyhedral cones in $\mathbb{Q}^{n}$ such that for any
$\sigma \in \Sigma$ every face $\tau \preccurlyeq \sigma$
belongs to $\Sigma$ and any two
$\sigma, \sigma' \in \Sigma$ intersect in a common face.
A \emph{map of lattice fans} from a fan $\Sigma$ in
$\mathbb{Z}^{n}$ to a fan $\Delta$ in $\mathbb{Z}^{m}$
is a homomorphism
$F \colon \mathbb{Z}^{n} \to \mathbb{Z}^{m}$ such that
for every $\sigma \in \Sigma$ there is a
$\tau \in \Delta$ with $F(\sigma) \subseteq \tau$. We
also write $F$ for the linear map
$\mathbb{Q}^{n} \to \mathbb{Q}^{m}$ extending the
homomorphism $\mathbb{Z}^{n} \to \mathbb{Z}^{m}$.

We present the basic construction of toric
geometry, associating a toric variety with an
arbitrary lattice fan. This construction is
functorial and the fundamental theorem of toric
geometry tells us that it even sets up a covariant
equivalence between the category of lattice fans
and the category of toric varieties.

\begin{construction}
\label{constr:fan2tv}
Let $\Sigma$ be a lattice fan in $\mathbb{Z}^{n}$.
For a cone $\sigma \subseteq \mathbb{Q}^{n}$ of
$\Sigma$ denote the dual cone by
$\sigma^\vee \subseteq \mathbb{Q}^{n}$ and consider
the spectrum $Z_\sigma$  of the monoid algebra
$\mathbb{K}[M_\sigma]$ of the additive monoid
$M_\sigma := \sigma^\vee \cap \mathbb{Z}^{n}$:
\[
Z_\sigma 
\ = \ 
\mathrm{Spec} \, \mathbb{K}[M_\sigma],
\qquad\qquad
\mathbb{K}[M_\sigma]
\ = \ 
\bigoplus_{u \in M_\sigma} \mathbb{K} \chi^u.
\]
The acting torus $\mathbb{T}^{n} = \mathrm{Spec} \, \mathbb{K}[\mathbb{Z}^{n}]$
embeds via $\mathbb{K}[M_\sigma] \subseteq \mathbb{K}[\mathbb{Z}^{n}]$ 
canonically into $Z_\sigma$, turning it into
an affine toric variety.
Similarly, we have open embeddings
$Z_\tau \subseteq Z_\sigma$
whenever $\tau \preceq \sigma$.
This allows to glue together all the $Z_\sigma$
to a variety $Z$:
\[
Z 
\ = \ 
\bigcup_{\sigma \in \Sigma} Z_\sigma,
\qquad
\text{where}
\
Z_{\sigma} \cap Z_{\sigma'}
\ = \
Z_{\sigma \cap \sigma'}
\ \subseteq \ 
Z.
\]
The gluing respects the toric structures
$\mathbb{T}^{n} \subseteq  Z_\sigma$ such that
we obtain a toric variety $\mathbb{T}^{n} \subseteq Z$,
\emph{the toric variety associated with the fan} $\Sigma$
in $\mathbb{Z}^{n}$. Any
$\chi^u \in \mathbb{K}[\mathbb{Z}^{n}]$ yields a
rational function on $Z$ satisfying
\[
\chi^u(1,\ldots,1)
\ = \
1,
\qquad\qquad
\chi^u(t \cdot z)
\ = \
t_1^{u_1} \cdots t_n^{u_n} \chi^u(z),
\]
whenever defined at $z \in Z$. In particular, the
restrictions $\chi^u \colon \mathbb{T}^{n} \to \mathbb{K}^*$
are precisely the characters of $\mathbb{T}^{n} \subseteq Z$.
Given a map $F$ of lattice fans  $\Sigma$ in
$\mathbb{Z}^{n}$ and $\Delta$ in $\mathbb{Z}^{m}$, we obtain
algebra homomorphisms
\[
\mathbb{K}[M_\tau] \ \to \ \mathbb{K}[M_\sigma],
\qquad
\chi^u \mapsto \chi^{u \circ F},
\]
whenever $\sigma \in \Sigma$ and $\tau \in \Delta$ satisfy
$F(\sigma) \subseteq \tau$. Passing to the spectra, this
defines morphisms $Z_\sigma \to Y_\tau$ which in turn glue
together to a morphism $Z \to Y$ of the toric varieties
associated with $ \Sigma$ and $\Delta$.
\end{construction}

The task of toric geometry is to link geometric
properties on the one side to combinatorial ones on the
other. We first indicate how to detect the orbit
decomposition and the invariant divisors of a toric
variety from its defining fan.

\begin{summary}
\label{sum:orbits-invarprimediv}
Let $\Sigma$ be a lattice fan in $\mathbb{Z}^{n}$ and $Z$
the associated toric variety. Every lattice vector
$v \in \mathbb{Z}^{n}$ defines a
\emph{one-parameter subgroup}
$$
\lambda_v \colon \mathbb{K}^* \ \to \ \mathbb{T}^{n} \ \subseteq Z,
\qquad \qquad 
t \ \mapsto \ (t^{v_1}, \ldots, t^{v_n}).
$$
If $v$ lies in the relative interior
$\sigma^\circ$ of a cone $\sigma \in \Sigma$,
then $\lambda_v$ extends to a morphism
$\bar \lambda_v  \colon \mathbb{K} \to Z$.
The associated \emph{limit point} is
$$
z_\sigma
\ := \
\lim_{t \to 0} \lambda_v(t)
\ := \
\bar \lambda_v (0)
\ \in \
Z_\sigma
\ \subseteq \
Z.
$$
Here, $z_\sigma$ does not depend on the
particular choice of $v \in \sigma^\circ$.
Note that the zero cone $\{0\} \in \Sigma$ yields
the unit element $z_0 \in \mathbb{T}^{n}$.
The limit points set up a bijection
$$
\Sigma \ \to \ \{\mathbb{T}^{n}\text{-orbits of } Z\},
\qquad
\sigma \ \mapsto \ \mathbb{T}^{n} \cdot z_\sigma.
$$
The dimension of the orbit $\mathbb{T}^{n} \cdot z_\sigma$
is $n- \dim(\sigma)$. In particular, the \emph{rays}, i.e.
the one-dimensional cones $\varrho_1,\ldots,\varrho_r$ of
$\Sigma$, define invariant prime divisors
\[
D_{i}
\ := \
\overline{\mathbb{T}^{n} \cdot z_{\varrho_i}}
\ \subseteq \
Z.
\]
We have $Z = \mathbb{T}^{n} \cup D_1 \cup \ldots \cup D_r$ and
$D_i \cap D_j \ne \emptyset$ if and only if
$\varrho_i, \varrho_j \subseteq \sigma$ for some
cone $\sigma \in \Sigma$. Moreover, there is an isomorphism
\[
\mathbb{Z}^{r} \ \to \ \mathrm{WDiv}(Z)^{\mathbb{T}^{n}},
\qquad
a \ \mapsto \ a_1D_1 + \ldots + a_rD_r.
\]
In other words, the prime divisors $D_{1},\dots,D_{r}$ freely
generate the group $\mathrm{WDiv}(Z)^{\mathbb{T}^{n}}$ of
\emph{invariant Weil divisors}. Finally, we obtain an
invariant anticanonical divisor
\[
-\mathcal{K}_{Z} \ = \ D_{1}+\dots+D_{r} \ \in \ \mathrm{WDiv}(Z)^{\mathbb{T}^{n}}.
\]
\end{summary}

Our next topic is the \emph{divisor class group}.
This is the group of Weil divisors modulo the subgroup
of principal divisors. We obtain an explicit description
in terms of the defining fan, which is also the key to
other invariants. We briefly recall the terminology. For
a point $z$ of a normal variety $Z$, the
\emph{local class group} $\mathrm{Cl}(Z,z)$ is the group
of Weil divisors modulo the subgroup of divisors that are
principal near~$z$. A \emph{Cartier divisor} is a Weil
divisor that is locally principal. The \emph{Picard group}
is the group of Cartier divisors modulo the subgroup of
principal divisors. Moreover, in the rational vector space
\[
\mathrm{Cl}_\QQ(Z)
\ = \
\QQ \otimes_\ZZ \mathrm{Cl}(Z)
\]
associated with the divisor class group of a variety $Z$,
there are the following important subsets. The
\emph{effective cone} $\Eff(Z)$ is generated by the
classes of effective divisors, i.e. contains all
non-negative $\QQ$-linear combinations of these classes.
The \emph{moving cone} $\Mov(Z)$ is generated by the
classes of movable divisors, i.e. those with base
locus of codimension one.
The \emph{semiample cone} $\SAmple(Z)$ is generated by
the classes of semiample divisors and the
\emph{ample cone} $\Ample(Z)$ consists of all strictly
positive $\QQ$-linear combinations of classes of ample divisors.

\begin{summary}
\label{sum:toriccldata}
Consider a lattice fan $\Sigma$ in $\mathbb{Z}^{n}$ and
its rays  $\varrho_1, \ldots, \varrho_r$ with the
corresponding primitive lattice vectors
$v_i \in \varrho_i$. The
\emph{generator matrix of $\Sigma$} is
\[
P \ := \ [v_1,\ldots,v_r].
\]
Suppose that $P$ is of rank $n$. We describe the
divisor class group of the toric variety~$Z$ associated
with $\Sigma$ in terms of $P$. First recall the
identification
\[
\mathbb{Z}^{r} \ \to \ \mathrm{WDiv}(Z)^{\mathbb{T}^{n}},
\qquad
a \ \mapsto \ D(a) := a_1D_1 + \ldots + a_rD_r.
\]
The divisor of a character function 
$\chi^u \in \Gamma(\mathbb{T}^{n},\mathcal{O}) \subseteq \mathbb{K}(Z)$,
where $u \in \mathbb{Z}^{n}$, is given via the transpose
$P^t$ of the generator matrix:
\[
D(P^t(u) )
\ = \
\bangle{u,v_1} D_1 + \ldots + \bangle{u,v_r} D_r
\ = \
\div(\chi^u).
\]
The divisor class group $\mathrm{Cl}(Z)$ of $Z$ turns
out to be the group of invariant Weil divisors modulo
the group of invariant principal divisors and thus we
obtain
\[
\mathrm{Cl}(Z)
\ = \
K
\ := \
\mathbb{Z}^{r} / \im(P^t).
\]
Using the projection $Q \colon \mathbb{Z}^{r} \to K$,
we have $[D_i] = Q(e_i)$ for the class
of the invariant prime divisor $D_i \subseteq Z$.
The local class group of $z_\sigma \in Z$ is
given by
\[
\mathrm{Cl}(Z,z_\sigma)
\ = \
K / K_\sigma,
\qquad\qquad
K_\sigma \ := \ \bangle{Q(e_i); \ P(e_i) \not\in \sigma}.
\]
Since $Z$ is the union of the $\mathbb{T}^{n}$-orbits
through the points $z_\sigma$, where
$\sigma \in \Sigma$, this determines all local class
groups. The Picard group of $Z$ is given by
\[
\Pic(Z)
\ = \
\bigcap_{\sigma \in \Sigma} K_\sigma
\ \subseteq \
\mathrm{Cl}(Z).
\]
In the rational divisor class group
$\mathrm{Cl}_\QQ(Z) = K_\QQ$, we identify the cones
of effective and movable divisor classes as
\[
\Eff(Z) \ = \ Q(\gamma),
\qquad\qquad
\Mov(Z)
\ = \
\bigcap_{{\gamma_0 \preccurlyeq \gamma}\atop{\text{facet}}} Q(\gamma_0),
\]
where $\gamma = \QQ^r_{\ge 0}$ denotes the positive
orthant in $\QQ^r$. The cones of semiample and
ample divisor classes in $\mathrm{Cl}_\QQ(Z)$ are
given by 
\[
\SAmple(Z)
\ = \
\bigcap_{\sigma \in \Sigma} \cone(Q(e_i); \ P(e_i) \not\in \sigma),
\qquad 
\Ample(Z) \ = \ \SAmple(Z)^\circ.
\]
\end{summary}

We take a look at singularities and smoothness.
Recall that a point $z \in Z$ of a
normal variety $Z$ is called \emph{$\QQ$-factorial}
if every Weil divisor admits a non-zero multiple
that is principal near $z$. A normal variety is
called \emph{$\QQ$-factorial} if each of its points
is $\QQ$-factorial.

\begin{summary}
\label{sum:tv-sing}
Let $Z$ be the toric variety arising from a lattice
fan $\Sigma$ in $\mathbb{Z}^{n}$. Consider the
limit points $z_\sigma \in Z$, where
$\sigma \in \Sigma$.
\begin{enumerate}
\item
The point $z_\sigma \in Z$ is $\QQ$-factorial if
and only if $\sigma$ is \emph{simplicial}, i.e.
generated by part of a vector space basis of
$\mathbb{Q}^{n}$.
\item
The point $z_\sigma \in Z$ is smooth if and only
if $\sigma$ is \emph{regular}, i.e. generated by
part of a lattice basis of $\mathbb{Z}^{n}$.
\item
The point $z_\sigma \in Z$ is $\QQ$-factorial
(smooth) if and only if all points of
$\mathbb{T}^{n} \cdot z_\sigma$ are
$\QQ$-factorial (smooth).  
\item
The variety $Z$ is $\QQ$-factorial (smooth)
if and only if all cones of the lattice fan
$\Sigma$ are simplicial (regular).
\end{enumerate}
\end{summary}

A lattice fan $\Sigma$ in $\mathbb{Z}^{n}$
is \emph{complete} if its \emph{support}, i.e.
the union over all its cones, equals $\mathbb{Q}^{n}$.
Moreover, $\Sigma$ is \emph{polytopal} if it
is spanned by an $n$-dimensional convex polytope
$\mathcal{A} \subseteq \mathbb{Q}^{n}$ containing the
origin in its interior; here \emph{spanned} means
that $\Sigma$ consists precisely of the cones generated
by the proper faces of $\mathcal{A}$.

\begin{summary}
Let the toric variety $Z$ arise from the lattice
fan $\Sigma$ in $\mathbb{Z}^{n}$. Then~$Z$ is
complete if and only if $\Sigma$ is complete and $Z$
is projective if and only if $\Sigma$ is polytopal.
If the latter holds, then, for any
$a \in \mathbb{Z}^{r}$ representing an ample divisor
$D(a) = a_{1}D_{1}+\dots+a_{r}D_{r}$, the lattice
fan $\Sigma$ is spanned by the dual of the polytope
\[
(P^t)^{-1}(B_a - a) \ \subseteq \ \mathbb{Q}^{n},
\qquad
B_a \ := \ Q^{-1}(Q(a)) \cap \gamma.
\]
Here, as before, $P$ is the generator matrix of
$\Sigma$ and $Q \colon \mathbb{Z}^{r} \to K =
\mathbb{Z}^{r}/\im(P^t)$ the projection and
$\gamma = \QQ^r_{\ge 0}$ the positive orthant. In
dimension one the only complete toric variety is
the projective line. Every complete toric surface
is projective. The first non-projective complete
toric varieties occur in dimension three.
\end{summary}

A \emph{Fano variety} is a normal
complete variety admitting an ample anticanonical
divisor. Observe that by this definition, Fano
varieties are projective and some positive
multiple of the anticanonical divisor of a Fano
variety is Cartier.

\begin{summary}
\label{sum:torvar-fano}
Let $\Sigma$ be a complete lattice fan in
$\mathbb{Z}^{n}$ and
$\mathcal{A} \subseteq \mathbb{Q}^{n}$ the convex
hull over the primitive generators $v_1,\ldots,v_r$ 
of $\Sigma$. Then the following statements are
equivalent.
\begin{enumerate}
\item
The toric variety $Z$ associated with $\Sigma$ is a
Fano variety.
\item
The vector $Q(e_{1})+\dots+Q(e_{r})$ lies in the
ample cone of $Z$.
\item
For every $\sigma \in \Sigma$ there is a
$u \in \QQ^r$ with $\bangle{u,v_i}=1$ whenever
$v_i \in \sigma$.
\item
The fan $\Sigma$ is spanned by the polytope
$\mathcal{A}$.
\end{enumerate}
\end{summary}

We will make use of
\emph{Cox's quotient presentation}, which generalizes
the construction of the projective space $\PP_n$ as
the quotient of  $\mathbb{K}^{n+1} \setminus \{0\}$
by $\mathbb{K}^*$ acting via scalar multiplication.
See \cite{Co}, also \cite[Sec. 5]{CoLiSc} and
\cite[Sec. 2.1.3]{ArDeHaLa} for more details.

\begin{construction}
\label{constr:torcox}
Consider a lattice fan $\Sigma$ in $\mathbb{Z}^{n}$
with generator matrix $P$ of rank~$n$ and let $Z$
be the associated toric variety. The \emph{Cox ring}
of $Z$ is given by
\[
\mathcal{R}(Z)
\ = \
\bigoplus_{D \in \mathrm{Cl}(Z)} \Gamma(Z,\mathcal{O}_D(Z))
\ \cong \
\bigoplus_{w \in K} \mathbb{K}[T_1, \ldots,T_r]_w
\ = \
\mathbb{K}[T_1, \ldots,T_r].
\]
The $K$-grading of the polynomial ring
$\mathbb{K}[T_1, \ldots,T_r]$ is given by
$\deg(T_i) := [D_i] = Q(e_i)$. Consider the orthant
$\gamma = \QQ_{\ge 0}^r$, its fan of faces
$\bar \Sigma$ and 
\[
\hat \Sigma
\ := \ 
\{\tau \preccurlyeq \gamma; \ 
P(\tau) \subseteq \sigma 
\text{ for some } 
\sigma \in \Sigma\}.
\]
Then $\hat \Sigma$ is a subfan of the fan $\bar \Sigma$
and $P$ sends cones from $\hat \Sigma$ into
cones of $\Sigma$.
For the associated toric varieties this leads to
the picture
\[
\xymatrix{
{\TT^r}
\ar@{}[r]|\subseteq
\ar[d]_{/H}^p  
&
{\hat Z}
\ar@{}[r]|\subseteq
\ar[d]^{\quot H}_p
&
{\bar Z}
\ar@{}[r]|{:=}
&
{\mathbb{K}^r}.
\\
{\mathbb{T}^{n}}
\ar@{}[r]|\subseteq
&
Z
&
&
}
\]
Here, $\hat Z \subseteq \bar Z$ is an open 
$\TT^r$-invariant subvariety and $p$ extends the
homomorphism of tori having the rows of
$P = (p_{ij})$ as its exponent vectors:
\[
\TT^r \ \to \ \mathbb{T}^{n}, 
\qquad
t \ \mapsto \ (t^{P_{1*}}, \ldots, t^{P_{n*}}),
\qquad
t^{P_{i*}} \ := \ t_1^{p_{i1}} \cdots t_r^{p_{ir}}.
\]
The morphism $p \colon \hat Z \to Z$ is a \emph{good quotient}
for the action of the quasitorus
$H = \ker(p) \subseteq \TT^r$ on $\hat Z$. If
$\Sigma$ is simplicial, then each fiber of $p$ is
an $H$-orbit.
\end{construction}

\begin{remark}
\label{rem:coxcoord}
Let $Z$ be a toric variety with quotient presentation
$p \colon \hat Z \to Z$ as in Construction
\ref{constr:torcox}. Then every $p$-fiber contains a
unique closed $H$-orbit. The presentation in
\emph{Cox coordinates} of a point $x \in Z$ is
\[
x \ = \ [z_1,\ldots, z_r],
\qquad
\text{where}
\
z = (z_1,\ldots, z_r) \in p^{-1}(x)
\text{ with } H \cdot z \subseteq \hat Z
\text{ closed}.
\]
Thus, $[z]$ and $[z']$ represent 
the same point $x \in Z$ if and only if 
$z$ and $z'$ lie in the same closed $H$-orbit 
of $\hat Z$.
For instance, the points $z_\sigma \in Z$, 
where $\sigma \in \Sigma$, 
are given in Cox coordinates as
\[
z_\sigma 
\ = \ 
[\varepsilon_1,\ldots, \varepsilon_r],
\qquad
\varepsilon_i 
\ = \ 
\begin{cases}
0, & P(e_i) \in \sigma,
\\
1, & P(e_i) \not\in \sigma.
\\
\end{cases}
\]

\end{remark}

We conclude the section with an example
from the surface case in order to see how its
(well known) geometric features are obtained
from its defining combinatorial data.

\begin{example}

Consider the weighted projective plane $\PP_{1,2,3}$.
As a toric variety it arises from the complete
lattice fan $\Sigma$ in $\ZZ^2$ with generator
matrix
\[
P
\ = \
[v_1,v_2,v_3]
\ = \
\left[
\begin{array}{rrr}
1 & 1 & -1
\\
2 & -1 & 0
\end{array}  
\right].
\]
In order to see that the toric variety $Z$
associated with $\Sigma$ indeed equals
$\PP_{1,2,3}$ we look at Cox's quotient
construction.
\begin{center}
\begin{tikzpicture}[scale=0.6]
\sffamily
\coordinate(oo) at (0,0);
\coordinate(e0) at (-1,0);
\coordinate(e1) at (1,-.5);
\coordinate(e2) at (0.7,0.3);
\coordinate(c0) at (-2,0);
\coordinate(c1) at (2,-1);
\coordinate(c2) at (1.4,0.6);
\coordinate(ooo) at (0,0+2.5);
\coordinate(e0o) at (-1,0+3.5);
\coordinate(e1o) at (1,-.5+3.5);
\coordinate(e2o) at (0.7,0.3+3.5);
\coordinate(c0o) at (-2,0+4.5);
\coordinate(c1o) at (2,-1+4.5);
\coordinate(c2o) at (1.4,0.6+4.5);
\coordinate(av1) at (0,2.1);
\coordinate(av2) at (0,0.8);
\coordinate(bv1) at (-8.5,2.5);
\coordinate(bv2) at (-8.5,0.8);

\node[left] at (-8,3.4) {$\mathbb{K}^3 \setminus \{0\} \ = \ \hat Z$};
\node[left] at (-8.1,0) {$\PP_{1,2,3} \ = \ Z$};
\node[right] at (-8.4,1.7) {$p$};
\node[left] at (-8.5,1.7) {$/H$};
\node[] at (2.8,4) {$\hat \Sigma$};
\node[] at (2.8,0) {$\Sigma$};
\node[right] at (0.2,1.5) {$P$};
\path[fill, color=gray!10] (oo) -- (c2) -- (c0) -- (oo);
\path[fill, color=gray!30] (oo) -- (c1) -- (c2) -- (oo);
\path[fill, color=gray!50] (oo) -- (c0) -- (c1) -- (oo);
\path[fill, color=gray!10] (ooo) -- (c2o) -- (c0o) -- (ooo);
\path[fill, color=gray!30] (ooo) -- (c1o) -- (c2o) -- (ooo);
%
\path[fill, color=black] (e0) circle (.5ex);
\path[fill, color=black] (e1) circle (.5ex);
\path[fill, color=black] (e2) circle (.5ex);
\draw[thick, color=black] (oo) to (c0);
\draw[thick, color=black] (oo) to (c1);
\draw[thick, color=black] (oo) to (c2);
\path[fill, color=black] (e2o) circle (.5ex);
\draw[thick, color=black] (ooo) to (c2o);
\path[fill, opacity=.9, color=gray!50] (ooo) -- (c0o) -- (c1o) -- (ooo);
\path[fill, color=black] (e0o) circle (.5ex);
\path[fill, color=black] (e1o) circle (.5ex);
\draw[thick, color=black] (ooo) to (c0o);
\draw[thick, color=black] (ooo) to (c1o);
%
\draw[->, thick, color=black] (av1) to (av2);
\draw[->, thick, color=black] (bv1) to (bv2);
\end{tikzpicture}
\end{center}
Note that the homomorphism of tori defined by the
generator matrix $P$ and its kernel $H$ are
explicitly given by 
\[
p \colon \TT^3 \ \to \ \TT^2,
\quad
(t_1,t_2,t_3) \ \mapsto \ \left(\frac{t_1t_2}{t_3},\frac{t_1^2}{t_2}\right),
\qquad
H \ = \ \{(t,t^2,t^3); \ t \in \mathbb{K}^*\}.
\]
Thus, $H = \mathbb{K}^*$ acts with the weights
$1$, $2$ and $3$ in $\mathbb{K}^3$ and
$Z = \PP_{1,2,3}$ is the quotient of
$\hat Z = \mathbb{K}^3 \setminus \{0\}$ by
this action. Let us look at the geometry of
$\PP_{1,2,3}$. We have
\[
\mathrm{Cl}(Z) \ = \ \ZZ^3/\im(P^t) \ = \ \ZZ
\]
with the projection $Q \colon \ZZ^3\to \ZZ$
sending $(a_1,a_2,a_3)$ to $(a_1,2a_2,3a_3)$.
This allows to recover the Picard group as 
\[
\Pic(Z)
\ = \ 
\ZZ \cap 2\ZZ \cap 3 \ZZ
\ = \
6\ZZ
\ \subseteq \
\ZZ
\ = \
\mathrm{Cl}(Z).
\]
As $\Sigma$ has three rays, there are three
invariant prime divisors $D_1$, $D_2$ and $D_3$
on $Z$. In particular, the anticanonical
class is given by
\[
[-\mathcal{K}_Z]
\ = \
[D_1]+[D_2]+[D_3]
\ = \
1 + 2 + 3
\ = \
6
\ \in \
\ZZ
\ = \
\mathrm{Cl}(Z).
\]
Thus, $-\mathcal{K}_Z$ is an ample Cartier divisor
generating $\Pic(Z)$. Note that $\Sigma$ is spanned
by $\mathcal{A} = \conv(v_1,v_2,v_3)$. There are
two singularities, given in Cox-coordinates by
\[
[0,1,0], \qquad\qquad [0,0,1].
\]

\end{example}


\section{Geometry of toric surfaces}
\label{section:toric-surfaces}

In this section, we take a closer look at two-dimensional
toric geometry, i.e. the case of toric surfaces.
In dimension two, the combinatorial framework of lattice
fans is elementary and thus the toric surfaces form a
particularly grateful example class;
see~\cite[Chap.~10]{CoLiSc} for a detailed treatment.
Let us see how the general picture from the preceding
section looks in the surface case.

\begin{summary}
Consider the projective toric surface $Z$ arising from
a complete lattice fan $\Sigma$ in $\ZZ^2$, the generator
matrix $P = [v_1,\ldots, v_r]$ of $\Sigma$, the rays
$\varrho_i = \cone(v_i) \in \Sigma$ and their limit
points $z_i \in Z$. Then we have
\[
Z \ = \  \TT^2  \cup D_1 \cup \ldots \cup D_r,
\qquad
D_i \ := \ \overline{\TT^2 \cdot z_i}.
\]
Each of the orbit closures $D_i$ is a smooth rational
curve and $D_1 \cup \ldots \cup D_r$ is a cycle in the
sense that $D_i \cap D_j$ is non-empty if and only if the
corresponding rays~$\varrho_i$ and~$\varrho_j$ are
adjacent. If two rays $\varrho_i$ and $\varrho_j$ are
adjacent, we have
\[
D_i \cap D_j \ = \ \{z_{ij}\},
\qquad
z_{ij}  \ := \ z_{\sigma_{ij}},
\qquad 
\sigma_{ij} \ := \ \cone(v_i,v_j) \ \in \ \Sigma.
\]
A one-parameter subgroup
$\lambda_v \colon \mathbb{K}^* \to \TT^2$,
$t \mapsto (t^{v_1},t^{v_2})$ approaches $z_i$ or
$z_{ij}$ for $t \to 0$ if and only if $v = (v_1,v_2)$
lies in the relative interior of $\varrho_i$ or
$\sigma_{ij}$, respectively. Gathering derivatives of 
$\lambda_v$ with common limit into cones we recover
$\Sigma$ from $Z$:

\begin{center}
\begin{tikzpicture}[scale=0.6]
\sffamily
\coordinate(x0) at (0,0);
\coordinate(x1) at (0,2.1);
\coordinate(y1) at (2,0.7);
\coordinate(y2) at (-2,0.7);
\coordinate(z1) at (1.5,-1.7);
\coordinate(z2) at (-1.5,-1.7);
\coordinate(x1y2) at ($(x1)!0.5!(y2)$);
\coordinate(r0) at ([xshift=9cm]$(x0)$);
\coordinate(r1) at ([xshift=9cm]$(x1)!0.5!(y1)$);
\coordinate(r2) at ([xshift=9cm]$(x1)!0.5!(y2)$);
\coordinate(r3) at ([xshift=9cm]$(y2)!0.5!(z2)$);
\coordinate(r4) at ([xshift=9cm]$(z2)!0.5!(z1)$);
\coordinate(r5) at ([xshift=9cm]$(z1)!0.5!(y1)$);
\coordinate(rr1) at ($(r0)!1.5!(r1)$);
\coordinate(rr2) at ($(r0)!1.5!(r2)$);
\coordinate(rr3) at ($(r0)!1.3!(r3)$);
\coordinate(rr4) at ($(r0)!1.3!(r4)$);
\coordinate(rr5) at ($(r0)!1.3!(r5)$);
\path[fill, color=gray!30] (rr1) -- (rr2) -- (rr3) -- (rr4) -- (rr5) -- (rr1);
\begin{scope}[line width=2pt]
\draw[bend angle=20, bend right, color=black] (x1) to (y1);
\draw[bend angle=20, bend right, color=black] (y2) to (x1);
\draw[bend angle=20, bend right, color=black] (y1) to (z1);
\draw[bend angle=20, bend right, color=black] (z2) to (y2);
\draw[bend angle=20, bend right,  color=black] (z1) to (z2);
\end{scope}
\path[fill, color=black] (x0) circle (1mm)
node[below, font=\scriptsize]{$z_0$};
\path[fill, color=black] (x1) circle (1mm)
node[above, font=\scriptsize]{$z_{23}$};
\path[fill, color=black] (y1) circle (1mm)
node[right, font=\scriptsize]{$z_{12}$};
\path[fill, color=black] (y2) circle (1mm)
node[left, font=\scriptsize]{$z_{34}$};
\path[fill, color=black] (z1) circle (1mm)
node[below, font=\scriptsize]{$z_{51}$};
\path[fill, color=black] (z2) circle (1mm)
node[below, font=\scriptsize]{$z_{45}$};
\begin{scope}[very thick, decoration={markings,
    mark=at position 0.5 with {\arrow{>}}}
    ] 
\draw[postaction={decorate}, color=black] (x0) to (y1);
\draw[postaction={decorate}, bend angle=40, bend right, color=black] (x0) to (y1);
\draw[postaction={decorate}, bend angle=40, bend left, color=black] (x0) to (y1);
\draw[postaction={decorate}, shorten >=2mm, color=black] (x0) to ($(x0)!1.05!(x1y2)$);
\draw[postaction={decorate}, bend angle=10, bend left, color=black] (3,0) to (6,0);
\path[] (3,.8) node[right, font=\scriptsize] {$\lambda_v \mapsto \frac{\partial \lambda_v}{\partial t}(1)$};
\end{scope}
\draw[thick, color=black] (r0) to (rr1);
\draw[thick, color=black] (r0) to (rr2);
\draw[thick, color=black] (r0) to (rr3);
\draw[thick, color=black] (r0) to (rr4);
\draw[thick, color=black] (r0) to (rr5);
\node[font=\scriptsize] (sig12) at (10.4,.4) {$\sigma_{12}$};
\node[font=\scriptsize] (sig12) at (9,1.6) {$\sigma_{23}$};
\node[font=\scriptsize] (sig34) at (7.6,.4) {$\sigma_{34}$};
\node[font=\scriptsize] (sig12) at (8.3,-1) {$\sigma_{45}$};
\node[font=\scriptsize] (sig12) at (9.9,-1) {$\sigma_{51}$};
\end{tikzpicture}
\end{center}

\noindent
The orbits $\TT^2 \cdot z_0$ and $\TT^2 \cdot z_i$
only contain smooth points of $Z$. The $z_{ij}$ are
precisely the toric fixed points. Every toric fixed point
$z_{ij}$ is $\QQ$-factorial. Moreover, $z_{ij}$ is smooth
if and only if $\det(v_i,v_j) = \pm 1$.
\end{summary}

Being $\QQ$-factorial, any complete toric
surface comes with a well-defined intersection product. 
Here is how to compute the intersection numbers explicitly
in terms of the defining fan.

\begin{summary}
\label{sum:torsurfintnos}
Consider the toric surface $Z$ arising from a complete
lattice fan $\Sigma$ in $\ZZ^2$ with generator matrix
\[
P
\ = \
[v_1,\ldots,v_r].
\]
For any two distinct generators $v_i, v_j$ in positive
orientation, the intersection number of the associated
divisors $D_i, D_j$ is given as
\[
D_i \cdot D_j
\ = \
\begin{cases}
\det(v_i,v_j)^{-1}, & \text{if } \cone(v_i,v_j) \in \Sigma,
\\
0, & \text{else}.
\end{cases}
\]
Moreover, we can compute the self intersection number
of a divisor $D_j$. Taking adjacent generators
$v_i$, $v_j$, $v_k$ in positive orientation, we have
\[
D_j^2
\ = \
-\frac{\det(v_i,v_k)}{\det(v_i,v_j)\det(v_j,v_k)}.
\]
\end{summary}

\begin{remark}
\label{rem:toricaccselfie}
Let $\Sigma$ be a complete lattice fan in $\ZZ^2$ with
generator matrix $P$ with columns $v_1,\ldots,v_r$ listed
according to positive orientation.
Assume $v_i = (l_i,d_i)$ with $l_i \ne 0$ for $i=1,\dots,r$
and set $v_{r+1} := v_1$.
Then the self intersection number of any canonical
divisor $\mathcal{K}_{Z}$ on the toric surface $Z$
associated with $\Sigma$ is given as
\[
\mathcal{K}_Z^2
\ = \
\sum_{i = 1}^r
\left(
2 - \frac{l_i}{l_{i+1}} - \frac{l_{i+1}}{l_i}
\right)
\frac{1}{\det(v_i,v_{i+1})}.
\]
\end{remark}

We take a closer look at the singularities
of toric surfaces. As observed, these are necessarily
toric fixed points. Thus, for the local study, we have to
consider affine toric surfaces defined by
two-dimensional lattice cones.

\begin{summary}
Consider the affine toric surface $Z_\sigma$
associated with a two-dimensional lattice cone $\sigma$
in $\ZZ^2$. After applying a suitable unimodular
transformation, the generator matrix is of the shape
\[
P
\ = \
\left[
\begin{array}{cc}
1 & a 
\\
0 & b
\end{array}    
\right],
\qquad
0 \le a < b.
\]
The cone $\sigma$ is therefore generated by the
columns of $P$. The point $z_\sigma \in Z_\sigma$ is
singular if and only if $b>1$ holds. Cox's quotient
presentation from \ref{constr:torcox} yields a
morphism $p \colon \mathbb{K}^2 \to \mathbb{K}^2/H =
Z_\sigma$, where
\[
H
\ = \
\ker(p)
\ = \ 
\{(t^{-a},t); \ t \in \Gamma_b\},
\qquad
\Gamma_b
\ := \
\{t \in \mathbb{K}^*; \ t^b = 1\}.
\]
Thus, $Z_\sigma$ is the quotient of $\mathbb{K}^2$ by 
a diagonal action of a cyclic group of order $b$. In
particular, for $b \ge 2$, we see that the point
$z_\sigma = p(0,0)$ is a two-dimensional cyclic
quotient singularity.
\end{summary}

We discuss the resolution of toric surface
singularities. Recall that the \emph{Hilbert basis}
$\mathcal{H}_\sigma$ of a pointed convex polyhedral
cone $\sigma \subseteq \mathbb{Q}^{n}$ is the (finite)
set of indecomposable elements of the additive monoid
$\sigma \cap \mathbb{Z}^{n}$. Here, a non-zero element
$v \in \sigma \cap \mathbb{Z}^{n}$ is
\emph{indecomposable} if $v = v'+ v''$ with
$v',v'' \in \sigma \cap \mathbb{Z}^{n}$ is only
possible for $v = v'$ or $v = v''$.

\begin{summary}
\label{sum:toricsurfinters}
Let the affine toric surface $Z_\sigma$ arise
from a two-dimensional lattice cone $\sigma$ in
$\ZZ^2$. Subdividing $\sigma$ along the members
$v_1,\dots,v_r$ of the Hilbert basis~$\mathcal{H}_\sigma$
gives a lattice fan $\Sigma$
in $\ZZ^2$ with support $\sigma$ and generator
matrix $P = [v_1,\dots,v_r]$.
\begin{center}
\begin{tikzpicture}[scale=0.6]
\sffamily
\coordinate(ool) at (0,0);
\coordinate(v1l) at (1,0);
\coordinate(sv1l) at (3,0);
\coordinate(v2l) at (1,1);
\coordinate(sv2l) at (3,3);
\coordinate(v3l) at (2,3);
\coordinate(sv3l) at (3,4.5);
\coordinate(oor) at (0+8,0);
\coordinate(v1r) at (1+8,0);
\coordinate(sv1r) at (3+8,0);
\coordinate(v3r) at (2+8,3);
\coordinate(sv3r) at (3+8,4.5);
\path[fill, color=gray!30] (ool) -- (sv1l) -- (sv3l) -- (ool);
\path[fill, color=gray!30] (oor) -- (sv1r) -- (sv3r) -- (oor);
\path[fill, color=black] (v1l) circle (1mm);
\path[fill, color=black] (v2l) circle (1mm);
\path[fill, color=black] (v3l) circle (1mm);
\path[fill, color=black] (v1r) circle (1mm);
\path[fill, color=black] (v3r) circle (1mm);
\draw[thick, color=black] (ool) to (sv1l);
\draw[thick, color=black] (ool) to (sv2l);
\draw[thick, color=black] (ool) to (sv3l);
\draw[thick, color=black] (oor) to (sv1r);
\draw[thick, color=black] (oor) to (sv3r);
\draw[very thick, color=black, -stealth] (4.5,1.5) to (6.5,1.5);
\end{tikzpicture}
\end{center}
The toric surface $Z$ associated with $\Sigma$ is
smooth and the canonical toric morphism
$\pi \colon Z \to Z_\sigma$ is the minimal
resolution of singularities. The exceptional
curves of $\pi$ are precisely the $D_i$ given
by the Hilbert basis members
$v_i \in \sigma^\circ$.

\end{summary}

We say that a normal variety $X$ with
canonical divisor $\mathcal{K}_X$ is
\emph{$\QQ$-Gorenstein} if some non-zero integral
multiple of $\mathcal{K}_X$ is Cartier. If this is
fulfilled, then the \emph{Gorenstein index} of $X$ is
the smallest non-zero integer $\iota_X$ such that
$\iota_X \mathcal{K}_X$ is Cartier. A variety is
said to be \emph{Gorenstein} if it is of
Gorenstein index $1$.

\begin{summary}
\label{sum:toric-surf-gorind}
Consider a two-dimensional lattice cone $\sigma$
in $\ZZ^2$ with primitive generators $v_1$ and $v_2$.
Then there is a primitive
$u_\sigma \in \ZZ^2$ and an
$\iota_\sigma \in \ZZ_{> 0}$ such that 
\[
\bangle{u_\sigma,v_1} \ = \ \iota_\sigma,
\qquad\qquad
\bangle{u_\sigma,v_2} \ = \ \iota_\sigma.
\]
Moreover $u_\sigma$ and $\iota_\sigma$ are
uniquely determined by this property. For the
associated affine toric surface $Z_\sigma$ and
its invariant canonical divisor, we have 
\[
- \iota_\sigma  \mathcal{K}_{Z_\sigma}
\ = \
\iota_\sigma D_1 + \iota_\sigma D_2
\ = \
\bangle{u_\sigma,v_1}D_1 + \bangle{u_\sigma,v_2}D_2
\ = \
\div(\chi^{u_\sigma}).
\]
Thus, $\iota_\sigma$ is minimal with
$\iota_\sigma \mathcal{K}_Z$ being Cartier and
hence equals the Gorenstein index of $Z_\sigma$.
For a toric surface $Z$ arising from a complete
lattice fan $\Sigma$ in $\ZZ^2$, we obtain
\[
\iota_Z
\ = \
\lcm(\iota_\sigma; \ \sigma \in \Sigma, \ \dim(\sigma) = 2). 
\]
\end{summary}

\begin{remark}
\label{rem:toric-surf-gorind}
Consider two  primitive vectors $v_1 = (a,c)$
and $v_2 = (b,d)$ in $\ZZ^2$ generating a
two-dimensional cone $\sigma \subseteq \QQ^2$.
The linear form $u_\sigma$ and the number
$\iota_\sigma$ from Summary
\ref{sum:toric-surf-gorind} are given as 
\[
u_\sigma
\ = \
\iota_\sigma
\left(
\frac{c-d}{ad-bc},  \, \frac{b-a}{ad-bc}
\right),
\qquad
\qquad
\iota_\sigma
\ = \
\frac{\vert ad-bc \vert}{\gcd(c-d, \, b-a)}.
\]
In particular, this allows to compute the
Gorenstein index $\iota_\sigma$ of the affine
toric surface $Z_\sigma$ in terms of the
generator matrix $P = [v_1,v_2]$. Note that
$\iota_\sigma$ equals the order of the class
of $\mathcal{K}_{Z_\sigma}$ in the local
class group $\mathrm{Cl}(Z_\sigma,z_\sigma)$.
\end{remark}

\begin{proposition}
\label{prop:affineiota}
In $\ZZ^2$, consider the vector $e := (1,0)$,
for $a \in \ZZ_{\ge 1}$ the vectors
$v_a := (1,a)$ and for
$\iota, b \in \ZZ_{\ge 2}$ the vectors  
\[
v_{\iota,b,\kappa}
\ := \
\left(b, \, \iota \frac{b-1}{\kappa} \right),
\,
\kappa = 1, \ldots, \iota -1,
\
\kappa \mid b-1,
\
\gcd(b,\iota {\scriptstyle \frac{b-1}{\kappa}}) = \gcd(\kappa,\iota) = 1.
\]
Set $\sigma_a := \cone(e,v_a) \subseteq \QQ^2$
and $\sigma_{\iota,b,\kappa} :=
\cone(e,v_{\iota,b,\kappa}) \subseteq \QQ^2$.
Then the following statements hold.
\begin{enumerate}
\item
Up to isomorphy the Gorenstein affine toric surfaces
with fixed point are precisely the $Z_{\sigma_a}$.
\item
Fix $\iota \in \ZZ_{\ge 2}$. Up to isomorphy the
affine toric surfaces with fixed point being of
Gorenstein index $\iota$ are precisely the
$Z_{\sigma_{\iota,b,\kappa}}$.
\end{enumerate} 
\end{proposition}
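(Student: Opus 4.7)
The plan is to normalize the defining cone, read off the Gorenstein index via Remark \ref{rem:toric-surf-gorind}, and match the resulting invariants against the two families listed in the proposition. Concretely, I would first invoke the normal form recalled earlier: every two-dimensional lattice cone $\sigma \subseteq \QQ^2$ is unimodularly equivalent to $\sigma = \cone((1,0),(A,B))$ with $0 \leq A < B$ and $\gcd(A,B) = 1$. Remark \ref{rem:toric-surf-gorind} then yields
\[
\iota_\sigma \ = \ \frac{B}{\gcd(B,A-1)},
\]
and the proof splits naturally according to $\iota_\sigma = 1$ and $\iota_\sigma = \iota \geq 2$.

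For part (i), if $\iota_\sigma = 1$ then $B \mid A-1$, which combined with $0 \leq A < B$ and $\gcd(A,B) = 1$ forces either $A = 1$ (for any $B \geq 1$) or $(A,B) = (0,1)$ (the smooth case). In either situation $\sigma$ is unimodularly equivalent to some $\sigma_a$ with $a \geq 1$ --- directly in the first case with $a = B$, and via the obvious map exchanging the roles of the two generators in the smooth case. Conversely each $\sigma_a$ has Gorenstein index one by the formula.

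For part (ii), assuming $\iota_\sigma = \iota \geq 2$, I would write $d := \gcd(B,A-1)$, so $B = \iota d$, and note that $A \geq 2$ is automatic. Setting
\[
b \ := \ A, \qquad \kappa \ := \ \frac{b-1}{d},
\]
a direct computation gives $\iota(b-1)/\kappa = B$, so $\sigma = \cone(e, v_{\iota,b,\kappa})$. The four defining conditions on $(b,\kappa)$ reduce to routine checks: $\kappa \mid b-1$ is immediate, $\gcd(b, \iota(b-1)/\kappa) = \gcd(A,B) = 1$ is primitivity, $\kappa \leq \iota - 1$ follows from $A < B$ together with $A \geq 2$, and $\gcd(\kappa,\iota) = 1$ is the standard identity $\gcd((A-1)/d, B/d) = 1$. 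The converse direction is symmetric: every admissible triple produces a primitive $v_{\iota,b,\kappa}$, and the same computation read backwards yields Gorenstein index exactly $\iota$.

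The main obstacle is purely notational: making sure the slightly baroque condition list defining $v_{\iota,b,\kappa}$ matches exactly the data produced by the reduction. One subtle point worth flagging is that the parametrization in (ii) is not injective on isomorphism classes --- the cones $\cone(e,(A,B))$ and $\cone(e,(A',B))$ with $AA' \equiv 1 \pmod B$ are unimodularly equivalent, so genuine duplicates do appear (for example when $\iota = 3$, the pairs $(b,\kappa) = (4,1)$ and $(7,2)$ both yield the cone with $(A,B) = (4,9)$). The proposition only asserts surjectivity onto isomorphism classes, so this causes no harm.
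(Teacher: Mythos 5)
Your proposal is correct and follows essentially the same route as the paper's proof: normalize the cone to $\cone((1,0),(A,B))$ with $0\le A<B$, read off the Gorenstein index from Remark~\ref{rem:toric-surf-gorind} (equivalently, from the linear forms $u_a=(1,0)$ and $u_{\iota,b,\kappa}=(\iota,-\kappa)$ that the paper exhibits directly), and match parameters in the two cases $\iota=1$ and $\iota\ge 2$. You actually carry out the arithmetic that the paper compresses into ``we directly see'', and your observation that the parametrization in (ii) is not injective on isomorphism classes is accurate but harmless, since the statement only claims the listed surfaces exhaust the isomorphism classes.
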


\begin{proof}
By Summary \ref{sum:toric-surf-gorind}, a toric
surface $Z_\sigma$ is of Gorenstein index $\iota$
if there is a primitive $u \in \ZZ^2$ evaluating
to $\iota$ on the primitive generators of
$\sigma$.
The linear forms
\[
u_a
\ := \
(1,0),
\qquad\qquad
u_{\iota,b,\kappa}
\ := \
(\iota,-\kappa)
\]
do so for $Z_{\sigma_a}$ and
$Z_{\sigma_{\iota,b,\kappa}}$. Conversely, given
an affine toric surface $Z_\sigma$ with fixed
point of Gorenstein index $\iota$, we may assume
that
\[
\sigma \ = \ \cone(v_1,v_2),
\qquad
v_1 = (1,0),
\qquad
v_2 = (a,b),
\qquad
0 \le a < b.
\]
Then we directly see that the existence of a
primitive $u \in \ZZ^2$ evaluating to $\iota$
on $v_1$ and $v_2$ forces $v_2 = v_a$ or
$v_2 = v_{\iota,b,\kappa}$.
\end{proof}

\begin{example}
We examine the minimal resolution of
affine toric surfaces of small Gorenstein
index. For $\iota=1$, we have to look at
\[
\sigma_a \ = \ \cone((1,0), \, (1,a)),
\quad a = 1,2,3, \ldots 
\qquad
\mathcal{H}_{\sigma_a}
\ = \
\{(1,j); \ j = 1, \ldots, a\}.
\]
Moreover for $\iota = 2$, by Proposition
\ref{prop:affineiota} we have to consider the
cones $\sigma_{2,b,\kappa}$, where
$b \ge 2$ must be odd and $\kappa = 1$.
Thus, we end up with
\[
\sigma_{2,b,1} \ = \ \cone((1,0), \, (b,2b-2)),
\quad b = 3, 5, 7, 9, \ldots 
\]
Besides the primitive generators, the Hilbert
basis of $\sigma_{2,b,1}$ contains all
interior lattice points of the line with
slope $2$ through $(1,1)$, i.e. 
\[
\mathcal{H}_{\sigma_{2,b,1}}
\ = \
\{(1,0), \, (b,2b-2)\}
\cup 
\left\{(1+j,1+2j); \ j = 0, \ldots, \frac{b-3}{2}\right\}.
\]
Using Summary \ref{sum:toricsurfinters} we can
compute \emph{resolution graphs}, the vertices
of which represent the exceptional curves,
labelled by their self intersection number.
Two vertices are connected by an edge if and
only if the corresponding curves intersect.
\begin{center}
\begin{tikzpicture}[scale=0.6]
\sffamily
\path[color=black] (-1,0) circle (1mm);
\node at (-7,0) {$Z_{\sigma_a}$:};
\draw (-3.5,0) circle (10pt);
\draw (-2,0) circle (10pt);
\node[] (a) at (-3.5,0) {$\scriptscriptstyle -2$};
\node[] (a1) at (-2,0) {$\scriptscriptstyle -2$};
\draw (3.5,0) circle (10pt);
\draw (2,0) circle (10pt);
\node[] (b) at (3.5,0) {$\scriptscriptstyle -2$};
\node[] (b1) at (2,0) {$\scriptscriptstyle -2$};
\draw[] (a) edge (a1);
\draw[] (b1) edge (b);
\draw[dotted] (-1.5,0) to (1.5,0);
\node at (7,0) {$a = 2,3,4, \ldots ,$};

\node at (-7,0-1.5) {$Z_{\sigma_{2,b,1}}$:};
\draw (0,0-1.5) circle (10pt);
\node[] (a) at (0,0-1.5) {$\scriptscriptstyle -4$};
\node at (7,0-1.5) {$b = 3, $};

\node at (-7,0-3) {$Z_{\sigma_{2,b,1}}$:};
\draw (-0.75,0-3) circle (10pt);
\draw (0.75,0-3) circle (10pt);
\node[] (a) at (-0.75,0-3) {$\scriptscriptstyle -3$};
\node[] (b) at (0.75,0-3) {$\scriptscriptstyle -3$};
\draw[] (a) edge (b);
\node at (7,-3) {$b = 5,$};

\node at (-7,0-4.5) {$Z_{\sigma_{2,b,1}}$:};
\draw (-3.5,0-4.5) circle (10pt);
\draw (-2,0-4.5) circle (10pt);
\node[] (a) at (-3.5,0-4.5) {$\scriptscriptstyle -3$};
\node[] (a1) at (-2,0-4.5) {$\scriptscriptstyle -2$};
\draw (3.5,0-4.5) circle (10pt);
\draw (2,0-4.5) circle (10pt);
\node[] (b) at (3.5,0-4.5) {$\scriptscriptstyle -3$};
\node[] (b1) at (2,0-4.5) {$\scriptscriptstyle -2$};
\draw[] (a) edge (a1);
\draw[] (b1) edge (b);
\draw[dotted] (-1.5,0-4.5) to (1.5,0-4.5);
\node at (7,0-4.5) {$b = 7,9,11, \ldots .$};
\end{tikzpicture}
\end{center}

One calls the singularities
of $Z_{\sigma_a}$ of \emph{type $A_n$}
with $n=a-1$ and those of
$Z_{\sigma_{2,b,1}}$ of
\emph{type $K_n$} with $n =(b-1)/2$.
Note that $n$ is the number of
exceptional curves.
\end{example}

We dicuss toric del Pezzo surfaces. Recall that a
\emph{del Pezzo surface} is a two-dimensional Fano
variety, i.e. a normal projective surface admitting
an ample anticanonical divisor.
By a \emph{(lattice) polygon} we mean a convex
polytope in $\QQ^2$ (having only integral vertices).
An \emph{LDP-polygon} is a polygon in $\QQ^2$
containing the origin as an interior
point and having only primitive vectors from
$\ZZ^2$ as vertices. We call two polygons 
\emph{unimodularly equivalent} if they can be
transformed into each other by a
\emph{unimodular matrix}, i.e. an integral
$2 \times 2$ matrix with determinant $\pm 1$.

\begin{summary}
Any LDP-polygon $\mathcal{A} \subseteq \QQ^2$
spans a complete lattice fan $\Sigma_{\mathcal{A}}$
in $\ZZ^2$. The toric surface $Z_\mathcal{A}$
associated with $\Sigma_\mathcal{A}$ is a del Pezzo
surface. The assignment
$\mathcal{A} \mapsto Z_{\mathcal{A}}$ yields a
bijection between the unimodular equivalence
classes of LDP-polygons and the isomorphy classes
of toric del Pezzo surfaces.
\end{summary}

\begin{example}
Up to isomorphy, there are only five smooth
toric del Pezzo surfaces. They are given by the
following LDP-polygons.

\def\plinepline{
\begin{tikzpicture}[scale=0.4]
\sffamily
\coordinate(oo) at (0,0);
\coordinate(v1) at (1,0);
\coordinate(v2) at (0,1);
\coordinate(v3) at (-1,0);
\coordinate(v4) at (0,-1);
\coordinate(r1) at (2,0);
\coordinate(r2) at (0,2);
\coordinate(r3) at (-2,0);
\coordinate(r4) at (0,-2);
\path[fill, color=gray!30] (oo) -- (r1) -- (r2) -- (r3) -- (r4) -- (r1) -- (oo);  
\path[fill, color=gray!50] (oo) -- (v1) -- (v2) -- (v3) -- (v4) -- (v1) -- (oo);  
\draw[color=black] (oo) to (r1);
\draw[color=black] (oo) to (r2);
\draw[color=black] (oo) to (r3);
\draw[color=black] (oo) to (r4);
\draw[thick, color=black] (v1) to (v2);
\draw[thick, color=black] (v2) to (v3);
\draw[thick, color=black] (v3) to (v4);
\draw[thick, color=black] (v4) to (v1);
\path[fill, color=black] (v1) circle (1mm);
\path[fill, color=black] (v2) circle (1mm);
\path[fill, color=black] (v3) circle (1mm);
\path[fill, color=black] (v4) circle (1mm);
\end{tikzpicture}
}

\def\projplane{
\begin{tikzpicture}[scale=0.4]
\sffamily
\coordinate(oo) at (0,0);
\coordinate(v1) at (1,0);
\coordinate(v2) at (0,1);
\coordinate(v3) at (-1,-1);
\coordinate(r1) at (2,0);
\coordinate(r2) at (0,2);
\coordinate(r3) at (-2,-2);
\path[fill, color=gray!30] (oo) -- (r1) -- (r2) -- (r3) -- (r1) -- (oo);  
\path[fill, color=gray!50] (oo) -- (v1) -- (v2) -- (v3) -- (v1) -- (oo);  
\draw[color=black] (oo) to (r1);
\draw[color=black] (oo) to (r2);
\draw[color=black] (oo) to (r3);
\draw[thick, color=black] (v1) to (v2);
\draw[thick, color=black] (v2) to (v3);
\draw[thick, color=black] (v3) to (v1);
\path[fill, color=black] (v1) circle (1mm);
\path[fill, color=black] (v2) circle (1mm);
\path[fill, color=black] (v3) circle (1mm);
\end{tikzpicture}
}

\def\bloneprojplane{
\begin{tikzpicture}[scale=0.4]
\sffamily
\coordinate(oo) at (0,0);
\coordinate(v1) at (1,0);
\coordinate(v2) at (0,1);
\coordinate(v3) at (-1,-1);
\coordinate(v4) at (0,-1);
\coordinate(r1) at (2,0);
\coordinate(r2) at (0,2);
\coordinate(r3) at (-2,-2);
\coordinate(r4) at (0,-2);
\path[fill, color=gray!30] (oo) -- (r1) -- (r2) -- (r3) -- (r4) -- (r1) -- (oo);  
\path[fill, color=gray!50] (oo) -- (v1) -- (v2) -- (v3) -- (v4) -- (v1) -- (oo);  
\draw[color=black] (oo) to (r1);
\draw[color=black] (oo) to (r2);
\draw[color=black] (oo) to (r3);
\draw[color=black] (oo) to (r4);
\draw[thick, color=black] (v1) to (v2);
\draw[thick, color=black] (v2) to (v3);
\draw[thick, color=black] (v3) to (v4);
\draw[thick, color=black] (v4) to (v1);
\path[fill, color=black] (v1) circle (1mm);
\path[fill, color=black] (v2) circle (1mm);
\path[fill, color=black] (v3) circle (1mm);
\path[fill, color=black] (v4) circle (1mm);
\end{tikzpicture}
}

\def\bltwoprojplane{
\begin{tikzpicture}[scale=0.4]
\sffamily
\coordinate(oo) at (0,0);
\coordinate(v1) at (1,0);
\coordinate(v2) at (0,1);
\coordinate(v3) at (-1,-1);
\coordinate(v4) at (0,-1);
\coordinate(v5) at (1,1);
\coordinate(r1) at (2,0);
\coordinate(r2) at (0,2);
\coordinate(r3) at (-2,-2);
\coordinate(r4) at (0,-2);
\coordinate(r5) at (2,2);
\path[fill, color=gray!30] (oo) -- (r1) -- (r5) -- (r2) -- (r3) -- (r4) -- (r1) -- (oo);  
\path[fill, color=gray!50] (oo) -- (v1) -- (v5) -- (v2) -- (v3) -- (v4) -- (v1) -- (oo);  
\draw[color=black] (oo) to (r1);
\draw[color=black] (oo) to (r2);
\draw[color=black] (oo) to (r3);
\draw[color=black] (oo) to (r4);
\draw[color=black] (oo) to (r5);
\draw[thick, color=black] (v1) to (v5);
\draw[thick, color=black] (v5) to (v2);
\draw[thick, color=black] (v2) to (v3);
\draw[thick, color=black] (v3) to (v4);
\draw[thick, color=black] (v4) to (v1);
\path[fill, color=black] (v1) circle (1mm);
\path[fill, color=black] (v2) circle (1mm);
\path[fill, color=black] (v3) circle (1mm);
\path[fill, color=black] (v4) circle (1mm);
\path[fill, color=black] (v5) circle (1mm);
\end{tikzpicture}
}

\def\blthreeprojplane{
\begin{tikzpicture}[scale=0.4]
\sffamily
\coordinate(oo) at (0,0);
\coordinate(v1) at (1,0);
\coordinate(v2) at (0,1);
\coordinate(v3) at (-1,-1);
\coordinate(v4) at (0,-1);
\coordinate(v5) at (1,1);
\coordinate(v6) at (-1,0);
\coordinate(r1) at (2,0);
\coordinate(r2) at (0,2);
\coordinate(r3) at (-2,-2);
\coordinate(r4) at (0,-2);
\coordinate(r5) at (2,2);
\coordinate(r6) at (-2,0);
\path[fill, color=gray!30] (oo) -- (r1) -- (r5) -- (r2) -- (r6) -- (r3) -- (r4) -- (r1) -- (oo);  
\path[fill, color=gray!50] (oo) -- (v1) -- (v5) -- (v2) -- (v6) -- (v3) -- (v4) -- (v1) -- (oo);  
\draw[color=black] (oo) to (r1);
\draw[color=black] (oo) to (r2);
\draw[color=black] (oo) to (r3);
\draw[color=black] (oo) to (r4);
\draw[color=black] (oo) to (r5);
\draw[color=black] (oo) to (r6);
\draw[thick, color=black] (v1) to (v5);
\draw[thick, color=black] (v5) to (v2);
\draw[thick, color=black] (v2) to (v6);
\draw[thick, color=black] (v6) to (v3);
\draw[thick, color=black] (v3) to (v4);
\draw[thick, color=black] (v4) to (v1);
\path[fill, color=black] (v1) circle (1mm);
\path[fill, color=black] (v2) circle (1mm);
\path[fill, color=black] (v3) circle (1mm);
\path[fill, color=black] (v4) circle (1mm);
\path[fill, color=black] (v5) circle (1mm);
\path[fill, color=black] (v6) circle (1mm);
\end{tikzpicture}
}
 
\begin{center}
\quad
\plinepline
\quad  
\projplane
\quad  
\bloneprojplane
\quad  
\bltwoprojplane
\qquad  
\blthreeprojplane
\qquad
\qquad
\end{center}

\noindent
The surfaces are $\PP_1 \times \PP_1$, the
projective plane $\PP_2$ and the blowing-up of $\PP_2$
in up to three points in general position.
\end{example}

The situation gets much more lively if
we look at singular del Pezzo surfaces.
We will consider singularities coming from the minimal
model program. Let us briefly recall their
definition. Given any $\QQ$-Gorenstein variety $X$
with canonical divisor~$\mathcal{K}_X$, consider
a resolution of singularities
$\pi \colon X' \to X$. Then there are canonical
divisors $\mathcal{K}_X$ on $X$ and
$\mathcal{K}_X'$ on $X'$ such that the
\emph{ramification formula}
\[
\mathcal{K}_X'
\ = \
\pi^* \mathcal{K}_X + \sum a(E) E
\]
holds. Here, $E$ runs through the exceptional
prime divisors and the $a(E) \in \QQ$ are the
\emph{discrepancies} of $\pi \colon X' \to X$.
For $0 \le \varepsilon \le 1$, the
singularities of $X$ are called
\emph{$\varepsilon$-log terminal
($\varepsilon$-log canonical)} if
$a(E) > \varepsilon-1$ for each $E$
($a(E) \ge \varepsilon-1$ for each~$E$). For
$\varepsilon=0$, one speaks of log terminal
(log canonical) singularities and for
$\varepsilon=1$ of terminal (canonical)
ones.

\begin{summary}
Consider the toric del Pezzo surface $Z_\mathcal{A}$
defined by an LDP-polygon $\mathcal{A}$ and a toric
resolution of singularities
\[
\pi \colon Z \ \to \ Z_{\mathcal A}
\]
given by a map of lattice fans $\Sigma$ and
$\Sigma_{\mathcal{A}}$. The discrepancy of an
exceptional divisor $E_\varrho \subseteq Z$
corresponding to a ray $\varrho \in \Sigma$ is
given by
\[
a(E_\varrho) 
\ = \ 
\frac{\Vert v_\varrho \Vert}{\Vert v_\varrho'  \Vert} - 1,
\]
where $v_\varrho \in \varrho$ is the primitive
lattice vector and $v_\varrho' \in \varrho$
denotes the intersection point of $\varrho$ and 
the boundary $\partial \mathcal{A}$ of
$\mathcal{A}$.  
\end{summary}

This shows in particular that toric
del Pezzo surfaces are always log terminal, hence
the ``L'' in LDP-polygon.
Moreover, a toric del Pezzo surface $Z$ of
Gorenstein index $\iota_Z$ is $1/k$-log canonical
for all $k \ge \iota_Z$.
Finally, we obtain the following characterization of
$1/k$-log canonicity for a toric del
Pezzo surface via its polygon.

\begin{definition}
Given $k \in \ZZ_{\ge 1}$, we say that a polygon
$\mathcal{A} \subseteq \QQ^2$ is \emph{almost $k$-hollow}
if $\mathcal{A}^\circ \cap k \ZZ^2 = \{(0,0)\}$ holds.  
\end{definition}

\begin{proposition}
\label{prop:khollow}
Let $\mathcal{A}$ be an LDP-polygon and consider the
associated toric del Pezzo surface $Z_{\mathcal{A}}$.
Then, for any $k \in \ZZ_{\ge 1}$, the following
statements are equivalent.
\begin{enumerate}
\item
The polygon $\mathcal{A}$ is almost $k$-hollow.
\item
The surface $Z_{\mathcal{A}}$ has only
$1/k$-log canonical singularities.
\end{enumerate}
\end{proposition}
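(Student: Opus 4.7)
The plan is to read off both directions directly from the discrepancy formula recorded in the preceding summary. For any ray $\varrho \subseteq \QQ^2$ contained in a cone of $\Sigma_{\mathcal{A}}$, write its primitive generator as $v_\varrho$ and the intersection of $\varrho$ with $\partial\mathcal{A}$ as $v'_\varrho = t v_\varrho$ for a unique $t > 0$. The formula
\[
a(E_\varrho) \ = \ \frac{\Vert v_\varrho\Vert}{\Vert v'_\varrho\Vert} - 1 \ = \ \frac{1}{t} - 1
\]
shows that $a(E_\varrho) \ge 1/k - 1$ is equivalent to $t \le k$, i.e.\ to $k v_\varrho \notin \mathcal{A}^\circ$. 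Hence the proposition will follow once we verify that $\mathcal{A}^\circ \cap k\ZZ^2 = \{0\}$ is equivalent to the condition $k v \notin \mathcal{A}^\circ$ for every primitive $v \in \ZZ^2$.

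For the implication (ii)$\Rightarrow$(i), suppose there exists a non-zero $w \in \mathcal{A}^\circ \cap k\ZZ^2$. Write $w = km\, v$ with $v \in \ZZ^2$ primitive along the ray $\cone(w)$ and $m \in \ZZ_{\ge 1}$. Since $\Sigma_{\mathcal{A}}$ is complete, the ray $\varrho := \cone(v)$ lies in some maximal cone of $\Sigma_{\mathcal{A}}$, and $w \in \mathcal{A}^\circ$ forces the boundary parameter to satisfy $t > km \ge k$. Then $a(E_\varrho) = 1/t - 1 < 1/k - 1$, contradicting $1/k$-log canonicity of $Z_{\mathcal{A}}$.

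For (i)$\Rightarrow$(ii), assume $\mathcal{A}$ is almost $k$-hollow. Every ray $\varrho$ inside a cone of $\Sigma_{\mathcal{A}}$ has primitive generator $v_\varrho$ with $k v_\varrho \in k\ZZ^2 \setminus \{0\}$, so $k v_\varrho \notin \mathcal{A}^\circ$ by hypothesis. Therefore $t \le k$ and $a(E_\varrho) \ge 1/k - 1$. This treats the rays of $\Sigma_{\mathcal{A}}$ itself (where $v_\varrho = v'_\varrho$, discrepancy $0$) as well as every new ray appearing in a toric refinement; in particular, applied to the Hilbert basis members in each singular $2$-dimensional cone, it handles the exceptional curves of the minimal resolution described in Summary~\ref{sum:toricsurfinters}.

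The only subtle point worth making explicit is that the $1/k$-log canonicity condition is defined via all exceptional divisors over $Z_{\mathcal{A}}$, but for a toric variety every such divisor is realised by a primitive lattice vector in the support of $\Sigma_{\mathcal{A}}$; thus testing discrepancies on all toric refinements is sufficient, and this is exactly what our ray-by-ray argument does. No further estimate is needed, so I do not expect a real obstacle: the proof is essentially a direct translation between the inequality $t \le k$ on each ray and the lattice-point statement on $\mathcal{A}^\circ$.
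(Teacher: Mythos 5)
Your argument is correct and is precisely the intended proof: the paper states the proposition without an explicit proof, as an immediate consequence of the discrepancy formula $a(E_\varrho) = \Vert v_\varrho\Vert/\Vert v_\varrho'\Vert - 1$ from the preceding summary, and your ray-by-ray translation between $a(E_\varrho) \ge 1/k-1$ and $k v_\varrho \notin \mathcal{A}^\circ$ is exactly that argument. The only quibble is your closing remark that every exceptional divisor over a toric surface is realised by a lattice vector -- that is not literally true (there are non-toric divisorial valuations), but it is also not needed, since the paper's definition of $1/k$-log canonicity is tested on a single resolution and the toric (minimal) resolution suffices for one direction, while for the other direction the ray through the offending $k$-fold lattice point appears in a suitable toric resolution.
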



\section{Classifying toric del Pezzo surfaces}

The aim of this section is to classify
$1/k$-log canonical toric del Pezzo surfaces.
According to Proposition \ref{prop:khollow},
the task is to classify almost $k$-hollow
LDP-polygons.
With Algorithm \ref{algo:khollow}, we present
a sufficently efficient classification procedure
for given $k$ and Theorem \ref{thm:polygonclass}
gathers explicit key data of the polygon classification
for $k=1,2,3$.
Corollary \ref{cor:toricdelpezzo} presents the
corresponding classification statements on del
Pezzo surfaces.

The general idea behind the Classification
Algorithm \ref{algo:khollow} is to build up almost
$k$-hollow LDP-polygons from a finite number
of minimal ones by successively adding further
vertices.
By suitable boundedness statements we ensure
that this becomes a finite procedure and allows
us to reach indeed all almost $k$-hollow
LDP-polygons up to unimodular equivalence.

\begin{definition}
Given a polygon $\mathcal{A}$ and a vector
$v \in \QQ^2$, we obtain new polygons
$\mathcal{A}^+_v$ by \emph{expanding}
$\mathcal{A}$ at $v$ and $\mathcal{A}^-_v$ by
\emph{collapsing} $\mathcal{A}$ at $v$:
\[
\mathcal{A}^+_v \ := \ \conv(\mathcal{A} \cup \{v\}),
\qquad
\qquad
\mathcal{A}^-_v \ := \ \conv(\ZZ^2 \cap \mathcal{A} \setminus \{v\}).
\]
A polygon $\mathcal{A}$ having the origin $(0,0)$ as
an interior point is \emph{minimal} if
$(0,0) \notin (\mathcal{A}^{-}_{v})^{\circ}$ holds for
every vertex $v$ of $\mathcal{A}$.
\end{definition}

\begin{example}
Consider the canonical basis vectors $e_{1} = (1,0)$
and $e_{2} = (0,1)$ in~$\mathbb{Z}^{2}$. Given
$k \in \mathbb{Z}_{\ge 1}$, we have minimal almost
$k$-hollow lattice polygons
\[
\Delta_{\alpha} :=  \conv(e_1,\, e_2,\, -\alpha e_{1}-e_{2}),
\
\alpha = 1, \ldots, 2k,
\qquad
\Box := \conv(\pm e_1, \, \pm e_2).
\]
\end{example}

\begin{proposition}
\label{prop:min-k-hollow}
Let $k \in \ZZ_{\ge 1}$. Then, up to unimodular
equivalence, $\Delta_{1}, \ldots, \Delta_{2k}$
and $\Box$ are the only minimal almost
$k$-hollow lattice polygons.
\end{proposition}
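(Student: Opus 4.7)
The plan is to extract from minimality an angular constraint on the vertices around the origin. Let $\mathcal{A}$ be minimal almost $k$-hollow with vertices $v_1, \dots, v_r$ in cyclic order. First, each $v_i$ must be primitive: if $v_i = m u$ with $m \ge 2$ and $u$ primitive, then $u \in \mathcal{A} \setminus \{v_i\}$ keeps the origin in the interior of $\conv(\mathcal{A} \cap \ZZ^2 \setminus \{v_i\})$, contradicting minimality. Next, for each vertex $v$ the fact that the origin is not interior to $\mathcal{A}^-_v$ yields a line $L_v$ through origin separating $v$ strictly from every other lattice point of $\mathcal{A}$. Labelling the angular gaps $g_1, \dots, g_r$ between consecutive vertices (which sum to $2\pi$), separability of $v_i$ forces $g_{i-1} + g_i \ge \pi$; summing yields $r\pi \le 4\pi$, so $r \le 4$.

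If $r = 4$, equality forces $g_{i-1} + g_i = \pi$ for every $i$, hence opposite vertices are diametrically opposite and primitivity gives $v_3 = -v_1$, $v_4 = -v_2$, so $\mathcal{A} = \conv(\pm v_1, \pm v_2)$ is centrally symmetric. Any non-origin interior lattice point $p$ comes with antipode $-p$, and no half-plane through origin can contain both $p$ and $-p$ strictly opposite a vertex. Hence origin is the sole interior lattice point; Pick's formula then forces $|\det(v_1, v_2)| = 1$, and a unimodular transformation maps $\mathcal{A}$ to $\Box$.

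If $r = 3$, I first argue that some pair of vertices forms a $\ZZ$-basis: if every pair had $|\det| \ge 2$, a lattice-point excess quantified by Pick's formula and the Hilbert bases of the cones $\cone(v_i, v_j)$ would produce a witness of non-minimality after removing the opposite vertex. After unimodular normalization $v_1 = e_1$, $v_2 = e_2$, and $v_3 = (c, d)$ is primitive with $c, d \le -1$. Suppose, for contradiction, $c \le -2$ and $d \le -2$. The barycentric formula
\[
(-1,-1) \ = \ \tfrac{-1-2c+d}{1-c-d}\,e_1 \ + \ \tfrac{-1+c-2d}{1-c-d}\,e_2 \ + \ \tfrac{3}{1-c-d}\,v_3
\]
puts $(-1, -1)$ strictly in $\mathcal{A}^\circ$ whenever the numerators $-1 - 2c + d$ and $-1 + c - 2d$ are both positive; otherwise, a case analysis based on the $\gcd$ of the edge directions of $v_3 e_1$ and $v_3 e_2$, combined with Pick's formula, locates a non-vertex lattice point $p \in \mathcal{A}$ with strictly negative coordinates. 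In every case, $\conv(e_1, e_2, p) \subseteq \mathcal{A}^-_{v_3}$ strictly contains origin (since $p$ has negative coordinates, $(0,0)$ is a strict positive convex combination of $e_1, e_2, p$), contradicting minimality. Hence $c = -1$ or $d = -1$; after a coordinate swap $d = -1$, so $\mathcal{A} = \Delta_{-c}$. Finally, a direct half-plane check on the three defining inequalities of $\Delta_\alpha$ shows $(-k, 0) \in \Delta_\alpha^\circ$ if and only if $\alpha > 2k$, so the almost $k$-hollow condition yields $\alpha \le 2k$. The most delicate step is the case analysis in this last paragraph, both for locating a suitable witness lattice point $p$ and for establishing that a $\ZZ$-basis pair of vertices always exists in a minimal triangle.
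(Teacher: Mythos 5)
Your angular-gap argument for $r \le 4$ is correct and is a nice alternative to the paper's Lemma~\ref{lem:min2vertices}, and the reduction of the quadrilateral case to $\mathcal{A} = \conv(\pm v_1, \pm v_2)$ with primitive $v_1, v_2$ is sound. However, the step ``origin is the sole interior lattice point; Pick's formula then forces $|\det(v_1,v_2)|=1$'' is false. Pick's formula gives $2|\det(v_1,v_2)| = I + B/2 - 1$ with $I=1$, i.e. $B = 4|\det(v_1,v_2)|$, which is perfectly consistent with $|\det|\ge 2$: the polygon $\conv(\pm(1,0),\pm(1,2))$ is centrally symmetric with primitive vertices, has the origin as its only interior lattice point, and has determinant $2$; the surplus lattice points all sit on the boundary. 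What rules this configuration out is minimality, not Pick: after normalizing $v_1=e_1$ and $v_2=(a,b)$ with $0\le a<b$ and $\gcd(a,b)=1$, the assumption $b\ge 2$ forces $a\ge 1$, hence $(1,1)\in\conv(0,v_1,v_2)\subseteq\mathcal{A}$, and then $(0,0)\in\conv(e_1,\,e_1+e_2,\,-e_1,\,-e_1-e_2)^\circ\subseteq(\mathcal{A}^-_{v_4})^\circ$ --- which is exactly the paper's argument. You stopped invoking minimality one step too early.

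In the triangle case the two load-bearing claims are only asserted. First, ``some pair of vertices forms a $\ZZ$-basis'' is justified by a one-line appeal to Pick and Hilbert bases; making this precise is genuinely delicate (the Hilbert basis elements of the three cones need not positively span the plane once a vertex is deleted), and the detour is avoidable: a shear fixing $v_1=e_1$ lets you normalize $v_2=(a,b)$ with $0\le a<b$ and $\gcd(a,b)=1$ without assuming $\det(v_1,v_2)=\pm 1$, which is how the paper proceeds. Second, when the barycentric test for $(-1,-1)\in\mathcal{A}$ fails (e.g. $v_3=(-2,-5)$), you claim that a ``case analysis based on the gcd of the edge directions, combined with Pick's formula'' locates a non-vertex lattice point with both coordinates strictly negative; this is the crux of the entire triangle case and no argument is given --- you yourself flag it as the most delicate step. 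As it stands, the proposal establishes $r\le 4$ and the symmetric shape of the quadrilateral, but the decisive step for $\Box$ rests on an incorrect inference and essentially all of the $\Delta_\alpha$ case rests on unproved assertions.
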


\begin{lemma}
\label{lem:min2vertices}
Let $\mathcal{A}$ be a minimal polygon. Then
$\mathcal{A}$ has at most four vertices.
\end{lemma}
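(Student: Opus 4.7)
The plan is to associate to each vertex of $\mathcal{A}$ a separating linear functional, translate this into an angular inequality for the rays from the origin to the vertices, and sum these inequalities around the circle.

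\emph{Separation.} For each vertex $v_i$, the hypothesis $(0,0) \notin (\mathcal{A}^-_{v_i})^\circ$ together with the supporting hyperplane theorem produces a non-zero $u_i \in (\QQ^2)^*$ with $\langle u_i, w \rangle \ge 0$ for every $w \in \ZZ^2 \cap \mathcal{A} \setminus \{v_i\}$; in particular this holds for every other vertex $v_j$. Moreover one must have $\langle u_i, v_i \rangle < 0$: otherwise $\langle u_i, \cdot \rangle$ would be non-negative on all vertices of $\mathcal{A}$, hence on all of $\mathcal{A}$, forcing the origin onto the boundary and contradicting interiority. Establishing this strict inequality is the subtlest point of the argument and is the step where interiority of the origin enters crucially.

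\emph{Angular inequalities.} Interiority of the origin also ensures that no two vertices can lie on a common ray from the origin, since otherwise the origin would lie on the boundary edge carried by that ray or on its extension. After reindexing we may thus assume that the vertices have pairwise distinct angular positions $\theta_1 < \theta_2 < \dots < \theta_r$ in $[0,2\pi)$. The separation step now says that, for each $i$, the $r-1$ angles $\{\theta_j : j \ne i\}$ all lie in a closed half-circle while $\theta_i$ lies in the complementary open half-circle. The smallest closed arc containing $\{\theta_j : j \ne i\}$ is the one running from $\theta_{i+1}$ around to $\theta_{i-1}$ without passing through $\theta_i$, of length $2\pi - \alpha_{i-1} - \alpha_i$, where $\alpha_k := \theta_{k+1}-\theta_k$ denote the cyclic gaps (indices mod $r$, with $\theta_{r+1} := \theta_1 + 2\pi$). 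The half-circle bound therefore becomes
\[
\alpha_{i-1} + \alpha_i \ \ge \ \pi \qquad \text{for every } i = 1, \dots, r.
\]

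\emph{Summing.} Each gap $\alpha_k$ appears in exactly two of the $r$ inequalities above, so summing them yields $2 \sum_k \alpha_k = 4\pi$ on the left-hand side and $r\pi$ on the right-hand side, whence $r \le 4$, as desired.
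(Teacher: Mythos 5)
Your proof is correct, and it takes a genuinely different route from the paper's. The paper argues by pure pigeonhole: if $r \ge 5$, then
$\mathcal{A}^\circ = \conv(v_1,v_3,\dots,v_r)^\circ \cup \conv(v_1,\dots,v_{r-1})^\circ \subseteq (\mathcal{A}^-_{v_2})^\circ \cup (\mathcal{A}^-_{v_r})^\circ$,
so the interior point $(0,0)$ lies in one of the two collapsed polygons, contradicting minimality -- two lines, no functionals. Your route converts each minimality condition into a separating functional and then into the angular gap inequalities $\alpha_{i-1}+\alpha_i \ge \pi$, which you sum. This is longer, but it buys strictly more: for $r=4$ the summed inequality is an equality, so $\alpha_{i-1}+\alpha_i = \pi$ for every $i$, i.e. $v_{i+1} \in \RR_{<0}\, v_{i-1}$; that is precisely the ``diagonals meet in the origin'' statement that the paper re-derives by hand at the start of the four-vertex case of the proof of Proposition~\ref{prop:min-k-hollow}.

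Two minor remarks, neither a gap. First, like the paper's own proof, you silently use that the remaining vertices $v_j$ belong to $\ZZ^2 \cap \mathcal{A}$, i.e. that $\mathcal{A}$ is a lattice polygon; this is the only setting in which the lemma is applied, so it is harmless. Second, the arc $I$ from $\theta_{i+1}$ to $\theta_{i-1}$ avoiding $\theta_i$ is in general only the smallest closed arc containing $\{\theta_j : j \ne i\}$ \emph{and avoiding} $\theta_i$ (a single gap $\alpha_k$ with $k \ne i-1,i$ could exceed $\alpha_{i-1}+\alpha_i$, making the complement of that gap shorter than $I$). Your deduction still goes through because you have already placed $\theta_i$ in the complementary open half-circle: a closed half-circle containing $\theta_{i-1}$ and $\theta_{i+1}$ but not $\theta_i$ must contain $I$, whence $2\pi-\alpha_{i-1}-\alpha_i \le \pi$. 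Worth rephrasing, but the argument is sound.
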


\begin{proof}
We number the vertices $v_1,\ldots, v_r$ of $\mathcal{A}$ 
counter-clockwise. Assume $r \ge 5$. Then we have
\[
\mathcal{A}^\circ
\ = \
\conv(v_1,v_3, \ldots , v_r)^\circ
\cup
\conv(v_1,\ldots, v_{r-1})^\circ
\ \subseteq \
(\mathcal{A}^-_{v_2})^\circ
\cup
(\mathcal{A}^-_{v_r})^\circ.
\]
The origin lies in $\mathcal{A}^\circ$ and thus in
$(\mathcal{A}^-_{v_2})^\circ$ or in
$(\mathcal{A}^-_{v_r})^\circ$. This is a 
contradiction to the minimality of $\mathcal{A}$.
\end{proof}

\begin{proof}[Proof of Proposition \ref{prop:min-k-hollow}]
Let $\mathcal{A}$ be a minimal almost $k$-hollow lattice
polygon. By Lemma \ref{lem:min2vertices}, we know that
$\mathcal{A}$ has either four or three vertices.

We first treat the case, that $\mathcal{A}$
has four vertices. Say $v_1,v_2,v_3,v_4$, ordered
counter-clockwise. We claim that the two diagonals of
the quadrangle $\mathcal{A}$ intersect in the origin.
Indeed, minimality of $\mathcal{A}$ yields
\[
(0,0)
\ \not\in \
\conv(v_1,v_2,v_3)^\circ
\ \subseteq \
(\mathcal{A}^-_{v_4})^\circ,
\quad 
(0,0)
\ \not\in \
\conv(v_1,v_3,v_4)^\circ
\ \subseteq \
(\mathcal{A}^-_{v_2})^\circ.
\]
Since $(0,0) \in \mathcal{A}^\circ$, we have
$(0,0) \in \conv(v_1,v_3)$. Analogously,
$(0,0) \in \conv(v_2,v_4)$. Using minimality of
$\mathcal{A}$ again, we see that each vertex
$v_i \in \ZZ^2$ is primitive. Thus, after
applying a suitable unimodular transformation
of $\ZZ^2$, we can assume
\[
v_1 = (1,0),
\qquad
v_2 = (a,b), \quad a,b \in \ZZ, \ 0 \le a < b, \ \gcd(a,b) = 1.
\]
By primitivity, we also have $v_3 = -v_1$ and
$v_4 = -v_2$. Finally, we claim $v_2 = (0,1)$.
Otherwise $(1,1) \in \mathcal{A}$, which
yields a contradiction to the minimality
of $\mathcal{A}$, namely
\[
(0,0)
\ \in \
\conv(e_1,\, e_1+e_2, \, -e_1, -e_1-e_2)^\circ
\ \subseteq \
(\mathcal{A}^-_{v_4})^\circ .
\]

Now assume that $\mathcal{A}$ has three
vertices, say $v_1,v_2,v_3$, numbered
counter-clockwise. As in the previous case,
each of these vertices has to be primitive by
minimality.
We first assume that
$\mathcal{A}$ has an interior lattice point
$v \ne (0,0)$. Then we can write
\[
\mathcal{A}
\ = \
\conv(v_1,v_2,v)
\cup
\conv(v_2,v_3,v)
\cup
\conv(v_3,v_1,v).
\]
By minimality, $(0,0)$ lies on a line segment,
say on $\conv(v,v_1)$. Then we can assume
$v=-v_1$. Applying a suitable unimodular
transformation gives
\[
v_1 = (1,0),
\qquad
v_2 = (a,b), \quad a,b \in \ZZ, \ 0 \le a < b, \ \gcd(a,b) = 1.
\]
We show that $\mathcal{A}$ equals $\Delta_{\alpha}$
for some $1 \le \alpha \le 2k$. We claim $a=0$.
Otherwise, $(1,1) \in \mathcal{A}$ and, in
contradiction to the minimality of $\mathcal{A}$
we obtain
\[
(0,0)
\ \in \
\conv(v_{1}, \, e_{1}+e_{2}, \, -v_{1}, v_{3})^{\circ}
\ \subseteq \
(\mathcal{A}^-_{v_2})^{\circ}.
\]
Hence $a=0$ and $v_2 = e_2$. Moreover,
$v_{3} = (c,d)$ with integers $c,d < 0$. Since
$-v_1$ is an interior point of $\mathcal{A}$,
the vertex $v_3$ lies above the line through
$v_2$ and $-v_1$. In particular, $c \le -3$.
By minimality of $\mathcal{A}$, no point
$(e,-1)$ with $e \in \ZZ_{\le 0}$ lies in
$\mathcal{A}^\circ$. This yields $d = -1$.

We are left with the situation that  $\mathcal{A}$
has the three vertices $v_1,v_2,v_3$ and the
origin is its only interior lattice point.
As before, we adjust by means of a suitable
unimodular transformation to
\[
v_1 = (1,0),
\qquad
v_2 = (a,b), \quad a,b \in \ZZ, \ 0 \le a < b, \ \gcd(a,b) = 1.
\]
Minimality implies $a \le 1$. If $a=0$, the
only possibility is $v_2=e_2$ and $v_3$
being one of $(-1,-1)$, $(-2,-1)$ or
$(-1,-2)$. If $a=1$, we have $v_2 = (2,1)$
and $v_3 = (-1,-1)$. For each of these
constellations, $\mathcal{A}$ is
equivalent to a $\Delta_\alpha$ for
$\alpha=1,2$.
\end{proof}

We are ready to provide effective
bounds for almost $k$-hollow lattice polygons.
Below, $D_r \subseteq \RR^2$ denotes the disk of
radius $r$ centered around the origin $(0,0)$.

\begin{proposition}
\label{prop:almost-k-hollow}
Let $\mathcal{A}$ be an almost $k$-hollow lattice polygon.
Then there is a unimodular transformation
$\varphi \colon \RR^2 \to \RR^2$ such that one of the
following holds.
\begin{enumerate}
\item
We have $\Box \subseteq \varphi(\mathcal{A}) \subseteq D_r$ 
for $r=k^2 / \sqrt{2}$.
\item
We have $\Delta_\alpha \subseteq \varphi(\mathcal{A}) \subseteq D_r$
for $1 \le \alpha \le 2k$ and
\[
r=2k^2\sqrt{\alpha^2+2\alpha+2}.
\]
\end{enumerate}
\end{proposition}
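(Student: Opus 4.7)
My plan is to proceed in two stages: first produce, after a unimodular transformation, an inclusion of a minimal polygon inside $\mathcal{A}$, and then derive the disk bound from almost $k$-hollowness.

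For the first stage, starting from $\mathcal{A}$, I would iteratively apply the collapse operation $\mathcal{A} \mapsto \mathcal{A}^-_v$ at a vertex $v$ chosen so that the origin remains an interior point of the resulting sub-polygon. Each collapse produces a lattice sub-polygon that is again almost $k$-hollow, since $(\mathcal{A}^-_v)^\circ \subseteq \mathcal{A}^\circ$, and the process terminates at a minimal almost $k$-hollow polygon $\mathcal{A}_{\min} \subseteq \mathcal{A}$. By Proposition~\ref{prop:min-k-hollow}, some unimodular $\varphi$ takes $\mathcal{A}_{\min}$ onto either $\Box$ or some $\Delta_\alpha$ with $1 \le \alpha \le 2k$. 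The same $\varphi$ then sends $\mathcal{A}$ to a polygon containing the minimal one, which establishes the first half of (i) or (ii).

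For the disk bound, let $w = (a,b)$ be an arbitrary vertex of $\varphi(\mathcal{A})$ and write $w = m u$ with $u \in \ZZ^2$ primitive and $m = \gcd(|a|,|b|)$. The intermediate lattice points $u, 2u, \dots, (m-1)u$ all lie on the open segment from the origin to $w$, hence in $\varphi(\mathcal{A})^\circ$. If $m \ge k+1$, then $k u \in k\ZZ^2 \setminus \{(0,0)\}$ would be interior to $\varphi(\mathcal{A})$, contradicting almost $k$-hollowness; therefore $m \le k$ and $\Vert w \Vert \le k\Vert u \Vert$. It remains to bound $\Vert u \Vert$, which I would do by using the inclusion of the minimal polygon $M := \varphi(\mathcal{A}_{\min})$: choose two vertices $p_1, p_2$ of $M$ such that the triangle $T := \conv(w, p_1, p_2)$ contains the origin in its interior. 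A short case analysis ($D_4$-symmetry for $\Box$; three sectors for $\Delta_\alpha$, one per edge) shows such a pair always exists when $w$ is not itself a vertex of $M$.

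Since $T \subseteq \varphi(\mathcal{A})$, we have $T^\circ \cap k\ZZ^2 = \{(0,0)\}$, so every nonzero $(ck,dk) \in k\ZZ^2$ must violate at least one edge inequality of $T$. Picking $(c,d)$ corresponding to the $k$-lattice point closest to the edge of $T$ opposite $w$ yields the sharpest linear constraint on $(a,b)$; combining this with $m \le k$ produces the asserted radii $r = k^2/\sqrt{2}$ in case~(i) and $r = 2k^2\sqrt{\alpha^2+2\alpha+2}$ in case~(ii). The main obstacle is the sector-by-sector arithmetic in the second stage, in particular identifying the binding $k$-lattice point: the factor $\sqrt{\alpha^2+2\alpha+2}$ emerges as the Euclidean length of the edge $\overline{e_1(-\alpha e_1 - e_2)}$ of $\Delta_\alpha$, which is the longest edge and hence supplies the tightest constraint on vertices lying on its far side, while one must still verify that the remaining two edges produce weaker constraints in every sector.
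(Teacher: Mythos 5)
Your first stage --- collapsing to a minimal almost $k$-hollow polygon and invoking Proposition~\ref{prop:min-k-hollow} --- is exactly the paper's first step and is fine. The gap is in the second stage, and it is structural rather than a matter of omitted arithmetic. Writing a vertex $w=mu$ with $u$ primitive and showing $m\le k$ only helps if you can then bound $\Vert u\Vert$ by something of order $k$, since the product must come out of order $k^2$. But a primitive vector can itself lie at distance of order $k^2$: for $M=\Box$ take $u=(k,\,k^2+k-1)$, which is primitive since $k^2+k-1\equiv -1 \pmod k$, and observe that $\conv(\Box\cup\{u\})$ is almost $k$-hollow --- its $x$-range is $[-1,k]$, so the only candidates in $k\ZZ^2\setminus\{0\}$ sit on the lines $x=0$ and $x=k$; the open slice at $x=0$ is $-1<y<\tfrac{k^2+k-1}{k+1}=k-\tfrac{1}{k+1}<k$, and the polygon meets $x=k$ only in the vertex $u$ itself. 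Here $m=1$, so the multiplicity factor contributes nothing and the entire length $\sim k^2$ must be absorbed by whatever bound you prove for primitive vectors; since any correct such bound is therefore already of order $k^2$, multiplying it by $k$ overshoots to $k^3$. The decomposition $\Vert w\Vert\le k\Vert u\Vert$ cannot be combined with a bound on primitive vectors to give the stated radii; one has to bound $\Vert w\Vert$ in a single step.

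That single step is the content of Lemma~\ref{lem:minkowski}, which your proposal does not use: since $\conv(M\cup\{w\})\subseteq\varphi(\mathcal{A})$ is again almost $k$-hollow and $M$ contains the inscribed disk $D_\rho$ (of radius $\rho=1/\sqrt2$ for $\Box$ and $\rho=1/\sqrt{\alpha^2+2\alpha+2}$ for $\Delta_\alpha$ --- this inradius, not the edge length, is where the factor $\sqrt{\alpha^2+2\alpha+2}$ actually enters), the set $k^{-1}\conv(D_\rho\cup\{\pm w\})$ is centrally symmetric and convex with no nonzero interior lattice point, so Minkowski's theorem bounds its area and hence $\Vert w\Vert$ by a constant times $k^2/\rho$. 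Your replacement for this --- ``the $k$-lattice point closest to the opposite edge yields the sharpest linear constraint, which produces the asserted radii'' --- is precisely the quantitative assertion to be proved, and you explicitly defer the sector-by-sector verification, so nothing in the proposal certifies the specific radii $k^2/\sqrt2$ and $2k^2\sqrt{\alpha^2+2\alpha+2}$. (A smaller but real defect: the pair $p_1,p_2$ of vertices of $M$ with $0\in\conv(w,p_1,p_2)^\circ$ need not exist even when $w$ is not a vertex of $M$; for $M=\Box$ and $w=(a,0)$ with $a\ge 2$ the origin lies on an edge of every such triangle.)
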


\begin{lemma}
\label{lem:minkowski}
Let $\mathcal{A}$ be an almost $k$-hollow polygon and
$r \in \RR$ with $D_r \subseteq \mathcal{A}$ and let
$v \in \RR^2$. If the extension $\mathcal{A}_v^+$ is
almost $k$-hollow, then $\Vert v \Vert \le k^2 /2r$.
\end{lemma}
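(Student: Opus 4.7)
We argue by contraposition: assume $\Vert v \Vert > k^2/(2r)$; the goal is to produce a nonzero $m \in k\ZZ^2$ lying in $(\mathcal{A}_v^+)^{\circ}$, contradicting the almost $k$-hollow hypothesis on $\mathcal{A}_v^+$.

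The first step is to construct a triangle sitting inside $\mathcal{A}_v^+$. Let $n$ denote a unit vector perpendicular to $v$, so that $\pm rn \in \partial D_r \subseteq \mathcal{A}$. Combined with $v \in \mathcal{A}_v^+$, convexity gives
\[
T \ := \ \conv(v,\, rn,\, -rn) \ \subseteq \ \mathcal{A}_v^+,
\]
a triangle whose base $2r$ and height $\Vert v \Vert$ yield the area $r\Vert v\Vert$; the hypothesis translates to $\mathrm{area}(T) > k^2/2$. Symmetrizing, the centrally symmetric rhombus
\[
R \ := \ T \cup (-T) \ = \ \conv(\pm v,\, \pm rn)
\]
has orthogonal diagonals along $v$ and $v^{\perp}$, of lengths $2\Vert v \Vert$ and $2r$ respectively, and total area $2r\Vert v\Vert > k^2$.

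The heart of the proof is now a Minkowski-type lattice-point statement for $R$: a centrally symmetric convex body with area exceeding $k^2$ and diagonals oriented as above must meet $k\ZZ^2 \setminus \{0\}$ in its interior. Classical Minkowski would require $\mathrm{area}(R) > 4k^2$ to yield such a point; the missing factor of $4$ has to be recovered by exploiting the rhombus' specific shape together with the fact that the short diagonal $[-rn,rn]$ is actually forced to lie in $\mathcal{A}$, not merely in the larger body $\mathcal{A}_v^+$. Once $m \in R^{\circ} \cap (k\ZZ^{2} \setminus \{0\})$ is produced, the decomposition $R = T \cup (-T)$ together with the negation-symmetry $k\ZZ^{2} = -k\ZZ^{2}$ lets us replace $m$ by $-m$ if necessary, so that $m \in T^{\circ} \subseteq (\mathcal{A}_v^+)^{\circ}$, producing the required contradiction.

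The main obstacle is precisely the Minkowski refinement: the generic Minkowski threshold of $4k^2$ is too weak, and reaching the sharper threshold $k^2$ requires a careful lattice-point argument keyed on the orientation of $R$'s diagonals relative to the lattice $k\ZZ^{2}$ and on the extra disk $D_r$ inside $\mathcal{A}$. All remaining steps (computing the area of $T$, verifying $T\subseteq\mathcal{A}_v^+$, and extracting $m$ from the symmetrized body) are routine convexity and linear-algebra checks.
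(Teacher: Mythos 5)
Your overall architecture is essentially the paper's: symmetrize, bound the area of the resulting centrally symmetric body from below by the area $2r\Vert v\Vert$ of the rhombus $\conv(\pm v,\pm rn)$, invoke Minkowski's theorem for the lattice $k\ZZ^2$, and fold the resulting nonzero lattice point back into $\conv(D_r\cup\{v\})\subseteq\mathcal{A}_v^+$ using $-k\ZZ^2=k\ZZ^2$. (The paper symmetrizes the larger body $\mathcal{B}=\conv(D_r\cup\{\pm v\})$ rather than just the rhombus, but the rhombus is what supplies the area lower bound there as well.)

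The gap is exactly the step you yourself flag as ``the heart of the proof'': the claimed refinement of Minkowski's theorem from the threshold $4k^2$ down to $k^2$. You assert that the missing factor of $4$ ``has to be recovered'' from the shape of the rhombus, but you give no argument, and none exists, because the inequality $\Vert v\Vert\le k^2/(2r)$ is false as literally stated. Take $k=1$, $\mathcal{A}=\Box=\conv(\pm e_1,\pm e_2)$ with inscribed disk of radius $r=1/\sqrt2$, and $v=(2,1)$: then $\mathcal{A}_v^+=\conv((0,1),(2,1),(0,-1),(-1,0))$ has area $3$ and $6$ boundary lattice points, so by Pick's theorem its only interior lattice point is the origin and it is almost $1$-hollow, yet $\Vert v\Vert=\sqrt5>1/\sqrt2$. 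What the paper's proof actually establishes is $2r\Vert v\Vert\le\vol(\mathcal{B})\le 4k^2$, i.e.\ $\Vert v\Vert\le 2k^2/r$, and this is the bound used downstream (the radius $2k^2\sqrt{\alpha^2+2\alpha+2}$ in Proposition~\ref{prop:almost-k-hollow}(ii) is $2k^2/r$ for the inradius $r=1/\sqrt{\alpha^2+2\alpha+2}$ of $\Delta_\alpha$); the displayed ``$k^2/2r$'' should be read as, or corrected to, $2k^2/r$. With that target your plan closes immediately by classical Minkowski and no refinement is needed. One small point you should still make explicit: Minkowski a priori produces a nonzero point of $k\ZZ^2$ in the closed body, whereas you need one in the interior of one of the two convex pieces (hence in $(\mathcal{A}_v^+)^\circ$); this is obtained by applying the theorem to $(1-\varepsilon)\mathcal{B}$ for small $\varepsilon>0$.
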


\begin{proof}
First consider the case $k=1$. Then the origin is the
only interior lattice point of $\mathcal{A}$. Being
contained in $\mathcal{A}_v^+$, the set
$\conv(D_r \cup \{v\})$ has the origin as its only
interior lattice point. Thus, also
$\mathcal{B} := \conv(D_r \cup \{\pm v\})$ has the
origin as its only interior lattice point. Since
$\mathcal{B}$ is a centrally symmetric convex set,
Minkowski's Theorem yields $\vol(\mathcal{B}) \le 4$.
Moreover, we directly see
$2r \Vert v \Vert \le \vol(\mathcal{B})$. This proves
the assertion for $k=1$. For the case of a general
$k$, apply the previous consideration to
$k^{-1} \mathcal{A}$ and $k^{-1} v$.
\end{proof}

\begin{proof}[Proof of Proposition \ref{prop:almost-k-hollow}]
First observe that successively collapsing
$\mathcal{A}$ at vertices,  we arrive at a minimal
almost $k$-hollow lattice polygon $\mathcal{A}'$.
Proposition \ref{prop:min-k-hollow} provides
a unimodular transformation 
$\varphi \colon \RR^2 \to \RR^2$ such that
$\varphi(\mathcal{A}')$ is one of the polygons
mentioned in (i) and (ii). It remains to show that
$\varphi(\mathcal{A})$ lies in the corresponding
disks. For this, we have to bound the length of
any given vertex $v \in \varphi(\mathcal{A})$
accordingly. Collapsing step by step suitable
vertices turns $\varphi(\mathcal{A})$ into
$\varphi(\mathcal{A}')_v^+$. Thus the assertion
follows from Lemma \ref{lem:minkowski} and the
fact that we find a disk $D_r$ of radius
$r = 1/\sqrt{2}$ in $\Box$ and of radius
$r = 1/\sqrt{\alpha^2+2\alpha+2}$ in
$\Delta_\alpha$.
\end{proof}

\begin{corollary}
\label{cor:globalbound}
Up to unimodular transformation every almost $k$-hollow
lattice polygon is obtained by stepwise extending
almost $k$-hollow lattice polygons inside~$D_R$
starting with $\Box$ and $\Delta_\alpha$ for
$R := R(k) := k^2 \sqrt{4k^2+4k+2}$.
\end{corollary}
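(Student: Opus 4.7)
The plan is to deduce the Corollary directly from Proposition \ref{prop:almost-k-hollow} by realizing any target almost $k$-hollow polygon as an iterated single-vertex expansion of its minimal skeleton. Given an almost $k$-hollow lattice polygon $\mathcal{A}$, I first apply Proposition \ref{prop:almost-k-hollow} to obtain a unimodular transformation $\varphi \colon \RR^2 \to \RR^2$ such that $\varphi(\mathcal{A})$ contains one of the minimal polygons $\mathcal{A}' \in \{\Box, \Delta_1, \ldots, \Delta_{2k}\}$, and $\varphi(\mathcal{A}) \subseteq D_r$ with $r$ given case-by-case in the Proposition. The uniform bound $R = k^2\sqrt{4k^2+4k+2}$ is then extracted by maximizing over the admissible case radii, so that $\varphi(\mathcal{A}) \subseteq D_R$ in all cases.

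Next, I enumerate the vertices of $\varphi(\mathcal{A})$ as $v_1, \ldots, v_s$ (in any order) and form the chain
\[
\mathcal{A}_0 := \mathcal{A}', \qquad \mathcal{A}_i := (\mathcal{A}_{i-1})^+_{v_i} \text{ for } i = 1, \ldots, s.
\]
By construction each $\mathcal{A}_i$ is contained in $\varphi(\mathcal{A}) \subseteq D_R$, and $\mathcal{A}_s = \varphi(\mathcal{A})$ since all vertices of $\varphi(\mathcal{A})$ belong to $\mathcal{A}_s$ and hence $\conv$ of the accumulated points equals $\varphi(\mathcal{A})$. Some steps may be trivial, when $v_i$ already lies in $\mathcal{A}_{i-1}$, but this does not affect the argument. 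This realizes $\varphi(\mathcal{A})$ as a stepwise extension starting from one of the listed minimal polygons.

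It remains to verify that each intermediate $\mathcal{A}_i$ is itself an almost $k$-hollow lattice polygon inside $D_R$. The inclusion in $D_R$ follows from $\mathcal{A}_i \subseteq \varphi(\mathcal{A}) \subseteq D_R$. For almost $k$-hollowness, the open set $\mathcal{A}_i^\circ$ sits inside the convex set $\varphi(\mathcal{A})$, hence inside $\varphi(\mathcal{A})^\circ$, so
\[
\mathcal{A}_i^\circ \cap k\ZZ^2 \ \subseteq \ \varphi(\mathcal{A})^\circ \cap k\ZZ^2 \ = \ \{(0,0)\}.
\]

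The main piece of numerical bookkeeping is the check that $R = k^2\sqrt{4k^2+4k+2}$ does indeed dominate the radii $r$ appearing in cases (i) and (ii) of Proposition \ref{prop:almost-k-hollow} across all admissible $\mathcal{A}'$; the remaining structural content of the argument is simply the monotonicity of convex hulls under inclusion combined with the Proposition.
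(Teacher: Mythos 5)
Your structural argument is correct and is essentially the one the paper leaves implicit: apply Proposition~\ref{prop:almost-k-hollow}, then adjoin the vertices of $\varphi(\mathcal{A})$ one at a time to the minimal polygon $\mathcal{A}'$, so that every intermediate convex hull is squeezed between $\mathcal{A}'$ and $\varphi(\mathcal{A})$ and is therefore a lattice polygon inside the disk whose interior meets $k\ZZ^2$ only in the origin. (The paper's implicit route is the mirror image --- reversing the collapsing chain from the proof of the Proposition --- but your ``add all vertices in any order'' variant is equally valid.) One small point: almost $k$-hollowness is the \emph{equality} $\mathcal{A}_i^\circ \cap k\ZZ^2 = \{(0,0)\}$, so besides your inclusion ``$\subseteq$'' you need $(0,0) \in \mathcal{A}_i^\circ$; this is immediate from $\mathcal{A}' \subseteq \mathcal{A}_i$, but it should be said.

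The more serious issue is that the ``numerical bookkeeping'' you defer is exactly the step that does not close as written. Taking Proposition~\ref{prop:almost-k-hollow} at face value, the case~(ii) radius at $\alpha = 2k$ is $2k^2\sqrt{4k^2+4k+2} = 2R$, so $R$ does \emph{not} dominate the stated case radii, and your chain of intermediates is only guaranteed to lie in $D_{2R}$. This is a constant-factor inconsistency inside the paper itself: Lemma~\ref{lem:minkowski} combined with the inradius $1/\sqrt{\alpha^2+2\alpha+2}$ of $\Delta_\alpha$ actually bounds each added vertex by $\Vert v \Vert \le k^2\sqrt{\alpha^2+2\alpha+2}/2 \le R$ for all $1 \le \alpha \le 2k$ (and by $k^2/\sqrt{2} \le R$ in the $\Box$ case), which is what the Corollary's $R$ is calibrated against; the factor printed in Proposition~\ref{prop:almost-k-hollow}~(ii) is not the one the Lemma produces. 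To make your proof airtight you should bound the vertices of $\varphi(\mathcal{A})$ directly via Lemma~\ref{lem:minkowski} applied to the inscribed disks of $\Box$ and $\Delta_\alpha$, rather than citing the radii as stated in the Proposition; as it stands, the one step you declared routine is the one that fails.
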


In particular, this allows to construct all
almost $k$-hollow lattice polygons up to unimodular
equivalence. A naive way is to extend $\Box$ and
$\Delta_\alpha$ by lattice points from the box
$\conv(\pm Re_1+\pm Re_2)$ with
$R=k^2 \sqrt{4k^2+4k+2}$ and check in each step for
almost $k$-hollowness. The following principle allows
more target-oriented searching.

\begin{remark}
\label{rem:shadow}
Consider a box $\mathcal{B} := \conv(\pm Re_{1}, \pm Re_{2})$
in $\RR^2$ and an almost $k$-hollow lattice polygon
$\mathcal{A} \subseteq \mathcal{B}$. The \emph{shadow} of
$w \in k\ZZ^2$ with respect to $\mathcal{A}$ is
\[
S(w,\mathcal{A})
\ := \
\cone(w-u; \ u \in \mathcal{A})^\circ + w
\ \subseteq \
\RR^2.
\]
This is an open affine cone in $\RR^2$. The lattice
vectors $v \in \mathcal{B} \cap \ZZ^2$ such that
$\mathcal{A}^+_v$ is almost $k$-hollow are all located
in the star-shaped set
\[
\Xi(\mathcal{A})
\ := \
\bigcap_{0 \ne w \in k\ZZ^2} 
\mathcal{B} \setminus S(w, \mathcal{A})
\ \subseteq \
\mathcal{B}.
\]
Each $S(w,\mathcal{A})$ is described by two
inequalities. This leads to an explicit description
of the searching space $\Xi(\mathcal{A})$ and thus allows
computational membership tests.
\end{remark}

\begin{example}
In the case $k=2$ and $\mathcal{A} = \Box$,
the searching space $\Xi(\mathcal{A})$ for
possible $v \in \ZZ^2$ with $\mathcal{A}^+_v$
almost $2$-hollow is the white area in the figure
below.
\[
\includegraphics[scale=0.15]{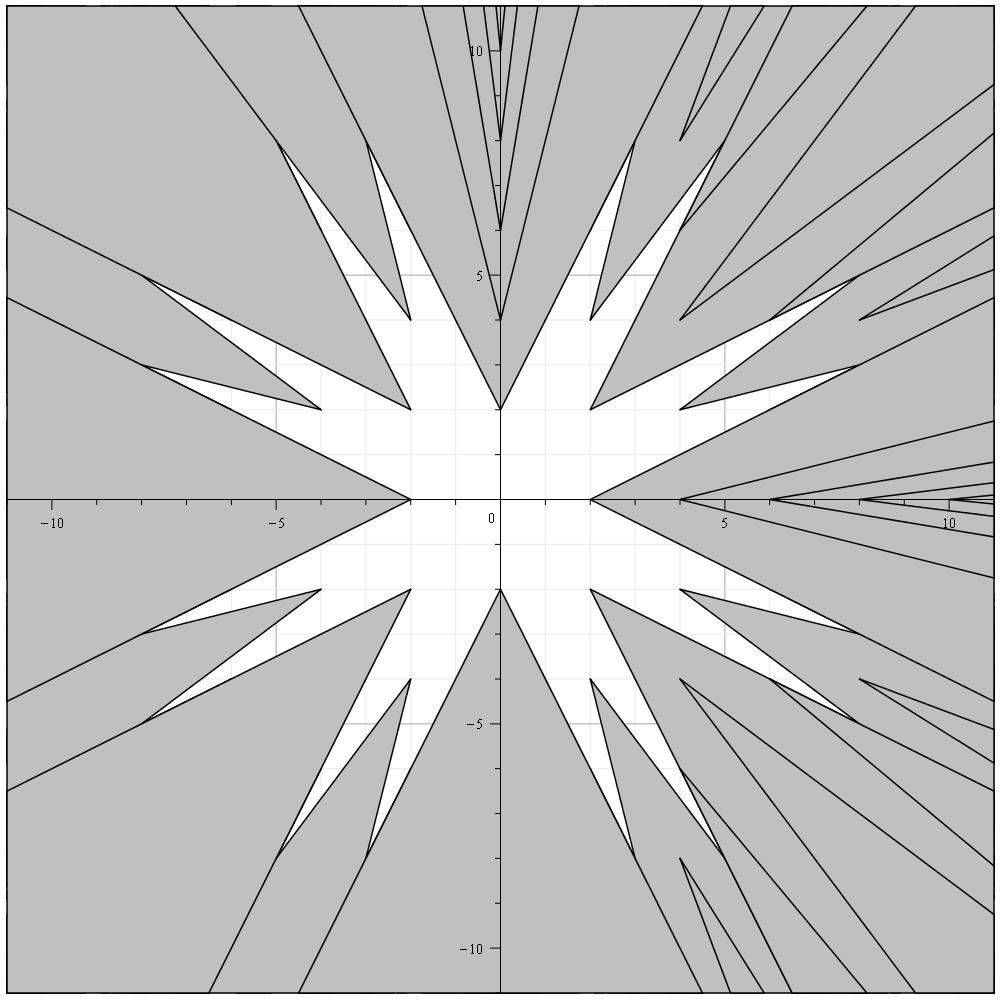}
\]
\end{example}

With the bound provided by Corollary \ref{cor:globalbound}
and the searching spaces from Remark~\ref{rem:shadow},
we obtain the following procedure for computing the almost
$k$-hollow polygons.

\begin{algorithm}
\label{algo:khollow}
Given $k \in \ZZ_{>0}$, set $R:=k^2 \sqrt{4k^2+4k+2}$.
The following procedure terminates and delivers
a list $L$ of representatives for the almost $k$-hollow
polygons up to unimodular equivalence:
\begin{itemize}
\item
Start with $i := 0$ and the list
$L_0 = [\Box, \Delta_{1}, \ldots, \Delta_{2k}]$
of minimal almost $k$-hollow lattice polygons.  
\item
As long as the list $L_i$ is non-empty, repeat the
following:
Increment $i$ to $i+1$ and compute the list $L_{i}$ of
all almost $k$-hollow polygons $\mathcal{A}^+_v$ with $\mathcal{A}$
from $L_{i-1}$ and
$v \in \ZZ^2 \cap (\Xi(\mathcal{A}) \setminus \mathcal{A})$.  
\item 
Take the concatenation $L'$ of all lists $L_i$ and reduce
it to a list $L$ containing each
entry from $L'$ precisely once up to unimodular equivalence.
Return the reduced list $L$.
\end{itemize} 
\end{algorithm}

Using an ad hoc implementation of this procedure,
we classified the almost $k$-hollow lattice polygons
for $k = 1,2,3$. The following theorem gathers some
key data of the classification; the complete classification
list can be extracted from~\cite{HaHaSp}.

\goodbreak

\begin{theorem}
\label{thm:polygonclass}
We obtain the following statements on almost
$k$-hollow LDP-polygons.
\begin{itemize}
\item[$k=1$:]
There are up to unimodular equivalence exactly
$16$ almost $1$-hollow LDP-polygons. These are
the well known reflexive polygons. The maximum
number of vertices is $6$ and the maximum
volume is $\frac{9}{2}$.

\begin{small}
\begin{center}
\begin{tabular}{l|c|c|c|c}
vertices & $3$  & $4$ & $5$ & $6$
\\
\hline
number & $5$ & $7$ & $3$ & $1$
\\
\hline
max.vol. & $\frac{9}{2}$ & 4 & $\frac{7}{2}$ & 3
\end{tabular}

\end{center}
\end{small}

\item[$k=2$:]
There are up to unimodular equivalence exactly
$505$ almost $2$-hollow LDP-polygons. The
maximum number of vertices is $8$, realized by
exactly one polygon, and the maximum volume
is $17$.

\medskip

\begin{small}
\begin{center}
\begin{tabular}{l|c|c|c|c|c|c}
vertices & $3$  & $4$ & $5$ & $6$ & $7$ & $8$
\\
\hline
number & $42$ & $181$ & $202$ & $74$ & $5$ & $1$ 
\\
\hline
max.vol. &  $16$ & $16$ & $17$ & $\frac{33}{2}$ & $14$ & $14$
\end{tabular}  

\medskip

\end{center}
\end{small}

\item[$k=3$:]
There are up to unimodular equivalence exactly
$48032$ almost $3$-hollow LDP-polygons. The
maximum number of vertices is $12$, realized
by exactly one polygon, and the maximum volume
is $47$.

\medskip

\begin{tiny}
\begin{center}
\begin{tabular}{l|c|c|c|c|c|c|c|c|c|c}
vertices & $3$  & $4$ & $5$ & $6$ & $7$ & $8$ & $9$ & $10$ & $11$ & $12$
\\
\hline
number & $355$ & $3983$ & $13454$ & $17791$ & $9653$ & $2456$ & $292$ & $37$ & $1$ & $1$ 
\\
\hline
max.vol. &  $44$ & $\frac{91}{2}$ & $47$ & $43$ & $39$ & $35$ & $30$ & $29$ & $\frac{47}{2}$ & $24$ 
\end{tabular}
\end{center}
\end{tiny}
\end{itemize}

\end{theorem}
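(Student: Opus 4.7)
The proof is essentially a computational verification: all three parts are obtained by running Algorithm \ref{algo:khollow} for the given values of $k$, restricting the output to LDP-polygons, and extracting the relevant statistics. The plan is therefore to justify that such a run indeed produces the claimed tables and that no polygon is missed or double-counted.

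First I would recall why the algorithm is known to be correct and terminating for the input $k\in\{1,2,3\}$. By Corollary \ref{cor:globalbound}, every almost $k$-hollow lattice polygon is unimodularly equivalent to one that is built up from a minimal one (classified in Proposition \ref{prop:min-k-hollow} as $\Box$ or $\Delta_\alpha$ with $\alpha=1,\ldots,2k$) by successively adding vertices from the disk $D_R$ with $R=k^2\sqrt{4k^2+4k+2}$. Since each added vertex is a lattice point in a bounded region and strictly enlarges the polygon, the chains in Algorithm \ref{algo:khollow} terminate. The restriction of the search to the star-shaped sets $\Xi(\mathcal{A})$ from Remark \ref{rem:shadow} is harmless: a lattice point $v$ yields an almost $k$-hollow extension $\mathcal{A}^+_v$ if and only if $v$ lies in $\Xi(\mathcal{A})$, so no candidate is lost.

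Next I would describe the implementation of the filtering stages. In the generation phase, for each polygon $\mathcal{A}$ from $L_{i-1}$ we enumerate the finitely many lattice points $v\in\Xi(\mathcal{A})\setminus\mathcal{A}$ using the explicit two-inequality description of each shadow $S(w,\mathcal{A})$, and we retain only those $\mathcal{A}^+_v$ that are again almost $k$-hollow. In the reduction phase, the concatenated list is sorted into equivalence classes under $\operatorname{GL}_2(\ZZ)$; here one normalises each polygon by a Hermite-type canonical form and compares vertex multisets. The LDP-property (all vertices primitive, origin interior) is read off directly. Finally, volumes are computed by the shoelace formula, vertex counts are immediate.

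The one genuinely delicate step is the unimodular equivalence test, which dominates both runtime and the risk of over- or undercounting. The reliable way is to attach to each polygon a combinatorial invariant (for instance the ordered cyclic sequence of vertex determinants, together with the lattice width in several directions) to reduce the number of candidate pairs, and then to certify equivalence or non-equivalence by explicitly constructing or excluding a unimodular transformation matching vertices. Once this is implemented correctly, the numbers of unimodular classes and their distributions by vertex count, together with the maximum volumes, are read off from the resulting lists. For $k=1$ the outcome must reproduce the classical list of $16$ reflexive polygons; for $k=2$ and $k=3$ the outputs of two independent implementations (or cross-checking against the anticanonical self-intersection formula of Remark \ref{rem:toricaccselfie}) provide a further internal consistency check, and the full classification lists are made available in the database \cite{HaHaSp}.
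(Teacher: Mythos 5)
Your proposal follows essentially the same route as the paper: the theorem is obtained by running Algorithm~\ref{algo:khollow}, whose correctness and termination rest on Proposition~\ref{prop:min-k-hollow}, Corollary~\ref{cor:globalbound} and Remark~\ref{rem:shadow}, followed by reduction modulo unimodular equivalence and extraction of the vertex-count and volume statistics. Your additional remarks on implementing the equivalence test and cross-checking the output are sensible engineering detail but do not change the argument.
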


Here is the translation of Theorem
\ref{thm:polygonclass} into the setting of toric
log del Pezzo surfaces.
Note that the case $\varepsilon=1$ delivers
precisely the 16 Gorenstein toric log del
Pezzo surfaces.
Moreover, the cases $\varepsilon = 1/2$ and
$\varepsilon = 1/3$
comprise in particular the toric log del Pezzo
surfaces of Gorenstein index 2 and 3 which
have been found in~\cite{KaKrNi}.

\begin{corollary}
\label{cor:toricdelpezzo}
We obtain the following statements on toric
$\varepsilon$-log canonical del Pezzo surfaces.
\begin{itemize}
\item[$\varepsilon=1$:]
Up to isomorphy there are exactly $16$ toric
canonical del Pezzo surfaces. These are the well
known toric Gorenstein del Pezzo surfaces. The
maximum Picard number is $4$, realized by exactly
one surface.
\item[$\varepsilon=\frac{1}{2}$:]
Up to isomorphy there are exactly $505$ toric
$\frac{1}{2}$-log canonical del Pezzo surfaces.
The maximum Picard number is $6$, realized by
exactly one surface.
\item[$\varepsilon=\frac{1}{3}$:]
Up to isomorphy there are exactly $48032$ toric
$\frac{1}{3}$-log canonical del Pezzo surfaces.
The maximum Picard number is $10$, realized by
exactly one surface.
\end{itemize}
\end{corollary}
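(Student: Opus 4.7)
The plan is to deduce this directly from Theorem~\ref{thm:polygonclass} via two translation principles already set up: the bijection between unimodular equivalence classes of LDP-polygons and isomorphy classes of toric del Pezzo surfaces, and the characterization of $1/k$-log canonicity in terms of almost $k$-hollowness.

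First I would invoke Proposition~\ref{prop:khollow}, which identifies the $1/k$-log canonical toric del Pezzo surfaces precisely with those $Z_\mathcal{A}$ arising from almost $k$-hollow LDP-polygons $\mathcal{A}$. Combined with the bijection $\mathcal{A} \mapsto Z_\mathcal{A}$ recalled before Proposition~\ref{prop:khollow}, this gives an isomorphism of classifying sets: unimodular equivalence classes of almost $k$-hollow LDP-polygons correspond bijectively to isomorphy classes of $1/k$-log canonical toric del Pezzo surfaces. The three numerical counts $16$, $505$ and $48032$ for $k=1,2,3$ are then immediate from the corresponding counts of almost $k$-hollow LDP-polygons in Theorem~\ref{thm:polygonclass}.

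For the statements on Picard numbers, I would use the fact that every complete lattice fan $\Sigma$ in $\ZZ^2$ is automatically simplicial, since every two-dimensional cone is generated by two linearly independent rays. By Summary~\ref{sum:tv-sing}, this forces $Z_\mathcal{A}$ to be $\QQ$-factorial, so by Summary~\ref{sum:toriccldata} the Picard group has finite index in $\mathrm{Cl}(Z_\mathcal{A}) = \ZZ^r/\im(P^t)$, where $r$ is the number of rays of $\Sigma_\mathcal{A}$. Since $P$ has rank $2$, the Picard number of $Z_\mathcal{A}$ equals $r-2$, and $r$ in turn equals the number of vertices of $\mathcal{A}$. The maximum Picard numbers $4$, $6$ and $10$ for $k=1,2,3$ together with the uniqueness assertions then translate verbatim from the maximum vertex counts $6$, $8$ and $12$ (each realized by exactly one polygon) in Theorem~\ref{thm:polygonclass}.

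There is essentially no obstacle here: all the work has been done in the lattice-polygon classification. The only point requiring a brief comment is the identification of the Picard number with ``vertices minus two'', which rests on the automatic $\QQ$-factoriality of complete toric surfaces noted above.
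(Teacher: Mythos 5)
Your proposal is correct and is exactly the translation the paper intends: the corollary is stated immediately after Theorem~\ref{thm:polygonclass} as its reformulation via Proposition~\ref{prop:khollow} and the LDP-polygon/toric del Pezzo bijection, with no further argument given. Your added justification that the Picard number equals the vertex count minus two (via $\QQ$-factoriality of complete toric surfaces) is the right bookkeeping and matches all the stated maxima and uniqueness claims.
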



\section{Rational $\KK^*$-surfaces}
\label{sec:kstar-surfaces}

A \emph{$\KK^*$-surface} is an irreducible,
normal surface $X$ endowed with an effective
morphical action $\KK^* \times X \to X$ of
the multiplicative group $\KK^*$. In this
section, we recall the basic background and
present our working environment for
rational $\KK^*$-surfaces: the Cox ring
based approach from \cite{HaSu,HaHe,HaWr};
see also\cite[Sec.~5.4]{ArDeHaLa}.
We begin with a brief reminder on the raw
geometric picture of projective
$\KK^*$-surfaces, the major part of which
has been drawn in the work of Orlik and
Wagreich~\cite{OrWa3}.

\begin{summary}
\label{sum:orwa}
Let $X$ be a projective $\KK^*$-surface.
We discuss the basic geometric properties
of the $\KK^*$-action. The possible
isotropy groups $\KK^*_x$, where
$x \in X$, are~$\KK^*$ itself and the
subgroups of order $l \in \ZZ_{\ge 1}$
consisting of the $l$-th roots of unity.
Thus, the non-trivial orbits are locally
closed curves of the form
\[
\KK^* \cdot x
\ \cong \
\KK^* / \KK^*_x
\ \cong \
\KK^*.
\]
There are three types of fixed points.
A fixed point $x \in X$ is
\emph{elliptic (hyperbolic, parabolic)} 
if it lies in the closure of 
infinitely many (precisely two,
precisely one) non-trivial
$\KK^*$-orbit(s). Elliptic and
hyperbolic fixed points are isolated,
whereas the parabolic fixed points
form a curve in $X$. The
\emph{limit points} $x_0$ and $x_\infty$
of an orbit $\KK^* \cdot x$ are obtained 
by extending the orbit map
$t \to t \cdot x$ to a morphism
$\varphi_x \colon \PP_1 \to X$ and
setting
\[
x_0 \ := \  \lim_{t \to 0} t \cdot x \ := \ \varphi_x(0),
\qquad\qquad
x_\infty \ := \ \lim_{t \to \infty} t \cdot x \ := \ \varphi_x(\infty).
\]
These limit points $x_0$, $x_\infty$
are fixed points and together with
$\KK^* \cdot x$ they form the closure
of the orbit $\KK^* \cdot x$. Every
projective $\KK^*$-surface $X$ has a
\emph{source} and a \emph{sink}, i.e.
irreducible components
$F^+, F^- \subseteq X$ of the fixed
point set admitting open
$\KK^*$-invariant neighborhoods
$U^+, U^- \subseteq X$ such that 
\[
x_0 \ \in \ F^+
\text{ for all } 
x \in U^+,
\qquad\qquad
x_\infty \in \ F^-
\text{ for all } 
x \in U^-.
\]
The source and sink each consist of
either a single elliptic fixed point 
or it is a smooth irreducible curve
of parabolic fixed points. Apart
from the source and the sink, we
find at most hyperbolic fixed points.
The raw geometric picture of a
projective $\KK^*$-surface $X$ is as 
follows.
\begin{center}
\begin{tikzpicture}[scale=0.6]
\sffamily
\coordinate(source) at (0,2);
\coordinate(sink) at (0,-2);
\coordinate(x0a) at (-2,1);
\coordinate(x0aa) at (-1.75,1.9);
\coordinate(x0z) at (-2,-1);
\coordinate(x0zz) at (-1.75,-2);
\coordinate(x1a) at (-1,1);
\coordinate(x1z) at (-1,-1);
\coordinate(xr-1a) at (1,1);
\coordinate(xr-1z) at (1,-1);
\coordinate(xra) at (2,1);
\coordinate(xraa) at (1.75,1.9);
\coordinate(xrz) at (2,-1);
\coordinate(xrzz) at (1.75,-2);
\coordinate(A0) at (-2.65,0);
\coordinate(Ar) at (2.65,0);
\coordinate(P0) at (-2,-4.5);
\coordinate(P1) at (2,-4.5);

\path[fill, color=black] (source) circle (.6ex) 
node[above, font=\scriptsize]{$F^+$};
\path[fill, color=black] (sink) circle (.6ex) 
node[below, font=\scriptsize]{$F^-$};
\path[fill, color=black] (x0a) circle (.5ex) node[left]{};
\path[fill, color=black] (x0z) circle (.5ex) node[left]{};
\path[fill, color=black] (x1a) circle (.5ex) node[left]{};
\path[fill, color=black] (x1z) circle (.5ex) node[left]{};
\path[fill, color=black] (xr-1a) circle (.5ex) node[left]{};
\path[fill, color=black] (xr-1z) circle (.5ex) node[left]{};
\path[fill, color=black] (xra) circle (.5ex) node[right]{};
\path[fill, color=black] (xrz) circle (.5ex) node[right]{};
\draw[thick, bend right=30] (source) to (x0a) node[]{};
\draw[thick, bend right=30] (source) to (x1a) node[]{};
\node[align=center, font=\scriptsize] (D01) at (x0aa)  {$D_{01}$};
\draw[thick, bend right=30] (x0z) to (sink) node[]{};
\draw[thick, bend right=30] (x1z) to (sink) node[]{};
\node[align=center, font=\scriptsize] (D0n0) at (x0zz)  {$D_{0n_0}$};
\draw[thick, bend left=30] (source) to (xr-1a) node[]{};
\draw[thick, bend left=30] (source) to (xra) node[]{};
\node[align=center, font=\scriptsize] (Dr1) at (xraa)  {$D_{r1}$};
\draw[thick, bend left=30] (xr-1z) to (sink) node[]{};
\draw[thick, bend left=30] (xrz) to (sink) node[]{};
\node[align=center, font=\scriptsize] (Drnr) at (xrzz)  {$D_{rn_r}$};
\draw[thick, dashed, bend right=20] (x0a) to (x0z) node[]{};
\draw[thick, dashed, bend right=20] (x1a) to (x1z) node[]{};
\draw[thick, dashed, bend left=20] (xr-1a) to (xr-1z) node[]{};
\draw[thick, dashed, bend left=20] (xra) to (xrz) node[]{};
\node[align=center, font=\scriptsize] (Anull) at (A0)  {$\mathscr{A}_0$};
\node[align=center, font=\scriptsize] (Aerr) at (Ar)  {$\mathscr{A}_r$};
\draw[->, thick, dashed] (0,-3) to (0,-4);
\node[align=center, font=\scriptsize] at (.3,-3.5)  {$\pi$};

\draw[thick] (P0) to (P1) node[]{};
\path[fill, color=black] (-2,-4.5) circle (.5ex);
\node[align=center, font=\scriptsize] at (-2.2,-5)  {$\pi(\mathscr{A}_0)$};
\path[fill, color=black] (-1,-4.5) circle (.5ex) node[left]{};
\path[fill, color=black] (1,-4.5) circle (.5ex) node[left]{};
\path[fill, color=black] (2,-4.5) circle (.5ex) node[left]{};
\node[align=center, font=\scriptsize] at (2.2,-5)  {$\pi(\mathscr{A}_r)$};
\end{tikzpicture}
\end{center}
The general orbit $\KK^* \cdot x \subseteq X$
has trivial isotropy group and connects the
source and the sink in the sense that its
closure contains one fixed point from
$F^+$ and one from  $F^-$. Besides the
general orbits, there are special
non-trivial orbits. Their closures are
rational curves $D_{ij} \subseteq X$
forming the \emph{arms}
\[
\mathscr{A}_i
\ := \
D_{i1} \cup \dots \cup D_{in_i}
\ \subseteq \ X,
\qquad i = 0 , \dots, r.
\]
The intersections $F^+ \cap D_{i1}$ and
$D_{in_i} \cap F^-$ each consist of a
fixed point and any two subsequent
$D_{ij}$, $D_{ij+1}$ intersect in a
hyperbolic fixed point. Finally, the
field of invariant rational functions
$\LL \subseteq \KK(X)$ yields a
projective curve $C$ and a surjective
rational map
\[
\pi \colon X \dasharrow C.
\]
It is defined everywhere except at
possible elliptic fixed points. The
$\KK^*$-surface $X$ is rational if and
only if $C = \PP_1$ holds. We will
also call $\pi \colon X \dasharrow C$
the (rational) quotient of $X$. The
critical fibers of $\pi$ are up to
elliptic fixed points precisely the
arms containing two or more
non-trivial orbits or an orbit with
non-trivial finite isotropy group.
\end{summary}

Our working environment for
a sufficiently explicit treatment is
the Cox ring based approach to
rational normal varieties with
torus action of complexity one from
\cite{HaSu,HaHe,HaWr}; see also
\cite{HaHiWr} for torus actions of
higher complexity. The following is a
play back of
\cite[Constr. 1.1, 1.4 and 1.6]{HaWr}
adapted to the surface case. The
procedure starts with a pair $(A,P)$
of matrices as defining data and
puts out a \emph{semiprojective}
rational $\KK^*$-surface $X(A,P)$
having only constant invariant
functions and coming embedded into a
toric variety $Z$. Here,
semiprojective means projective over
an affine variety. This comprises
for instance the projective and the
affine varieties.

\begin{construction}
\label{constr:kstarsurf}
Fix integers $r, n_0, \dots, n_r \ge 1$
and $0 \le m \le 2$. We construct a
rational semiprojective $\KK^*$-surface
$X$ coming from a $2 \times (r+1)$
matrix $A$ over $\KK$ and an integral
$(r+1) \times (n+m)$ matrix $P$, both
given as a list of columns
\[
A 
\ = \ 
[a_0, \dots, a_r],
\qquad
P
\  = \ 
[v_{01}, \dots, v_{0n_0},
\dots,
v_{r1}, \dots, v_{rn_r},
v_1,\dots, v_m].
\]
Here $r+1<n+m$ for
$n=n_{0}+\dots+n_{r}$. The columns of
$A$ are pairwise linearly independent
and those of $P$ generate $\QQ^{r+1}$
as a vector space. Moreover, the
columns~$v_{ij}$ of $P$ are pairwise
distinct and of the form
\[
v_{0j} = (-l_{0j},\dots,-l_{0j},d_{0j}),
\quad
v_{ij} = (0,\dots,0,l_{ij},0\dots,0,d_{ij}),
\quad i = 1,\dots,r,
\]
where $l_{ij}$ sits at the $i$-th place
for $i = 1, \dots, r$ and we always have
$\gcd(l_{ij},d_{ij})=1$. The columns
$v_k$ are pairwise distinct as well
and of the form
\[
v^+ \ := \ (0,\dots,0,1),
\qquad\qquad 
v^- \ := \ (0,\dots,0,-1).
\]
The idea is to let $X$ come embedded
into a toric variety $Z$. The
defining fan $\Sigma$ of~$Z$
can be written down comfortably
provided that $P$ is \emph{slope-ordered},
i.e.
\[
m_{i1} > \dots > m_{in_i}, 
\quad
m_{ij} := \frac{d_{ij}}{l_{ij}},
\quad
i = 0, \dots, r,
\
j = 1, \dots, n_i.
\]
This can always be achieved by suitable
numbering. In this setting, $\Sigma$
has the columns of $P$ as its rays and
there are the maximal cones
\[
\tau_{ij}
\ := \
\cone(v_{ij}, v_{ij+1})
\ \in \ \Sigma,
\qquad
i = 0, \dots, r, \ j = 1, \dots, n_i-1.
\]
Depending on the existence of $v^+$
and $v^-$ and the values of 
$\skr{m}^+ := m_{01}+\dots+m_{r1}$
and
$\skr{m}^- := m_{0n_{0}}+\dots+m_{rn_{r}}$,
we complement the collection of maximal
cones by

\smallskip

\hspace*{1cm}
$\tau_i^+ := \cone(v^+,v_{i1})$
for $i = 0, \dots, r$
if $P$ has a column $v^+$,

\smallskip

\hspace*{1cm}
$\tau_i^- := \cone(v^-,v_{in_i})$
for $i = 0, \dots, r$
if $P$ has a column $v^-$, 

\smallskip

\hspace*{1cm}
$\sigma^+ :=  \cone(v_{01}, \dots, v_{r1})$
if $\skr{m}^+ > 0$ and there is no $v^+$, 

\smallskip
\hspace*{1cm}
$\sigma^- :=  \cone(v_{0n_0}, \dots, v_{rn_r})$
if $\skr{m}^- < 0$ and there is no $v^-$.

\smallskip

\noindent
The toric variety $Z$ associated with
$\Sigma$ will be the ambient variety
for $X$. Consider $\KK^{n+m}$ with the
coordinate functions $T_{ij}$, $S_k$
and for $\iota = 0, \dots, r-2$ the
trinomials 
\[
g_\iota
\ := \
\det
\left[
\begin{array}{ccc}
T_{\iota}^{l_{\iota}} & T_{\iota+1}^{l_{\iota+1}} & T_{\iota+2}^{l_{\iota+2}}
\\
a_{\iota} & a_{\iota+1} & a_{\iota+2}
\end{array}
\right]
\ \in \
\KK[T_{ij},S_k],
\]
where $T_i^{l_i} := T_{i1}^{l_{i1}} \cdots T_{in_i}^{l_{in_i}}$.
Then, with
$\bar X := V(g_0, \dots, g_{r-2}) \subseteq \KK^{n+m} =: \bar Z$,
we have a commutative diagram 
\[
\xymatrix@R=1.5em{
{\bar{X}}
\ar@{}[r]|\subseteq
\ar@{}[d]|{\rotatebox[origin=c]{90}{$\scriptstyle\subseteq$}}
&
{\bar{Z}}
\ar@{}[d]|{\rotatebox[origin=c]{90}{$\scriptstyle\subseteq$}}
\\
{\hat{X}}
\ar[r]
\ar[d]_{/ H}^p
&
{\hat{Z}}
\ar[d]^{/ H}_p
\\
X
\ar[r]
&
Z.}
\]
Here $p \colon \hat Z \to Z$ is
Cox's quotient presentation from
\ref{constr:torcox}, we set
$\hat X := \bar X \cap \hat Z$ and
obtain a normal closed surface
\[
X \ := \ X(A,P) \ := \ p(\hat X) \ \subseteq \ Z.
\]
The $\KK^*$-action on the surface $X$
is obtained as $t \cdot x = \lambda(t) \cdot x$,
where $\lambda$ is the one-parameter subgroup
of the acting torus $\TT^{r+1}$ of $Z$ given by
\[
\lambda \colon \KK^* \ \to \ \TT^{r+1},
\qquad
t \ \mapsto \ = (1, \dots, 1, t).
\]
Altogether, we end up with a rational
semiprojective $\KK^*$-surface $X = X(A,P)$
with
$\Gamma(X,\mathcal{O})^{\KK^*} = \KK$
coming embedded into a toric variety
$Z$. Furthermore,
\begin{itemize}
\item
$X$ is projective if and only if the
columns of $P$ generate $\QQ^{r+1}$ as
a cone,
\item
$X$ is affine if and only if
$n_0 = \dots = n_r = 1$ and $P$ has
neither $v^+$ nor $v^-$.
\end{itemize}
\end{construction}

\begin{theorem}
See \cites{HaSu,HaWr}. Every rational,
semiprojective $\KK^*$-surface having
only constant invertible global
functions is equivariantly isomorphic
to a $\KK^*$-surface arising from
Construction \ref{constr:kstarsurf}.
\end{theorem}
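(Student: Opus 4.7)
The plan is to reconstruct the defining data $(A,P)$ from the intrinsic geometry of $X$, invoke Construction \ref{constr:kstarsurf} with this data, and match the resulting surface with $X$. Since $X$ is rational with $\Gamma(X,\OOO)^{\KK^{*}} = \KK$, the rational quotient from Summary \ref{sum:orwa} yields $\pi \colon X \dashrightarrow \PP_{1}$. First I would fix homogeneous coordinates on $\PP_{1}$ and collect the (finitely many) critical values of $\pi$, producing pairwise linearly independent vectors $a_{0}, \ldots, a_{r} \in \KK^{2}$ whose projective classes are the critical values; these form the matrix $A$.

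Next I would extract the lattice data. For every index $i$, the preimage of the critical value $[a_{i}]$ decomposes into an arm $\mathscr{A}_{i} = D_{i1} \cup \ldots \cup D_{in_{i}}$ of orbit closures; each $D_{ij}$ contributes a pair $(l_{ij}, d_{ij})$ where $l_{ij}$ is the order of the generic $\KK^{*}$-isotropy along $D_{ij}$ and $d_{ij}$ records the $\KK^{*}$-weight data (equivalently, a primitive vector arising from the local description of the orbit closure as $t \mapsto (t^{l_{ij}}, t^{d_{ij}})$). A parabolic source (resp.\ sink) contributes the column $v^{+}$ (resp.\ $v^{-}$) with $m \in \{0,1,2\}$ equal to the number of such parabolic fixed-point curves; elliptic fixed points are signalled by their absence. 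Arranging the columns in slope order and collecting them into $P$ yields the matrix in \ref{constr:kstarsurf}.

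The heart of the proof is to identify the Cox ring of $X$ with the trinomial presentation. Here I would appeal to the general theory of Cox rings of normal complete rational varieties with a torus action of complexity one, as developed in \cites{HaSu,HaWr}. The $\KK^{*}$-invariant prime divisors coming from the arms and from $F^{\pm}$ provide homogeneous generators $T_{ij}$ and $S_{k}$; the complexity-one structure forces the relations among their canonical sections to be generated by the trinomials $g_{\iota}$ obtained from the $2 \times 3$ minors of $[T_{0}^{l_{0}}, T_{1}^{l_{1}}, \ldots, T_{r}^{l_{r}} \,;\, a_{0}, a_{1}, \ldots, a_{r}]$. The grading is recorded precisely by the projection $Q \colon \ZZ^{n+m} \to K$ dual to $P$, so $\Spec \RRR(X)$ is cut out in $\bar{Z} = \KK^{n+m}$ by the $g_{\iota}$, i.e.\ equals $\bar{X}$ of \ref{constr:kstarsurf}.

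Finally I would invoke the characteristic-space description of $X$ as a good quotient of the relevant set of semistable points by $H = \Spec \KK[K]$ to identify $X$ with $p(\hat{X}) \subseteq Z$, and check that the one-parameter subgroup $\lambda(t) = (1, \ldots, 1, t)$ of $\TT^{r+1}$ restricts on $X$ to the given $\KK^{*}$-action, via the choice of the last coordinate in $P$. The slope ordering and the case distinction on $\skr{m}^{\pm}$ in Construction \ref{constr:kstarsurf} exactly reflect whether the two extremal cones $\sigma^{\pm}$ are present, which in turn mirrors the presence of elliptic versus parabolic fixed points. The main obstacle is the Cox-ring step, i.e.\ the structural theorem that rules out all relations beyond the trinomials $g_{\iota}$; for this I would cite \cites{HaSu,HaWr} rather than reprove it, the rest being a bookkeeping verification that the combinatorial assumptions on $(A,P)$ in \ref{constr:kstarsurf} are fulfilled.
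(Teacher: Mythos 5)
The paper offers no proof of this theorem at all --- it simply defers to \cites{HaSu,HaWr} --- and your outline is exactly the strategy of those references: recover $(A,P)$ from the arms, isotropy orders and fixed-point structure, identify the Cox ring with the trinomial algebra $R(A,P)$, and reconstruct $X$ as the good quotient of the characteristic space, with the one genuinely hard step (that the Cox ring of a rational complexity-one $T$-variety has precisely the trinomial relations) imported from the same sources. So your proposal is correct and takes essentially the same route as the paper; the only cosmetic slips are that the $g_\iota$ are $3\times3$ determinants built from consecutive triples (not $2\times3$ minors) and that the relevant hypothesis is $\Gamma(X,\mathcal{O})^{*}=\KK^{*}$ rather than $\Gamma(X,\mathcal{O})^{\KK^{*}}=\KK$.
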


\begin{remark}
\label{rem:toric-XAP}
For $r=1$, Construction
\ref{constr:kstarsurf} precisely
gives the semiprojective toric surfaces
with a $\KK^*$-action. To be more
specific, given $r=1$, the following
holds for $X = X(A,P)$.
\begin{enumerate}
\item
The fan $\Sigma$ in $\ZZ^2$ has convex
support and its one-dimensional cones
are the rays given by the columns of
$P$.
\item
The surface $X$ coincides with the
toric surface $Z$ and $\KK^*$ acts via
the one-parameter subgroup
$\KK^* \to \TT^2$, $t \mapsto (1,t)$.
\end{enumerate}
\end{remark}

Let us see how to extract the
general raw geometric picture from Summary~\ref{sum:orwa}
in the case of a rational projective
$\KK^*$-surface $X = X(A,P)$ from its
defining matrices~$A$ and $P$.

\begin{summary}
\label{rem:matrices-to-geometry}
Consider a projective $\KK^*$-surface
$X = X(A,P)$. Let
$D^{ij}_Z \subseteq Z$ denote the toric
prime divisor corresponding to the ray
generated by the column~$v_{ij}$ of
$P$. Then we have $r+1$ arms in $X$:
\[
\mathscr{A}_i
\ = \ 
D^{i1}_X \cup \dots \cup D^{in_i}_X,
\qquad
D^{ij}_X \ = \ D^{ij}_Z \cap X.
\]
The toric orbit of $Z$ corresponding
to a cone $\tau_{ij}$ cuts out the
hyperbolic fixed point in
$D_X^{ij} \cap D_X^{ij+1}$. According
to the possible constellations of
$v^+,v^-,\sigma^+$ and $\sigma^-$
source and sink look as follows.
\begin{itemize}
\item[(e-e)]
The fan $\Sigma$ has the cones
$\sigma^+$ and $\sigma^-$. Then the
associated toric orbits are elliptic
fixed points $x^+,x^- \in X$ forming
source and sink.
\item[(e-p)]
The fan $\Sigma$ has $\sigma^+$ as a
cone and $P$ has $v^-$ as a column. The
toric orbit given by $\sigma^+$ is an
elliptic fixed point $x^+ \in X$
forming the source. The toric divisor
$D_Z^-$ given by the ray through
$v^-$ cuts out a curve $D_X^-$ of
parabolic fixed points forming the
sink.
\item[(p-e)]
The matrix $P$ has $v^+$ as a column
and $\Sigma$ has $\sigma^-$ as a cone.
The toric divisor $D_Z^+$ given by the
ray through $v^+$ cuts out a curve
$D_X^+$ of parabolic fixed points
forming the source. The toric orbit
given by $\sigma^-$ is an elliptic
fixed point $x^- \in X$ forming the
sink.
\item[(p-p)]
The matrix $P$ has $v^+$ and $v^-$ as
columns. The toric divisors $D_Z^+$,
$D_Z^-$ given by the rays through
$v^+$, $v^-$ cut out curves $D_X^+$,
$D_X^-$ of parabolic fixed points
forming source and sink.
\end{itemize}
The entry $l_{ij}$ of $P$ equals the
order of the isotropy group $\KK^*_x$
of a general $x \in D_X^{ij}$ and the
associated $d_{ij}$ yields the weight
of the tangent representation of
$\KK^*_x$. Moreover, we retrieve the
quotient map
$\pi \colon X \dasharrow \PP_1$ from
the commutative diagram
\[
\xymatrix{
X
\ar@{}[r]|\subseteq
\ar@{-->}[d]_{/ \KK^*}^\pi
&
Z
\ar@{-->}[d]^{/ \KK^*}_\pi
\\
{\PP_1}
\ar@{}[r]|\subseteq
&
{\PP_r},
}
\]
where the r.h.s map arises from the projection
$\ZZ^{r+1} \to \ZZ^r$ onto the first~$r$ coordinates.
The homogeneous coordinates of $[a_{i1},a_{i2}]$
of $\pi(\mathscr{A}_i) \in \PP_1$ of
are given by the $i$-th column of the
matrix $A$.
Moreover, $\PP_1 \subseteq \PP_r$
is cut out by the linear forms
\[
h_\iota
\ := \
\det
\left[
\begin{array}{ccc}
U_{\iota} & U_{\iota+1} & U_{\iota+2}
\\
a_{\iota} & a_{\iota+1} & a_{\iota+2}
\end{array}
\right]
\ \in \
\KK[U_0,\dots,U_r],
\qquad
\iota = 0, \dots, r-2.
\]
Note that the rational quotient map
$\pi \colon X \dasharrow \PP_1$ is
defined everywhere apart from possible elliptic
fixed points~$x^{\pm}$.
Moreover, $\pi$ is categorical in the sense that
every $\KK^*$-invariant morphism
$X \setminus \{x^{\pm}\} \to Y$ factors uniquely
through $\pi$.
\end{summary}

\begin{remark}[Cox coordinates]
\label{rem:coxcoord2}
Every point $x \in X$ is of the form $x=p(z)$
with a point
\[
z
\ = \
(z_{ij},z^\pm)
\ \in \
\hat X
\ = \
\bar X \cap \hat Z
\ \subseteq \ \bar Z
\ = \
\KK^{n+m}.
\]
Here, $z = (z_{ij},z^\pm)$ is unique up
to multiplication by the quasitorus
$H$. We call $x = [z_{ij},z^\pm]$ a
presentation in \emph{Cox coordinates}.
For any $x = [z_{ij},z^\pm] \in X$, we
have
\begin{align*}
x \text{ general } \
&\Longleftrightarrow \ \text{ all }
z_{ij}, z^\pm \ne 0,
\\
x \in D^+ \
&\Longleftrightarrow \ \text{ only } z^+ = 0,
\\
x \in D^- \
&\Longleftrightarrow \ \text{ only } z^- = 0,
\\
x \in D_{ij} \
&\Longleftrightarrow \ \text{ only } z_{ij} = 0,
\\
x \in D_{ij} \cap D_{ij+1} \
&\Longleftrightarrow \ \text{ only } z_{ij} = z_{ij+1} = 0,
\\
x = x^+ \
&\Longleftrightarrow \ \text{ only } z_{01} = \dots = z_{r1} = 0,
\\
x = x^- \
&\Longleftrightarrow \ \text{ only } z_{0n_0} = \dots = z_{rn_r} = 0.
\end{align*}
By ``general'' we mean it is not contained
in any arm of $X$. This collects all
possibilities of vanishing and
non-vanishing of Cox coordinates of the
points of $X$.
\end{remark}

\begin{example}
\label{ex:runningexampleA1}
Given formatting data $r=2$,
$n_{0}=n_{1}=2,n_{2}=1$ and $m=1$
consider the following defining
matrices $A$ and $P$:
\[
A
 = 
\left[
\begin{array}{rrr}
1 & 0 & -1
\\
0 & 1 & -1
\end{array}    
\right],
\qquad
P
 = 
[v_{01},v_{02},v_{11},v_{21},v^+]
 = 
\left[
\begin{array}{ccccc}
-3 & -5 & 2 & 0 & 0 
\\
-3 & -5 & 0 & 2 & 0 
\\
-4 & -8 & 1 & 1 & 1 
\end{array}    
\right].
\]
For $i = 0,1,2$, the projection
$\pr \colon \ZZ^3 \to \ZZ^2$ onto the
first two coordinates sends the columns
$v_{ij}$ of the matrix $P$ into the
rays $\varrho_i \subseteq \QQ^2$
spanned by the vectors
\[
(-1,-1),
\qquad
(1,0),
\qquad
(0,1).
\]
These are the primitive generators of 
the fan $\Delta$ of the projective
plane $\PP_2$. For the fan $\Sigma$ we
obtain the picture  
\begin{center}
\begin{tikzpicture}[scale=0.6]
\sffamily
\coordinate(oo) at (0,-5);
\coordinate(e0) at (-1,-5);
\coordinate(e1) at (.5,-.3-5);
\coordinate(e2) at (1,0.5-5);
\coordinate(c0) at (-2,-5);
\coordinate(c1) at (1,-.6-5);
\coordinate(c2) at (2,1-5);
\coordinate(ooo) at (0,0);
\coordinate(v01) at (-2,-0.5);
\coordinate(v02) at (-2.5,-1);
\coordinate(v11) at (.7,2);
\coordinate(v12) at (1,-0.7);
\coordinate(v21) at (2.5,3);
\coordinate(vplus) at (0,1);
\coordinate(cplus) at (0,2);
\coordinate(c01) at (-2,-0);
\coordinate(c02) at (-2,-1.5);
\coordinate(c11) at (1,-2);
\coordinate(c21) at (2,-.75);
\node[] at (2.8,1) {$\Sigma$};
\node[] at (2.8,-5) {$\Delta$};
\node[] at (-0.7,-3.2) {$\pr$};
\path[fill, color=gray!15] (oo) -- (c2) -- (c0) -- (oo);
\path[fill, color=gray!20] (oo) -- (c1) -- (c2) -- (oo);
\path[fill, color=gray!10] (oo) -- (c0) -- (c1) -- (oo);
\path[fill, color=gray!40] (ooo) -- (c11) -- (c21) -- (ooo);
\path[fill, color=gray!30] (ooo) -- (cplus) -- (c21) -- (ooo);
\path[fill, color=gray!10] (ooo) -- (cplus) -- (c01) -- (c02) -- (ooo);
\path[fill, color=gray!30] (ooo) -- (c02) -- (c11) -- (ooo);
\draw[thick, color=black] (oo) to (c0);
\draw[thick, color=black] (oo) to (c1);
\draw[thick, color=black] (oo) to (c2);
\draw[thick, color=black] (ooo) to (c01);
\draw[thick, color=black] (ooo) to (c02);
\draw[thick, color=black] (ooo) to (c11);
\draw[thick, color=black] (ooo) to (c21);
\draw[thick, color=black] (ooo) to (cplus);
\draw[thick, dotted, color=black] (cplus) to (c01);
\draw[thick, dotted, color=black] (cplus) to (c21);
\draw[thick, dotted, color=black] (c01) to (c02);
\draw[thick, dotted, color=black] (c02) to (c11);
\draw[thick, dotted, color=black] (c11) to (c21);
\path[fill, opacity=.9, color=gray!20] (ooo) -- (cplus) -- (c11) -- (ooo);
\draw[thick, dotted, color=black] (cplus) to (c11);
\draw[dashed, color=black] (c02) to (c0);
\draw[dashed, color=black] (c11) to (c1);
\draw[dashed, color=black] (c21) to (c2);
\draw[->, thick, color=black] (0,-2.5) to (0,-4);
\end{tikzpicture}
\end{center}
\noindent
In terms of Cox coordinates,
the surface $X=X(A,P)$ sitting in the
toric variety~$Z$ defined by $\Sigma$
is given as
\[
\bar X
\ = \
V(T_{01}^3T_{02}^5 + T_{11}^2 + T_{21}^2)
\ \subseteq \
\bar Z
\ = \
\KK^5.
\]
The rational toric morphism
$\pi \colon Z \dasharrow \PP_2$ given
by $\pr \colon \ZZ^3 \to \ZZ^2$ is
defined everywhere except at the point
$x^- \in X \subseteq Z$. Restricting to
$X$ gives the map
\[
\pi \colon X
\ \dasharrow \
\PP_1
\ = \
V(S_0+S_1+S_2)
\ \subseteq \
\PP_2,
\]
which is the rational quotient. The
source $D^+ \subseteq X$ maps onto
$\PP_1$ and the critical values of
$\pi$ are cut out by $S_i = 0$ for
$i=0,1,2$.
\end{example}



\section{The isomorphy problem}

We consider the question whether or not
two pairs of defining matrices $(A,P)$
and $(A',P')$ give rise to isomorphic
$\KK^*$-surfaces.
In Proposition~\ref{prop:isochar}, we provide
a characterization in terms of what we call
admissible operations on defining matrices.
In Proposition~\ref{normalform}, we
present an efficiently computable normal form
for defining matrices, which is
used for the systematic data storage
in~\cite{HaHaSp}.
Moreover, the normal form directly shows that
for fixed $P$, the non-toric $X(A,P)$ come
in $(r-2)$-dimensional families;
see Remark~\ref{rem:families}.

A key feature of Construction~\ref{constr:kstarsurf}
is that it delivers for free for any $\KK^*$-surface
$X = X(A,P)$ the divisor class group
$\Cl(X)$ and the Cox ring
\[
\mathcal{R}(X)
\ = \
\bigoplus_{\Cl(X)} \Gamma(X,\mathcal{O}(D)).
\]
We refer to \cite[Sec.~1.1.4]{ArDeHaLa} for the
precise definition of the Cox ring and to
\cite[Sec.~3.4.3]{ArDeHaLa} for details of the
following.

\begin{summary}
\label{sum:classgroup-coxring}
Consider a $\KK^*$-surface $X = X(A,P) \subseteq Z$
as provided by
Construction~\ref{constr:kstarsurf}. Recall that
the columns $v_{ij}$ and $v^\pm$ of $P$ yield
prime divisors
\[
D_X^{ij}
\ = \
X \cap D_Z^{ij}
\ \subseteq \
X,
\qquad\qquad
D_X^\pm
\ = \
X \cap D_Z^\pm
\ \subseteq \
X.
\]
Moreover, every character function
$\chi^u \in \KK(Z)$ restricts to a non-zero rational
function on~$X$ with divisor given by
\[
\div(\chi^u)
\ = \
\sum_{i,j} \bangle{u,v_{ij}} D_X^{ij} + \sum_{+,-} \bangle{u,v^\pm} D_X^\pm.
\]
The terms of the second sum are understood to
equal zero if there is no $v^+$ or $v^-$.
The subvariety $X \subseteq Z$ inherits the
divisor class group from its ambient variety:
\[
\Cl(X) \ = \ \Cl(Z) \ = \ K \ := \ \ZZ^{n+m}/\image(P^t).
\]
Let $Q \colon \ZZ^{n+m} \to K$ be the projection
and let $e_{ij}$, $e^\pm$ be the canonical basis
vectors of $\ZZ^{n+m}$, indexed in accordance
with the variables $T_{ij}$ and $S^\pm$. Then
\[
\deg(T_{ij}) := w_{ij} := Q(e_{ij}) = [D_X^{ij}],
\qquad 
\deg(S^\pm) := w^\pm := Q(e^\pm) = [D_X^\pm]
\]
defines a $K$-grading on the polynomial ring
$\KK[T_{ij},S^\pm]$ such that the trinomials
$g_0,\dots,g_{r-2}$ are homogeneous. We have
isomorphisms
\[
\mathcal{R}(X) 
\ \cong \
R(A,P)
\ := \ 
\KK[T_{ij},S_k]/\bangle{g_0,\dots,g_{r-2}}
\ \cong \
\mathcal{R}(Z)/\bangle{g_0,\dots,g_{r-2}}
\]
of $K$-graded $\KK$-algebras. In particular,
the Cox ring of $X$ is a factor algebra of the
Cox ring of its ambient toric variety $Z$.
\end{summary}


We enter the isomorphy problem.
The subsequent two definitions
provide us with the necessary concepts 
to characterize of isomorphy of two given
$\KK^*$-surfaces in terms of their defining
matrices.

\begin{definition}
\label{adjustedP}
Consider a defining matrix $P$ as in
Construction~\ref{constr:kstarsurf}.
\begin{enumerate}
\item
If $l_{i1}n_{i} > 1$ for $i=0,\ldots,r$, then
we call $P$ \emph{irredundant}, else \emph{redundant}.
\item
We call a column $v_{i1}$ of $P$ \emph{erasable}
if $i>0$, $n_i=1$, $l_{i1}=1$ and $d_{i1}=0$.
\end{enumerate}
\end{definition}

The tuple $v_{i1}, \dots, v_{in_i}$ of columns
of a defining matrix $P$ describes the $i$-th arm of
any $X = X(A,P)$. Hence, we also refer to
$v_{i1}, \dots, v_{in_i}$
as the \emph{$i$-th arm} of the defining matrix
$P$.

\begin{definition}
\label{admop}
Consider pair $(A,P)$ of defining
matrices as in Construction~\ref{constr:kstarsurf}.
The \emph{admissible operations} on $(A,P)$
are the following:
\begin{enumerate}
\item
\label{blub}
\emph{erase} an erasable column
$v_{i1}$ by removing the $i$-th row and
the $i1$-th column from $P$ and the
$i$-th column of $A$,
\item
multiply the last row of $P$ by $-1$,
\item 
swap the columns $v_+$ and $v_-$,
\item
swap two columns $v_{ij_1}$ and
$v_{ij_2}$ inside the $i$-th arm of
$P$,
\item 
swap the $i$-th and $j$-th column of
$A$, the $i$-th and $j$-th arm of $P$
and rearrange the shape of $P$ by
elementary operations on the first $r$
rows,
\item 
add a multiple of one of the upper $r$
rows to the last row of $P$,
\item
transform $A$ into $BAD$ with
$B \in \GL_2(\KK)$ and diagonal
$D \in \GL_{r+1}(\KK)$,
\end{enumerate}
We say that two pairs $(A,P)$ and
$(A',P')$ of defining matrices are
\emph{equivalent} if we can transform
one of them into the other via
admissible operations. 
\end{definition}

\begin{remark}
\label{rem:admop2irredundant}
Every defining matrix $P$ can be turned
via admissible operations of types (i),
(v) and (vi) into an irredundant one.
\end{remark}

\begin{remark}
\label{rem:irredundaprops}
Consider a $\KK^*$-surface $X = X(A,P)$ with an
irredundant $P$. Then we have the following.
\begin{enumerate}
\item
The surface $X$ is isomorphic to a toric surface
if and only if $r = 1$ holds.
\item 
The arms of $X$ coincide with the critical fibers
of the map $\pi \colon X \dasharrow \PP_1$.
\end{enumerate}
\end{remark}

Recall that a \emph{morphism}
of $\KK^*$-surfaces $X$ and $X'$ is a
pair $(\varphi,\psi)$ with a morphism
$\varphi \colon X \to X'$ of varieties
and a homomorphism
$\psi \colon \KK^* \to \KK^*$ of
algebraic groups such that we always
have $\varphi(t \cdot x) = \psi(t)
\cdot \varphi(x)$. So, in this setting,
an equivariant morphism is a morphism
$(\varphi,\psi)$ with $\psi$ being the
identity.

\begin{example}
Given a rational projective
$\KK^*$-surface $X$, consider the
automorphism $\jmath(t)= t^{-1}$ of
$\KK^*$. Then $(\id_X,\jmath)$ is a
non-equivariant isomorphism $X \to X$
of $\KK^*$-surfaces, swapping the
source and the sink. For $X = X(A,P)$
the isomorphism $(\id_X,\jmath)$ is
given by multiplying the last row of
$P$ by $-1$.
\end{example}

\begin{proposition}
\label{prop:isochar}
Consider defining pairs $(A,P)$ and
$(A',P')$ of non-toric projective
$\KK^*$-surfaces $X$ and $X'$. Then the
following statements are equivalent.
\begin{enumerate}
\item
$(A,P)$ and $(A',P')$ are equivalent.
\item
$R(A,P)$ and $R(A',P')$ are isomorphic
as graded algebras.
\item
$X$ and $X'$ are isomorphic as
$\KK^*$-surfaces.
\item
$X$ and $X'$ are isomorphic as
surfaces.
\end{enumerate}
\end{proposition}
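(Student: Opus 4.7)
The plan is to run the cycle (i) $\Rightarrow$ (ii) $\Rightarrow$ (iii) $\Rightarrow$ (iv) $\Rightarrow$ (i). The implication (iii) $\Rightarrow$ (iv) is immediate by forgetting the $\KK^*$-action, and the three nontrivial implications are handled separately.

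For (i) $\Rightarrow$ (ii) I would march through the admissible operations of Definition~\ref{admop} and check case by case that each induces a graded isomorphism of Cox rings together with a compatible isomorphism of grading groups $K = \ZZ^{n+m}/\image(P^t)$. The row and column operations on $P$ are relabelings of the variables $T_{ij}, S^\pm$ combined with automorphisms of $K$; the swap $v^+ \leftrightarrow v^-$ and the inversion of the last row of $P$ only reinterpret the grading and leave the graded algebra intact; erasing an erasable column amounts to eliminating a $T_{i1}$ that appears linearly in a single trinomial; and the operation $A \mapsto BAD$ replaces the generators $g_0, \dots, g_{r-2}$ of the defining ideal by $B$-linear combinations while rescaling the monomials $T_i^{l_i}$ via the diagonal matrix $D$. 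For (ii) $\Rightarrow$ (iii) I would appeal to the standard Cox ring formalism: a graded isomorphism $R(A,P) \cong R(A',P')$, with grading group identification $K \cong K'$, corresponds under $\Spec$ to an equivariant isomorphism of the affine total coordinate spaces, which descends to an isomorphism $X \to X'$ after passing to the GIT-open subsets $\hat X, \hat X'$ singled out by the same ample class. To see $\KK^*$-equivariance, one observes that the acting $\KK^*$ sits inside the grading torus via a one-parameter subgroup encoded in the last row of $P$; this subgroup is distinguished by the combinatorics of $P$ up to the inversion $t \mapsto t^{-1}$, which is itself admissible operation~(ii) of Definition~\ref{admop}.

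The main obstacle is (iv) $\Rightarrow$ (i). Given an abstract isomorphism $\varphi \colon X \to X'$, the crucial task is to upgrade it to a $\KK^*$-equivariant isomorphism. Here the non-toric hypothesis is essential: for a non-toric rational projective $\KK^*$-surface, a maximal algebraic torus in the connected automorphism group of $X$ is one-dimensional and thus conjugate to the acting $\KK^*$. Consequently, transporting the $\KK^*$-action on $X$ via $\varphi$ yields a $\KK^*$-action on $X'$ that agrees with the given one up to an automorphism of $\KK^*$, i.e.\ up to $t \mapsto t^{-1}$. After this adjustment $\varphi$ becomes equivariant, giving (iii), and hence via the canonical identification $\mathcal{R}(X) \cong R(A,P)$ a $K$-graded algebra isomorphism, i.e.\ (ii). To pass from (ii) to (i), I would invoke the normal form for defining matrices of Proposition~\ref{normalform}: both $(A,P)$ and $(A',P')$ can be brought by admissible operations into a canonical form, and the graded Cox ring isomorphism forces these two normal forms to coincide, so the original pairs are equivalent. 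The uniqueness of the $\KK^*$-action up to inversion on a non-toric rational projective $\KK^*$-surface is the technical heart of the argument and the place where I expect most of the work to lie.
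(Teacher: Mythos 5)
Your cycle has the right overall shape, and two of its legs match the paper: the case-by-case check that admissible operations induce graded isomorphisms is exactly how the paper proves (i)$\Rightarrow$(ii), and your observation that on a non-toric surface any two one-dimensional tori in $\Aut(X)$ are maximal and hence conjugate is precisely the paper's argument for upgrading an abstract isomorphism to an equivariant one (the paper phrases this as (iv)$\Rightarrow$(iii), citing that $\Aut(X)$ is linear algebraic). A minor quibble on your (ii)$\Rightarrow$(iii): the acting $\KK^*$ does not sit inside the grading quasitorus $H = \Spec\KK[K]$ but in the quotient torus $\TT^{n+m}/H$, so a $K$-graded isomorphism does not directly hand you equivariance; this is harmless only because your torus-conjugacy argument repairs it, i.e.\ you are really doing (ii)$\Rightarrow$(iv)$\Rightarrow$(iii).

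The genuine gap is in closing the cycle. Your route back to (i) is: pass to normal forms via Proposition~\ref{normalform} and assert that ``the graded Cox ring isomorphism forces these two normal forms to coincide.'' This is circular. The uniqueness half of Proposition~\ref{normalform} says only that two pairs in normal form that are \emph{equivalent} (i.e.\ already related by admissible operations) coincide; its proof tracks the invariants $\skr{b}^\pm,\beta^\pm$ along a chain of admissible operations and gives nothing for two pairs merely known to have isomorphic graded Cox rings or isomorphic surfaces. Deducing equivalence from an isomorphism is exactly the content of the proposition being proved, so it cannot be imported from the normal form. The paper fills this hole with a concrete argument you omit: an equivariant isomorphism $\varphi$ respects isotropy groups and arms (here irredundance of $P$, $P'$ is used), which after operations of types (iv) and (v) makes $P$ and $P'$ agree in the first $r$ rows and gives $\varphi(D_X^{ij})=D_{X'}^{ij}$; the resulting commutative diagram of exact sequences $0 \to \ZZ^{r+1} \to \ZZ^{n+m} \to \Cl(X) \to 0$ produces a unimodular $S$ with $S^tP'=P$, whose shape is forced to be lower block-triangular with $\pm 1$ in the corner, so $P$ and $P'$ differ only by operations (ii) and (vi); finally the induced automorphism of the quotient $\PP_1$ yields $A'=BAD$. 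Without some version of this reconstruction of the admissible operations from the isomorphism, your proof of (iv)$\Rightarrow$(i) (equivalently (iii)$\Rightarrow$(i)) does not go through.
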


\begin{proof}
According to Summary~\ref{sum:classgroup-coxring},
the rings $R(A,P)$ and $R(A',P')$ are the Cox rings
of $X$ and $X'$ respecively.
Consequently,~(iv) implies~(ii). If~(ii) holds,
then $X$ and $X'$ have isomorphic
Cox rings.
As we are in the surface case, $X$ and $X'$ are
isomorphic to each other; use for
instance~\cite[Rem.~3.3.4.2 and Thm.~4.3.3.5]{ArDeHaLa}.
Thus,~(ii) implies~(iv).

Clearly~(iii) implies~(iv). For the converse,
it suffices to show that any two $\KK^*$-actions
on $X$ are conjugate by an automorphism of $X$.
By~\cite[Cor.~2.4]{ArHaHeLi}, the automorphism
group $\Aut(X)$ is linear algebraic and acts
morphically.
In particular, any $\KK^*$-action on $X$ defines
a one-dimensional torus in $\Aut(X)$.
As $X$ is non-toric, any two one-dimensional
tori in $\Aut(X)$ are maximal an hence conjugate
to each other.
We showed that~(iv) implies~(iii).

In order to see that~(i) implies~(ii), we have
to check that any admissible operation turning
a pair of defining matrices $(A,P)$ into
a pair $(A',P')$ induces an isomorphism
from $R(A,P)$ onto $R(A',P')$.
First observe that for operations of the
types~(ii) and~(vi) we even have
$R(A,P) = R(A',P')$.
Moreover, in case of an operation of 
type~(iii), (iv) or~(v), the ring $R(A',P')$
arises from $R(A,P)$ by renumbering the variables
and their degrees accordingly.
For operations of type (i), it suffices to consider
the case $i=r$. Then the trinomial~$g_{r-2}$ is of
the form
\[
g_{r-2}
\ = \
\alpha T_{r-2}^{l_{r-2}}+ \beta T_{r-1}^{l_{r-1}} +\gamma T_{r1},
\qquad
\alpha,\beta,\gamma\in\KK^*.
\]
Thus, sending $T_{r1}$ to
$-\gamma^{-1}(\alpha T_{r-2}^{l_{r-2}}+\beta T_{r-1}^{l_{r-1}})$ 
and leaving the other variables untouched defines 
a graded algebra isomorphism $R(A,P)\to R(A',P')$.
Finally, consider an operation of type~(vii). 
Multiplying $A$ with $B \in \GL_2(\KK)$ from the 
left means scaling the trinomials~$g_{\iota}$
by the factor $\det(B)$ and hence doesn't change
$R(A,P)$.
Moreover, multiplying $A$ with a diagonal matrix
from the right amounts to scaling the coefficients
in the trinomials $g_{\iota}$ by non-zero factors.
Appropriately rescaling the $T_{ij}$
yields an isomorphism from $R(A,P)$ onto $R(A',P')$.

We are left with verifying ``(iii)$\Rightarrow$(i)''.
We may assume that $P$ and $P'$ are irredundant.
Let $\varphi\colon X\to X'$  be an isomorphism of
$\KK^*$-surfaces.
Then $\varphi$ respects the isotropy groups
and, using for instance Remark~\ref{rem:irredundaprops},
we see that  $\varphi$ respects also the arms of the
$\KK^*$-surfaces.
We conclude $r=r'$, $n=n'$ and $m=m'$.
Summary~\ref{rem:matrices-to-geometry} tells us
that, after applying suitable operations
of types~(iv) and~(v),
$P$ and $P'$ coincide in the first~$r$ rows
and $\varphi(D_X^{ij})=D_{X'}^{ij}$ as well as
$\varphi(D_X^{\pm}) = D_{X'}^{\pm}$ hold.
This leads to a commutative diagram
\[
\xymatrix@R=1.5em{
0 \ar[r] & 
\ZZ^{r+1} \ar[r]^{P^t} \ar[d]_{S} & 
\ZZ^{n+m} \ar[r] \ar@{}[d]|{\rotatebox[origin=c]{90}{$=$}} & 
\mathrm{Cl}(X) \ar[r] \ar[d]_{\varphi_*} & 
0 \\
0 \ar[r] &
\ZZ^{r+1} \ar[r]_{(P')^t} &
\ZZ^{n+m} \ar[r] &
\mathrm{Cl}(X') \ar[r] & 
0
}
\]
with exact rows, where $\varphi_*$ denotes the
induce push forward isomorphism on the divisor
class groups.
The l.h.s. downwards map must be an isomorphism,
hence it is given by a unimodular matrix $S$,
satisfying $(P')^tS=P^t$, or equivalently, $S^tP'=P$. 
By the structure of $P$ and $P'$, the latter
forces $S^t$ to be of the form
\[
S^t
\ = \
\left[
\begin{array}{cc}
E_r & 0
\\
\ast & \pm 1
\end{array}
\right].
\]
This implies that $P$ and $P'$ differ only by admissible 
operations of types (ii) and~(vi).
It remains to show that $A$ and $A'$ differ only by an
admissible operation of type~(vii).
Stressing again Summary~\ref{rem:matrices-to-geometry},
we obtain a commutative diagram involving the rational
quotients
\[
\xymatrix@R=2em{
X \ar[r]^{\varphi}  \ar@{-->}[d]_{/\KK^*}^{\pi}
&
X' \ar@{-->}[d]^{/\KK^*}_{\pi'}
\\
\PP_1 \ar[r]_{B}
&
\PP_1,
}
\]
where the induced map $B$ is an isomorphism,
hence given by a matrix $B \in \GL_2(\KK)$
acting on the homogeneous coodinates.
The quotient maps $\pi$ send
the $i$-th arm of~$X$ to the point 
$a_i \in \PP_1$ with homogeneous coordinates
given by $i$-th column of $A$.
The same holds for $\pi'$, $X'$, $a_i'$
and $A'$.
Thus $B(a_i)$ and $a'_i$ differ only by a
non-zero scalar in their homogeneous coordinates. 
This implies $A'=BAD$ with a
diagonal matrix $D \in \GL_{r+1}(\KK)$.
\end{proof}

\begin{remark}
In Proposition~\ref{prop:isochar},
the assumption of $X$ and $X'$ being
non-toric is essential.
For instance, on the
projective plane $\PP_2$ the
$\KK^*$-actions $[z_0,tz_1,t^2z_2]$ and
$[z_0,z_1,tz_2]$ give rise to
non-isomorphic $\KK^*$-surfaces.
\end{remark}

We come to the normal form. Recall from
Construction~\ref{constr:kstarsurf} that
for every defining matrix $P$, we have
the \emph{slopes}
\[
m_{ij} := \frac{d_{ij}}{l_{ij}},
\quad
i = 0,\ldots, r, \ j = 1, \ldots, n_i.
\]

\begin{definition}
\label{mplusminus}
With any defining matrix $P$ as in 
Construction~\ref{constr:kstarsurf},
we associate the following integers
\[
\skr{b}^+_i \ := \ \max(\lfloor m_{i1}\rfloor,\ldots,\lfloor m_{in_i}\rfloor),
\qquad\qquad
\skr{b}^+ \ := \ \skr{b}_0^+ + \ldots + \skr{b}_r^+,
\]
\[
\skr{b}^-_i \ := \ \min(\lceil m_{i1}\rceil,\ldots,\lceil m_{in_i}\rceil),
\qquad\qquad
\skr{b}^- \ := \ -(\skr{b}_0^- + \ldots + \skr{b}_r^-).
\]
\end{definition}

Recall that a defining matrix $P$ is \emph{slope-ordered}
if $m_{i1} > \ldots > m_{in_i}$ holds for 
$i=0, \ldots, r$.
If $P$ is  slope-ordered, then the numbers just introduced
satisfy $\skr{b}^+_i=\lfloor m_{i1}\rfloor$
and $\skr{b}^-_i=\lceil m_{in_i}\rceil$.

\begin{definition}
\label{betaplusminus}
Consider a defining matrix $P$ as in Construction~\ref{constr:kstarsurf}. 
For any $i = 0 ,\ldots, r$ and $j = 1, \ldots, n_i$, we have the rational
numbers
\[
\beta_{ij}^+ \ := \ m_{ij}- \skr{b}^+_{i},
\qquad\qquad
\beta_{ij}^- \ := \ \skr{b}^-_{i} - m_{ij}.
\]
Let $\beta^{\pm}_i \in \QQ^{n_i}$ have the entries
$\beta^{\pm}_{i1},\ldots,\beta^{\pm}_{in_i}$ odered
descendingly and $\beta^{\pm}$ be the
lexicographical ordering of
$(\beta_0^{\pm}, \ldots, \beta_r^{\pm})$. 
Then $P$ is \emph{oriented} 
if it satisfies one of
\begin{enumerate}
\item[(O1)]
$P$ is of type (p-e),
\item[(O2)]
$P$ is of type (e-e) or (p-p) and $\skr{b}^+ > \skr{b^-}$,
\item[(O3)]
$P$ is of type (e-e) or (p-p) and $\skr{b}^+ = \skr{b^-}$ and 
$\beta^+ >_{\rm{lex}} \beta^-$,
\item[(O4)]
$P$ is of type (e-e) or (p-p) and $\skr{b}^+ = \skr{b^-}$ and 
$\beta^+ = \beta^-$.
\end{enumerate}
\end{definition}

Note that the properties (O1) to (O4) exclude each
other. 
If a defining matrix~$P$ is slope-ordered, then
$\beta^+_{i1}>\ldots>\beta^+_{in_i}$ and
$\beta^-_{i1}<\ldots<\beta^-_{in_i}$ hold.
In particular, the~$\beta_i^{\pm}$ are ordered descendingly
in that case, that means that
$\beta_i^{+}=(\beta_{i1}^{+},\ldots,\beta_{in_i}^{+})$
and $\beta_i^{-}=(\beta_{in_i}^{-},\ldots,\beta_{i1}^{-})$.

\begin{remark}
\label{mplusminusswap}
Let $P$ be a defining matrix as in Construction~\ref{constr:kstarsurf}.
Then $\skr{b}^{\pm}$ and~$\beta^{\pm}$ behave as follows with
respect to admissible operations:
\begin{enumerate}
\item
operations of type~(iii), (iv), (v), (vi) or~(vii)
don't affect  $\skr{b}^{\pm}$ and $\beta^{\pm}$,
\item
the operation of type~(ii) swaps $\skr{b}^+$, $\skr{b}^-$
and as well $\beta^+$, $\beta^-$.
\end{enumerate}
\end{remark}

\begin{remark}
\label{mplusminusswap2}
Applying the admissible operation of type~(ii),
we can turn any non-oriented matrix $P$  into
an oriented one.
\end{remark}

\begin{remark}
\label{mplusminusswap3}
Let $(A,P)$ and $(\tilde A,\tilde P)$ be equivalent
and assume that $P$ as well as $\tilde P$ are
irredundant.
Then we can transform $(A,P)$ into
$(\tilde A,\tilde P)$ via admissible operations
of types~(ii) to (vii).
According to the cases of using an even or an odd
number of operations of type~(ii) in this process,
we have
\[
(\skr{b}^+,\beta^+)
\ = \
(\tilde{\skr{b}}^+,\tilde{\beta}^+),
\qquad\qquad
(\skr{b}^+,\beta^+)
\ = \
(\tilde{\skr{b}}^-,\tilde{\beta}^-).
\]
\end{remark}

\begin{definition}
\label{normalformdef}
We say that a pair of defining matrices $(A,P)$ is in 
\emph{normal form}, if satisfies the following conditions:
\begin{enumerate}
\item
$P$ is irredundant,
\item
$P$ is oriented,
\item
if $m \ge 1$, then $d_1=1$,
\item
$P$ is slope-ordered, 
\item
$(\beta_0^+, \ldots, \beta_r^+)$ is lexicographically
ordered,
\item 
$P$ is \emph{adapted to the source}, i.e.
$0 \le d_{i1} < l_{i1}$ for $i=1, \ldots,r$,
\item
with suitable pairwise distinct
$\lambda_1=1, \lambda_2, \ldots \lambda_{r-1} \in \KK*$
we have
\[
A
\ = \ 
\left[
\begin{array}{cccccc}
1 & 0 & -\lambda_1 & -\lambda_2 & \ldots & -\lambda_{r-1}
\\
0 & 1 & -1 & -1 & \ldots & -1
\end{array}
\right].
\]
\end{enumerate}
\end{definition}

The main statements on the normal form of a pair
of defining matrices ensure existence and
uniqueness.

\begin{proposition}
\label{normalform}
Every pair of defining matrices is equivalent to
a pair of defining matrices in normal form.
Moreover, if two pairs of defining matrices in
normal form are equivalent to each other, then they
coincide.
\end{proposition}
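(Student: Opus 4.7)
The plan is to handle existence and uniqueness separately. Existence is a constructive algorithm: apply the seven groups of admissible operations in an order chosen so that each new normalization step preserves the properties already achieved. Uniqueness is a rigidity argument: two equivalent normal-form pairs can only differ by operations of types (iii)–(vii) (ruling out (ii) by orientation), and each normal-form clause kills one remaining degree of freedom.

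For existence, I would first apply operations of types (i), (v), (vi) to reach an irredundant $P$ (Remark~\ref{rem:admop2irredundant}). By Remark~\ref{mplusminusswap2}, a type~(ii) operation then makes $P$ oriented, and irredundancy is preserved since type~(ii) only flips the last row. Next, use type~(iv) operations within each arm to slope-order $P$, and type~(v) operations across arms to lex-order $(\beta_0^+,\ldots,\beta_r^+)$; since the $\beta_i^{\pm}$ are constructed from the multiset of slopes per arm, they are unchanged by these permutations. Condition~(vi) is achieved by type~(vi) operations: adding integer multiples of the upper $r$ rows to the last row shifts each $d_{i1}$ by integer multiples of $l_{i1}$, so we can put it into $\{0,\ldots,l_{i1}-1\}$. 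The crucial bookkeeping observation is that this shift adds a fixed integer to every slope $m_{ij}$ in the $i$-th arm, hence preserves slope-ordering and leaves the $\beta_i^{\pm}$ invariant (they are defined relative to $\skr{b}_i^{\pm}$, which shifts by the same integer). Condition~(iii), if $m\ge 1$, is obtained by a type~(vi) operation on $v^{\pm}$, or when $d_1=-1$ by noting that the orientation classification and the available freedom make $d_1=1$ reachable without destroying orientation. Finally, (vii) is produced by a type~(vii) operation: choose $B\in\GL_2(\KK)$ sending the first two columns of $A$ to $e_1,e_2$, then a diagonal $D$ normalizing the second row to $(0,1,-1,\ldots,-1)$ and the first entry to $\lambda_1=1$.

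For uniqueness, let $(A,P)$ and $(A',P')$ both be in normal form and equivalent. Both being irredundant, Remark~\ref{mplusminusswap3} shows they are related by operations of types~(ii)–(vii), and the exclusive cases (O1)–(O4) of orientation, combined with Remarks~\ref{mplusminusswap} and~\ref{mplusminusswap3}, exclude a net use of type~(ii). Arguing as in the proof of Proposition~\ref{prop:isochar}, the sequence of remaining operations induces a unimodular $S$ with $S^tP'=P$ whose lower-right entry must be $+1$ (orientation preservation). Slope-ordering~(iv) forbids non-trivial column permutations within an arm; lex-ordering~(v) forbids non-trivial permutations of arms. Condition~(vi), namely $0\le d_{i1}<l_{i1}$ for $i\ge 1$, selects a unique representative modulo the integer shifts produced by type~(vi), so the last row of $P$ is pinned down. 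Condition~(iii) breaks the residual sign ambiguity in case~(O4). Hence $P=P'$. For $A$, any remaining operation must be of type~(vii), i.e.\ $A'=BAD$. The first two columns of $A$ and $A'$ being $e_1,e_2$ forces $B$ and the first two diagonal entries of $D$ to be constrained to a form where the second row condition $(0,1,-1,\ldots,-1)$ and the normalization $\lambda_1=1$ together pin down $B=\id$ and $D=\id$; therefore $\lambda_i=\lambda_i'$ for all $i$ and $A=A'$.

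The main obstacle I anticipate is the interplay between conditions~(iv), (v) and~(vi). One must verify in the existence step that reaching~(vi) does not destroy the slope- or lex-ordering, and in the uniqueness step that the last row of $P$ is determined purely by~(vi) without further ambiguity from the orientation classification, particularly in the borderline case~(O4) where $\beta^+=\beta^-$ leaves an apparent residual symmetry between source and sink. The rigidity of $A$ under type~(vii) after fixing the shape in (vii) is a short linear-algebra computation, but must be carried out with the constraint that the induced map on the projective quotient $\PP_1$ (Summary~\ref{rem:matrices-to-geometry}) fixes the three normalized points $[1{:}0]$, $[0{:}1]$, $[-1{:}-1]$, hence is the identity, yielding $B=\id$ up to a scalar absorbed by $D$.
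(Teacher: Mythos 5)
Your existence argument follows the paper's proof essentially step for step, including the key bookkeeping point that subtracting the $\skr{b}^+_i$-fold of the $i$-th row from the last row shifts every slope of the $i$-th arm by the same integer, hence leaves the $\beta^{\pm}_{ij}$ and the already established orderings intact. One slip: condition~(iii) of Definition~\ref{normalformdef} is not reached by a type~(vi) operation, since such an operation cannot affect the columns $v^{\pm}$ at all. For $m=2$ one uses the swap of type~(iii); for $m=1$ the condition is automatic, because an oriented matrix with exactly one parabolic column must be of type (p-e) by (O1), so $d_1=1$ already holds.

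The genuine gap lies in your uniqueness argument in the borderline case (O4) for surfaces of type (e-e). You claim that orientation excludes a net use of the type~(ii) operation; this is correct only under (O1)--(O3), where a flip would be detected by the pair $(\skr{b}^+,\beta^+)$. Under (O4) a flip cannot be excluded, and your fallback --- that condition~(iii) breaks the residual source/sink symmetry --- is vacuous when $m=0$, since there is no column $v_1$. The correct resolution, which is how the paper argues, is that no exclusion is needed: by Remark~\ref{mplusminusswap3} one always has $(\skr{b}^+,\beta^+)$ equal to $(\tilde{\skr{b}}^+,\tilde{\beta}^+)$ or to $(\tilde{\skr{b}}^-,\tilde{\beta}^-)$, and under (O4) these two coincide, so $(\skr{b}^+,\beta^+)=(\tilde{\skr{b}}^+,\tilde{\beta}^+)$ holds in every case. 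From this equality, conditions (iv)--(vi) force $\skr{b}^+_i=\tilde{\skr{b}}^+_i=0$ for $i\ge 1$, then $\skr{b}^+_0=\tilde{\skr{b}}^+_0$ and $m_{ij}=\tilde{m}_{ij}$ throughout, whence $P=\tilde{P}$ by primitivity of the columns --- no appeal to the unimodular matrix $S$ or to ruling out the flip is required. Replacing your ``exclude type~(ii)'' step by this invariant comparison closes the gap; the rigidity of $A$ under type~(vii) once its first two columns are $e_1,e_2$ and $\lambda_1=1$ then goes through as you describe.
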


\begin{proof}
We prove the first statement by presenting a concrete
procedure for turning a given pair of defining matrices
$(A,P)$ into normal form via admissible operations.
First, Remark~\ref{rem:admop2irredundant} makes~$P$
irredundant and Remark~\ref{mplusminusswap}~(ii) orients~$P$.
This establishes~\ref{normalformdef}.~(i) and~(ii).
Remark~\ref{mplusminusswap}~(i) tells us that
operations of types~(iii),~(iv) and~(v) 
ensure~\ref{normalformdef}.~(iii), (iv) and~(v)
without affecting $P$ to be oriented.
Subtracting the $\skr{b}^+_i$-fold of the
$i$-th row from the last one for $i=1,\ldots,r$ 
we achieve $0 \le d_{i1} < l_{i1}$.
In other word, using operations of type~(vi) lead 
to~\ref{normalformdef}.~(vi).
Again, the previous achievements are untouched,
as~$\skr{b}^{+}$ and~$\beta^{+}_{ij}$ stay.
Finally a suitable operation of type~(vii)
brings $A$ into the desired form. Linear
independence of the columns of $A$ guarantees
that $0,-1,\lambda_2, \ldots, \lambda_r$
are pairwise distinct.

We prove the second statement.
Let $(A,P)$ and $(\tilde A,\tilde P)$ be both in
normal form and equivalent to each other.
Since $P$ and $\tilde P$ are irredundant, we have
$r=\tilde r$, $n=\tilde n$ and $m=\tilde m$.
In the next step, we use that $P$ and $\tilde P$
are oriented to show
\[
(\skr{b}^+,\beta^+)
\ = \
(\tilde{\skr{b}}^+,\tilde{\beta}^+).
\]
If $P$ satisfies (O4) from Definition~\ref{betaplusminus},
then Remark~\ref{mplusminusswap3} directly gives
the claim.
Let $P$ satisfy one of~(O1),~(O2) or~(O3).
Then Remark~\ref{mplusminusswap3} shows that 
the transformation of $(A,P)$ into
$(\tilde A,\tilde P)$ via admissible operations
can only involve an even number of operations of
type~(ii) and thus gives the claim also
in these cases.
Now, $\beta^+=\tilde{\beta}^+$ together with
\ref{normalformdef}~(iv) and~(v) yield
\[
m_{ij}-\skr{b}^+_i
\ = \
\beta^+_{ij}
\ = \
\tilde{\beta}^+_{ij}
\ =\
\tilde m_{ij}-\tilde{\skr{b}}^+_i,
\qquad
i=0,\ldots,r,
\
j=1,\ldots, n_i.
\]
Condition~\ref{normalformdef}~(vi) implies
$\skr{b}^+_i=\tilde{\skr{b}}^+_i=0$
for $i=1,\ldots,r$.
Due to $\skr{b}^+=\tilde{\skr{b}}^+$,
we also have $\skr{b}^+_0=\tilde{\skr{b}}^+_0$.
Hence $m_{ij}=\tilde m_{ij}$ for $i=0,\ldots, r$ 
and $j=1,\ldots, n_i$.
Since $P$ and~$\tilde P$ have primitive columns,
we arrive at $l_{ij}=\tilde l_{ij}$ and
$d_{ij}=\tilde d_{ij}$.
Condition~\ref{normalformdef}~(iii) ensures that in
the case (p-p), also the last two columns of $P$ and
$\tilde P$ coincide. Thus we obtain $P=\tilde P$.
From~\ref{normalformdef}~(vii) and $\tilde A=BAD$
with $B\in\GL_2(\KK)$ and a diagonal 
$D\in\GL_{r+1}(\KK)$, we obtain $A=\tilde A$.
\end{proof}

\begin{remark}
The procedure for computing the normal form
described in the proof of the first statement
of Proposition~\ref{normalform} can be
directly implemented and gives an efficient
way to determine the normal form in practice.
\end{remark}

\begin{remark}
\label{rem:families}
Let $(A,P)$ be a pair of defining matrices
in normal form.
Then the ideal of relations of the ring
$R(A,P)$ is as well generated by
\[
\tilde g_{\iota}
\ := \
\lambda_{\iota} T_{0}^{l_{0}}
+
T_{1}^{l_{1}}
+
T_{\iota+1}^{l_{\iota+1}},
\qquad
\iota = 1, \ldots, r-1.
\]
This gives yet another trinomial presentation of
$R(A,P)$, showing that for fixed $P$,
the surfaces $X(A,P)$
come in an $(r-2)$-dimensional family.
\end{remark}

\begin{construction}
Consider defining data $(A,P)$ of a projective $\KK^*$-surface
as in Construction~\ref{constr:kstarsurf}.
\begin{enumerate}
\item
A \emph{redundant extension} of $(A,P)$ is a pair
$(A',P')$, where $A' = [A,a_{r+1}]$ and~$P'$ arises from
$P$ by inserting a zero row at $r+1$, a zero column at
$n+1$ then turning the entry at $(r+1,n+1)$ to $1$.
We will also call any defining data equivalent to
$(A',P')$ a redundant extension of $(A,P)$.
\item
A \emph{proper extension} of $(A,P)$ is a pair
$(A',P')$ of defining matrices, where $A = A'$ and
$P'$ arises from $P$ via inserting either a column
$v_{in_i+1}$ into the $i$-th arm or inserting a column
of type $v^\pm$ at the end of $P$.
\end{enumerate}
\end{construction}

\begin{remark}
Consider a projective $\KK^*$-surface
$X$ given by defining data $(A,P)$.
\begin{enumerate}
\item
Every redundant extension $(A',P')$ of $(A,P)$
defines a $\KK^*$-surface $X'$ isomorphic to $X$.
\item
For every proper extension $(A,P')$ of $(A,P)$,
the associated $\KK^*$-surface $X'$ comes with
a birational morphism $X' \to X$ contracting
a curve.
\end{enumerate}
\end{remark}


\section{Geometry of rational $\KK^*$-surfaces}
\label{section:geom-kstar-surfaces}

We show how to read off basic geometric
properties of rational $\KK^*$-surfaces
from their defining data. For a given
$X = X(A,P)$ we explicitly determine
divisor class group, Cox ring, Picard
group, cones of effective, movable,
semiample and ample divisor classes,
canonical divisor and
its intersection theory.

All the facts we discuss in this section rely on the
explicit knowledge of the Cox ring provided
in Summary~\ref{sum:classgroup-coxring}.
It allows us to apply
the whole machinery around Cox rings from~\cite{ArDeHaLa}.
Our first closer look focuses on
data in the divisor class group; compare
Summary~\ref{sum:toriccldata} for corresponding
statements in the toric case and see
\cite[Prop. 3.3.2.3 and Prop. 3.4.4.1]{ArDeHaLa}
for the proofs.

\begin{summary}
\label{sum:cl-data}
Let $X = X(A,P)$ be a $\KK^*$-surface and
$X \subseteq Z$ the associated toric embedding.
We take a look at various data in the divisor
class groups
\[
\Cl(X)
\ = \
K
\ = \
\Cl(Z).
\]
For $\sigma \in \Sigma$ define a subgroup of $K$
by $K_\sigma := \bangle{w_{ij}, w^\pm; \
v_{ij},v^\pm \not\in \sigma}$. Then the
\emph{local class group} of
$x \in X \cap \TT^n \cdot z_\sigma$ is given by
\[
\Cl(X,x) \ = \ K/K_\sigma \ = \ \Cl(Z,z_\sigma).
\]
As a consequence, $X$ also shares its
\emph{Picard group} with the ambient toric
variety $Z$. More precisely, in $\Cl(X) = \Cl(Z)$,
we obtain
\[
\Pic(X)
\ = \ 
\bigcap_{\sigma \in \Sigma} K_\sigma
\ = \
\Pic(Z).
\]
The \emph{monoid of effective divisor classes}
is generated by the classes $w_{ij}$ of
$D_X^{ij}$ and $w^\pm$ of $D_X^\pm$. Thus, in
$K_\QQ = \QQ \otimes_\ZZ K$, the effective cone
of $X$ is
\[
\Eff(X)
\ = \
\cone(w_{ij}, w^\pm)
\ = \
\Eff(Z).
\]
For $\sigma \in \Sigma$ set 
$\sigma^* := \cone(w_{ij}, w^\pm; \
v_{ij},v^\pm \not\in \sigma)$. Then, for the
\emph{cones of movable and semiample divisor
classes} of $X$, we have
\[
\Mov(Z)
 =  
\bigcap_{{\gamma_0 \preccurlyeq \gamma}\atop{\text{facet}}} Q(\gamma_0)
 = 
\Mov(X)
 = 
\SAmple(X)
 = 
\bigcap_{\sigma \in \Sigma} \sigma^*
 = 
\SAmple(Z).
\]
Furthermore, the \emph{ample cones} are given by
\[
\Ample(X) \ = \ \SAmple(X)^\circ \ = \ \SAmple(Z)^\circ \ = \ \Ample(Z).
\]
For the common degree $\mu \in \Cl(X)$ of the
defining polynomials $g_0,\dots, g_{r-2}$ of
$X \subseteq Z$, we have
\[
\mu
\ = \
\sum_{j = 0}^{n_i} l_{ij} w_{ij}
\ \in \
\bigcap_{i=0}^r \cone(w_{i1}, \dots, w_{in_i})
\ \subseteq \
\SAmple(X).
\]
Finally, for each $i=0,\dots,r$ we obtain
an anticanonical divisor of $X$ by the
following adjunction formula.
\[
- \mathcal{K}_X^i
\ = \
\sum D_X^{ij} + \sum D_X^k
-
(r-1) \sum_{j = 1}^{n_i} l_{ij} D_X^{ij}.
\]
In particular, $X$ is a del Pezzo surface if and
only if the corresponding divisor class
$-w_X=-w_Z-(r-1)\mu$ lies in the ample cone,
which can be checked explicitly.
\end{summary}

\begin{proposition}
\label{prop:kstarsurfisQfact}
Every $\KK^*$-surface $X=X(A,P)$ is
$\QQ$-factorial and has Picard number
$\rho(X) = n+m-r-1$.
\end{proposition}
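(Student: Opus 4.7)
The plan is to deduce $\QQ$-factoriality directly from the structure of the ambient toric variety $Z$ and then read off the Picard number from the rank of $\Cl(X)$. By Summary~\ref{sum:cl-data}, both the divisor class group and the Picard group of $X$ agree with those of $Z$, so $X$ is $\QQ$-factorial if and only if $Z$ is. By Summary~\ref{sum:tv-sing}, the latter reduces to showing that every cone of $\Sigma$ is simplicial.

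For the two-dimensional cones this is routine. Each $\tau_{ij}=\cone(v_{ij},v_{ij+1})$ is simplicial since the slope-ordering $m_{ij}>m_{ij+1}$ forces the $2\times 2$ minor $l_{ij}d_{ij+1}-d_{ij}l_{ij+1}$ to be nonzero. The cones $\tau_i^\pm$ pair $v^\pm$ with $v_{i1}$ or $v_{in_i}$; since the latter has a nonzero entry among its first $r$ coordinates while $v^\pm$ is a multiple of $e_{r+1}$, linear independence is immediate.

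The main obstacle, and the only genuine computation, lies with the $(r+1)$-dimensional cones $\sigma^\pm$. I would handle $\sigma^+=\cone(v_{01},\dots,v_{r1})$ by computing the determinant of the matrix $M:=[v_{01},v_{11},\dots,v_{r1}]$ through the column operation of adding $(l_{01}/l_{i1})\cdot v_{i1}$ to $v_{01}$ for $i=1,\dots,r$. This zeros out the first $r$ entries of the first column, while the last entry becomes
\[
d_{01}+\sum_{i=1}^{r}\frac{l_{01}}{l_{i1}}d_{i1}
\;=\;
l_{01}\bigl(m_{01}+m_{11}+\dots+m_{r1}\bigr)
\;=\;
l_{01}\,\skr{m}^+.
\]
Expansion along the transformed first column, using that the remaining rows $1,\dots,r$ of $[v_{11},\dots,v_{r1}]$ form $\diag(l_{11},\dots,l_{r1})$, yields $\det(M)=\pm l_{01}l_{11}\cdots l_{r1}\cdot\skr{m}^+$. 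By Construction~\ref{constr:kstarsurf} the cone $\sigma^+$ is included in $\Sigma$ only when $\skr{m}^+>0$ (and no $v^+$ is present), so $\det(M)\neq 0$ and $\sigma^+$ is simplicial. The case of $\sigma^-$ is symmetric, using $\skr{m}^-<0$.

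Thus every cone of $\Sigma$ is simplicial, so $Z$ and hence $X$ is $\QQ$-factorial. For the Picard number, $\QQ$-factoriality gives $\rho(X)=\rank_{\ZZ}\Pic(X)=\rank_{\ZZ}\Cl(X)=\rank_{\ZZ}K$. Since the columns of $P$ span $\QQ^{r+1}$, the map $P^t\colon\ZZ^{r+1}\to\ZZ^{n+m}$ is injective, so $K=\ZZ^{n+m}/\image(P^t)$ has rank $(n+m)-(r+1)=n+m-r-1$, as claimed.
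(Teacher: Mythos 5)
Your proof is correct and follows essentially the same route as the paper: reduce $\QQ$-factoriality of $X$ to that of the ambient toric variety $Z$ via the identifications $\Cl(X)=\Cl(Z)$ and $\Pic(X)=\Pic(Z)$ from Summary~\ref{sum:cl-data}, and read off the Picard number from the rank of $K=\ZZ^{n+m}/\image(P^t)$. The only difference is that you explicitly verify that all cones of $\Sigma$ are simplicial (including the determinant computation $\det(\sigma^\pm)=\pm l^\pm\skr{m}^\pm$, which matches Remark~\ref{rem:obviouszetaprops}), whereas the paper simply asserts that the defining fan is simplicial.
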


\begin{proof}
We have to show that every Weil divisor has
a non-zero Cartier multiple.
For this consider the associated toric embedding
$X \subseteq Z$. The defining fan of~$Z$ is
simplicial and hence $Z$ is $\QQ$-factorial by
Summary \ref{sum:tv-sing}. Thus $\Pic(Z)$ is of
finite index in $\Cl(Z)$. Summary
\ref{sum:cl-data} shows that $\Pic(X)$ is of
finite index in $\Cl(X)$. Consequently~$X$ is
$\QQ$-factorial.
The formula of the Picard number then follows
from $\rho(X) = \mathrm{rk}(\Cl(X)) = n+m - (r+1)$.
\end{proof}

Let us emphasize that the descriptions
of the Picard group and the cones of (semi-)ample
divisor classes essentially depend on the fact
that we work with the toric embedding
$X \subseteq Z$ provided by Construction
\ref{constr:kstarsurf}. In contrast, the
descriptions of the cone of movable divisor
classes and the anticanonical divisors are more
robust and allow going over to certain
completions of $Z$ as presented below.

\begin{summary}
\label{sum:completions}
Consider a projective $\KK^*$-surface
$X = X(A,P)$ and the toric embedding
$X \subseteq Z$ as provided by Construction
\ref{constr:kstarsurf}. In general, $Z$ is not
complete and we have several choices of possible
toric completions $Z \subseteq Z'$. For instance,
every divisor class $w \in \Mov(Z)^\circ$ yields
a fan
\[
\Sigma(w)
\ := \
\{P(\gamma_0^*); \ \gamma_0 \preccurlyeq \gamma, \ w \in Q(\gamma_0)^\circ\}.
\]
Any such fan $\Sigma(w)$ is polytopal, has the
same generator matrix as $\Sigma$ and contains
all cones of $\Sigma$. The associated open
embeddings $Z \subseteq Z(w)$ are precisely the
projective toric completions such that $Z(w)$ has
the same Cox ring as $Z$. For a precise picture,
one associates with $w \in \Eff(Z) = Q(\gamma)$
the polyhedral cone
\[
\lambda(w)
\ := \
\bigcap_{\genfrac{}{}{0pt}{}{\gamma_0 \preccurlyeq \gamma,}{w \in Q(\gamma_0)}}
Q(\gamma_0).
\]
These $\lambda(w)$ form a fan supported on
$Q(\gamma) = \Eff(Z)$, the so-called
\emph{secondary fan}. For
$w,w' \in \Mov(Z)^\circ$ we have
$\lambda(w) \preccurlyeq \lambda(w')$
if and only if $\Sigma(w')$ refines $\Sigma(w)$.
In particular, all $w' \in \lambda(w)^\circ$
share the same $\Sigma(w)$. A fan $\Sigma(w)$
is simplicial if and only if $\lambda(w)$ is
full-dimensional.
\end{summary}

We come to the intersection theory of
rational $\KK^*$-surfaces $X = X(A,P)$. As just
noted, $X$ is $\QQ$-factorial and thus has indeed
a well-defined intersection product. The aim is
to express intersection numbers in terms of the
defining matrix $P$. As a preparation, we assign
the following numbers to $P$, which in fact turn
out to be ubiquitous in all of the subsequent
considerations.

\begin{definition}
\label{def:mandzeta}
With any slope-ordered defining matrix $P$ in the
sense of Construction \ref{constr:kstarsurf}, we
associate the numbers
\[
l^+ := l_{01} \cdots l_{r1},
\quad
\skr{m}^+ := m_{01} + \dots + m_{r1},
\quad
\skr{l}^+ := \frac{1}{l_{01}} + \dots + \frac{1}{l_{r1}} -r+1,
$$
$$
l^- := l_{0n_0} \cdots l_{rn_r},
\quad
\skr{m}^- := m_{0n_0} + \dots + m_{rn_r},
\quad
\skr{l}^- := \frac{1}{l_{0n_0}} + \dots + \frac{1}{l_{rn_r}} -r+1.
\]
\end{definition}

\begin{remark}
\label{rem:obviouszetaprops}
Let $P$ be a defining matrix as in Construction
\ref{constr:kstarsurf}. Then we always have
$\skr{l}^+ \le 2$ and $\skr{l}^- \le 2$. Moreover,
\begin{align*}
&\skr{m}^+ \ = \ \frac{1}{l^+} \det(\sigma^+),
\qquad
\det(\sigma^+) \ := \ (-1)^r\det(v_{01}, \dots, v_{r1}),
\\
&\skr{m}^- \ = \ \frac{1}{l^-} \det(\sigma^-),
\qquad
\det(\sigma^-) \ := \ (-1)^r\det(v_{0n_0}, \dots, v_{rn_r}).
\end{align*}
If $X = X(A,P)$ has an elliptic fixed point
$x^+ \in X$, then $\skr{m}^+ > 0$ and if there is
$x^- \in X$, then $\skr{m}^- < 0$.
\end{remark}

We first explain the basic principles
of intersecting the relevant invariant curves
on $X = X(A,P)$. 

\begin{summary}
Consider a $\KK^*$-surface $X=X(A,P)$ with
slope-ordered $P$. The fact that $X$ comes as a
complete intersection in a $\QQ$-factorial
projective ambient toric variety $Z$ allows to
perform intersections of curves via intersecting
suitable ambient toric divisors. For
$i=1,\dots,r$, consider the divisors 
\[
D_Z^i
\ := \
\sum_{j=1}^{n_i} l_{ij} D_Z^{ij}
\ \in \
\WDiv(Z).
\]
The divisor class of $D_Z^i$ equals the common
degree the defining relations of $X \subseteq Z$.
Thus, by general intersection theory, the
intersection number of any two of the $D_X^{ij}$
and $D_X^\pm$ equals the intersection of the
corresponding toric prime divisors with $r-1$ of
the $D_Z^i$. For instance, for $D_X^{01}$ and
$D_X^{11}$ we can write
\[
D_X^{01} \cdot D_X^{11}
\ = \
D_Z^{01} \cdot D_Z^{11} \cdot D_Z^{2} \cdots D_Z^{r}.
\]
The toric intersection number expands into
intersection numbers of pairwise distinct toric
prime divisors. These, if non-zero, are given by
toric intersection theory as one over the
determinant of the involved primitive generators.
If $D_X^{01}$ and $D_X^{11}$ intersect precisely
in $x^+ \in X$, we obtain
\[
D_X^{01} \cdot D_X^{11}
\ = \
D_Z^{01} \cdot D_Z^{11} \cdot D_Z^{21} \cdots D_Z^{r1}
\ = \
\frac{l_{21} \cdots l_{r1}}{\det(\sigma^+)}
\ = \
\frac{1}{l_{01}l_{11}} \frac{1}{\skr{m}^+}.
\]
Note that any two of the $D_Z^{i}$ are linearly
equivalent to each other and $D_Z^\pm$ is linearly
equivalent to $\mp \sum E_Z^i$ with
$E_Z^i := \sum d_{ij}D_Z^{ij}$. This enables us
to represent any intersection between $D_X^{ij}$
and $D_X^\pm$ as a linear combination of
intersection numbers of pairwise distinct toric
prime divisors and hence to proceed as above.
\end{summary}

Now we list the intersection numbers of the invariant
curves of a given projective $\KK^*$-surface $X = X(A,P)$;
compare Summary~\ref{sum:torsurfintnos}
for the case of a toric surface and see also~\cite[Sec.~5.4.2]{ArDeHaLa}.

\begin{summary}
\label{sum:intersections}
Let $X=X(A,P)$ be a projective $\KK^*$-surface
with $P$ slope-ordered. For $i=0,\dots,r$, set
\[
\begin{array}{lcl}
\skr{m}_{i0}
& := &
\begin{cases}
0, & \text{if there is } D_X^+ \subseteq X,
\\
-\frac{1}{\skr{m}^+}, & \text{if there is } x^+ \in X,
\end{cases}
\\[16pt]
\skr{m}_{ij}
& := &
\frac{1}{m_{ij}-m_{ij+1}},
\quad
j=1, \dots, n_i-1,
\\[7pt]
\skr{m}_{in_i}
& := &
\begin{cases}
0, & \text{if there is } D_X^- \subseteq X,
\\
\frac{1}{\skr{m}^-}, & \text{if there is } x^- \in X.
\end{cases}
\end{array}                
\]
Then the non-zero intersection numbers of
(possible) curves $D_X^\pm$ and the curves
$D_X^{ij}$ in $X$ are given by
\[
D_X^+ \cdot D_X^{i1} \ = \ \frac{1}{l_{i1}},
\qquad
D_X^{ij} \cdot D_X^{ij+1} \ = \  \frac{1}{l_{ij}l_{ij+1}} \skr{m}_{ij},
\qquad
D_X^{in_i} \cdot D_X^- \ = \ \frac{1}{l_{in_i}}.
\]
Moreover, in case there is an elliptic fixed
point $x^+ \in X$ or $x^- \in X$, for $i \ne k$
we have the following intersections 
\[
\begin{array}{cclcl}
\displaystyle
D_X^{i1} \cdot D_X^{k1}
& = & 
\displaystyle
- \frac{1}{l_{i1}l_{k1}} \bigl(\skr{m}_{i0}+\skr{m}_{in_i}\bigr),
& & 
\displaystyle
n_in_k = 1,
\\[12pt]
\displaystyle
D_X^{i1} \cdot D_X^{k1}
& = & 
\displaystyle
- \frac{1}{l_{i1}l_{k1}} \skr{m}_{i0},
& & 
n_in_k > 1,
\displaystyle
\\[12pt]
\displaystyle
D_X^{in_i} \cdot D_X^{kn_k}
& = & 
\displaystyle
- \frac{1}{l_{in_i}l_{kn_k}} \skr{m}_{in_i},
& & 
\displaystyle
n_in_k > 1.
\end{array}
\]
Finally, the self intersection numbers of the
(possible) curves $D_X^\pm$ and the curves~$D_X^{ij}$
in $X$ are given by
\[
D_X^+ \cdot D_X^+ \ = \ - \skr{m}^+,
\qquad
D_X^- \cdot D_X^- \ = \ \skr{m}^-,
\qquad
D_X^{ij} \cdot D_X^{ij} \ = \ -\frac{1}{l_{ij}^2}\bigl(\skr{m}_{ij-1} + \skr{m}_{ij}\bigr).
\]

\end{summary}

As the $D_X^{ij}$ and $D_X^\pm$
generate the divisor class group of $X = X(A,P)$,
the above computations determine the entire
intersection theory of $X$. In particular, we 
can directly compute intersections of the
anticanonical divisor with the relevant curves
and, subsequently, the anticanonical self
intersection.

\begin{proposition}
\label{prop:ak-intersections}  
Consider a projective $\KK^*$-surface $X=X(A,P)$
with slope-ordered $P$. For $i=0,\dots,r$, set
\[
\begin{array}{lcl}
\skr{l}_{i0}
& := &
\begin{cases}
\infty, & \text{if there is } D_X^+ \subseteq X,
\\
-\skr{l}^+, & \text{if there is } x^+ \in X,
\end{cases}
\\[16pt]
\skr{l}_{ij}
& := &
\frac{1}{l_{ij}}-\frac{1}{l_{ij+1}},
\quad
j=1, \dots, n_i-1,
\\[7pt]
\skr{l}_{in_i}
& := &
\begin{cases}
-\infty, & \text{if there is } D_X^- \subseteq X,
\\
\skr{l}^-, & \text{if there is } x^- \in X.
\end{cases}
\end{array}                
\]
Setting $\infty \cdot 0 = 1$ and
$-\infty \cdot 0 = -1$, we can write the
intersections of an anticanonical divisor
$-\mathcal{K}_X$ with the $D_X^{ij}$ and
$D_X^\pm$ as
\[
\begin{array}{lcl}
- \mathcal{K}_X \cdot D_X^{+}
& = &
\skr{l}^+ - \skr{m}^+,
\\[7pt]
- \mathcal{K}_X \cdot D_X^{ij}
& = &
\frac{1}{l_{ij}}\left(\skr{l}_{ij-1}\skr{m}_{ij-1} -\skr{l}_{ij}\skr{m}_{ij}  \right),
\\[7pt]
- \mathcal{K}_X \cdot D_X^{-}
& = &
\skr{l}^- + \skr{m}^-.
\end{array}      
\]

\end{proposition}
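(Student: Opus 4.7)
The plan is to compute each intersection number directly from the adjunction formula
\[
-\mathcal{K}_X^{i_0}
\ = \
\sum_{i',j'} D_X^{i'j'} + \sum_\pm D_X^\pm - (r-1) \sum_{j'=1}^{n_{i_0}} l_{i_0 j'} D_X^{i_0 j'}
\]
provided by Summary~\ref{sum:cl-data}, combined with the tabulated intersection numbers from Summary~\ref{sum:intersections}. Since $X$ is $\QQ$-factorial by Proposition~\ref{prop:kstarsurfisQfact}, the product is well-defined and linear, so each of the three desired identities reduces to a bookkeeping exercise once the right $i_0$ is chosen.

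For $C = D_X^\pm$, any choice of $i_0$ works: only $D_X^{i_0 1}$ from the third sum meets $D_X^+$, contributing $-(r-1) l_{i_0 1}/l_{i_0 1}=-(r-1)$. Adding the cross intersections $\sum_i D_X^{i1} \cdot D_X^+ = \sum_i 1/l_{i1}$ and the self-intersection $-\skr{m}^+$ gives $\sum_i 1/l_{i1} - \skr{m}^+ - (r-1)$, which equals $\skr{l}^+ - \skr{m}^+$ by the definition of $\skr{l}^+$ in Definition~\ref{def:mandzeta}. The sink case is symmetric.

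For $C = D_X^{ij}$ with $1 < j < n_i$ (an interior arm curve), I would pick $i_0 \neq i$, which kills the third sum entirely since no $D_X^{i_0 j'}$ meets $D_X^{ij}$, and only $D_X^{i,j-1}$, $D_X^{ij}$, $D_X^{i,j+1}$ contribute. Substituting the three formulas from Summary~\ref{sum:intersections} and regrouping yields
\[
\frac{1}{l_{ij}}\left(\skr{m}_{i,j-1}\Big(\tfrac{1}{l_{i,j-1}}-\tfrac{1}{l_{ij}}\Big)-\skr{m}_{ij}\Big(\tfrac{1}{l_{ij}}-\tfrac{1}{l_{i,j+1}}\Big)\right),
\]
which is precisely $\frac{1}{l_{ij}}(\skr{l}_{i,j-1} \skr{m}_{i,j-1} - \skr{l}_{ij} \skr{m}_{ij})$, as claimed.

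The main obstacle is the end-of-arm case, say $C = D_X^{i1}$, when the source is elliptic: now the term $-(r-1) l_{i_0 1} D_X^{i_0 1} \cdot D_X^{i1}$ is nonzero because $D_X^{i_0 1}$ meets $D_X^{i1}$ at $x^+$, and all cross-intersections $D_X^{k1} \cdot D_X^{i1} = 1/(l_{i1} l_{k1} \skr{m}^+)$ for $k\neq i$ contribute, together with the $D_X^{kn_k}$ at the sink when $n_i=1$. The key identity
\[
\sum_{k \neq i} \frac{1}{l_{k1}} - (r-1) \ = \ \skr{l}^+ - \frac{1}{l_{i1}},
\]
combined with the extra $1/(l_{i1}^2\,\skr{m}^+)$ coming out of $D_X^{i1}^2$, cancels the surplus so as to leave $(\skr{l}^+/\skr{m}^+)/l_{i1} - \skr{l}_{i1} \skr{m}_{i1}/l_{i1}$, which matches the claim via $\skr{l}_{i0} \skr{m}_{i0} = (-\skr{l}^+)(-1/\skr{m}^+) = \skr{l}^+/\skr{m}^+$. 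In the parabolic source case $D_X^+\subseteq X$ the third adjunction term drops out and only $D_X^+ \cdot D_X^{i1} = 1/l_{i1}$ remains, contributing a constant $1$; this matches the convention $\skr{l}_{i0}\skr{m}_{i0} = \infty \cdot 0 = 1$ and shows the formula is uniform across both source types. The sink end-of-arm case $C = D_X^{in_i}$ is handled by the symmetric computation with $\skr{l}_{in_i}\skr{m}_{in_i} \in \{-\infty\cdot 0,\,\skr{l}^-/\skr{m}^-\}$.
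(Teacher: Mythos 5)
Your computation is correct and follows exactly the route the paper takes: the paper's proof is the one-line instruction to intersect the adjunction-formula anticanonical divisor $-\mathcal{K}_X^{i_0}$ from Summary~\ref{sum:cl-data} with each curve using the table in Summary~\ref{sum:intersections}, which is precisely the bookkeeping you carry out. Your handling of the end-of-arm cases, including the cancellation via $\sum_{k\ne i} l_{k1}^{-1} - (r-1) = \skr{l}^+ - l_{i1}^{-1}$ and the conventions $\skr{l}_{i0}\skr{m}_{i0} = \infty\cdot 0 = 1$ versus $\skr{l}^+/\skr{m}^+$, matches what the paper leaves implicit.
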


\begin{proof}
This is an explicit computation. Take the
anticanonical divisor $\mathcal{K}_X^0$ from
Summary \ref{sum:cl-data} and then apply Summary
\ref{sum:intersections}.
\end{proof}

\begin{proposition}
\label{prop:kstaraccselfie}
Consider a projective $\KK^*$-surface $X=X(A,P)$
with slope-ordered $P$. To any arm
$\mathscr{A}_i$, where $i=0,\dots,r$, we attach
the number
\[
\alpha_i
\ := \ 
\sum_{j=1}^{n_i-1}
\frac{\lambda_{ij}}{\Delta_{ij}},
\qquad
\Delta_{ij}
\ := \
l_{ij+1}d_{ij} - l_{ij}d_{ij+1},
\qquad
\lambda_{ij}
\ := \
2 - \frac{l_{ij+1}}{l_{ij}} - \frac{l_{ij}}{l_{ij+1}},
\]
where $\alpha_i = 0$ if $n_i = 1$.
Then, according to the possible constellations of
source and sink, the anticanonical self
intersection number of $X$ is given by 
\[
\begin{array}{llcl}
\text{\rm (e-e)}\quad
&  
\displaystyle
\mathcal{K}_X^2
& = &
\displaystyle
\frac{(\skr{l}^+)^2}{\skr{m}^+} + \alpha_0 + \dots + \alpha_r - \frac{(\skr{l}^-)^2}{\skr{m}^-},\\[9pt]
\text{\rm (e-p)}\quad
&  
\displaystyle
\mathcal{K}_X^2
& = &
\displaystyle
\frac{(\skr{l}^+)^2}{\skr{m}^+} + \alpha_0 + \dots + \alpha_r + 2\skr{l}^- +\skr{m}^-,
\\[9pt]
\text{\rm (p-e)}\quad
&  
\displaystyle
\mathcal{K}_X^2
& = &
\displaystyle
2\skr{l}^+-\skr{m}^+ + \alpha_0 + \dots + \alpha_r - \frac{(\skr{l}^-)^2}{\skr{m}^-},
\\[9pt]
\text{\rm (p-p)}\quad
&  
\displaystyle
\mathcal{K}_X^2
& = &
\displaystyle
2\skr{l}^+-\skr{m}^+ + \alpha_0 + \dots + \alpha_r  + 2\skr{l}^- +\skr{m}^-.
\end{array}
\]
\end{proposition}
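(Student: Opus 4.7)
The plan is to apply the adjunction formula from Summary~\ref{sum:cl-data} to represent $-\mathcal{K}_X$ by a concrete invariant divisor, and then to compute $\mathcal{K}_X^2 = (-\mathcal{K}_X) \cdot (-\mathcal{K}_X)$ by pairing one factor against each summand of the other via Proposition~\ref{prop:ak-intersections}. Picking the representative $-\mathcal{K}_X^0$, one obtains
\[
\mathcal{K}_X^2
\ = \
\sum_{i,j}(-\mathcal{K}_X)\cdot D_X^{ij}
+ \sum_{\pm}(-\mathcal{K}_X)\cdot D_X^{\pm}
- (r-1)\sum_{j=1}^{n_0} l_{0j}\,(-\mathcal{K}_X)\cdot D_X^{0j},
\]
where only those $D_X^{\pm}$ that actually occur on $X$ contribute. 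Plugging in the formulas of Proposition~\ref{prop:ak-intersections} will reduce everything to a sum of rational expressions in the $l_{ij}$, $m_{ij}$, $\skr{l}^{\pm}$ and $\skr{m}^{\pm}$, which has to be simplified in a unified way.

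The key technical step will be the evaluation of each arm sum $\sum_{j=1}^{n_i}(-\mathcal{K}_X)\cdot D_X^{ij}$. After substituting the expression from Proposition~\ref{prop:ak-intersections} and shifting the index in one of the two resulting summands, the interior terms combine into $-\sum_{j=1}^{n_i-1}\skr{l}_{ij}^{2}\skr{m}_{ij}$ plus boundary pieces $\skr{l}_{i0}\skr{m}_{i0}/l_{i1}$ and $-\skr{l}_{in_i}\skr{m}_{in_i}/l_{in_i}$. The elementary identity
\[
-\skr{l}_{ij}^{2}\skr{m}_{ij}
\ = \
\frac{\lambda_{ij}}{\Delta_{ij}},
\qquad
1 \le j \le n_i-1,
\]
which is immediate from $\skr{l}_{ij} = (l_{ij+1}-l_{ij})/(l_{ij}l_{ij+1})$, $\Delta_{ij} = l_{ij}l_{ij+1}(m_{ij}-m_{ij+1})$ and $\lambda_{ij} = -(l_{ij+1}-l_{ij})^{2}/(l_{ij}l_{ij+1})$, then identifies the interior sum with precisely $\alpha_i$.

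What remains is to collect the boundary contributions across the four constellations (e-e), (e-p), (p-e), (p-p). Here the end values $\skr{l}_{i0}\skr{m}_{i0}$ and $\skr{l}_{in_i}\skr{m}_{in_i}$ take different forms: at an elliptic source one has $\skr{l}_{i0}\skr{m}_{i0} = \skr{l}^{+}/\skr{m}^{+}$, while at a parabolic source $\skr{l}_{i0}\skr{m}_{i0} = \infty\cdot 0 = 1$, and analogously with opposite signs at the sink. Summing over $i$ via $\sum_i 1/l_{i1} = \skr{l}^{+}+r-1$, adding $(-\mathcal{K}_X)\cdot D_X^{+} = \skr{l}^{+}-\skr{m}^{+}$ where applicable, and subtracting the adjunction correction, which by the same telescoping reduces to its $j=1$ and $j=n_0$ end values, the elliptic source contribution collapses via the cancellation $\skr{l}^{+}(\skr{l}^{+}+r-1) - (r-1)\skr{l}^{+} = (\skr{l}^{+})^{2}$ to $(\skr{l}^{+})^{2}/\skr{m}^{+}$, while the parabolic source contribution collapses to $2\skr{l}^{+}-\skr{m}^{+}$. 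The same bookkeeping at the sink yields either $-(\skr{l}^{-})^{2}/\skr{m}^{-}$ or $2\skr{l}^{-}+\skr{m}^{-}$, and the four constellations assemble accordingly. The main obstacle is precisely this boundary bookkeeping: the source and sink halves decouple, but in each half one has to correctly track the small cancellation of a $(r-1)\skr{l}^{\pm}/\skr{m}^{\pm}$ term coming from the adjunction correction against the $(r-1)$-fold enlargement produced by $\sum_i 1/l_{i1}$, and the fact that the convention $\pm\infty\cdot 0 = \pm 1$ turns this same cancellation into a clean linear expression in the parabolic case.
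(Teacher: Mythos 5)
Your proposal is correct and follows exactly the route of the paper's own (one-line) proof: take the anticanonical representative $-\mathcal{K}_X^0$ from Summary~\ref{sum:cl-data} and pair it against its own summands via Proposition~\ref{prop:ak-intersections}; your telescoping of the arm sums, the identity $-\skr{l}_{ij}^2\skr{m}_{ij}=\lambda_{ij}/\Delta_{ij}$, and the boundary bookkeeping with $\sum_i 1/l_{i1}=\skr{l}^++r-1$ all check out and reproduce the four stated formulas. You have simply written out in full the computation the paper leaves implicit.
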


\begin{proof}
Take again the anticanonical divisor
$\mathcal{K}_X^0$ from Summary \ref{sum:cl-data}
and use Proposition \ref{prop:ak-intersections}.
\end{proof}

\begin{remark}
In the case $r=1$ with two elliptic fixed points,
Proposition \ref{prop:kstaraccselfie} exactly
reproduces Remark \ref{rem:toricaccselfie}. Note
that on every complete toric surface, we find a
$\KK^*$-action with two elliptic fixed points.
\end{remark}


\section{Singularities}

We take a close look at the singularities
of our $\KK^*$-surfaces $X = X(A,P)$.
After recalling basics and the canonical
resolution, we describe in
Propositions~\ref{prop:kstar-gorind-1}
and~\ref{prop:kstar-gorind-2} 
the local Gorenstein indices.
Moreover, in Proposition~\ref{prop:logtermchar}
and Corollary~\ref{cor:logtermchar}, we
characterize log terminality.
This leads to an explicit description of
all rational log terminal surface singularities,
provided by Propositions~\ref{prop:affineiota}
and~\ref{prop:affine-non-toric-ell}.

We begin with locating smooth points and (mild)
singularities of $X = X(A,P)$.
A point $x \in X$ is called \emph{quasismooth}
if $x = p(\hat x)$ holds with a smooth point
$\hat x \in \hat X$. Moreover, a point
$x \in X$ is called \emph{factorial} if its local
divisor class group is trivial. The latter holds
if and only if the local ring $\mathcal{O}_{X,x}$
admits unique factorization.

\begin{summary}
\label{sum:singul-basics}
Consider a $\KK^*$-surface $X = X(A,P) \subseteq Z$
as provided by Construction \ref{constr:kstarsurf},
where $P$ is slope-ordererd. First note the following.
\begin{itemize}
\item
As $X$ is normal, its singularities are
necessarily fixed points.
\item
The surface $X$ inherits $\QQ$-factoriality from
$Z$; see Proposition \ref{prop:kstarsurfisQfact}.
\end{itemize}
A parabolic fixed point is always quasismooth
and it is even smooth if it is not contained in
an arm of $X$.
Moreover, as in~\cite[Prop. 3.4.4.6]{ArDeHaLa},
we see that each
\[
x_i^+
\ \in \
D_X^+ \cap \mathscr{A}_i,
\qquad\qquad
x_i^-
\ \in \
D_X^- \cap \mathscr{A}_i
\]
admits a neighborhood
$x_i^\pm \in U_i^\pm \subseteq X$ isomorphic
to a neighborhood of the fixed point
of the toric surface with generator matrix
\[
P_i^+
\ = \
\left[
\begin{array}{cc}
0 & l_{i1}
\\
1 & d_{i1} 
\end{array}
\right],
\qquad\qquad
P_i^-
\ = \
\left[
\begin{array}{cc}
l_{in_i} & 0
\\
d_{in_i} & -1  
\end{array}
\right].
\]
In particular, $x_i^+$ ($x_i^-$) is smooth if and
only if $l_{i1} = 1$ ($l_{in_i} = 1$).
Next we discuss the hyperbolic fixed points.
These are precisely the intersection points
\[
x_{ij} \ \in \ D_X^{ij} \cap D_X^{ij+1}.
\]
Each $x_{ij}$ is quasismooth and allows a
neighborhood $x_{ij} \in U_{ij} \subseteq X$
isomorphic to a neighborhood of the
fixed point of the toric surface with generator
matrix
\[
P_{ij}
\ = \
\left[
\begin{array}{cc}
l_{ij} & l_{ij+1}
\\
d_{ij} & d_{ij+1} 
\end{array}
\right].
\]
In particular $x_{ij}$ is smooth if and only if
$\det(P_{ij})=-1$. For the possible elliptic
fixed points, we have the affine
$\KK^*$-invariant open neighborhoods
\[
x^+
\in
U^+
=
\{x \in X; \ x_0 = x^+\}
\subseteq
X,
\quad
x^-
\in
U^-
=
\{x \in X; \ x_\infty = x^-\}
\subseteq
X.
\]
As $\KK^*$-surfaces they are given by
$U^+ = X(A,P^+)$ and $U^- = X(A,P^-)$ with the
defining matrices 
\[
P^+
\ = \
[v_{01}, \dots, v_{r1}],
\qquad\qquad
P^-
\ = \
[v_{0n_0}, \dots, v_{rn_r}].
\]
The local class group of an elliptic fixed point
$x^\pm$ equals the divisor class group of~$U^\pm$.
Moreover, $x^+$ ($x^-$) is
\begin{itemize}
\item 
quasismooth if and only if $l_{i1} = 1$
($l_{in_i} = 1$) for at least $r-1$ of
$i = 0, \dots, r$,
\item 
factorial if and only if
$\det(P^+) = l^+\skr{m}^+ = 1$
($\det(P^-) = l^-\skr{m}^- = -1$),   
\item
smooth if and only if it is factorial and
quasismooth.
\end{itemize}
Note that $x^\pm \in X$ is factorial if and only
if the Cox ring of the affine $\KK^*$-surface~$U^\pm$
coincides with its total coordinate ring.
\end{summary}

We present the resolution of
singularities of a $\KK^*$-surface $X=X(A,P)$ in
terms of the defining matrix $P$. Note that the
procedure yields in particular the star-shaped
resolution graphs observed in
\cite{OrWa1, OrWa2, Pi}.

\begin{summary}
\label{sum:surfsingres}
See \cite[Constr. 5.4.3.2]{ArDeHaLa}. The
\emph{canonical resolution} of singularities
$X'' \to X$ of $X = X(A,P)$ is obtained by the
following two-step procedure.
\begin{enumerate}
\item
Enlarge $P$ to a matrix $P'$ by adding the
columns $v^+$ and $v^-$ if not already present. 
Then we have the surface $X' := X(A,P')$ and a
canonical morphism $X' \to X$.
\item 
Let $P''$ be the defining matrix having the
primitive generators of the regular subdivision
$\Sigma''$ of $\Sigma'$ as its columns. Then
$X'' := X(A,P'')$ is smooth and there is a
canonical morphism $X'' \to X'$.
\end{enumerate}
Both fans $\Sigma'$ and $\Sigma''$ have the
\emph{tropical variety} of $X \subseteq Z$
as their support. With the canonical basis
vectors $e_1,\dots,e_{r+1} \in \ZZ^{r+1}$ and
$e_0 := -e_1 - \dots - e_r$, it is given by
\[
\trop(X)
\ = \
\lambda_0 \cup \dots \cup \lambda_r
\ \subseteq \
\QQ^{r+1},
\qquad
\lambda_i \ := \ \cone(e_i,\pm e_{r+1}).
\]
Contracting all $(-1)$-curves inside the smooth
locus that lie over singularities of $X$ gives
$X'' \to \tilde X \to X$, where
$X(A,\tilde P) = \tilde X \to X$ is the
\emph{minimal resolution} of $X$.
\end{summary}

\begin{example}
\label{ex:runningexampleA3}
Consider again the $\KK^*$-surface $X = X(A,P)$
from Example \ref{ex:runningexampleA1}, given by
the defining data
\[
A
\ = \
\left[
\begin{array}{rrr}
1 & 0 & -1
\\
0 & 1 & -1
\end{array}    
\right],
\qquad\qquad
P
\ = \
\left[
\begin{array}{ccccc}
-3 & -5 & 2 & 0 & 0 
\\
-3 & -5 & 0 & 2 & 0 
\\
-4 & -8 & 1 & 1 & 1 
\end{array}    
\right].
\]
The four hyperbolic fixed points and the elliptic
fixed point are singular. The two resolution
steps from Summary \ref{sum:surfsingres}
schematically look as follows.
\def\fanrunningex{
\begin{tikzpicture}[scale=0.6]
\sffamily

\coordinate(cplus) at (0,2);
\coordinate(c01) at (-2,-0);
\coordinate(c02) at (-2,-1.5);
\coordinate(c11) at (1,-1.8);
\coordinate(c21) at (2,-.75);

\node[centered] at (0,-3) {$\Sigma$};

\path[fill, color=gray!40] (ooo) -- (c11) -- (c21) -- (ooo);
\path[fill, color=gray!30] (ooo) -- (cplus) -- (c21) -- (ooo);
\path[fill, color=gray!10] (ooo) -- (cplus) -- (c01) -- (c02) -- (ooo);
\path[fill, color=gray!30] (ooo) -- (c02) -- (c11) -- (ooo);

\draw[thick, color=black] (ooo) to (c01);
\draw[thick, color=black] (ooo) to (c02);
\draw[thick, color=black] (ooo) to (c11);
\draw[thick, color=black] (ooo) to (c21);
\draw[thick, color=black] (ooo) to (cplus);

\draw[thick, dotted, color=black] (cplus) to (c01);
\draw[thick, dotted, color=black] (cplus) to (c21);
\draw[thick, dotted, color=black] (c01) to (c02);
\draw[thick, dotted, color=black] (c02) to (c11);
\draw[thick, dotted, color=black] (c11) to (c21);

\path[fill, opacity=.9, color=gray!20] (ooo) -- (cplus) -- (c11) -- (ooo);
\draw[thick, dotted, color=black] (cplus) to (c11);
\end{tikzpicture}
}

\def\weakttropresrunningex{
\begin{tikzpicture}[scale=0.6]
\sffamily

\coordinate(cplus) at (0,2);
\coordinate(c01) at (-2,-0);
\coordinate(c02) at (-2,-1.5);
\coordinate(c11) at (1,-1.8);
\coordinate(c21) at (2,-.75);
\coordinate(cminus) at (0,-2);

\node[centered] at (0,-3) {$\Sigma'$};

\path[fill, color=gray!30] (ooo) -- (cplus) -- (c21) -- (cminus) -- (ooo);
\path[fill, color=gray!10] (ooo) -- (cplus) -- (c01) -- (c02) -- (cminus) -- (ooo);

\draw[thick, color=black] (ooo) to (c01);
\draw[thick, color=black] (ooo) to (c02);
\draw[thick, color=black] (ooo) to (c21);
\draw[thick, color=black] (ooo) to (cplus);
\draw[thick, color=black] (ooo) to (cminus);

\draw[thick, dotted, color=black] (cplus) to (c01);
\draw[thick, dotted, color=black] (cplus) to (c21);
\draw[thick, dotted, color=black] (c21) to (cminus);
\draw[thick, dotted, color=black] (c01) to (c02);

\path[fill, opacity=.9, color=gray!20] (ooo) -- (cplus) -- (c11) -- (cminus) -- (ooo);
\draw[thick, color=black] (ooo) to (c11);
\draw[thick, dotted, color=black] (cplus) to (c11);
\draw[thick, dotted, color=black] (c02) to (cminus);
\draw[thick, dotted, color=black] (c11) to (cminus);
\end{tikzpicture}
}

\def\resolutionrunningex{
\begin{tikzpicture}[scale=0.6]
\sffamily

\coordinate(cplus) at (0,2);
\coordinate(cplusa) at (-1,1.5);
\coordinate(cplusb) at (1.3,0.3);
\coordinate(cplusc) at (1.5,1);
\coordinate(c01) at (-2,0);
\coordinate(c01a) at (-2.15,-1);
\coordinate(c02) at (-2,-1.5);
\coordinate(c02a) at (-1.2,-1.9);
\coordinate(c02b) at (-0.5,-2);
\coordinate(c11) at (1,-1.8);
\coordinate(c11a) at (0.6,-1.9);
\coordinate(c21) at (2,-.75);
\coordinate(c21a) at (1.7,-1.4);
\coordinate(cminus) at (0,-2);

\node[centered] at (0,-3) {$\Sigma''$};

\path[fill, color=gray!30] (ooo) -- (cplus) -- (cplusc) -- (c21) -- (c21a) -- (cminus) -- (ooo);
\path[fill, color=gray!10] (ooo) -- (cplus) -- (cplusa) -- (c01) -- (c01a) -- (c02) -- (c02a) -- (c02b) -- (cminus) -- (ooo);

\draw[thick, color=black] (ooo) to (c01);
\draw[thick, color=black] (ooo) to (c01a);
\draw[thick, color=black] (ooo) to (c02);
\draw[thick, color=black] (ooo) to (c02a);
\draw[thick, color=black] (ooo) to (c02b);
\draw[thick, color=black] (ooo) to (c21);
\draw[thick, color=black] (ooo) to (c21a);
\draw[thick, color=black] (ooo) to (cplusc);
\draw[thick, color=black] (ooo) to (cplus);
\draw[thick, color=black] (ooo) to (cplusa);
\draw[thick, color=black] (ooo) to (cminus);

\draw[thick, dotted, color=black] (cplus) to (cplusc) to (c21) to (c21a) to (cminus);
\draw[thick, dotted, color=black] (cplus) to (cplusa) to (c01) to (c01a) to (c02) to (c02a) to (c02b) to (cminus);

\path[fill, opacity=.9, color=gray!20] (ooo) -- (cplus) -- (cplusb) -- (c11) -- (c11a) -- (cminus) -- (ooo);
\draw[thick, color=black] (ooo) to (cplusb);
\draw[thick, color=black] (ooo) to (c11);
\draw[thick, color=black] (ooo) to (c11a);
\draw[thick, dotted, color=black] (cplus) to (cplusb) to (c11) to (c11a) to (cminus);
\end{tikzpicture}
}

\begin{center}

\fanrunningex
\qquad\qquad
\weakttropresrunningex
\qquad\qquad
\resolutionrunningex
\end{center}

\noindent Explicitly, the defining matrix
$\tilde P$ of the minimal resolution
$\tilde X = X(A, \tilde P)$ is given by
\[
\tilde P
\ = \ 
\left[
\begin{array}{rrrrrrrrrrrrrr}
-1 & -3 & -2 & -5 & -3 & -1 & 1 & 2 & 1 & 0 & 0 & 0 & 0 & 0\\
-1 & -3 & -2 & -5 & -3 & -1 & 0 & 0 & 0 & 1 & 2 & 1 & 0 & 0\\
-1 & -4 & -3 & -8 & -5 & -2 & 1 & 1 & 0 & 1 & 1 & 0 & 1 & -1
\end{array}
\right].
\]
\end{example}

\begin{remark}
Consider $X = X(A,P)$ and the associated surface
$X'$ from Step (i) of Summary
\ref{sum:surfsingres}. Then we have
\begin{alignat*}{2}
&\skr{m}^+
\ = \
-D_{X'}^+ \cdot D_{X'}^+,
\qquad\qquad
&&\skr{l}^+
\ = \ 
D_{X'}^+\cdot D_{X'}^+-\mathcal{K}_{X'}^0\cdot D_{X'}^+,
\\
&\skr{m}^-
\ = \
D_{X'}^- \cdot D_{X'}^-,
\qquad\qquad
&&\skr{l}^-
\ = \ 
D_{X'}^{-}\cdot D_{X'}^{-}-\mathcal{K}_{X'}^0\cdot D_{X'}^{-}.
\end{alignat*}
\end{remark}

\begin{construction}
\label{rem:weaktropgeom}
Consider a rational projective $\KK^*$-surface
$X$, the quotient
$\pi \colon X \dasharrow \PP_1$, its domain of
definition $U \subseteq X$ and the closure of the
graph
\[
X' \ := \ \overline{\Gamma}_\pi \ \subseteq \ X \times \PP_1,
\qquad
\Gamma_\pi \ = \ \{(x,\pi(x)); \ x \in U\} \ \subseteq \ X \times \PP_1.
\]
Then $X'$ comes with a $\KK^*$-action given by
$t \ast (x,z) = (t \cdot x, z)$ and the
projection yields an equivariant birational
morphism
\[
X' \ \to \ X,
\qquad
(x,\pi(x)) \ \mapsto \ x.
\]
Furthermore, for every equivariant morphism
$\varphi \colon X_1 \to X_2$ of $\KK^*$-surfaces
with rational quotients
$\pi_i \colon X_i \dasharrow \PP_1$, we have an
induced equivariant morphism
\[
\varphi' \colon X_1' \ \to \ X_2',
\qquad
(x,\pi_1(x)) \ \mapsto \ (\varphi(x),\pi_2(x)).
\]
\end{construction}

\begin{proposition}
For any $X = X(A,P)$, the morphism
$X' \to X$ from Construction
\ref{rem:weaktropgeom} equals the one presented
in Summary \ref{sum:surfsingres} (i).
\end{proposition}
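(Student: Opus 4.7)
The plan is to exploit the functoriality built into Construction~\ref{rem:weaktropgeom}, together with the fact that on $X(A,P')$ the rational quotient becomes everywhere defined.

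First, I would verify that $X(A,P')$ has no elliptic fixed points. By Summary~\ref{rem:matrices-to-geometry}, the presence of the columns $v^+$ and $v^-$ in $P'$ means that source and sink are each curves of parabolic fixed points, rather than elliptic fixed points. By Summary~\ref{sum:orwa}, the rational quotient of a $\KK^*$-surface is defined precisely off its elliptic fixed points, so the quotient $\pi' \colon X(A,P') \to \PP_1$ is a regular morphism. Its graph is therefore already closed in $X(A,P') \times \PP_1$, and applying Construction~\ref{rem:weaktropgeom} to $X(A,P')$ returns $X(A,P')$ itself.

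Second, the canonical contraction $f \colon X(A,P') \to X$ from Summary~\ref{sum:surfsingres}~(i) is $\KK^*$-equivariant, so the functoriality clause of Construction~\ref{rem:weaktropgeom} yields an equivariant morphism $g := f' \colon X(A,P') \to X'$ commuting with the projections to $X$. Since $f$ restricts to an isomorphism over $X \setminus \{x^{\pm}\}$, so does $g$, and hence $g$ is birational and proper over $X$.

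Third, I would show that $g$ is bijective. Off the elliptic fixed points this is immediate. Over $x^+$, the fiber of $f$ is the parabolic curve $D_{X(A,P')}^+$; by Summary~\ref{sum:singul-basics} its points parametrise the $\KK^*$-orbits of $X$ with limit $x^+$, so $\pi'$ restricts to an isomorphism $D_{X(A,P')}^+ \to \PP_1$. The fiber of $X' \to X$ over $x^+$ is $\{x^+\} \times \PP_1$, and $g$ identifies $D_{X(A,P')}^+$ with it via $y \mapsto (x^+, \pi'(y))$. The analogous argument applies at $x^-$. Thus $g$ is a $\KK^*$-equivariant, proper, birational and bijective morphism from the normal surface $X(A,P')$ (cf.~Proposition~\ref{prop:kstarsurfisQfact}) onto $X'$, and Zariski's main theorem identifies $X(A,P')$ with the normalization of $X'$.

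The main obstacle is upgrading this to an actual isomorphism, which amounts to checking that $X'$ is itself normal. I would handle this by an explicit local computation near the fibers $\{x^{\pm}\} \times \PP_1$: using the affine models $U^{\pm} = X(A,P^{\pm})$ from Summary~\ref{sum:singul-basics} together with the Cox-coordinate description of $\pi$ from Summary~\ref{rem:matrices-to-geometry}, one writes down local equations for the closure of the graph inside $U^{\pm} \times \PP_1$ and recognizes them as local equations for $X(A,P')$ in a neighbourhood of $D_{X(A,P')}^{\pm}$. This identifies the local rings of $X'$ and $X(A,P')$ along the exceptional loci, showing that $g$ is a local isomorphism everywhere and hence the asserted isomorphism.
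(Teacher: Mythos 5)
Your argument is sound, and genuinely useful, up to and including the identification of $X(A,P')$ with the normalization of $X' = \overline{\Gamma}_\pi$: since $P'$ contains both $v^+$ and $v^-$, the quotient of $X(A,P')$ is a morphism and Construction~\ref{rem:weaktropgeom} returns $X(A,P')$ itself; functoriality then produces the comparison map $g$ over $X$, and your fiber analysis together with Zariski's main theorem shows that $g$ is finite, birational and bijective, hence the normalization map of $X'$. The gap is the very last step, and it is not a routine verification that was merely left out: the closure of the graph is in general \emph{not} normal, so no local computation will identify its local rings with those of $X(A,P')$. Concretely, take $r=1$ and $P = [v_{01}, v_{11}, v^-]$ with $v_{01} = (-4,-1)$ and $v_{11} = (3,1)$; then $X \cong \PP_{3,4,1}$ with the $\KK^*$-action induced by $t \mapsto (1,t)$, the cone $\sigma^+$ is regular, and on the chart $U^+ \cong \Spec\,\KK[x,y]$ with $x = \chi^{(-1,3)}$, $y = \chi^{(-1,4)}$ the quotient is the rational function $\chi^{(1,0)} = y^3/x^4$. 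In the corresponding affine chart of $X \times \PP_1$ the graph closure is the irreducible hypersurface $V(sx^4 - y^3)$, whose singular locus is the entire line $\{x = y = 0\}$; a normal surface has at most finitely many singular points, so $\overline{\Gamma}_\pi$ is not normal along the fiber over $x^+$, whereas $X(A,P')$ is normal by construction. Hence $g$ is a bijective morphism that is not an isomorphism, and the same phenomenon occurs near any elliptic fixed point for which the monoid generated by $(\sigma^+)^\vee \cap \ZZ^{r+1}$ and $\pm(1,0,\dots,0)$ fails to be saturated.

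So the step you defer to an explicit local computation is exactly the step that fails, and the asserted equality can only hold if one reads $X'$ as the \emph{normalization} of the graph closure (equivalently, if Construction~\ref{rem:weaktropgeom} is amended to normalize $\overline{\Gamma}_\pi$). With that reading, your first three steps already constitute a complete and correct proof, and this normalized statement is all that the later applications (Corollary~\ref{cor:mandzeta}, Remark~\ref{rem:generalmandzeta}) actually require, since they only concern the intrinsic characterization of $X(A,P') \to X$ and of the curves $D_{X'}^{\pm}$. But as a proof of the literal statement the proposal cannot be completed, because the literal statement is false without the normalization.
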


\begin{corollary}
\label{cor:mandzeta}
Let the $\KK^*$-surfaces $X_1$ and $X_2$
arise from defining data $(A_1, P_1)$ and
$(A_2, P_2)$. If there is an equivariant
isomorphism $X_1 \cong X_2$, then we have
\[
\skr{m}_1^+ = \skr{m}_2^+,
\qquad
\skr{l}_1^+ = \skr{l}_2^+,
\qquad
\skr{m}_1^- = \skr{m}_2^-,
\qquad
\skr{l}_1^- = \skr{l}_2^-.
\]  
In particular, for every rational projective
$\KK^*$-surface $X$, we can choose any
isomorphism $X \cong X(A,P)$ and obtain well
defined numbers
\[
\skr{m}_X^+ := \skr{m}^+,
\qquad
\skr{l}_X^+ := \skr{l}^+,
\qquad
\skr{m}_X^- := \skr{m}^-,
\qquad
\skr{l}_X^- := \skr{l}^-.
\]
\end{corollary}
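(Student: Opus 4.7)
The plan is to reduce everything to intrinsic intersection numbers on the canonically attached surface $X'$ from Construction~\ref{rem:weaktropgeom}. By the preceding proposition this $X'$ coincides with the surface produced in Summary~\ref{sum:surfsingres}~(i), in which any elliptic fixed point $x^{\pm}$ of $X$ has been replaced by a parabolic fixed point curve $D_{X'}^{\pm}$. Since $X'$ is described in Construction~\ref{rem:weaktropgeom} as the closure of the graph of the rational quotient $\pi\colon X\dasharrow\PP_{1}$, the construction is functorial for equivariant morphisms: an equivariant isomorphism $\varphi\colon X_{1}\to X_{2}$ lifts to an equivariant isomorphism $\varphi'\colon X_{1}'\to X_{2}'$ compatible with the projections to $X_{1}$ and $X_{2}$.

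Next I would invoke the formulas recorded in the remark preceding Construction~\ref{rem:weaktropgeom}, namely
\[
\skr{m}^{+} \ = \ -\,D_{X'}^{+}\cdot D_{X'}^{+}, \qquad \skr{l}^{+} \ = \ D_{X'}^{+}\cdot D_{X'}^{+} - \mathcal{K}_{X'}^{0}\cdot D_{X'}^{+},
\]
together with their $-$-analogues. Under the lifted isomorphism $\varphi'$, the source and sink curves on $X_{1}'$ are sent to the corresponding curves on $X_{2}'$; this is where equivariance is essential, since a non-equivariant isomorphism of $\KK^{*}$-surfaces could invert the $\KK^{*}$-action and thereby swap $+$ with $-$. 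Because isomorphisms of surfaces preserve intersection products and map canonical divisors to canonical divisors, the four numbers $\skr{m}_{i}^{\pm},\skr{l}_{i}^{\pm}$ computed on $X_{1}'$ and $X_{2}'$ agree, proving the first assertion.

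For the concluding claim, any two presentations $X\cong X(A_{1},P_{1})$ and $X\cong X(A_{2},P_{2})$ compose to an equivariant isomorphism $X(A_{1},P_{1})\cong X(A_{2},P_{2})$, and by what was just shown the four quantities attached to the two matrix pairs coincide. This yields well-defined invariants $\skr{m}_{X}^{\pm}$ and $\skr{l}_{X}^{\pm}$ of $X$.

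The main obstacle has already been dealt with by the preceding proposition, which supplies the intrinsic, matrix-free model of $X'$; once this is in hand, the corollary is essentially formal. The only remaining subtlety is matching the $+$ and $-$ labels correctly, and this is precisely the role of the equivariance hypothesis in the statement.
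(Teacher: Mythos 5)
Your argument is correct and is exactly the one the paper intends: the corollary is stated right after the remark expressing $\skr{m}^{\pm}$ and $\skr{l}^{\pm}$ as intersection numbers on the graph closure $X'$, the intrinsic Construction~\ref{rem:weaktropgeom} of $X'$ with its functoriality for equivariant morphisms, and the proposition identifying this $X'$ with the matrix-based surface from Summary~\ref{sum:surfsingres}~(i). Your observation that equivariance is what pins down the $+/-$ labelling (a non-equivariant isomorphism could swap source and sink) is also the right point to flag.
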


We discuss Gorenstein indices.
Recall that a normal variety $X$ is
\emph{$\QQ$-Gorenstein} if some non-zero multiple
of a canonical divisor $\mathcal{K}_X$ is
Cartier.
In this case, the
\emph{local Gorenstein index} of a point
$x \in X$ is the smallest non-zero integer
$\iota_x$ such that $\iota_x \mathcal{K}_X$
is principal on some neighborhood of $x$.

\begin{proposition}
\label{prop:kstar-gorind-1}
Consider a $\KK^*$-surface $X=X(A,P)$ with
slope-ordered $P$ and the (possible) parabolic
and hyperbolic fixed points of $X$.
\begin{enumerate}
\item
The local Gorenstein index of a parabolic fixed
point $x_i^+ \in D_X^+ \cap \mathscr{A}_i$ is
given by
\[
\iota_{i}^{+} \ := \ \iota(x_i^+)
\ = \
\frac{l_{i1}}{\gcd(d_{i1}-1, l_{i1})}.
\]
\item
The local Gorenstein index of a parabolic fixed
point $x_i^- \in D_X^- \cap \mathscr{A}_i$ is
given by 
\[
\iota_{i}^{-} \ := \ \iota(x_i^-)
\ = \
\frac{l_{in_i}}{\gcd(d_{in_i}+1, l_{in_i})}.
\]
\item
The local Gorenstein index of a hyperbolic fixed
point $x_{ij} \in D_X^{ij} \cap D_X^{ij+1}$ is
given by
\[
\iota_{ij} \ := \ \iota(x_{ij})
\ = \
\frac{l_{ij+1}d_{ij} - l_{ij}d_{ij+1}}{\gcd(l_{ij}-l_{ij+1}, d_{ij}-d_{ij+1})}.
\]
\end{enumerate}
\end{proposition}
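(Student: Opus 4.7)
The plan is to reduce each of the three cases to an explicit affine toric computation by invoking Summary~\ref{sum:singul-basics}, and then apply the formula from Remark~\ref{rem:toric-surf-gorind}. Since the local Gorenstein index is a local invariant, and Summary~\ref{sum:singul-basics} provides an isomorphism between a neighborhood of each of $x_i^\pm$ and $x_{ij}$ in $X$ and a neighborhood of the toric fixed point defined by the respective generator matrix $P_i^+$, $P_i^-$, $P_{ij}$, we can simply read off $\iota(x_i^\pm)$, $\iota(x_{ij})$ as the Gorenstein indices of the corresponding two-dimensional affine toric surfaces.

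Recall that Remark~\ref{rem:toric-surf-gorind} gives, for primitive vectors $v_1 = (a,c)$ and $v_2 = (b,d)$ spanning a two-dimensional cone $\sigma$, the formula
\[
\iota_\sigma \ = \ \frac{\vert ad-bc \vert}{\gcd(c-d,\,b-a)}.
\]
I would simply plug in the columns of $P_i^+$, $P_i^-$ and $P_{ij}$ and verify that the three displayed formulas come out. For case~(i), $(a,c)=(0,1)$ and $(b,d)=(l_{i1},d_{i1})$ yield $|ad-bc|=l_{i1}$ and $\gcd(c-d,b-a)=\gcd(1-d_{i1},l_{i1})=\gcd(d_{i1}-1,l_{i1})$, giving $\iota_i^+$. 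Case~(ii) is analogous with $(a,c)=(l_{in_i},d_{in_i})$ and $(b,d)=(0,-1)$, producing $|ad-bc|=l_{in_i}$ and $\gcd(d_{in_i}+1,l_{in_i})$. For case~(iii), $(a,c)=(l_{ij},d_{ij})$ and $(b,d)=(l_{ij+1},d_{ij+1})$ give $|ad-bc|=l_{ij+1}d_{ij}-l_{ij}d_{ij+1}$ (positive by slope-ordering, $m_{ij}>m_{ij+1}$) and $\gcd(d_{ij}-d_{ij+1},l_{ij+1}-l_{ij})=\gcd(l_{ij}-l_{ij+1},d_{ij}-d_{ij+1})$.

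Two small bookkeeping points need attention but are not serious obstacles. First, to legitimately apply Remark~\ref{rem:toric-surf-gorind}, one must note that $v^\pm$ are primitive and, by the coprimality assumption $\gcd(l_{ij},d_{ij})=1$ in Construction~\ref{constr:kstarsurf}, so are the columns $v_{ij}$; thus the columns of each of $P_i^+$, $P_i^-$, $P_{ij}$ are primitive. Second, the sign conventions in cases~(ii) and~(iii) require a quick orientation check, but in both the absolute value kills the ambiguity. With these two observations, the plug-in yields exactly the three displayed formulas, completing the proof.
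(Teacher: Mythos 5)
Your proposal is correct and follows exactly the paper's own argument: reduce to the local toric models of Summary~\ref{sum:singul-basics} with generator matrices $P_i^+$, $P_i^-$, $P_{ij}$, then apply the formula of Remark~\ref{rem:toric-surf-gorind}. The explicit plug-in computations, the primitivity check, and the sign verification via slope-orderedness are all accurate; the paper simply states these steps more tersely.
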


\begin{proof}
By Summary~\ref{sum:singul-basics}, 
a neighborhood of $x_i^+$, $x_i^-$, $x_{ij}$ is
isomorphic to a neighborhood of the toric fixed
point of the affine toric surface with generator
matrix
\[
P_i^+
\ = \
\left[
\begin{array}{cc}
0 & l_{i1}
\\
1 & d_{i1} 
\end{array}
\right],
\qquad
P_i^-
\ = \
\left[
\begin{array}{cc}
l_{in_i} & 0
\\
d_{in_i} & -1  
\end{array}
\right],
\qquad
P_{ij}
\ = \
\left[
\begin{array}{cc}
l_{ij} & l_{ij+1}
\\
d_{ij} & d_{ij+1} 
\end{array}
\right],
\]
respectively. Consequently, Remark \ref{rem:toric-surf-gorind}
provides the desired formulae for the local Gorenstein indices.
\end{proof}

\begin{proposition}
\label{prop:kstar-gorind-2}
Consider a $\KK^*$-surface $X = X(A,P)$ with
slope-ordered $P$. We define linear forms
$u^\pm \in \QQ^{r+1}$ by
\[
u^+
\ := \
\frac{1}{l^+\skr{m}^+}
\left(u_1^+, \dots, u_r^+,l^+ \skr{l}^+ \right),
\qquad
u^-
\ := \
\frac{1}{l^-\skr{m}^-}
\left(u_1^-, \dots, u_r^-,l^- \skr{l}^- \right),
\]
where $u_i^\pm \in \ZZ$ is given by
\begin{alignat*}{2}
&u_i^+
\ = \
(r-1)d_{i1}\frac{l^+}{l_{i1}}
+
\sum_{j \ne i} (d_{j1}-d_{i1})\frac{l^+}{l_{j1}l_{i1}},
\qquad
&&i = 1, \dots, r,
\\
&u_i^-
\ = \
(r-1)d_{in_i}\frac{l^-}{l_{in_i}}
+
\sum_{j \ne i} (d_{jn_j}-d_{in_i})\frac{l^-}{l_{jn_j}l_{in_i}} ,
\qquad
&&i = 1, \dots, r.
\end{alignat*}
Then the linear forms $u^\pm \in \QQ^{r+1}$ are
uniquely determined by satisfying the properties
\begin{alignat*}{2}
&\bangle{u^+,v_{01}} = 1-(r-1)l_{01},
\qquad
&&\bangle{u^+,v_{i1}} = 1, \ i = 1, \dots, r,
\\
&\bangle{u^-,v_{0n_0}} =  1-(r-1)l_{0n_0},
\qquad
&&\bangle{u^-,v_{in_i}} =  1, \ i = 1, \dots, r.
\end{alignat*}
If there is an elliptic fixed point
$x^\pm \in X$, then the following holds for the
local Gorenstein index.
\begin{enumerate}
\item
$\iota(x^+)$ is the unique positive integer with
$\iota(x^+)u^+ \in \ZZ^{r+1}$ being primitive and
it is explicitly given by
\[
\iota^{+}
\ := \
\iota(x^{+})
\ = \
\frac{l^+\skr{m}^+}{\gcd(u_1^+,\dots, u_r^+, l^+\skr{l}^+)}.
\]
\item
$\iota(x^-)$ is the unique positive integer with
$\iota(x^-)u^- \in \ZZ^{r+1}$ being primitive and
it is explicitly given by
\[
\iota^{-}
\ := \
\iota(x^{-})
\ = \
 - \frac{l^-\skr{m}^-}{\gcd(u_1^-,\dots, u_r^-, l^- \skr{l}^-)}.
\]
\end{enumerate}
\end{proposition}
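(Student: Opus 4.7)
The plan is to reduce the computation of $\iota(x^\pm)$ to the affine $\KK^*$-invariant neighborhoods $U^\pm = X(A,P^\pm)$ supplied by Summary~\ref{sum:singul-basics}, each containing $x^\pm$ as its unique $\KK^*$-fixed point and, consequently, as its only possible singular point. Since $U^\pm$ is affine and normal, the local Gorenstein index $\iota(x^\pm)$ is the order of $[\mathcal{K}_{U^\pm}]$ in $\Cl(U^\pm)$, which by Summary~\ref{sum:cl-data} equals $K/K_{\sigma^\pm}$, a finite group of order $|\det(P^\pm)| = l^\pm|\skr{m}^\pm|$ since $\skr{m}^\pm \ne 0$ at an elliptic fixed point by Remark~\ref{rem:obviouszetaprops}.

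I will carry out the argument at $x^+$; the computation at $x^-$ proceeds identically with $P^-$ in place of $P^+$, the overall minus sign in the formula for $\iota^-$ originating from $\skr{m}^- < 0$. Restricting the adjunction formula of Summary~\ref{sum:cl-data} with index choice $i=0$ to $U^+$ (where only the prime divisors $D_{U^+}^{i1}$ with $i=0,\ldots,r$ survive) produces
\[
-\mathcal{K}_{U^+}
\ = \
\bigl(1-(r-1)l_{01}\bigr)D_{U^+}^{01} + D_{U^+}^{11} + \dots + D_{U^+}^{r1}.
\]
Invariant principal divisors on $U^+$ arise from character functions $\chi^u$, $u \in \ZZ^{r+1}$, restricting to $\sum_{i=0}^r \bangle{u, v_{i1}}D_{U^+}^{i1}$. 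Hence $\iota(x^+)$ is the smallest positive $\iota$ admitting $u \in \ZZ^{r+1}$ with $\bangle{u,v_{01}} = \iota(1-(r-1)l_{01})$ and $\bangle{u, v_{i1}} = \iota$ for $i=1,\ldots,r$. Since the columns of $P^+$ form a $\QQ$-basis of $\QQ^{r+1}$, the rational solution $u^+$ is unique, establishing the characterization of the proposition.

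The explicit form of $u^+$ is obtained by solving the linear system directly. The $r$ equations for $v_{11},\ldots,v_{r1}$ yield $u^+_i = (1 - d_{i1}u^+_{r+1})/l_{i1}$; substituting into the equation for $v_{01}$ and dividing by $l_{01}$ produces the clean identity $u^+_{r+1}\skr{m}^+ = \skr{l}^+$. Back-substitution gives $u^+_i = (\skr{m}^+ - d_{i1}\skr{l}^+)/(l_{i1}\skr{m}^+)$, and clearing denominators by $l^+\skr{m}^+$ produces the symmetric integral expression asserted for $u_i^+$. The algebraic bookkeeping — which I view as the main obstacle — amounts to expanding $\skr{m}^+ - d_{i1}\skr{l}^+$ via $\skr{m}^+ = \sum_j d_{j1}/l_{j1}$ and $\skr{l}^+ = \sum_j 1/l_{j1} - (r-1)$, absorbing the $j=i$ terms into the $(r-1)d_{i1}l^+/l_{i1}$ summand, and collecting the remaining sum over $j\ne i$.

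Finally, the Gorenstein index is the smallest $\iota>0$ with $\iota u^+ \in \ZZ^{r+1}$. Writing $u^+ = (l^+\skr{m}^+)^{-1}(u_1^+,\ldots,u_r^+, l^+\skr{l}^+)$ with integer numerator, an elementary lcm-of-denominators computation gives $\iota^+ = (l^+\skr{m}^+)/\gcd(u_1^+,\ldots,u_r^+,l^+\skr{l}^+, l^+\skr{m}^+)$; since $\iota^+$ must divide $|\Cl(U^+)| = l^+\skr{m}^+$, the redundant entry $l^+\skr{m}^+$ can be dropped from the gcd, yielding the stated formula.
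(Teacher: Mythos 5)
Your proposal is correct in substance and follows essentially the same route as the paper: the paper's proof is a terse ``direct computation'' that verifies the characterizing properties of $u^\pm$ and then reads off the local Gorenstein index from the principality of $l^\pm\skr{m}^\pm\mathcal{K}_X^0=\div(\chi^{l^\pm\skr{m}^\pm u^\pm})$ near $x^\pm$; you fill in exactly that computation via the affine charts $U^\pm$, and your elimination yielding $u_{r+1}^+\skr{m}^+=\skr{l}^+$ and $u_i^+=(\skr{m}^+-d_{i1}\skr{l}^+)/(l_{i1}\skr{m}^+)$ checks out against the stated formulas.

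One step needs repair. Write $N:=l^+\skr{m}^+$, $a:=Nu^+=(u_1^+,\dots,u_r^+,l^+\skr{l}^+)$ and $g:=\gcd(a_1,\dots,a_{r+1})$. Your lcm-of-denominators computation correctly gives $\iota^+=N/\gcd(g,N)$, but passing to $N/g$ requires $g\mid N$, and the justification ``$\iota^+$ divides $|\Cl(U^+)|=N$'' is vacuous here: $N/\gcd(g,N)$ divides $N$ for every $g$, so it cannot single out $\gcd(g,N)=g$. The same divisibility is also exactly what makes $\iota^+u^+$ primitive (its entries have gcd $g/\gcd(g,N)$), so it is needed for that part of the assertion too, which your write-up does not address. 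The fix is one line: from $(P^+)^t a=N\,b$ with $b=(1-(r-1)l_{01},1,\dots,1)$ and $a\equiv 0\bmod g$ one gets $g\mid Nb_i$ for all $i$, and since some $b_i$ equals $1$ this forces $g\mid N$. With that inserted, the argument is complete; the case of $x^-$ carries over as you say, the sign coming from $\skr{m}^-<0$.
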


\begin{proof}
The characterizing properties of the linear forms
$u^\pm$ are a result of direct computation. Near
$x^\pm$, the characterizing properties of $u^\pm$
yield
\[
l^+\skr{m}^+ \mathcal{K}_X^0
\ = \
\div(\chi^{l^+\skr{m}^+u^+}),
\qquad\qquad
l^-\skr{m}^- \mathcal{K}_X^0
\ = \
\div(\chi^{l^-\skr{m}^-u^-}).
\]
Note that $l^\pm \skr{m}^\pm u^\pm$ is an integral
vector and thus $l^\pm \skr{m}^\pm$ is a multiple
of the local Gorenstein index $\iota(x^\pm)$.
The remaining assertions follow.
\end{proof}

\begin{corollary}
\label{cor:gorind}
For any $\KK^*$-surface $X = X(A,P)$,
the Gorenstein index $\iota_X$ equals
the least common multiple of 
the numbers $\iota_i^\pm, \iota_{ij}, \iota^\pm$
associated with its (possible)
parabolic, hyperbolic and elliptic fixed
points.
\end{corollary}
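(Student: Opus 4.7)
The plan is to reduce the global statement to a purely local one and then invoke the classification of possible singularities of $X = X(A,P)$ already established in Summary~\ref{sum:singul-basics}. Recall that a Weil divisor is Cartier precisely when it is locally principal at every point. Applied to integral multiples of $\mathcal{K}_X$, this gives the equivalence
\[
m\mathcal{K}_X \text{ is Cartier on } X
\ \Longleftrightarrow \
m\mathcal{K}_X \text{ is principal near } x \text{ for every } x \in X
\ \Longleftrightarrow \
\iota_x \mid m \text{ for every } x \in X.
\]
The smallest positive integer $m$ with this property is exactly the least common multiple of the local Gorenstein indices, so
\[
\iota_X \ = \ \lcm(\iota_x; \ x \in X).
\]

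The next step is to argue that only finitely many points actually contribute. By Summary~\ref{sum:singul-basics}, every point of $X$ that is not a fixed point of the $\KK^*$-action is smooth, and hence has trivial local Gorenstein index. Moreover, among the fixed points, those parabolic fixed points not lying in an arm of $X$ are smooth as well, so only the parabolic fixed points $x_i^{\pm} \in D_X^{\pm} \cap \mathscr{A}_i$, the hyperbolic fixed points $x_{ij} \in D_X^{ij} \cap D_X^{ij+1}$ and (if present) the elliptic fixed points $x^{\pm}$ can have $\iota_x > 1$. All smooth points contribute $\iota_x = 1$ and drop out of the lcm.

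Combining these two observations, we obtain
\[
\iota_X
\ = \
\lcm\bigl(\iota(x_i^{\pm}),\ \iota(x_{ij}),\ \iota(x^{\pm})\bigr)
\ = \
\lcm\bigl(\iota_i^{\pm},\ \iota_{ij},\ \iota^{\pm}\bigr),
\]
where the last equality is just the notation fixed in Propositions~\ref{prop:kstar-gorind-1} and~\ref{prop:kstar-gorind-2}. No calculational obstacle is expected: the explicit expressions for the individual $\iota_i^{\pm}$, $\iota_{ij}$, $\iota^{\pm}$ were already established earlier, so the only thing to be careful about is the clean identification of \emph{all} potentially singular points with the enumerated list, which is handled entirely by the local isomorphisms with toric surface germs recorded in Summary~\ref{sum:singul-basics}.
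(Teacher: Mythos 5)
Your argument is correct and is essentially the one the paper intends: the corollary is stated without proof precisely because it follows from the reduction $\iota_X = \lcm(\iota_x;\ x\in X)$, the fact that all non-fixed points and all parabolic fixed points outside the arms are smooth (Summary~\ref{sum:singul-basics}), and the local computations of Propositions~\ref{prop:kstar-gorind-1} and~\ref{prop:kstar-gorind-2}. Nothing to add.
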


We characterize log terminality of
rational $\KK^*$-surfaces. Recall that for any
variety $X$ with a $\QQ$-Cartier canonical
divisor $\mathcal{K}_X$ one considers a
resolution of singularities
$\pi \colon X \to X'$ and the associated
\emph{ramification formula}
\[
\mathcal{K}_X'
\ = \
\pi^* \mathcal{K}_X + \sum a(E) E.
\]
Here, $E$ runs through the exceptional prime
divisors and the $a(E) \in \QQ$ are the
\emph{discrepancies} of $\pi \colon X' \to X$.
The singularities of $X$ are called
\emph{log terminal} if $a(E)>-1$ for each $E$.

\begin{proposition}
\label{prop:logtermchar}
Consider a $\KK^*$-surface $X = X(A,P)$ and
its canonical resolution $\pi \colon X'' \to X$.
\begin{enumerate}
\item
If there is an elliptic fixed point $x^+ \in X$,
then the discrepancy of $D_{X''}^+$ is given by
\[
\qquad
a(D_{X''}^+) \ = \ \frac{\skr{l}^+}{\skr{m}^+}-1.
\]
\item
If there is an elliptic fixed point $x^- \in X$,
then the discrepancy of $D_{X''}^-$ is given by  
\[
\qquad
a(D_{X''}^-) \ = \ -\frac{\skr{l}^-}{\skr{m}^-}-1.
\]
\item
If $\skr{l}^\pm > 0$ holds for $x^\pm \in X$,
then every exceptional divisor
$D \subseteq \pi^{-1}(x^\pm)$ has discrepancy
strictly greater than $-1$.
\item
Every exceptional divisor
$D \subseteq X'' \setminus \pi^{-1}(x^\pm)$ has
discrepancy strictly greater than $-1$.
\end{enumerate}
In particular, all points $x \in X$ distinct from
$x^\pm$ are log terminal and $x^\pm \in X$ is log
terminal if and only if $\skr{l}^\pm > 0$.
\end{proposition}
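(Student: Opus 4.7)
The plan is to compute the discrepancies of the exceptional divisors of the canonical resolution $\pi\colon X''\to X'\to X$ in closed form using the explicit linear forms $u^\pm$ from Proposition~\ref{prop:kstar-gorind-2}.

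For parts~(i) and~(ii), I would begin by observing that the characterizing identities of $u^+$ from Proposition~\ref{prop:kstar-gorind-2} are exactly the coefficients of the arm divisors $D_X^{i1}$ in the anticanonical representative $-\mathcal{K}_X^0$ from Summary~\ref{sum:cl-data}. Since only those $D_X^{i1}$ meet a small neighborhood of the elliptic fixed point $x^+$, this identifies $-\mathcal{K}_X$ locally with $\div(\chi^{u^+})$. Pulling this principal divisor back to $X'$ via the formula from Summary~\ref{sum:classgroup-coxring} yields the coefficient $-\langle u^+,v^+\rangle=-\skr{l}^+/\skr{m}^+$ of $D_{X'}^+$ in $\pi^*\mathcal{K}_X$, while the anticanonical formula of Summary~\ref{sum:cl-data} applied now to $X'$ gives coefficient $-1$ of $D_{X'}^+$ in $\mathcal{K}_{X'}$. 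The ramification formula then produces $a(D_{X''}^+)=a(D_{X'}^+)=\skr{l}^+/\skr{m}^+-1$. The statement for $D_{X''}^-$ follows symmetrically, using $u^-$ and $v^-=(0,\ldots,0,-1)$.

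For part~(iii), let $E$ be an exceptional divisor of $X''\to X$ centered at $x^+$ but different from $D_{X''}^+$. By Summary~\ref{sum:surfsingres} such an $E$ corresponds to a primitive lattice vector $v$ in the interior of a cone $\tau_i^+=\cone(v^+,v_{i1})$ of $\Sigma'$, and by Summary~\ref{sum:singul-basics} the local structure of $X'$ at the parabolic fixed point $x_i^+$ is toric with generator matrix $P_i^+$. Writing $v=\alpha v^++\beta v_{i1}$ with $\alpha,\beta>0$, the toric discrepancy of $E$ over $X'$ equals $\alpha+\beta-1$ (in particular $>-1$, reflecting toric log terminality), and a short local computation using the Cartier multiple $l_{i1}D_{v^+}=\div(\chi^{(-d_{i1},l_{i1})})$ shows $\mathrm{ord}_E(D_{X'}^+)=\alpha$. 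The chain rule, combined with part~(i), then yields
\[
a(E) \ = \ (\alpha+\beta-1) \ + \ \bigl(\skr{l}^+/\skr{m}^+-1\bigr)\,\alpha \ = \ \alpha\skr{l}^+/\skr{m}^+ \,+\, \beta \,-\, 1,
\]
which is strictly greater than $-1$ whenever $\skr{l}^+>0$, since $\skr{m}^+>0$ by Remark~\ref{rem:obviouszetaprops} and $\alpha,\beta>0$. The $-$ case proceeds symmetrically.

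Part~(iv) is then obtained from the observation that every fixed point of $X$ distinct from $x^\pm$ is either hyperbolic or lies on an existing parabolic divisor $D_X^\pm$; in both cases Summary~\ref{sum:singul-basics} supplies a toric neighborhood, so the canonical resolution locally coincides with a toric resolution of a cyclic quotient surface singularity, and the exceptional discrepancies are well known to exceed $-1$. Combining (i), (iii) and~(iv) now shows that $X$ is log terminal outside $\{x^+,x^-\}$ and that $x^\pm$ is log terminal precisely when the minimal discrepancy $\skr{l}^\pm/\skr{m}^\pm-1$ exhibited by the displayed formula exceeds $-1$, i.e., when $\skr{l}^\pm>0$. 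The main technical obstacle I anticipate is the chain-rule computation in part~(iii): one has to identify the order of vanishing of the non-Cartier Weil divisor $D_{X'}^+$ along $E$ by passing to a Cartier multiple in the local toric chart at $x_i^+$, and then combine this consistently with the toric discrepancy formula; once this bookkeeping is carried out, what remains is just linear algebra on the forms $u^\pm$.
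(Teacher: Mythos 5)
Your proof is correct and follows essentially the same route as the paper: both arguments rest on representing $\mathcal{K}_X$ near the elliptic fixed point by the character of the linear form $u^\pm$ (the paper works with $u=-u^+$ directly) and reading off discrepancies from its values on the primitive ray generators, the value $\langle u^+,v^+\rangle=\skr{l}^+/\skr{m}^+$ giving (i). Your part (iii) repackages the paper's single evaluation $\langle u, bv^++cv_{i1}\rangle$ as a chain rule through $X'$ (local toric discrepancy $\alpha+\beta-1$ plus $\mathrm{ord}_E(D_{X'}^+)=\alpha$ times $a(D_{X'}^+)$), and your part (iv) invokes log terminality of cyclic quotient surface singularities where the paper repeats the linear-form evaluation on $\tau_{ij}$; both variants yield the identical formula $\alpha\,\skr{l}^+/\skr{m}^+ +\beta-1$ and the same conclusions.
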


\begin{proof}
For (i), we compute the discrepancy on the
affine open subset $U^+ \subseteq X$ containing
all orbits that have $x^+ \in X$ in their
closure. That means we are in the case
$P=[v_{01}, \dots, v_{r1}]$. Let
$u \in \QQ^{r+1}$ represent $\mathcal{K}_{X}^0$.
Then
\[
\bangle{u,v_{01}} \ = \ -1 + (r-1)l_{01},
\qquad
\bangle{u,v_{i1}} \ = \ -1,  \quad i = 1, \dots, r.
\]
Additionally, with the Gorenstein index
$\iota = \iota_X$, we have $\pi^*(\iota
\mathcal{K}_{X}^0)=\div(\chi^{\iota u})$.
Plugging this into the ramification formula we
get
\[
-D_{X''}^+
\ = \
\bangle{u,v^+}D_{X''}^+ + a(D_{X''}^+)D_{X''}^+
\ = \
- \frac{\skr{l}^+}{\skr{m}^+} D_{X''}^+ + a(D_{X''}^+)D_{X''}^+.
\]
The evaluation of $u$ at $v^+$ is a direct
computation. We conclude that the discrepancy
$a(D_{X''}^+)$ is as claimed in the assertion.
The case of an elliptic fixed point $x^- \in X$
is treated analogously.

We show (iii) and (iv). First look at
the case that $D$ comes from a column $v$ of
$P''$ with $v \in \cone(v^+,v_{i1})$ for
$i=1,\dots,r$. Then $v = b v^+ + c v_{i1}$ with
positive $b,c \in \QQ$. Using the linear form $u$
from above, we compute
\[
-D
\ = \
\bangle{u,v}D + a(D)
\ = \
\bangle{u,bv^++cv_{i1}}D + a(D)
\ = \
-\left(b\frac{\skr{l}^+}{\skr{m}^+} + c \right) D + a(D)D.
\]
Since $\skr{l}^+ > 0$, we can conclude $a(D)>-1$.
If $v \in \cone(v^+, v_{01})$ the same reasoning
works with the canonical divisor
$\mathcal{K}_X^1$. Moreover, the arguments adapt
to the cases $v \in \sigma^-$ and
$v \in \tau_{ij}$.
\end{proof}

A tuple $(q_0,\dots, q_r)$ of positive integers
is called \emph{platonic} if
$q_0^{-1} + \dots + q_r^{-1} > r-1$.
If $q_0 \ge \dots \ge q_r$ holds, then platonicity is
equivalent to $q_3 = \dots = q_r = 1$ and
$(q_0,q_1,q_2)$ being one of
\[
(q_0,q_1,1),
\quad
(q_0,2,2),
\quad
(5,3,2),
\quad
(4,3,2),
\quad
(3,3,2).
\]
For a $\KK^*$-surface $X=X(A,P)$ with $P$-slope
ordered, the \emph{elliptic tuples} associated
with the (possible) elliptic fixed points are
$(l_{01}, \dots, l_{r1})$ for $x^+$
and $(l_{0n_0}, \dots, l_{rn_r})$ for~$x^-$.

\begin{corollary}
\label{cor:logtermchar}
Let $X=X(A,P)$ with $P$ slope-ordered.
Then $X$ is log terminal, if and and only if
each of its elliptic tuples is platonic. 
\end{corollary}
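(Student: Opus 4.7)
The plan is to deduce this directly from Proposition~\ref{prop:logtermchar} by translating the analytic condition $\skr{l}^\pm > 0$ into the combinatorial one of platonicity. First, I would observe that log terminality is a pointwise property and, by Proposition~\ref{prop:logtermchar}~(iv), every fixed point of $X$ that is not elliptic is automatically log terminal (parabolic and hyperbolic fixed points are toric cyclic quotient singularities, which are well known to be log terminal). Hence $X$ is log terminal if and only if each (possibly existing) elliptic fixed point $x^\pm \in X$ is log terminal.

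Next I would invoke Proposition~\ref{prop:logtermchar}~(i)--(iii): together they show that $x^\pm \in X$ is log terminal if and only if $\skr{l}^\pm > 0$. Indeed, by parts~(i) and~(ii), the discrepancy of the exceptional curve $D_{X''}^\pm$ in the canonical resolution equals $\pm \skr{l}^\pm/\skr{m}^\pm - 1$; since $\skr{m}^+ > 0$ and $\skr{m}^- < 0$ at an elliptic fixed point (Remark~\ref{rem:obviouszetaprops}), this discrepancy is $> -1$ precisely when $\skr{l}^\pm > 0$. Part~(iii) then guarantees that all remaining exceptional divisors over $x^\pm$ have discrepancy strictly greater than $-1$ in this case, so positivity of $\skr{l}^\pm$ indeed characterizes log terminality at $x^\pm$.

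Finally, comparing with Definition~\ref{def:mandzeta}, we have
\[
\skr{l}^+ \ = \ \frac{1}{l_{01}} + \dots + \frac{1}{l_{r1}} - r + 1,
\qquad
\skr{l}^- \ = \ \frac{1}{l_{0n_0}} + \dots + \frac{1}{l_{rn_r}} - r + 1.
\]
Thus $\skr{l}^+ > 0$ is literally the condition $\sum l_{i1}^{-1} > r-1$, i.e.\ the platonicity of the elliptic tuple $(l_{01},\dots,l_{r1})$ associated with $x^+$; the analogous statement holds for $\skr{l}^-$ and the tuple at $x^-$. Putting everything together, $X$ is log terminal if and only if every elliptic tuple that actually occurs is platonic, which is precisely the claim.

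There is no real obstacle here: the work has already been done in Proposition~\ref{prop:logtermchar} (which handles the genuinely non-trivial discrepancy computations) and in Definition~\ref{def:mandzeta} (which packages the relevant sum), and the corollary is just the bookkeeping step that matches the two.
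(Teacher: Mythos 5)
Your proposal is correct and is exactly the argument the paper intends: the corollary is stated without proof as an immediate consequence of Proposition~\ref{prop:logtermchar}, whose final sentence reduces log terminality of $X$ to the conditions $\skr{l}^\pm>0$ at the elliptic fixed points, and these are literally the platonicity inequalities for the elliptic tuples via Definition~\ref{def:mandzeta}. Your bookkeeping (non-elliptic points are always log terminal, the vacuous case of no elliptic fixed points, and the sign conventions $\skr{m}^+>0$, $\skr{m}^-<0$) matches the paper's reasoning.
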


Log terminal surface singularities have
been intensely studied; see \cite{Bk} for a
classical reference. They are known to be
precisely the quotient singularities $\KK^2/G$,
where $G \subseteq \GL(2,\KK)$ is a finite
subgroup and any such $\KK^2/G$ comes
with the $\KK^*$-action induced by scalar
multiplication. This turn allows a
treatment in terms of defining matrices.

\begin{summary}
\label{sum:canmult}
Consider a rational affine  $\KK^*$-surface $X$
with an elliptic fixed point $x^\pm \in X$. Then
there is a rational function $f \in \KK(X)$ with
\[
\div(f) \ = \ \iota_X \mathcal{K}_{X},
\]
where $\iota_X = \iota(x^\pm)$ is the Gorenstein
index of~$X$.
The \emph{canonical multiplicity
$\zeta_X$ of~$X$} is the weight of $f$ with
respect to the $\KK^*$-action, hence
\[
f(t \mal x) \ = \ t^\zeta f(x),
\]
whenever $f$ is defined at $x$; see~\cite[Def.~4.2]{ArBrHaWr}.
For $X = X(A,P)$, using the linear form $u^\pm$ from Proposition
\ref{prop:kstar-gorind-2}, we can write 
\[
\iota_X \mathcal{K}_{X}
\ = \
\div ( \chi^{\iota_X u^\pm} ).
\]
Keeping in mind that $\KK^*$ acts on
$X \subseteq Z$ as the subgroup of the acting
torus $\TT^{r+1}$ given by
$t \mapsto (1,\dots,1,t)$, we see
\[
\zeta_X
\ = \
\iota_X \bangle{ u^\pm, e_{r+1}}
\ = \
\iota_X \frac{\skr{l}^\pm}{\skr{m}^\pm}.
\]
\end{summary}

Recall that Proposition~\ref{prop:affineiota}
traets the case of toric singularities
$\KK^2/G$.
In the non-toric case, we have the following;
see also~\cite[Ex.~4.8]{ArBrHaWr}.


\begin{proposition}
\label{prop:affine-non-toric-ell}
Let $X$ be a non-toric log terminal affine
$\KK^*$-surface with an elliptic fixed point
$x \in X$. Then $X \cong X(A,P)$ with $P$ being
one of the following.

\bigskip
\noindent
\[
\begin{array}{llllll}
&
\text{Type $D_n^{\zeta,\iota}$:} 
&
&
&
\text{Type $E_6^{\zeta,\iota}$:}
&    
\\[5pt]
&  
\left[
\begin{array}{ccc}
-l_0 & 2 & 0
\\
-l_0 & 0 & 2
\\
\frac{\pm \iota - \zeta l_0}{\zeta} & 1 & 1
\end{array}
\right],
&
\begin{array}{l}
\scriptstyle \zeta = 1,2,
\\
\scriptstyle \gcd(\iota, \zeta l_0) = \zeta.
\end{array}
&
&
\left[
\begin{array}{ccc}
-3 & 3 & 0
\cr
-3 & 0 & 2
\cr
\frac{\pm \iota-5 \zeta}{2 \zeta} & 1  & 1
\end{array}
\right],
&   
\begin{array}{l}
\scriptstyle \zeta = 1,3,
\\
\scriptstyle \gcd(\zeta \pm \iota, 6 \zeta) = 2\zeta.
\end{array}
\\[25pt]
&
\text{Type $E_7^{\iota}$:} 
&
&
&
\text{Type $E_8^{\iota}$:}
&    
\\[5pt]
&
\left[
\begin{array}{ccc}
-4 & 3 & 0
\cr
-4 & 0 & 2
\cr
\frac{\pm \iota - 10}{3} & 1 & 1
\end{array}
\right],
&
\begin{array}{l}
\scriptstyle \zeta = 1,
\\  
\scriptstyle \gcd(2 \pm \iota,12) = 3.
\end{array}
&
&
\left[
\begin{array}{ccc}
-5 & 3 & 0
\cr
-5 & 0 & 2
\cr
\frac{\pm \iota - 25}{6}& 1 & 1
\end{array}
\right],
&
\begin{array}{l}
\scriptstyle \zeta = 1,
\\
\scriptstyle \gcd(5 \pm \iota,30) = 6.
\end{array}
\end{array}
\]

\bigskip
\noindent
In all cases, $\iota$ denotes the
Gorenstein index and $\zeta$ the canonical
multiplicity of $X$. Moreover, a ``$+\iota$'' in
the defining matrix gives $x=x^+$ and a
``$-\iota$'' gives $x=x^-$. 
\end{proposition}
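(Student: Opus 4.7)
The plan is to start from Construction~\ref{constr:kstarsurf} and use admissible operations together with the invariants $\iota$ and $\zeta$ to progressively pin down the defining matrix. Since $X$ is affine with an elliptic fixed point, one obtains $X \cong X(A,P)$ with $n_0 = \cdots = n_r = 1$ and no $v^\pm$ columns. Non-toricness forces $r \ge 2$ by Remark~\ref{rem:toric-XAP}, and after applying the admissible operation multiplying the last row of $P$ by $-1$ if necessary, I may assume the elliptic fixed point is $x^+$; the case $x = x^-$ will account for the variants with $-\iota$ in the displayed matrices. Bringing $(A,P)$ into normal form, irredundancy forces $l_{i1} \ge 2$ for every $i$, and the adapted-to-source condition gives $0 \le d_{i1} < l_{i1}$ for $i \ge 1$.

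Next I identify the elliptic tuple. By Corollary~\ref{cor:logtermchar}, the tuple $(l_{01}, \ldots, l_{r1})$ is platonic. Combined with all $l_{i1} \ge 2$ and $r \ge 2$, the classification of platonic triples forces, up to reordering of arms via an admissible operation of type~(v), $r = 2$ and the tuple to be one of $(l_0, 2, 2)$ with $l_0 \ge 2$ (type $D$), $(3,3,2)$ (type $E_6$), $(4,3,2)$ (type $E_7$), or $(5,3,2)$ (type $E_8$); the cyclic $A_n$ case is excluded by non-toricness. In each type, coprimality $\gcd(l_{i1}, d_{i1}) = 1$ together with $0 \le d_{i1} < l_{i1}$ for $i = 1, 2$ leaves only a handful of options; a combination of arm-swap, sign-flip, and row-reduction operations (types (v), (ii), (vi)) normalizes $(d_{11}, d_{21})$ to $(1, 1)$ in every case, possibly interchanging $x^+$ with $x^-$, which is absorbed by the $\pm \iota$ sign convention.

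With $l_{i1}$ and $d_{i1}$ for $i \ge 1$ now fixed, only $d_{01}$ remains free, and I recover it from the invariants $\iota$ and $\zeta$. Applying Proposition~\ref{prop:kstar-gorind-2} at $x^+$ yields the linear form $u^+$, and the Gorenstein index $\iota$ is the unique positive integer making $\iota u^+$ primitive integral. Summary~\ref{sum:canmult} then gives $\zeta = \iota \skr{l}^+ / \skr{m}^+$. For each of the four Dynkin types, the explicit values of $l_{i1}$ together with $d_{11} = d_{21} = 1$ turn this into a linear equation in $d_{01}$ whose solution is exactly the expression displayed in the proposition, with the sign $\pm$ corresponding to whether the elliptic point is $x^+$ or $x^-$. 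The listed $\gcd$ conditions encode precisely that $d_{01}$ is an integer and that $\iota u^+$ is primitive.

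The main obstacle is verifying in each Dynkin case that the stated $\gcd$ constraints capture the full primitivity and integrality requirements, and in particular that $\zeta$ is restricted to the small set listed ($\{1, 2\}$ for $D_n$, $\{1, 3\}$ for $E_6$, $\{1\}$ for $E_7$ and $E_8$). This reduces to an arithmetic analysis of the components $u_1^+, u_2^+$ and $l^+ \skr{l}^+$ entering $\iota u^+$ that depends on the numerical peculiarities of each platonic triple; the tight divisibility structure forced by $l_1 = 3,\, l_2 = 2$ with $l_0 \in \{4,5\}$ is what excludes $\zeta > 1$ in types $E_7$ and $E_8$, and is the most delicate computation in the proof.
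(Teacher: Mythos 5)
Your proposal follows essentially the same route as the paper's proof: reduce via admissible operations (erasing redundant arms, slope/source adaptation, sign flip) to an irredundant $3\times 3$ matrix whose platonic triple is $(l_0,2,2)$, $(3,3,2)$, $(4,3,2)$ or $(5,3,2)$ with $d_{11}=d_{21}=1$, then determine $d_{01}$ from the linear form $u^{\pm}$ of Proposition~\ref{prop:kstar-gorind-2}, the primitivity of $\iota u^{\pm}$, and $\zeta\skr{m}^{\pm}=\iota\skr{l}^{\pm}$. The paper compresses the normalization and the concluding arithmetic into single sentences, so your write-up is simply a more explicit account of the same argument, with the final $\gcd$ verification correctly identified as the only remaining computation.
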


\begin{proof}
We may assume $X=X(A,P)$. Then, as $X$ is affine,
$n_0 = \dots = n_r = 1$ holds. Since $X$ is
non-toric and log terminal, we have $l_{i1}>1$
exactly three times. Thus, we arrive at defining
$3 \times 3$ matrices $P$ of the shape
\[
\left[
\begin{array}{rrr}
-l_0 & 2 & 0
\cr
-l_0 & 0 & 2
\cr
d_0 & 1 & 1
\end{array}
\right],
\quad
l_0 \ge 2,
\qquad\qquad
\left[
\begin{array}{rrr}
-l_0 & 3 & 0
\cr
-l_0 & 0 & 2
\cr
d_0 & 1 & 1
\end{array}
\right],
\quad
l_0 = 3,4,5
\]
by applying suitable admissible operations and
removing erasable columns. Now compute the linear
forms $u^\pm$ from Proposition
\ref{prop:kstar-gorind-2} for these matrices.
Then $\iota_X u^\pm$ being primitive, $\iota_X$
dividing $\det(P)$ and
$\zeta_X \skr{m}^\pm = \iota_X  \skr{l}^\pm$ lead
to the assertion.
\end{proof}

\begin{example}
Look again at the log terminal elliptic fixed
points of given Gorenstein index $\iota$ from
Proposition~\ref{prop:affine-non-toric-ell}.
Resolving the singularity according to
Summary~\ref{sum:surfsingres} and computing the self
intersection numbers with the aid of
Summary~\ref{sum:intersections}, we get for $\iota=1$
the classical resolution graphs of type
$D_{n},E_{6},E_{7}$ and $E_{8}$:

\def\Dn-one-graph{
\begin{tikzpicture}[scale=0.6]
\sffamily
\node at (-2,0) {$D^{1,1}_n$:};
\draw (0,1) circle (10pt);
\draw (0,-1) circle (10pt);
\node[] (ao) at (0,1) {$\scriptscriptstyle -2$};
\node[] (au) at (0,-1) {$\scriptscriptstyle -2$};
\draw (2,0) circle (10pt);
\draw (4,0) circle (10pt);
\node[] (a1) at (2,0) {$\scriptscriptstyle -2$};
\node[] (a2) at (4,0) {$\scriptscriptstyle -2$};
\draw (8,0) circle (10pt);
\node[] (a3) at (8,0) {$\scriptscriptstyle -2$};
\draw[] (ao) edge (a1);
\draw[] (au) edge (a1);
\draw[] (a1) edge (a2);
\draw[dotted] (4.5,0) to (7.5,0);
\node[right] (n) at (10,0) {$n \in \ZZ_{\ge 1}$};
\end{tikzpicture}
}

\def\Esix-one-graph{
\begin{tikzpicture}[scale=0.6]
\sffamily
\node at (-2,0) {$E^{1,1}_6$:};
\draw (0,0) circle (10pt);
\node[] (a0) at (0,0) {$\scriptscriptstyle -2$};
\draw (2,0) circle (10pt);
\node[] (a1) at (2,0) {$\scriptscriptstyle -2$};
\draw (4,0) circle (10pt);
\node[] (a2) at (4,0) {$\scriptscriptstyle -2$};
\draw (4,1.5) circle (10pt);
\node[] (a2o) at (4,1.5) {$\scriptscriptstyle -2$};
\draw (6,0) circle (10pt);
\node[] (a3) at (6,0) {$\scriptscriptstyle -2$};
\draw (8,0) circle (10pt);
\node[] (a4) at (8,0) {$\scriptscriptstyle -2$};
\draw[] (a0) edge (a1);
\draw[] (a1) edge (a2);
\draw[] (a2) edge (a3);
\draw[] (a2o) edge (a2);
\draw[] (a3) edge (a4);
\end{tikzpicture}
}

\def\Eseven-one-graph{
\begin{tikzpicture}[scale=0.6]
\sffamily
\node at (-2,0) {$E^{1,1}_7$:};
\draw (0,0) circle (10pt);
\node[] (a0) at (0,0) {$\scriptscriptstyle -2$};
\draw (2,0) circle (10pt);
\node[] (a1) at (2,0) {$\scriptscriptstyle -2$};
\draw (4,0) circle (10pt);
\node[] (a2) at (4,0) {$\scriptscriptstyle -2$};
\draw (4,1.5) circle (10pt);
\node[] (a2o) at (4,1.5) {$\scriptscriptstyle -2$};
\draw (6,0) circle (10pt);
\node[] (a3) at (6,0) {$\scriptscriptstyle -2$};
\draw (8,0) circle (10pt);
\node[] (a4) at (8,0) {$\scriptscriptstyle -2$};
\draw (10,0) circle (10pt);
\node[] (a5) at (10,0) {$\scriptscriptstyle -2$};
\draw[] (a0) edge (a1);
\draw[] (a1) edge (a2);
\draw[] (a2) edge (a3);
\draw[] (a2o) edge (a2);
\draw[] (a3) edge (a4);
\draw[] (a4) edge (a5);
\end{tikzpicture}
}

\def\Eeight-one-graph{
\begin{tikzpicture}[scale=0.6]
\sffamily
\node at (-2,0) {$E^{1,1}_8$:};
\draw (0,0) circle (10pt);
\node[] (a0) at (0,0) {$\scriptscriptstyle -2$};
\draw (2,0) circle (10pt);
\node[] (a1) at (2,0) {$\scriptscriptstyle -2$};
\draw (4,0) circle (10pt);
\node[] (a2) at (4,0) {$\scriptscriptstyle -2$};
\draw (4,1.5) circle (10pt);
\node[] (a2o) at (4,1.5) {$\scriptscriptstyle -2$};
\draw (6,0) circle (10pt);
\node[] (a3) at (6,0) {$\scriptscriptstyle -2$};
\draw (8,0) circle (10pt);
\node[] (a4) at (8,0) {$\scriptscriptstyle -2$};
\draw (10,0) circle (10pt);
\node[] (a6) at (12,0) {$\scriptscriptstyle -2$};
\draw (12,0) circle (10pt);
\node[] (a5) at (10,0) {$\scriptscriptstyle -2$};
\draw[] (a0) edge (a1);
\draw[] (a1) edge (a2);
\draw[] (a2) edge (a3);
\draw[] (a2o) edge (a2);
\draw[] (a3) edge (a4);
\draw[] (a4) edge (a5);
\draw[] (a5) edge (a6);
\end{tikzpicture}
}

\medskip

\Dn-one-graph 

\medskip

\Esix-one-graph

\medskip

\Eseven-one-graph

\medskip

\Eeight-one-graph

\medskip

\noindent For Gorenstein index $\iota = 2$, none
of the types $D$ and $E$ can occur. In Gorenstein
indices $\iota = 3,4$, the simplest examples are

\def\Dnthreegraph{
\begin{tikzpicture}[scale=0.6]
\sffamily
\node at (-12,0) {$D^{1,3}_4$:};
\node at (-7,0) {$P \ = \ 
\left[              
\begin{array}{rrr}
-2 & 2 & 0
\\
-2 & 0 & 2
\\
1 & 1 & 1
\end{array}                   
\right]
$};
\draw (0,1) circle (10pt);
\draw (0,-1) circle (10pt);
\node[] (ao) at (0,1) {$\scriptscriptstyle -2$};
\node[] (au) at (0,-1) {$\scriptscriptstyle -2$};
\draw (2,0) circle (10pt);
\draw (4,0) circle (10pt);
\node[] (a1) at (2,0) {$\scriptscriptstyle -3$};
\node[] (a2) at (4,0) {$\scriptscriptstyle -2$};
\draw[] (ao) edge (a1);
\draw[] (au) edge (a1);
\draw[] (a1) edge (a2);
\end{tikzpicture}
}

\def\Dnfourgraph{
\begin{tikzpicture}[scale=0.6]
\sffamily
\node at (-12,0) {$D^{2,4}_4$:};
\node at (-7,0) {$P \ = \ 
\left[              
\begin{array}{rrr}
-3 & 2 & 0
\\
-3 & 0 & 2
\\
-1 & 1 & 1
\end{array}                   
\right]
$};
\draw (0,1) circle (10pt);
\draw (0,-1) circle (10pt);
\node[] (ao) at (0,1) {$\scriptscriptstyle -2$};
\node[] (au) at (0,-1) {$\scriptscriptstyle -2$};
\draw (2,0) circle (10pt);
\draw (4,0) circle (10pt);
\node[] (a1) at (2,0) {$\scriptscriptstyle -2$};
\node[] (a2) at (4,0) {$\scriptscriptstyle -3$};
\draw[] (ao) edge (a1);
\draw[] (au) edge (a1);
\draw[] (a1) edge (a2);
\end{tikzpicture}
}

\Dnthreegraph

\vskip .5cm

\Dnfourgraph

\end{example}


\section{Log del Pezzo $\KK^*$-surfaces}

In a first part of this section, we discuss
geometric consequences of the del Pezzo property.
We show that every del Pezzo $\KK^*$-surface
naturally embeds into a toric Fano variety and
every non-toric del Pezzo $\KK^*$-surface admits
at most one parabolic fixed point curve
and at most one non-log terminal singularity.
As an application, we bound the number of
singularities of a log del Pezzo surface in
terms of its Picard number; see
Proposition~\ref{prop:singbounds}.
Then, in the second part, we present and discuss
the central combinatorial tool for the subsequent
classification, the anticanonical complex.

We enter the first part.
Recall that a Fano variety is a normal projective
variety admitting an ample anticanonical divisor.
Thus, the del Pezzo surfaces are precisely the
two-dimensional Fano varieties. 
The following observation provides a close
connection between del Pezzo $\KK^*$-surfaces and
toric Fano varieties.

\begin{proposition}
\label{prop:fano-completion}
Consider $X = X(A,P)$ and the toric embedding
$X \subseteq Z$ from Construction
\ref{constr:kstarsurf}. If $X$ is a del Pezzo
surface the following holds.
\begin{enumerate}
\item
The anticanonical class $w = -w_Z \in K$ of $Z$
yields a Fano toric completion
$X \subseteq Z \subseteq Z(w)$.
\item
The defining fan $\Sigma(w)$ of $Z(w)$ from (i)
is spanned by the convex hull
$\mathcal{A} := \conv(v_1, \ldots, v_r)$ over the
columns of $P$.
\end{enumerate}
\end{proposition}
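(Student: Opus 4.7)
The plan is to derive both parts from Summary~\ref{sum:cl-data}, Summary~\ref{sum:completions} and Summary~\ref{sum:torvar-fano}. Set $w := -w_Z \in K = \Cl(Z)$, which by Summary~\ref{sum:orbits-invarprimediv} is the anticanonical class of the toric variety $Z$. The first task is to verify that $w$ lies in $\Mov(Z)^\circ$, so that the construction of Summary~\ref{sum:completions} produces an open toric completion $Z \subseteq Z(w)$ sharing its Cox ring, generator matrix $P$ and anticanonical class with $Z$.

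For this, I would use the relation $-w_X = -w_Z - (r-1)\mu$ from Summary~\ref{sum:cl-data}, rewritten as $w = -w_X + (r-1)\mu$. Since $X$ is del Pezzo, $-w_X$ lies in $\Ample(X)$, which by Summary~\ref{sum:cl-data} equals $\Mov(X)^\circ = \Mov(Z)^\circ$. The second summand $(r-1)\mu$ lies in $\SAmple(X) = \Mov(Z)$, again by Summary~\ref{sum:cl-data}. Adding an element of the closed cone $\Mov(Z)$ to an interior point of that cone yields an interior point, so $w \in \Mov(Z)^\circ$ and Summary~\ref{sum:completions} gives the open embedding $X \subseteq Z \subseteq Z(w)$.

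It remains to see that $Z(w)$ is actually Fano, and then to read off~(ii). Since $w$ lies in the interior of its own secondary-fan chamber $\lambda(w)$, and $\lambda(w)$ coincides with the semiample cone of the projective toric variety $Z(w)$ by Summary~\ref{sum:toriccldata}, we obtain $w \in \Ample(Z(w))$. As $w$ is simultaneously the anticanonical class of $Z(w)$, this surface is Fano, which is~(i). Statement~(ii) then follows immediately from the equivalence (i)$\Leftrightarrow$(iv) of Summary~\ref{sum:torvar-fano} applied to $Z(w)$: the fan of a Fano toric variety is spanned by the convex hull of its primitive ray generators, and these generators are exactly the columns of $P$, so $\Sigma(w)$ is spanned by $\mathcal{A} = \conv(v_1,\ldots,v_r)$.

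The only subtle point is the identification $\lambda(w)^\circ = \Ample(Z(w))$ used in the last paragraph; this is the standard GIT/Cox-ring correspondence between chambers of the secondary fan and the projective toric varieties with Cox ring equal to that of $Z$, and for a concrete verification it suffices to combine the fan description $\Sigma(w) = \{P(\gamma_0^*);\ \gamma_0 \preccurlyeq \gamma,\ w \in Q(\gamma_0)^\circ\}$ from Summary~\ref{sum:completions} with the explicit formula for $\SAmple$ from Summary~\ref{sum:toriccldata}.
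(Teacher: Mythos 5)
Your proposal is correct and follows essentially the same route as the paper: both verify $-w_Z = -w_X + (r-1)\mu \in \Mov(Z)^\circ$ using the ampleness of $-w_X$ and $\mu \in \SAmple(X)$, and then invoke Summary~\ref{sum:completions}. The paper's proof stops there and leaves the Fano property of $Z(w)$ and part~(ii) implicit, whereas you spell out the identification $w \in \lambda(w)^\circ = \Ample(Z(w))$ and the appeal to Summary~\ref{sum:torvar-fano}; these additions are accurate and harmless.
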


\begin{proof}
By Summary \ref{sum:cl-data} the relation degree
$\mu$ lies in $\Mov(Z)=\SAmple(X)$.
The anticanonical divisor classes $-w_X$ of $X$
and $-w_Z$ of $Z$ are related via
\[
-w_X
\ = \
-w_Z - (r-1)\mu.
\]
Since $-w_X$ is ample, we conclude that $-w_Z$
lies in the interior of the moving cone
$\Mov(Z)$. Thus, Summary \ref{sum:completions}
gives the desired completion.
\end{proof}

Given $X = X(A,P)$, the Fano toric
completion $Z \subseteq Z(w)$ from Proposition
\ref{prop:fano-completion} is uniquely determined
by the Cox ring of $Z$ and hence by $P$. Although
there always exist $\QQ$-factorial toric
completions $Z \subseteq Z'$, the Fano toric
completion $Z \subseteq Z(w)$ need not be
$\QQ$-factorial in general. We present a concrete
example.

\begin{example}
Consider the projective $\KK^*$-surface
$X = X(A,P)$ and its toric embedding
$X \subseteq Z$ for the defining matrix
$$
P
\ = \
\left[
\begin{array}{rrrrr}
-1 & -1 & 1 & 1 & 0 
\\
-1 & -1 & 0 & 0 & 2
\\
0 & -1 & 1 & 0 & 1
\end{array} 
\right].    
$$
Then $X$ is a Gorenstein del Pezzo surface and in
$\Cl(X) = \ZZ^2 = \Cl(Z)$  the degrees
$w_{ij}=\deg(T_{ij})$ are located as follows.
\begin{center}
\begin{tikzpicture}[scale=0.6]
\sffamily
\coordinate(oo) at (0,0);
\coordinate(w01) at (-1,1);
\coordinate(w02) at (1,1);
\coordinate(w11) at (1,0);
\coordinate(w12) at (-1,2);
\coordinate(w21) at (0,1);
\coordinate(c01) at (-1.8,1.8);
\coordinate(c02) at (2,2);
\coordinate(c11) at (3,0);
\coordinate(c12) at (-1.3,2.6);
\coordinate(c21) at (0,3);
\path[fill, color=gray!20] (oo) -- (c11) -- (c02) -- (oo);
\path[fill, color=gray!40] (oo) -- (c02) -- (c21) -- (oo);
\path[fill, color=gray!40] (oo) -- (c21) -- (c12) -- (oo);
\path[fill, color=gray!20] (oo) -- (c12) -- (c01) -- (oo);
\path[fill, color=black] (w01) circle (1mm) 
node[left]{$\scriptscriptstyle w_{01}$};
\path[fill, color=black] (w02) circle (1mm) 
node[right]{$\scriptscriptstyle w_{02}$};
\path[fill, color=black] (w11) circle (1mm)
node[below]{$\scriptscriptstyle w_{11}$};
\path[fill, color=black] (w12) circle (1mm)
node[above right]{$\scriptscriptstyle w_{12}$};
\path[fill, color=black] (w21) circle (1mm)
node[above right]{$\scriptscriptstyle w_{21}$};
\draw[thick,color=black] (oo) to (c01);
\draw[thick,color=black] (oo) to (c02);
\draw[thick,color=black] (oo) to (c11);
\draw[thick,color=black] (oo) to (c12);
\draw[thick,color=black] (oo) to (c21);
\draw[color=black] (-3,0) to (3,0);
\draw[color=black] (0,-1) to (0,3);
\end{tikzpicture}
\end{center}
The anticanonical divisor classes of $X$ and $Z$
are $-w_X = (0,3)$ and $-w_Z = (0,5)$. With
$w := -w_z$ the maximal cones of the fan
$\Sigma(w)$ are given by
\begin{align*}
&\cone(v_{01},v_{02},v_{21}),
\\
\cone(v_{01},v_{11},v_{21}),
\
&\cone(v_{01},v_{02},v_{11},v_{12}),
\
\cone(v_{02},v_{12},v_{21}),
\\
&\cone(v_{11},v_{12},v_{21}).
\end{align*}
In particular, the Fano toric completion
$Z \subseteq Z(w)$ is not $\QQ$-factorial.
Refining the fan $\Sigma(w)$ by inserting
into $\cone(v_{01},v_{02},v_{11},v_{12})$
one of the diagonals
\[
\cone(v_{01},v_{12}), \qquad \cone(v_{11},v_{02})
\]
yields the two $\QQ$-factorial toric completions
$Z \subseteq Z'$ and $Z \subseteq Z''$ given by
the members $\cone(w_{02},w_{21})$ and
$\cone(w_{21},w_{12})$ of the secondary fan.
\end{example}

Before proceeding, we provide important inequalities
on the entries of the defining
matrix $P$ of a rational $\KK^*$-surface $X = X(A,P)$.
The first series of inequalities results from
slope-orderedness and irredundance; note that we don't
use the del Pezzo property there.

\begin{proposition}
\label{lem:rviamzeta}
Let $P$ be a defining matrix as in Construction
\ref{constr:kstarsurf}.
Assume that~$P$ is
slope-ordered, irredundant and $r \ge 2$. Then
the following holds.
\begin{enumerate}
\item
$
\lceil m_{ij} \rceil - \frac{l_{ij}-1}{l_{ij}}
\ \le \ 
m_{ij}
\ \le \
\lfloor m_{ij} \rfloor + \frac{l_{ij}-1}{l_{ij}}
$.
\item
$\lceil m_{i1} \rceil - \lfloor m_{in_i} \rfloor
\ \ge \ 1
$.
\item
$
r+1
\ \le \
(\skr{m}^+ - \skr{l}^+)  - (\skr{m}^- + \skr{l}^-) + 4
$.
\item
$\skr{m}^{+} \ge \skr{l}^{+}$ or $\skr{m}^{-} \le -\skr{l}^{-}$.
\end{enumerate}
\end{proposition}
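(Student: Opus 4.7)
The four items form a natural chain of increasing difficulty, so my plan is to prove them in order and use each as input to the next. The underlying theme is that the bounds really live arm-by-arm; I will extract an arm-wise estimate from (i) and (ii), and then sum over arms.

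For (i), I would just use the elementary fact that $m_{ij}=d_{ij}/l_{ij}$ with $\gcd(d_{ij},l_{ij})=1$ has fractional part in $\{0,1/l_{ij},\ldots,(l_{ij}-1)/l_{ij}\}$: if $l_{ij}=1$ both inequalities are equalities, and if $l_{ij}\ge 2$ then writing $d_{ij}=l_{ij}\lfloor m_{ij}\rfloor+s$ with $1\le s\le l_{ij}-1$ gives the bounds at once. For (ii), I would split on $n_i$. When $n_i=1$, irredundance forces $l_{i1}\ge 2$, so $m_{i1}=m_{in_i}$ is non-integer and $\lceil m_{i1}\rceil\ge \lfloor m_{i1}\rfloor+1$. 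When $n_i\ge 2$, slope-orderedness gives $m_{i1}>m_{in_i}$, and a short case distinction on whether $m_{in_i}$ is integral yields $\lceil m_{i1}\rceil\ge \lfloor m_{in_i}\rfloor+1$.

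For (iii), I would apply the lower bound of (i) to $m_{i1}$ and the upper bound of (i) to $m_{in_i}$, then subtract and invoke (ii) to get, for every $i=0,\ldots,r$,
\[
(m_{i1}-m_{in_i})-\bigl(\tfrac{1}{l_{i1}}+\tfrac{1}{l_{in_i}}\bigr)\ \ge\ \bigl(\lceil m_{i1}\rceil-\lfloor m_{in_i}\rfloor\bigr)-2\ \ge\ -1.
\]
Summing the $r+1$ resulting arm-wise inequalities and substituting the definitions of $\skr{m}^{\pm}$ and $\skr{l}^{\pm}$ directly produces the claimed bound $(\skr{m}^{+}-\skr{l}^{+})-(\skr{m}^{-}+\skr{l}^{-})\ge r-3$.

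For (iv), the first observation is that when $r\ge 3$, (iii) already forces $(\skr{m}^{+}-\skr{l}^{+})+(-\skr{m}^{-}-\skr{l}^{-})\ge 0$, so at least one summand is non-negative and the claim is immediate. The main obstacle is the case $r=2$, where (iii) only delivers $\ge -1$ and both summands could a priori be negative reals in $(-1,0)$. My plan is to absorb this gap by exploiting integrality. Arguing by contradiction, suppose $\skr{m}^{+}<\skr{l}^{+}$ and $\skr{m}^{-}>-\skr{l}^{-}$. Summing the lower bound from (i) gives
\[
\skr{m}^{+}\ \ge\ \sum_{i=0}^{r}\lceil m_{i1}\rceil\ +\ \skr{l}^{+}-2,
\]
which combined with $\skr{m}^{+}<\skr{l}^{+}$ forces the integer $\sum_{i}\lceil m_{i1}\rceil$ to be $\le 1$; symmetrically $\sum_{i}\lfloor m_{in_i}\rfloor\ge -1$. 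But summing (ii) gives $\sum_{i}\lceil m_{i1}\rceil-\sum_{i}\lfloor m_{in_i}\rfloor\ge r+1$, so chaining yields $1\ge r+1-1=r$, contradicting $r\ge 2$. Notably, this integrality argument handles all $r\ge 2$ uniformly, so it supersedes the direct $r\ge 3$ deduction from (iii).
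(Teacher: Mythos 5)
Your proof is correct and follows essentially the same route as the paper: elementary fractional-part bounds for (i), the integrality/irredundance argument for (ii), summing (i) and (ii) over the arms for (iii), and for (iv) the contradiction obtained by bounding the integers $\sum_i\lceil m_{i1}\rceil$ and $\sum_i\lfloor m_{in_i}\rfloor$ against the sum of (ii). The only (harmless) deviation is in (iv), where the paper first invokes (iii) to reduce to $r+1=3$ before running the same integrality estimates, whereas you run them uniformly for all $r\ge 2$ — a slight streamlining, not a different argument.
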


\begin{proof}
The first assertion is obvious. For the second
one note that $m_{i1} \ge m_{in_i}$ holds due
to slope-orderedness. Furthermore
\[
\lceil m_{i1} \rceil = \lfloor m_{in_i} \rfloor
\ \implies \
m_{i1} = m_{in_i} \in \ZZ
\ \implies \
n_i=1, \  l_{i1}=1.
\]
Thus, $\lceil m_{i1} \rceil =
\lfloor m_{in_i} \rfloor$ cannot happen since $P$
is irredundant with $r \ge 2$. The third
assertion is now deduced from the first two:
\begin{eqnarray*}
r+1
& \le &
\sum_{i=0}^r \lceil m_{i1} \rceil - \lfloor m_{in_i} \rfloor
\\
& \le &
\sum_{i=0}^r m_{i1} + \frac{l_{i1}-1}{l_{i1}}
-
\sum_{i=0}^r m_{in_i} - \frac{l_{in_i}-1}{l_{in_i}}
\\
& = &
(\skr{m}^+ - \skr{l}^+)  - (\skr{m}^- + \skr{l}^-) + 4.
\end{eqnarray*}
For the fourth assertion, assume
$\skr{m}^{+}<\skr{l}^{+}$ and
$\skr{m}^{-}>-\skr{l}^{-}$. Then (iii) gives the
estimate
\[
r+1
\ \le \
(\skr{m}^+ - \skr{l}^+)  - (\skr{m}^- + \skr{l}^-) + 4
\ < \
4.
\]
So $r+1 = 3$. Consequently, using (ii), we obtain
\[
3
\ \le \ 
\sum_{i=0}^2 \lceil m_{i1} \rceil - \lfloor m_{in_i} \rfloor
\ = \
\sum_{i=0}^2 \lceil m_{i1} \rceil - \sum_{i=0}^2 \lfloor m_{in_i} \rfloor.
\]
This contradicts the subsequent two estimates,
showing that the right hand side equals at most
two.
\begin{alignat*}{3}
2
\ &> \
2 + \skr{m}^+ - \skr{l}^+
\ &&= \
\sum_{i=0}^2 m_{i1} +  \frac{l_{i1}-1}{l_{i1}}
\ &&\ge \
\sum_{i=0}^2 \lceil m_{i1} \rceil,
\\
2
\ &> \
2 - \skr{m}^- - \skr{l}^-
\ &&= \
- \sum_{i=0}^2 m_{in_i} -  \frac{l_{in_i}-1}{l_{in_i}}
\ &&\ge \
- \sum_{i=0}^2 \lfloor m_{in_i} \rfloor.
\end{alignat*}
\end{proof}

The second series of inequalities arises from
characterizing the del
Pezzo property via Kleiman's ampleness criterion,
telling us that a divisor is ample if
and only if it has positive intersection with any
effective curve.

\begin{proposition}
\label{prop:kleimanample}
A projective $\KK^*$-surface $X = X(A,P)$ is a
del Pezzo surface if and only if the following
inequalities are satisfied.
\begin{itemize}
\item $\skr{l}^+ > \skr{m}^+$ if there is
$D_X^+ \subseteq X$,
\item
$\skr{l}_{ij-1}\skr{m}_{ij-1} >
\skr{l}_{ij}\skr{m}_{ij}$,
$i = 0, \ldots, r$, $j = 1, \ldots, n_i$,
\item
$\skr{l}^- > - \skr{m}^-$ if there is
$D_X^- \subseteq X$.
\end{itemize}
\end{proposition}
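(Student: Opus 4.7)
The plan is to apply Kleiman's ampleness criterion on the $\QQ$-factorial projective surface $X$ (Proposition~\ref{prop:kstarsurfisQfact}): a suitable multiple of the $\QQ$-Cartier divisor $-\mathcal{K}_X$ is ample precisely when $-\mathcal{K}_X$ is strictly positive on every nonzero class of the closed Mori cone $\overline{NE}(X)$. Once a finite generating set of $\overline{NE}(X)$ is identified, the problem reduces to checking finitely many intersection inequalities, which by Proposition~\ref{prop:ak-intersections} are precisely the three families stated.

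The first step is to show that $\overline{NE}(X)$ is generated by the classes $[D_X^{ij}]$ and $[D_X^{\pm}]$ of the $\KK^{*}$-invariant prime divisors. Given any irreducible curve $C \subseteq X$, the limit cycles $\lim_{t \to 0} t \cdot C$ and $\lim_{t \to \infty} t \cdot C$ under the $\KK^{*}$-action are effective $\KK^{*}$-invariant $1$-cycles rationally equivalent to $C$. Every $\KK^{*}$-invariant irreducible curve on $X$ is either one of the $D_X^{ij}$, one of the $D_X^{\pm}$, or the closure of a general one-dimensional orbit. A general orbit closure is a fibre of the rational quotient $\pi \colon X \dasharrow \PP_1$, and by the structure of the arms as its special fibres it shares the numerical class $[F] = \sum_{j=1}^{n_i} l_{ij}[D_X^{ij}]$ for any fixed $i$. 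Hence $[C]$ always lies in the cone generated by the $[D_X^{ij}]$ and $[D_X^{\pm}]$.

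The second step is a direct translation. With this finite generating set, Kleiman's criterion says that $-\mathcal{K}_X$ is ample if and only if $-\mathcal{K}_X \cdot D_X^{ij} > 0$ for all $i,j$ and $-\mathcal{K}_X \cdot D_X^{\pm} > 0$ whenever the corresponding parabolic divisor exists. Substituting the values
\[
-\mathcal{K}_X \cdot D_X^{+} = \skr{l}^{+} - \skr{m}^{+},
\quad
-\mathcal{K}_X \cdot D_X^{ij} = \tfrac{1}{l_{ij}}\bigl(\skr{l}_{ij-1}\skr{m}_{ij-1} - \skr{l}_{ij}\skr{m}_{ij}\bigr),
\quad
-\mathcal{K}_X \cdot D_X^{-} = \skr{l}^{-} + \skr{m}^{-}
\]
from Proposition~\ref{prop:ak-intersections}, and using $l_{ij} > 0$, these three positivities are equivalent, respectively, to $\skr{l}^{+} > \skr{m}^{+}$, $\skr{l}_{ij-1}\skr{m}_{ij-1} > \skr{l}_{ij}\skr{m}_{ij}$, and $\skr{l}^{-} > -\skr{m}^{-}$.

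The main obstacle will be the first step. Specializing arbitrary curves to invariant cycles via the $\KK^{*}$-action is standard for torus actions on projective varieties, but care is needed to enumerate the invariant irreducible curves --- the finite list $\{D_X^{ij}, D_X^{\pm}\}$ together with the one-parameter family of general orbit closures --- and to verify, across all four types (e-e), (e-p), (p-e) and (p-p), that the latter family's numerical class already lies in the cone spanned by the discrete list.
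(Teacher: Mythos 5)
Your proof is correct and follows essentially the same route as the paper: Kleiman's criterion combined with the intersection numbers of Proposition~\ref{prop:ak-intersections}. The reduction to the invariant prime divisors, which you establish by degenerating curves to invariant limit cycles, is simply asserted in the paper's proof; it also follows at once from the fact that on a surface effective curves are effective divisors and $\Eff(X)=\cone(w_{ij},w^{\pm})$ by Summary~\ref{sum:cl-data}.
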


\begin{proof}
By Kleiman's criterion, $X$ is del Pezzo if and
only $-\mathcal{K}_X \cdot D > 0$ for all
effective curves $D$ on $X$. The latter is true
if and only if $-\mathcal{K}_X$ has positive
intersection with all $D_X^{ij}$ and $D_X^\pm$.
Due to Proposition \ref{prop:ak-intersections},
the latter is equivalent to the inequalities of
the assertion.
\end{proof}

We are ready to prove the announced statments
on possible parabolic and elliptic fixed points
of del Pezzo $\KK^*$-surfaces.

\begin{proposition}
\label{prop:delPezzo-parabolic}
Let $X$ be a non-toric rational del Pezzo
$\KK^*$-surface. Then $X$ admits at most one
parabolic fixed point curve.
\end{proposition}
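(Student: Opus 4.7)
The plan is to reduce the statement to a direct contradiction between the del Pezzo inequalities of Proposition~\ref{prop:kleimanample} and the combinatorial bound of Proposition~\ref{lem:rviamzeta}~(iv), so the work is essentially bookkeeping on the defining matrix.

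First, I would represent $X\cong X(A,P)$ as in Construction~\ref{constr:kstarsurf} and, using Remark~\ref{rem:admop2irredundant}, arrange that $P$ is slope-ordered and irredundant. Since $X$ is non-toric, Remark~\ref{rem:irredundaprops}~(i) then gives $r\ge 2$, putting us exactly in the hypothesis range of Proposition~\ref{lem:rviamzeta}.

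Next, I would suppose for contradiction that $X$ carries two parabolic fixed point curves $D_X^+$ and $D_X^-$. By the dictionary in Summary~\ref{rem:matrices-to-geometry}, this is precisely the case (p-p), which means that the defining matrix $P$ contains both columns $v^+$ and $v^-$. Since $X$ is a del Pezzo surface, Proposition~\ref{prop:kleimanample} now applies to both parabolic curves and yields the strict inequalities
\[
\skr{m}^{+}\ <\ \skr{l}^{+},
\qquad\qquad
\skr{m}^{-}\ >\ -\skr{l}^{-}.
\]
On the other hand, with $P$ slope-ordered, irredundant and $r\ge 2$, Proposition~\ref{lem:rviamzeta}~(iv) asserts that $\skr{m}^{+}\ge\skr{l}^{+}$ or $\skr{m}^{-}\le-\skr{l}^{-}$. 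This directly contradicts the two inequalities above, ruling out the case (p-p) and thereby leaving at most one parabolic fixed point curve on $X$.

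There is no real obstacle here: once one has Propositions~\ref{lem:rviamzeta} and~\ref{prop:kleimanample} at hand, the proof is essentially a one-line combination of the two. The only care needed is the initial normalisation of $(A,P)$ so that both propositions apply simultaneously, and checking via Summary~\ref{rem:matrices-to-geometry} that the geometric hypothesis of two parabolic fixed point curves translates exactly to $P$ having both columns $v^+$ and $v^-$.
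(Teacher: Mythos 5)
Your proof is correct and follows exactly the paper's own argument: normalise $(A,P)$ to be slope-ordered and irredundant with $r\ge 2$, then observe that two parabolic fixed point curves would give $\skr{l}^{+}>\skr{m}^{+}$ and $\skr{l}^{-}>-\skr{m}^{-}$ via Proposition~\ref{prop:kleimanample}, contradicting Proposition~\ref{lem:rviamzeta}~(iv). No gaps.
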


\begin{proof}
We may assume $X = X(A,P)$ with slope-ordered,
irredundant $P$ and $r \ge 2$. Suppose that there
are $D_X^+$ and $D_X^-$. Then Proposition
\ref{prop:kleimanample} yields
$\skr{l}^{+}>\skr{m}^{+}$ and
$\skr{l}^{-}>-\skr{m}^{-}$. This contradicts
Proposition \ref{lem:rviamzeta} (iv).
\end{proof}

Let us explore the impact of the del
Pezzo property on the possible singularities of a
rational projective $\KK^*$-surface. Recall that
at most elliptic fixed points can be non log
terminal and thus the number of non log terminal
singularities of any rational $\KK^*$-surface is
bounded by two.

\begin{proposition}
\label{prop:delp2onlyonenonlt}
Let $X$ be a rational del Pezzo $\KK^*$-surface.
Then $X$ can have at most one non log terminal
singularity. 
\end{proposition}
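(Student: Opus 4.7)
The plan is to use Kleiman's ampleness criterion, in the form of Proposition~\ref{prop:kleimanample}, to exclude non log terminality at both elliptic fixed points of a del Pezzo $\KK^*$-surface.

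I would write $X = X(A,P)$ with $P$ slope-ordered and invoke Proposition~\ref{prop:logtermchar} together with Corollary~\ref{cor:logtermchar} to see that every non log terminal singularity of $X$ must be an elliptic fixed point $x^{\pm}$, and that $x^{\pm}$ is non log terminal if and only if $\skr{l}^{\pm}\le 0$. Cases (e-p), (p-e) and (p-p) admit at most one elliptic fixed point, so the claim is immediate there. This leaves the (e-e) case, where I must rule out the simultaneous occurrence of $\skr{l}^{+}\le 0$ and $\skr{l}^{-}\le 0$.

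So I would fix any arm $i \in \{0,\ldots,r\}$ and chain the strict middle inequalities of Proposition~\ref{prop:kleimanample} for $j=1,\ldots,n_i$. Unfolding the boundary conventions for $\skr{l}_{i0},\skr{m}_{i0},\skr{l}_{in_i},\skr{m}_{in_i}$ recorded in Proposition~\ref{prop:ak-intersections}, one obtains
\[
\frac{\skr{l}^{+}}{\skr{m}^{+}}
\ = \
\skr{l}_{i0}\skr{m}_{i0}
\ > \
\skr{l}_{i1}\skr{m}_{i1}
\ > \
\cdots
\ > \
\skr{l}_{in_i}\skr{m}_{in_i}
\ = \
\frac{\skr{l}^{-}}{\skr{m}^{-}}.
\]
By Remark~\ref{rem:obviouszetaprops} we have $\skr{m}^{+}>0$ and $\skr{m}^{-}<0$ in the (e-e) case. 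If both $\skr{l}^{+}\le 0$ and $\skr{l}^{-}\le 0$ held, this would force $\skr{l}^{+}/\skr{m}^{+}\le 0 \le \skr{l}^{-}/\skr{m}^{-}$, contradicting the strict chain above. Hence at most one of $\skr{l}^{\pm}$ is non-positive and $X$ admits at most one non log terminal singularity.

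The only delicate point I foresee is verifying that the endpoint terms $\skr{l}_{i0}\skr{m}_{i0}$ and $\skr{l}_{in_i}\skr{m}_{in_i}$ evaluate correctly to $\skr{l}^{\pm}/\skr{m}^{\pm}$, and that the chain remains meaningful in the degenerate case $n_i=1$ (where it reduces to the single inequality $\skr{l}_{i0}\skr{m}_{i0}>\skr{l}_{i1}\skr{m}_{i1}$); both facts are transparent from the piecewise definitions of $\skr{l}_{ij},\skr{m}_{ij}$ at $j=0$ and $j=n_i$ in the (e-e) setting.
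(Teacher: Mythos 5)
Your proposal is correct and follows essentially the same route as the paper: reduce to the (e-e) case, chain the strict inequalities of Proposition~\ref{prop:kleimanample} along one arm to get $\skr{l}^{+}/\skr{m}^{+} > \skr{l}^{-}/\skr{m}^{-}$, and use $\skr{m}^{+}>0$, $\skr{m}^{-}<0$ to conclude that $\skr{l}^{+}$ and $\skr{l}^{-}$ cannot both be non-positive. The only difference is cosmetic (you argue by contradiction where the paper splits into the cases $\skr{l}^{+}>0$ and $\skr{l}^{+}\le 0$), and your check of the endpoint terms $\skr{l}_{i0}\skr{m}_{i0}$ and $\skr{l}_{in_i}\skr{m}_{in_i}$ matches the conventions of Summary~\ref{sum:intersections} and Proposition~\ref{prop:ak-intersections}.
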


\begin{proof}
It suffices to treat the case that $X$ is
non-toric and comes with  $x^+ \in X$ and
$x^- \in X$. Moreover, we may assume $X = X(A,P)$
with slope-ordered, irredundant $P$ and
$r \ge 2$. For any $i$, Proposition
\ref{prop:kleimanample} yields 
\[
\frac{\skr{l}^+}{\skr{m}^+}
\ = \
\skr{l}_{i0} \skr{m}_{i_0}
\ > \ \ldots \ > \
\skr{l}_{in_i} \skr{m}_{in_i}
\ = \
\frac{\skr{l}^-}{\skr{m}^-}.
\]
Now, if $\skr{l}^+ > 0$ then at most $x^-$ can be
non log terminal and we are done. If
$\skr{l}^{+} \le 0$, then $\skr{m}^{-}<0$ and the
above (strict) inequalities imply
$\skr{l}^{-}>0$. Thus, at most $x^+$ can be non
log terminal.
\end{proof}

The following example gives an infinite
series of rational del Pezzo $\KK^*$-surfaces of
Gorenstein index two, each coming with a non log
terminal singularity. It also shows that the
class of del Pezzo surfaces of fixed Gorenstein
index is not finite.

\begin{example}
Consider the projective $\KK^*$-surfaces
$X_l = X(A,P_l)$ given by the defining data  
\[
A
\ = \
\left[
\begin{array}{ccc}
1 & 0 & -1
\\
0 & 1 & -1
\end{array}
\right],
\qquad
P
\ = \
\left[
\begin{array}{cccc}
-l & 2 & 0 & 0  
\\
-l & 0 & 6 & 1
\\
-\frac{l+1}{2} & 1 & 1 & 0
\end{array}
\right],
\quad
5 \le l \in 2\ZZ +1.
\]
Then $X_l$ is a del Pezzo surface of Gorenstein index~2,
has $x^{+}$ as its only singularity and is not log terminal
due to Corollary \ref{cor:logtermchar}.
\end{example}

As an application of Propositions \ref{prop:delPezzo-parabolic}
and \ref{prop:delp2onlyonenonlt}, we obtain
bounds on the number of singularities.
Recall that for any log del Pezzo surface of Picard
number one, results of Keel/McKernan \cite{KeMK}
and Belousov \cite{Bel} tell us that the number
of singularities is sharply bounded by four. In
presence of a $\KK^*$-action, we can extend this
statement to higher Picard numbers.

\begin{proposition}
\label{prop:singbounds}
Let $X$ be a del Pezzo $\KK^*$-surface of
Picard number $\rho(X)$. Then the number $s(X)$
of singularities of $X$ satisfies
\[
s(X)
\ \le \ 
\begin{cases}
\rho(X)+2, & \quad \text{if $X$ has no parabolic fixed points},
\\
2\rho(X)+2, & \quad \text{if $X$ has parabolic fixed points and is log terminal}.
\end{cases}
\]
\end{proposition}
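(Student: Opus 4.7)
The plan is to bound $s(X)$ by the total count of torus fixed points and then control that count by $\rho(X)$. Fix a presentation $X = X(A,P)$ with $P$ slope-ordered and irredundant. Summary~\ref{sum:singul-basics} tells us singularities only occur at fixed points, and the fixed-point tally reads off from the format: $n - r - 1$ hyperbolic fixed points, $r+1$ parabolic fixed points per parabolic curve $D_X^\pm$ (present iff $P$ carries a column $v^\pm$), and up to two elliptic fixed points. By Proposition~\ref{prop:kstarsurfisQfact}, $\rho(X) = n + m - r - 1$, where $m \in \{0,1,2\}$ counts the parabolic curves; the arithmetic then reduces to controlling $m$ and $n$.

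In the case of no parabolic fixed points (type (e-e), $m = 0$) the bound is immediate: $s(X) \le (n - r - 1) + 2 = \rho(X) + 2$. For the parabolic case I would split on source/sink type. If $m = 2$ (type (p-p)), Proposition~\ref{prop:delPezzo-parabolic} forces $X$ to be toric, hence $r = 1$, and the total $s(X) \le (n-2) + 4 = n + 2 \le 2n + 2 = 2\rho(X) + 2$ closes it. If $m = 1$ with $X$ toric ($r = 1$), then $s(X) \le (n-2) + 2 + 1 = n + 1 \le 2n = 2\rho(X) + 2$, using $n = n_0 + n_1 \ge 2$.

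The main obstacle is the non-toric mixed case $m = 1$ with $r \ge 2$, where the naive count only gives $s(X) \le n + 1$ and one needs $n \ge 2r - 1$ to conclude $s(X) \le 2(n-r) + 2 = 2\rho(X) + 2$. This is precisely where log-terminality pays off. Say we are in type (e-p), so the elliptic fixed point is $x^-$; Corollary~\ref{cor:logtermchar} forces the tuple $(l_{0n_0}, \ldots, l_{rn_r})$ to be platonic, and the explicit list of platonic tuples given before it shows that, after reordering, at most three of the $l_{in_i}$ exceed $1$. For each of the remaining (at least $r-2$) indices $i$ with $l_{in_i} = 1$, irredundance forces $n_i \ge 2$: otherwise $n_i = 1$ would yield $l_{i1} = l_{in_i} = 1$ in violation of $l_{i1} n_i > 1$. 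Summing gives
\[
n \ = \ \sum_{i=0}^{r} n_i \ \ge \ 2(r-2) + 3 \ = \ 2r - 1,
\]
which is exactly the inequality needed. The (p-e) case is symmetric, swapping the roles of the $l_{i1}$ and $l_{in_i}$ and invoking Corollary~\ref{cor:logtermchar} at $x^+$. Altogether, the hard part is isolating the one case where direct counting fails and translating the platonic classification into an effective lower bound on $n$; everything else is bookkeeping.
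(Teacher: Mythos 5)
Your proof is correct and follows essentially the same route as the paper: count the (potentially singular) fixed points via the format of $P$, and in the mixed non-toric case use log terminality (platonic elliptic tuple) together with irredundance to force at least $r-2$ arms of length $\ge 2$, which is exactly the paper's key inequality $r+1 \le \rho(X)+3-m$. The only blemish is a label swap — in type (e-p) the elliptic fixed point is $x^{+}$ with tuple $(l_{01},\dots,l_{r1})$, not $x^{-}$ — but since you treat both (e-p) and (p-e) symmetrically this does not affect the argument.
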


\begin{lemma}
\label{lem:singbounds}
Consider a non-toric $X = X(A,P)$ with $P$ irredundant
having a log terminal elliptic fixed point.
Then the number $r+1$ of arms of $X$ is bounded
by
\[
r+1 \ \le \ \rho(X) + 3 - m.
\]
Moreover, according to the possible
constellations of source and sink in $X$, the
number $s(X)$ of singularities of $X$ is bounded
as follows:
\[
\text{\rm (e-e):} \enspace s(X) \le \rho(X) + 2,
\qquad
\text{\rm (e-p), (p-e):} \enspace s(X) \le r + 1 + \rho(X) \le 2 \rho(X) + 2.
\]
\end{lemma}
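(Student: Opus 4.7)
The approach is to count the fixed points of $X$, which by Summary \ref{sum:singul-basics} contain the entire singular locus, and to exploit the platonicity provided by Corollary \ref{cor:logtermchar} together with the irredundance of $P$.

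First I would establish the bound $r+1 \le \rho(X)+3-m$. Say $x^+ \in X$ is the log terminal elliptic fixed point (the case of $x^-$ is symmetric). By Corollary \ref{cor:logtermchar}, the elliptic tuple $(l_{01},\dots,l_{r1})$ is platonic, so after reordering at most three of the $l_{i1}$ exceed~$1$. Irredundance of $P$ forces $n_i \ge 2$ whenever $l_{i1}=1$. Letting $k \le 3$ denote the number of indices with $l_{i1}>1$, this yields
\[
n \ = \ \sum_{i=0}^r n_i \ \ge \ k + 2(r+1-k) \ = \ 2(r+1) - k \ \ge \ 2r-1.
\]
Plugging this into $\rho(X) = n+m-r-1$ from Proposition \ref{prop:kstarsurfisQfact} rearranges to $r+1 \le \rho(X)+3-m$.

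Next I would bound $s(X)$ by counting fixed points via Summary \ref{sum:singul-basics}. The candidates for singularities are the elliptic fixed points $x^\pm$, the hyperbolic fixed points $x_{ij}$ for $j=1,\dots,n_i-1$ (totalling $\sum_i(n_i-1) = n-(r+1)$), and, in case $D_X^\pm \subseteq X$, the singular points $x_i^\pm$ on a parabolic fixed point curve, of which there are at most $r+1$ (one per arm). In the (e-e) case, where $m=0$, this gives
\[
s(X) \ \le \ 2 + (n-r-1) \ = \ n-r+1 \ = \ \rho(X)+2.
\]
In the (e-p) or (p-e) case, where $m=1$, the same bookkeeping gives
\[
s(X) \ \le \ 1 + (r+1) + (n-r-1) \ = \ n+1 \ = \ \rho(X)+r+1,
\]
and combining with the first step yields $s(X) \le 2\rho(X)+2$.

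The main obstacle is the careful bookkeeping in the first step: one needs to combine the platonic structure of the elliptic tuple with irredundance to produce the right lower bound on $n$, keeping track of small corner cases like $r+1 \le 3$. Once the inequality $n\ge 2r-1$ is in hand, everything else reduces to substituting $\rho(X) = n+m-r-1$ into a simple count of fixed point types.
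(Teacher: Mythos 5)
Your proposal is correct and follows essentially the same route as the paper: both arguments combine the platonicity of the elliptic tuple (at most three $l_{i1}>1$) with irredundance ($l_{i1}=1$ forces $n_i\ge 2$) to get $\rho(X)=m+\sum_i(n_i-1)\ge m+r-2$, and then bound $s(X)$ by counting the elliptic, hyperbolic and (at most $r+1$ per-arm) singular parabolic fixed points. The bookkeeping in both steps matches the paper's proof exactly.
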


\begin{proof}
Since $X$ is $\QQ$-factorial, $\rho(X)$ coincides
with the rank $\Cl(X)$ and hence equals $n+m-r-1$;
see Summary \ref{sum:classgroup-coxring}.
Since $P$ is irredundant and $X$ has a log terminal
elliptic fixed point, we can infer from
Corollary \ref{cor:logtermchar} that there are at
least $r-2$ arms having length greater or equal
than $2$.
Therefore
\[
\rho(X)
\ = \
m + \sum_{i=0}^r (n_i-1)
\ \ge \
m + r + 1 - 3.
\]
This proves the first statement. To estimate the
number of singularities we use that each of them is a
fixed point. Note that in any case the number of
hyperbolic fixed points of $X = X(A,P)$ is given
by
\[
(n_{0}-1) + \dots + (n_{r}-1).
\]
Thus, in the case (e-e), we have $\rho(X)+2$
fixed points in total. For the cases (e-p) and
(p-e) recall from Summary \ref{sum:singul-basics}
that there are at most $r+1$ singular parabolic
fixed points. Moreover, we have $\rho(X)-1$
hyperbolic fixed points.
\end{proof}

\begin{proof}[Proof of Proposition~\ref{prop:singbounds}]
First note that for a
projective toric surface $X$ the number of
singularities is at most the number of its fixed
points and is therefore bounded by $\rho(X)+2$.
If $X$ is non-toric, then
Propositions~\ref{prop:delPezzo-parabolic}
and~\ref{prop:delp2onlyonenonlt} ensure that
in either of the two cases under consideration,
we have a log terminal elliptic fixed point.
Thus, Lemma~\ref{lem:singbounds} gives the assertion.
\end{proof}

\begin{remark}
For rational $\KK^*$-surfaces coming with two parabolic fixed
point curves or with a non log terminal elliptic fixed
point and a parabolic fixed point curve, we can't expect to
bound the number of singularities in terms of the Picard number.
To see an example, take $n_{0}=\dots=n_{r}=1$
and consider 
\[
l_{01} = \dots = l_{r1} = 2,
\qquad
d_{01}=-1,
\qquad
d_{11} = \dots = d_{r1} = 1.
\]
This gives matrices
$P = [v_{01},\ldots,v_{r+1},v^+,v^-]$
and
$P = [v_{01},\ldots,v_{r+1},v^-]$
and associated $X = X(A,P)$.
In the first case, we have $\rho(X)=2$
and $2r+2$ singularities, $r+1$
on each fixed point curve.
In the second case, we have 
$\rho(X)=1$ and $r+2$ singularities,
$r+1$ on the fixed point curve
and the elliptic fixed point.
\end{remark}

We introduce the main tool of our
classification, the \emph{anticanonical complex}.
It was first introduced in~\cite{BeHaHuNi} to
study terminal Fano threefolds with an action of
a two-dimensional torus; see also \cite{HaMaWr,HiWr}
for further developments.
Differently as in~\cite{BeHaHuNi}, we give an explicit
construction of the complex, adapted to the surface case.

\begin{definition}
We call the defining matrix $P$ of $X = X(A,P)$
an \emph{LDP-matrix} if $X$ is a del Pezzo
surface with at most log terminal singularities.
\end{definition}

\begin{construction}
\label{constr:antican}
Consider an LDP-matrix $P$. Define vectors
$\tilde v^+ := \skr{d}^+v^+$ and
$\tilde v^- := \skr{d}^-v^-$ in $\RR^{r+1}$ by
\[
\skr{d}^+
 := 
\frac{\skr{m}^+}{\skr{l}^+},
\qquad\qquad
\skr{d}^-
 := 
\frac{\skr{m}^-}{\skr{l}^-}.
\]
We associate  with $P$ the two-dimensional
simplicial complex $\mathcal{A}_P$ in $\RR^{r+1}$
having as its cells 
\[
\kappa_{ij}
\ := \
\conv(0,v_{ij}, v_{ij+1}),
\qquad
i = 0, \ldots, r, \ j = 1, \ldots, n_i-1
\]
and, according to the possible cases (p-e), (e-p)
and (e-e), for $i = 0, \ldots, r$ the simplices
\[
\begin{array}{lll}
\text{(p-e)} 
& \kappa_i^+ := \conv(0, v^+,v_{i1}),        
& \kappa_i^- := \conv(0,\tilde v^-,v_{in_i}),
\\[3ex]                     
\text{(e-p)} 
& \kappa_i^+ := \conv(0, \tilde v^+,v_{i1}),
& \kappa_i^- := \conv(0,v^-,v_{in_i}),
\\[3ex]
\text{(e-e)} 
& \kappa_i^+ := \conv(0, \tilde v^+,v_{i1}),
& \kappa_i^- := \conv(0,\tilde v^-,v_{in_i}).
\end{array}                     
\]
Observe that the support of the simplicial
complex $\mathcal{A}_P$ is a subset of the
tropical variety $\trop(X) \subseteq \QQ^{r+1}$.
\end{construction}

\begin{remark}
\label{rem:generalmandzeta}
Corollary \ref{cor:mandzeta} ensures that for any
log del Pezzo $\KK^*$-surface $X=X(A,P)$ we can
set in accordance with Construction
\ref{constr:antican}:
\[
\skr{d}_X^+ \ := \ \frac{m_X^+}{\skr{l}_X^+},
\qquad\qquad
\skr{d}_X^- \ := \ \frac{m_X^-}{\skr{l}_X^-}.
\]
Moreover, using the properties of the surface
$X'$ from Remark \ref{rem:weaktropgeom}, we have
the following descriptions in terms of
intersection numbers.
\[
\skr{d}_X^+
\ = \
- \frac{D_{X'}^+ \cdot D_{X'}^+}{D_{X'}^+\cdot (D_{X'}^+-\mathcal{K}_{X'}^0)},
\qquad 
\skr{d}_X^-
\ = \
- \frac{D_{X'}^- \cdot D_{X'}^-}{D_{X'}^-\cdot (D_{X'}^-+\mathcal{K}_{X'}^0)}.
\]
\end{remark}

\begin{remark}
Consider a del Pezzo surface $X = X(A,P)$ with
$r=1$. Then $X$ is toric, thus log terminal
and the support of $\mathcal{A}_P$
equals the LDP-polygon of $X$.
\end{remark}

\begin{definition}
Let $P$ be an LDP-matrix and $X = X(A,P)$ the
corresponding $\mathbb{K}^{\ast}$-surface.
Consider the simplicial complex $\mathcal{A}_P$
associated with $P$.
\begin{enumerate}
\item
The \emph{interior} of $\mathcal{A}_P$
is the relative interior $\mathcal{A}_P^\circ$
of its support with respect to $\trop(X)$. 
\item
The \emph{outer vertices} of $\mathcal{A}_P$ are
the vertices of the complex $\mathcal{A}_P$ apart
from the origin.
\item
Given $k \in \ZZ_{\ge 1}$, we say that the
complex $\mathcal{A}_P$ is
\emph{almost $k$-hollow} if we have
$\mathcal{A}_P^\circ \cap k\ZZ^{r+1} = \{0\}$.
\end{enumerate} 
\end{definition}

\def\accrunningex{
\begin{tikzpicture}[scale=0.6]
\sffamily

\coordinate(cplus) at (0,2);
\coordinate(c01) at (-2,-0);
\coordinate(c02) at (-2,-1.5);
\coordinate(c11) at (1,-1.8);
\coordinate(c21) at (2,-.75);
\coordinate(cminus) at (0,-2);

\node[centered] at (0,-3) {$\mathcal{A}_P$};

\path[fill, color=gray!30] (ooo) -- (cplus) -- (c21) -- (cminus) -- (ooo);
\path[fill, color=gray!10] (ooo) -- (cplus) -- (c01) -- (c02) -- (cminus) -- (ooo);

\draw[thick, color=black] (ooo) to (c01);
\draw[thick, color=black] (ooo) to (c02);
\draw[thick, color=black] (ooo) to (c21);
\draw[thick, color=black] (ooo) to (cplus);
\draw[thick, color=black] (ooo) to (cminus);

\draw[thick, color=black] (cplus) to (c01);
\draw[thick, color=black] (cplus) to (c21);
\draw[thick, color=black] (c21) to (cminus);
\draw[thick, color=black] (c01) to (c02);

\path[fill, opacity=.9, color=gray!20] (ooo) -- (cplus) -- (c11) -- (cminus) -- (ooo);
\draw[thick, color=black] (ooo) to (c11);
\draw[thick, color=black] (cplus) to (c11);
\draw[thick, color=black] (c02) to (cminus);
\draw[thick, color=black] (c11) to (cminus);
\end{tikzpicture}
}

\begin{example}
\label{ex:runningexampleA5}
Consider again the $\KK^*$-surface $X = X(A,P)$
from Example \ref{ex:runningexampleA3}. The
defining matrix $P$ and the complex
$\mathcal{A}_P$ are given as
\[
\qquad\qquad
P
\ = \
\left[
\begin{array}{ccccc}
-3 & -5 & 2 & 0 & 0 
\\
-3 & -5 & 0 & 2 & 0 
\\
-4 & -8 & 1 & 1 & 1 
\end{array}    
\right]
\qquad\qquad
\vcenter{
\accrunningex
}
\]
The outer vertices of $\mathcal{A}_P$ are the
columns $v_{01},v_{02},v_{11},v_{21},v^+$ of $P$
together with the vector $\tilde v^{-} = (0,0,3)$.
\end{example}

We gather properties of $\mathcal{A}_P$
and in particular obtain that it coincides
with the anticanonical complex from~\cite{BeHaHuNi}.
Moreover, we will see that the
complex~$\mathcal{A}_P$ of a $\KK^*$-surface
$X(A,P)$ naturally generalizes the LDP-polygon of
a toric del Pezzo surface arising from a fan.

\begin{theorem}
\label{prop:ak-properties}
Consider an LDP-matrix $P$, its associated
simplicial complex $\mathcal{A}_P$ and the
projective $\KK^*$-surface $X = X(A,P)$.
\begin{enumerate}
\item
According to the possible constellations of
source and sink, the outer vertices of the
complex $\mathcal{A}_P$ are given by 

\medskip
\hspace*{1cm}{\rm (e-e)}\hspace*{.3cm}
$v_{ij}$ for $i=0,\dots,r,j=1,\dots,n_{i}$
and $\tilde v^+$, $\tilde v^-$,

\smallskip
\hspace*{1cm}{\rm  (e-p)}\hspace*{.3cm}
$v_{ij}$ for $i=0,\dots,r,j=1,\dots,n_{i}$ and
$\tilde v^+$, $v^-$,

\smallskip
\hspace*{1cm}{\rm (p-e)}\hspace*{.3cm}
$v_{ij}$ for $i = 0, \ldots r$, $j = 1, \ldots, n_i$
and $v^+$, $\tilde v^-$.

\medskip
\item
The simplicial complex $\mathcal{A}_P$ equals the
anticanonical complex of the log del Pezzo
$\KK^*$-surface $X = X(A,P)$.
\item
For all $i=0,\ldots,r$, the intersection
$\mathcal{A}_P \cap \lambda_i$ with the $i$-th
arm $\lambda_i \subseteq \trop(X)$ is a convex
polygon. Regarding possible outer vertices
$\tilde v^\pm$ we have
\[
\qquad
\tilde v^+ \ \not\in \ \conv(0,v_{01}, \ldots, v_{r1}),
\qquad 
\tilde v^- \ \not\in \ \conv(0,v_{0n_0}, \ldots, v_{rn_r}).
\]
\item
The discrepancy along an exceptional divisor
$E_\varrho \subseteq X''$ of the canonical
resolution $X'' \to X$ is given by
\[
a(E_\varrho) 
\ = \ 
\frac{\Vert v_\varrho \Vert}{\Vert \tilde v_\varrho  \Vert} - 1,
\]
where $v_\varrho \in \varrho$ is the primitive
vector and $\tilde v_\varrho \in \varrho$ is the
intersection point of $\varrho$ and the boundary
$\partial \mathcal{A}_P$.  
\item
The surface $X$ has at most $1/k$-log
canonical singularities if and only if the
anticanonical complex $\mathcal{A}_X$ is almost
$k$-hollow.
\end{enumerate}
\end{theorem}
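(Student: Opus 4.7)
The plan is to prove parts~(i) and~(iii) by direct inspection of Construction~\ref{constr:antican}, then spend the main effort on~(iv), from which~(ii) and~(v) will follow essentially formally. Part~(i) is a readout of the cell structure: consecutive $\kappa_{ij}$ share $v_{ij+1}$, and within each case only one of $v^\pm,\tilde v^\pm$ appears, yielding exactly the lists stated. For~(iii), slope-orderedness of $P$ places $v_{i1},\dots,v_{in_i}$ in strictly decreasing slope-order along the two-dimensional arm $\lambda_i$, so the $\kappa_{ij}$ fit together convexly; appending $v^\pm$ or $\tilde v^\pm$ along the extreme ray preserves convexity. The non-containment $\tilde v^+\notin\conv(0,v_{01},\dots,v_{r1})$ then reduces to a short computation: the block structure of the first $r$ rows of $P$ forces the coefficients in $\tilde v^+=\sum\alpha_i v_{i1}$ to satisfy $\alpha_i=(1/\skr{l}^+)(1/l_{i1})$, whence $\sum\alpha_i=1+(r-1)/\skr{l}^+>1$ for $r\ge 2$ and $\skr{l}^+>0$; the statement for $\tilde v^-$ is symmetric.

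The heart of the argument is~(iv). I would proceed by cases on the fixed point over which $E_\varrho$ lies. At a hyperbolic fixed point $x_{ij}$, Summary~\ref{sum:singul-basics} supplies a local toric chart with generator matrix $P_{ij}=[v_{ij},v_{ij+1}]$, and the cell $\kappa_{ij}$ coincides inside this chart with the local anticanonical polytope; the toric discrepancy formula then yields $a(E_\varrho)=\Vert v_\varrho\Vert/\Vert\tilde v_\varrho\Vert-1$ with $\tilde v_\varrho\in[v_{ij},v_{ij+1}]\subset\partial\mathcal{A}_P$. The same reasoning, with matrices $P_i^\pm$ and cells $\kappa_i^\pm$, handles the parabolic fixed points $x_i^\pm$. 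At an elliptic fixed point $x^\pm$, Proposition~\ref{prop:logtermchar} gives $a(D_{X''}^\pm)=\skr{l}^\pm/\skr{m}^\pm-1$, which is precisely $\Vert v^\pm\Vert/\Vert\tilde v^\pm\Vert-1$ by the definition $\tilde v^\pm=\skr{d}^\pm v^\pm$. The remaining exceptional divisors lying over $x^\pm$ are addressed via the intermediate surface $X'$ from Summary~\ref{sum:surfsingres}: on $X'$ these divisors sit over locally toric parabolic fixed points $x_i^{\pm,\prime}$, so $a(E_\varrho;X')$ is computed by the toric formula relative to the segment $[v^\pm,v_{i1}]$; the global discrepancy is recovered from $a(E_\varrho;X)=a(E_\varrho;X')+\mu\cdot a(D_{X'}^\pm;X)$ with $\mu$ the coefficient of $D_{X'}^\pm$ in the toric pullback.

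The main obstacle is this last step: one must show that the correction by $a(D_{X'}^\pm;X)=\skr{l}^\pm/\skr{m}^\pm-1$ exactly absorbs the discrepancy between ``using $v^\pm$'' and ``using $\tilde v^\pm=\skr{d}^\pm v^\pm$'' on the boundary of the cell $\kappa_i^\pm$, so that the formula~(iv) emerges uniformly with $\tilde v_\varrho\in[v_{i1},\tilde v^\pm]$. Decomposing $v_\varrho=\alpha v^\pm+\beta v_{i1}$ inside the local 2-plane, one has $\mu=\alpha$ and $a(E_\varrho;X')=\alpha+\beta-1$, while the norm-ratio to the segment $[v_{i1},\tilde v^\pm]$ evaluates to $\alpha/\skr{d}^\pm+\beta$; a short check reveals $\alpha+\beta-1+\alpha(\skr{l}^\pm/\skr{m}^\pm-1)=\alpha/\skr{d}^\pm+\beta-1$, which is exactly~(iv).

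With~(iv) at hand, part~(ii) follows from the characterization of the anticanonical complex in~\cite{BeHaHuNi}: being the unique two-dimensional simplicial complex supported on $\trop(X)$ whose boundary records discrepancies via the norm-ratio formula pins down $\mathcal{A}_P$ as the anticanonical complex of $X$. Finally, (v) is obtained by translation: $a(E_\varrho)\ge 1/k-1$ is equivalent to $kv_\varrho\notin\mathcal{A}_P^\circ$, and since every non-zero $w\in\mathcal{A}_P^\circ\cap k\ZZ^{r+1}$ has the form $kmv_\varrho$ for a primitive lattice vector $v_\varrho$ in $\trop(X)$, the equivalence between $1/k$-log canonicity of $X$ and $\mathcal{A}_P^\circ\cap k\ZZ^{r+1}=\{0\}$ follows.
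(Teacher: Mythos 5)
Your route is genuinely different from the paper's: the paper proves only the non-containment statement in (iii) and refers to \cite{BeHaHuNi} (Thm.~1.4, Prop.~2.3, Prop.~3.7, Cor.~4.10) for (i), (ii), (iv) and (v), whereas you reprove (iv) from scratch via the local toric charts of Summary~\ref{sum:singul-basics}, Proposition~\ref{prop:logtermchar} and the composition formula for discrepancies through the intermediate surface $X'$, and then derive (ii) and (v) from it. The core computation of your (iv) checks out: with $v_\varrho=\alpha v^\pm+\beta v_{i1}$ one gets $a(E_\varrho;X)=\alpha+\beta-1+\alpha(\skr{l}^\pm/\skr{m}^\pm-1)=\alpha/\skr{d}^\pm+\beta-1$, which is exactly the norm ratio to the segment $[\tilde v^\pm,v_{i1}]$; the hyperbolic and genuinely parabolic cases and the divisor $D_{X''}^\pm$ itself are handled correctly, and these exhaust the exceptional divisors of the canonical resolution. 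Your coefficient computation for the non-containment in (iii), namely $\alpha_i=1/(\skr{l}^+l_{i1})$ and $\sum\alpha_i=1+(r-1)/\skr{l}^+$, is equivalent to the paper's comparison of $\skr{m}^+/\skr{l}^+$ with $\skr{m}^+/(\skr{l}^++r-1)$ (note that both arguments need $r\ge 2$: for $r=1$ the point $\tilde v^+$ lies on the facet). For (ii) you still rely on \cite{BeHaHuNi} for the fact that their anticanonical complex satisfies the same norm-ratio discrepancy property; combined with star-shapedness this does pin the two complexes down as equal, but that input should be stated explicitly.

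The one genuine gap is the convexity claim in (iii). Slope-orderedness only guarantees that the rays through $v_{i1},\dots,v_{in_i}$ rotate monotonically, i.e.\ that the triangles $\kappa_{ij}$ have disjoint interiors and that their union is star-shaped around the origin; it does not make that union convex. For instance, the slope-ordered configuration $(10,9),(3,1),(10,-9)$ produces a reflex vertex of the boundary polyline at $(3,1)$, so ``appending convexly'' fails. Convexity at an interior vertex $v_{ij}$ is precisely what the del Pezzo inequality $-\mathcal{K}_X\cdot D_X^{ij}>0$, i.e.\ $\skr{l}_{ij-1}\skr{m}_{ij-1}>\skr{l}_{ij}\skr{m}_{ij}$ from Propositions~\ref{prop:ak-intersections} and~\ref{prop:kleimanample}, encodes, and the corresponding inequalities resp.\ the bounds on $\skr{d}^\pm$ control the end vertices. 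You must therefore invoke the LDP hypothesis at this point; without it the convexity assertion is simply false. Once that is repaired, the remainder of your argument stands.
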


\begin{proof}
Assertions (i), (ii) and (iv), (v) are covered by
\cite[Thm. 1.4, Prop. 2.3, Prop. 3.7 and Cor.
4.10]{BeHaHuNi}. We show (iii). The intersection
points of the boundary of $\mathcal{A}_P$ and the
facet $\conv(v_{01}, \ldots, v_{r1})$ with the
ray through $v^+$ are given by
\[
\tilde v^+ \ = \ \frac{\skr{m}^+}{\skr{l}^+} v^+,
\qquad 
\frac{\skr{m}^+}{\skr{l}^+ + r-1} v^+.
\]
Since $X$ is log terminal we have
$\skr{l}^{+}>0$. Together with $\skr{m}^{+}>0$,
this gives the assertion in case of the existence
of an elliptic fixed point $x^+$. The case of an
elliptic fixed point $x^-$ is analogous.
\end{proof}

As an application, we can bound the number
$r+1$ of rows of a defining matrix $P$ for
$1/k$-log canonical $\KK^*$-surfaces $X(A,P)$.
As we will see later in Example~\ref{ex:maxfam},
the presented bound is even sharp.
In more geomeric terms, the statement is the following.

\begin{proposition}
\label{prop:armbounds}
For any non-toric rational $1/k$-log
canonical del Pezzo $\KK^*$-surface $X$, the
number $r+1$ of critical values of
$\pi \colon X \dasharrow \PP_1$ is bounded as
follows.
\begin{enumerate}
\item  
If $X$ has two elliptic fixed points, then
$r+1 \le 4k$ holds.
\item
If $X$ has a parabolic fixed point curve, then
$r+1 \le 2k+1$ holds.
\end{enumerate}
\end{proposition}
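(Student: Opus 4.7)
The proof plan will rely on the equivalence from Theorem~\ref{prop:ak-properties}(v), which translates the $1/k$-log canonical hypothesis into almost $k$-hollowness of the anticanonical complex $\mathcal{A}_P$. I write $X\cong X(A,P)$ with $P$ irredundant and slope-ordered, noting $r\ge 2$ since $X$ is non-toric. The combinatorial starting point will be Proposition~\ref{lem:rviamzeta}(iii), which gives
\[
r+1 \ \le \ (\skr{m}^+-\skr{l}^+) \, + \, (-\skr{m}^- -\skr{l}^-) \, + \, 4.
\]
At any elliptic fixed point $x^{\pm}$, Proposition~\ref{prop:logtermchar} combined with $1/k$-log canonicity yields $\skr{m}^+\le k\skr{l}^+$ (respectively $-\skr{m}^-\le k\skr{l}^-$), and together with $\skr{l}^{\pm}\le 2$ from Remark~\ref{rem:obviouszetaprops} this gives the elliptic-side bounds $\skr{m}^+-\skr{l}^+\le 2(k-1)$ and $-\skr{m}^--\skr{l}^-\le 2(k-1)$.

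Case~(i) then follows immediately: with two elliptic fixed points, both estimates apply to the inequality above, yielding $r+1 \le 2(k-1)+2(k-1)+4 = 4k$.

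For case~(ii), Proposition~\ref{prop:delPezzo-parabolic} ensures there is exactly one parabolic fixed point curve. Applying the admissible operation of type~(ii) from Definition~\ref{admop} if necessary, I reduce to the (e-p) configuration, in which $v^-=-e_{r+1}$ is a lattice-primitive column of $P$ while $x^+$ is elliptic. The elliptic bound still supplies $\skr{m}^+-\skr{l}^+\le 2(k-1)$. Geometrically, the apex $\tilde v^+$ of $\mathcal{A}_P$ lies at height at most $\skr{d}^+\le k$, while $v^-$ sits at the fixed lattice-height $-1$, so the complex has total vertical extent at most $k+1$; this is substantially smaller than the extent $2k$ available in case~(i), which heuristically accounts for the sharper bound $2k+1$ versus $4k$. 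The plan is to promote this into the actual estimate by a lattice point count in the cells $\kappa_i^-=\conv(0,v^-,v_{in_i})$: these cells share the vertex $v^-$ across all $r+1$ arms, and each arm can accommodate only a bounded number of $k\ZZ^{r+1}$-points near $v^-$ before forcing an interior lattice point of $\mathcal{A}_P$ in $k\ZZ^{r+1}\setminus\{0\}$, violating almost $k$-hollowness. Tallying the resulting contributions across arms, while exploiting the slope-orderedness of $P$ to ensure the cells $\kappa_i^-$ meet only along shared edges, is expected to deliver $r+1\le 2k+1$.

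The main obstacle is precisely this final parabolic-side analysis. The symmetry between the two sides that made case~(i) automatic breaks down in case~(ii): Proposition~\ref{prop:logtermchar}(ii) is unavailable in the absence of an elliptic $x^-$, and the naive replacement $-\skr{m}^--\skr{l}^-\le 2(k-1)$ is far too weak. The sharper bound must be extracted directly from the almost $k$-hollow condition on $\mathcal{A}_P$, using both the rigidity of $v^-=-e_{r+1}$ as a fixed lattice vertex at height $-1$ and the interaction between neighboring arms through this shared vertex; the combinatorial bookkeeping required for this step is the crux of the proof.
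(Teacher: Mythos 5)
Your part~(i) is correct and coincides with the paper's argument: combine Proposition~\ref{lem:rviamzeta}(iii) with $\skr{m}^{+}\le k\skr{l}^{+}$, $-\skr{m}^{-}\le k\skr{l}^{-}$ and $\skr{l}^{+}+\skr{l}^{-}\le 4$ to get $r+1\le (k-1)(\skr{l}^{+}+\skr{l}^{-})+4\le 4k$.

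Part~(ii), however, has a genuine gap, and you have identified its location yourself: you need to control the term coming from the parabolic side in the inequality of Proposition~\ref{lem:rviamzeta}(iii), you correctly observe that the elliptic-type bound is unavailable there, and you then leave the job to an uncompleted lattice-point count in the cells $\kappa_i^{-}$ near the shared vertex $v^{-}$. That count is never carried out, the ``vertical extent'' comparison is only a heuristic, and it is not clear the sketched bookkeeping would produce the exact constant $2k+1$. The missing idea is much simpler and has nothing to do with almost $k$-hollowness on the parabolic side: it is the del Pezzo property via Kleiman's criterion, Proposition~\ref{prop:kleimanample}. If the sink is the parabolic fixed point curve $D_X^{-}$, then $-\mathcal{K}_X\cdot D_X^{-}=\skr{l}^{-}+\skr{m}^{-}>0$, i.e.\ the parabolic-side term $-\skr{m}^{-}-\skr{l}^{-}$ is \emph{strictly negative}. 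Feeding this into Proposition~\ref{lem:rviamzeta}(iii) together with the elliptic-side estimate $\skr{m}^{+}-\skr{l}^{+}\le(k-1)\skr{l}^{+}\le 2(k-1)$ gives the strict inequality
\[
r+1 \ \le \ (\skr{m}^{+}-\skr{l}^{+})+(-\skr{m}^{-}-\skr{l}^{-})+4 \ < \ 2(k-1)+4 \ = \ 2k+2,
\]
and since $r+1$ is an integer this yields $r+1\le 2k+1$. Note that the strictness of the Kleiman inequality is essential: it is exactly what buys the improvement from $2k+2$ to $2k+1$, which your proposal does not account for. (This is the paper's argument, stated there with source and sink exchanged.)
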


\begin{proof}
We may assume $X = X(A,P)$ with slope-ordered and
irredundant $P$ and $r \ge 2$. According to
Theorem \ref{prop:ak-properties}, we have
$\skr{d}^+ \le k$ and $\skr{d}^- \ge -k$. This
gives
\[
\skr{m}^+ \ \le \ k\skr{l}^+,
\qquad\qquad
\skr{m}^- \ \ge \ -k\skr{l}^-.
\]
Assume that $X$ has two elliptic fixed points.
Combining the above estimates with Lemma
\ref{lem:rviamzeta} (iii) and using
$\skr{l}^{+}+\skr{l}^{-} \le 4$ as well as
$k \ge 1$ leads to 
\[
r+1
\ \le \ 
(\skr{m}^+ - \skr{l}^+)  - (\skr{m}^- + \skr{l}^-) + 4
\ \le \ 
(k-1)(\skr{l}^++\skr{l}^-) + 4
\ \le \
4k.
\]
Assume that $X$ has a parabolic fixed point
curve, say $D_X^+$. Then
$-\mathcal{K}_X \cdot D_X^+ > 0$ implies
$\skr{m}^{+}<\skr{l}^{+}$. Similarly as before we
conclude
\[
r+1
\ \le \ 
(\skr{m}^+ - \skr{l}^+)  - (\skr{m}^- + \skr{l}^-) + 4
\ < \ 
(k-1)\skr{l}^- + 4
\ \le \
2k + 2.
\]
\end{proof}


\section{Contractions and combinatorial minimality}

We discuss contractions and combinatorial minimality
of $\KK^*$-surfaces.
In particular, Proposition \ref{prop:kstarcontractdecomp}
provides an explicit description of contractions in
terms of defining matrices,
Proposition \ref{prop:dplusdminuscontract} tells about
the effect of a contraction on the anticanonical complex
and Proposition \ref{prop:combminprops} presents
geometric properties of combinatorially minimal 
$\KK^*$-surfaces.

\begin{definition}
By a \emph{contraction} we mean a proper
morphism $\psi \colon X \to Y$
of normal varieties such that
$\psi \colon \psi^{-1}(V) \to V$
is an isomorphism for some open subset
$V \subseteq Y$ with complement of
codimension at least two in $Y$.
\end{definition}



Note that our contractions are often
referred to as ``birational contractions''. 
We gather basic properties of
contractions. Recall that for any proper morphism
$\psi \colon X \to Y$ of normal varieties, we
have the push forward homomorphisms
\[
\psi_* \colon \WDiv(X) \ \to \ \WDiv(Y),
\qquad\qquad
\psi_* \colon \Cl(X) \ \to \ \Cl(Y),
\]
defined by sending a prime divisor
$D \subseteq X$ to $\psi(D) \subseteq Y$
if $\psi(D)$ is a prime divisor in $Y$
and to zero else.

\begin{proposition}
\label{prop:contractprops}
Let $\psi \colon X \to Y$ be a
contraction and consider
the associated push forward
homomorphisms on the Weil divisors
and the divisor classes.
\begin{enumerate}
\item
If $\mathcal{K}_X$ is a canonical divisor on $X$,
then its push forward $\psi_{\ast}(\mathcal{K}_X)$ is
a canonical divisor on $Y$.
\item
The homomorphisms
$\psi_{\ast} \colon \WDiv(X) \to \WDiv(Y)$
and 
$\psi_{\ast} \colon \Cl(X) \to \Cl(Y)$ 
are both surjective.
\item
For the cones of effective and
movable divisor classes in
$\Cl_{\mathbb{Q}}(X)$ and $\Cl_{\mathbb{Q}}(Y)$
we have 
\[
\psi_{\ast}(\Eff(X)) = \Eff(Y),
\qquad
\psi_{\ast}(\Mov(X)) = \Mov(Y).
\]
\end{enumerate} 
\end{proposition}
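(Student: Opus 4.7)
The plan is to work off the defining property of a contraction: $\psi$ restricts to an isomorphism $\psi^{-1}(V)\to V$ for an open $V\subseteq Y$ whose complement has codimension at least two. Setting $U:=\psi^{-1}(V)$ and letting $E_{1},\ldots,E_{s}\subseteq X$ denote the \emph{exceptional} prime divisors, i.e.\ those prime divisors $E\subseteq X$ with $\psi(E)$ of codimension at least two in $Y$, normality produces a direct sum splitting
\[
\WDiv(X)
\ = \
\WDiv(U)\,\oplus\,\ZZ\langle E_{1},\ldots,E_{s}\rangle,
\qquad
\WDiv(Y)\ =\ \WDiv(V)\ =\ \WDiv(U),
\]
under which $\psi_{*}$ becomes the projection onto the first summand.

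First I would treat~(i): pick a rational canonical form $\omega$; as $\KK(X)=\KK(Y)$, both $\div_{X}(\omega)$ and $\div_{Y}(\omega)$ are canonical divisors, and they agree on $U\cong V$, so the splitting above yields $\psi_{*}(\div_{X}\omega)=\div_{Y}\omega$. For~(ii), every prime $F\subseteq Y$ meets $V$, so the closure in $X$ of $F\cap V$ is a prime $F^{s}\subseteq X$ with $\psi_{*}(F^{s})=F$; this gives surjectivity on $\WDiv$, and the standard compatibility $\psi_{*}(\div_{X}f)=\div_{Y}f$ for $f\in\KK(Y)^{*}$ transfers it to $\Cl$.

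For the effective cones in~(iii) both inclusions are immediate from the setup: push forward preserves effectivity by the nonnegativity of coefficients, while the strict transform $D^{s}$ of an effective $D\subseteq Y$ is effective with $\psi_{*}(D^{s})=D$. The forward inclusion $\psi_{*}(\Mov(X))\subseteq\Mov(Y)$ in the movable part also goes through routinely: given movable $D$ on $X$ and any prime $F\subseteq Y$, applying movability to the strict transform $F^{s}\subseteq X$ yields $D'\sim D$ effective with $F^{s}\not\subseteq\Supp(D')$, whose push forward supplies an effective representative of $[\psi_{*}D]$ missing $F$.

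The main obstacle is the reverse inclusion $\Mov(Y)\subseteq \psi_{*}(\Mov(X))$. The plan is to lift a given movable divisor $D$ on $Y$ to a class in $\Mov(X)$ with the desired push forward. Taking the strict transform $D^{s}$ automatically handles all exceptional primes, since $\ord_{E_{i}}(D^{s})=0$ and $D^{s}$ is already effective. At a non-exceptional prime $F\subseteq X$, movability of $D$ at $\psi(F)\subseteq Y$ produces $D_{1}\sim D$ effective with $\ord_{\psi(F)}(D_{1})=0$, whose strict transform $D_{1}^{s}$ is effective on $X$ with $\ord_{F}(D_{1}^{s})=0$ and coincides with $D^{s}$ on $U$ up to a principal divisor, so that the difference $D_{1}^{s}-D^{s}-\div(f)$ is an exceptional cycle $\sum c_{i}E_{i}$. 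The crux is then to absorb this exceptional correction into the lift: since $\psi_{*}[E_{i}]=0$, modifying $[D^{s}]$ by any integral combination of $[E_{i}]$ does not affect the push forward, so the task reduces to choosing exceptional coefficients that make the resulting class movable at every $F$ simultaneously. Closedness of $\Mov(X)$ and continuity of $\psi_{*}$ then extend the conclusion from such explicit representatives to all of $\Mov(Y)$.
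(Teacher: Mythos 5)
Your treatment of (i), (ii), the effective-cone equality and the inclusion $\psi_{*}(\Mov(X))\subseteq\Mov(Y)$ is correct and is essentially the paper's own argument: identify $\WDiv(Y)$ with $\WDiv(V)=\WDiv(U)$, read $\psi_{*}$ as the projection killing the exceptional primes, and transport canonical forms, principal divisors and effective representatives through the isomorphism $U\cong V$.

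The gap is the reverse inclusion $\Mov(Y)\subseteq\psi_{*}(\Mov(X))$, which you correctly single out as the crux but do not close. Your construction produces, for each non-exceptional prime $F\subseteq X$, an exceptional correction $\sum c_{i}E_{i}$ whose coefficients depend on $F$ (they come from the particular representative $D_{1}\sim D$ chosen to avoid $\psi(F)$), and movability is a property of a \emph{single} class: you must exhibit one choice of coefficients that works for all $F$ and all $E_{j}$ simultaneously, and no argument for the existence of such a choice is given. The concluding appeal to closedness of $\Mov(X)$ and continuity of $\psi_{*}$ cannot substitute for this, since at that point you have not produced a movable preimage of even one class, so there is nothing to take limits of. That the issue is real is shown by the blow-up $\psi\colon X\to\PP_{2}$ in two points $p,q$ and $D$ the line through them: $D$ is movable on $\PP_{2}$, but the strict transform $D^{s}=H-E_{1}-E_{2}$ is a $(-1)$-curve, hence rigid; the correct lift is the total transform $H=\psi^{*}D$, i.e.\ the coefficients $c_{1}=c_{2}=1$, and identifying them requires an actual mechanism — for instance pulling back a Cartier multiple of $D$ in the $\QQ$-factorial setting, or using the Cox-ring presentation $\mathcal{R}(Y)\cong\mathcal{R}(X)/\bangle{1-f_{i}}$ from Remark~\ref{rem:contractprops}, under which sections of a class on $Y$ lift to homogeneous elements of $\mathcal{R}(X)$ in degrees differing from $[D^{s}]$ by exceptional classes. (To be fair, the paper's own one-line proof is also silent on this direction; but your sketch, as written, does not establish it.)
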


\begin{proof}
Take any open subset $V \subseteq Y$ with
complement of codimension at least two
in $Y$ such that $\psi \colon \psi^{-1}(V) \to V$
is an isomorphism.
Then canonical (principal, effective, movable) divisors $D$
on $X$ restrict to canonical (principal, effective, movable)
divisors on $\psi^{-1}(V) \cong V$ and thus yield
canonical (principal, effective, movable) divisors $\psi_*(D)$
on $Y$.
\end{proof}

If a contraction $\psi \colon X \to Y$ maps a
prime divisor $E \subseteq X$ onto a subset of codimension
at least two in $Y$, then we say $\psi$ \emph{contracts}
$E$ and call $E$ an \emph{exceptional divisor} of $\psi$.
Moreover, a prime divisor on a normal variety $X$
is called \emph{contractible} if it gets contracted
by some contraction $X \to Y$.

\begin{remark}
For any contraction $\psi \colon X \to Y$,
there is a finite (possibly empty) collection
$E_1, \ldots, E_q \subseteq X$ of exceptional
divisors.
\end{remark}

\begin{remark}
\label{rem:contractprops}
Consider normal complete varieties $X$, $Y$ and 
let $\psi \colon X \to Y$ be a contraction with
the exceptional divisors $E_1, \ldots, E_q$.
\begin{enumerate}
\item
If $X$ comes with a morphical action of a connected algebraic
group $G$, then $E_1, \ldots, E_q \subseteq X$
are invariant and $Y$ admits a morphical $G$-action making
$\psi \colon X \to Y$ equivariant, see \cite[Prop.~I.1]{Bl}.
\item
Assume that $X$ has finitely generated Cox ring $\mathcal{R}(X)$
and let $f_i \in \mathcal{R}(X)$ represent the canonical section of 
$E_i$. Then we have an isomorpism
\[
\mathcal{R}(X) / \bangle{1 - f_i; \ i = 1, \ldots, q}
 \ \cong \ 
\mathcal{R}(Y)
\]
induced by sending homogeneous elements $f \in \mathcal{R}(X)$
of degree $[D]$ to homogeneous elements $\psi_* f \in \mathcal{R}(Y)$ of
degree $[\psi_*D]$, see \cite[Prop.~4.1.3.1]{ArDeHaLa}.
\end{enumerate}
\end{remark}

We focus on the surface case. 
We begin with some general observations
and then specialize to the rational
projective $\KK^*$-surfaces.

\begin{remark}
Let $\psi \colon X \to Y$ be a contraction
of surfaces.
Then the exceptional divisors
$E_1,\ldots, E_q \subseteq X$
map to points $y_1,\ldots,y_q \in Y$
and $X \setminus (E_1 \cup \ldots \cup E_q)$
maps isomorphically onto
$Y \setminus \{y_1,\ldots,y_q\}$.
\end{remark}

\begin{proposition}
\label{prop:delpcontractstodelp}
Let $\psi \colon X \to Y$ be a contraction of
surfaces,
where $X$ has finitely generated Cox ring.
Then $Y$ has finitely generated Cox ring and
we have 
\[
\psi_*(\SAmple(X)) = \SAmple(Y),
\qquad\qquad
\psi_*(\Ample(X)) = \Ample(Y)
\]
for the cones of semiample and ample
divisor classes.
Moreover, if $X$ is a del Pezzo surface,
then $Y$ is a del Pezzo surface.
\end{proposition}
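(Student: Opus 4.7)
The plan is to first establish finite generation of $\mathcal{R}(Y)$, then prove the ample-cone equality $\psi_{\ast}(\Ample(X)) = \Ample(Y)$ via Nakai-Moishezon and the projection formula, pass to the semiample cone by taking closures, and finally apply the result to the anticanonical class. For finite generation, Remark~\ref{rem:contractprops}~(ii) presents $\mathcal{R}(Y)$ as a quotient of the finitely generated $\KK$-algebra $\mathcal{R}(X)$, hence $\mathcal{R}(Y)$ is itself finitely generated. In particular, $Y$ is a Mori dream space, thus $\QQ$-factorial and projective, so that pullback $\psi^{\ast}$ and intersection products on $Y$ are well defined.

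For $\psi_{\ast}(\Ample(X)) \subseteq \Ample(Y)$ I would take $D \in \Ample(X)$ and verify Nakai-Moishezon on $Y$. Given an irreducible curve $C \subseteq Y$ with proper transform $\tilde C \subseteq X$, the projection formula yields $\psi_{\ast}D \cdot C = D \cdot \psi^{\ast}C$, and writing $\psi^{\ast}C = \tilde C + \sum \beta_i E_i$ with $\beta_i \geq 0$ (since $\psi^{\ast}$ preserves effectivity), ampleness of $D$ forces this to be positive. For the self-intersection, write $\psi^{\ast}\psi_{\ast}D = D + \sum a_i E_i$ (the difference lies in $\ker\psi_{\ast}$, spanned by the exceptional divisors) and compute $(\psi_{\ast}D)^{2} = D^{2} + \sum a_i D \cdot E_i$; intersecting the above relation with $E_j$ shows $Ma = -(D \cdot E_j)_j$, where $M = (E_i \cdot E_j)$ is negative definite with $E_i \cdot E_j \geq 0$ for $i \neq j$. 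This M-matrix property forces $M^{-1}$ to have non-positive entries, so $a_i \geq 0$ and $(\psi_{\ast}D)^{2} > 0$. For the reverse inclusion, given $A$ ample on $Y$, I would set $D := N \psi^{\ast}A - \sum c_i E_i$ with positive $c_i$ chosen via the same M-matrix theory so that $Mc$ has strictly negative entries, and $N$ sufficiently large so that $D \cdot C > 0$ for all non-exceptional curves $C$; then $D$ is ample on $X$ with $\psi_{\ast}D = NA$, yielding $A \in \psi_{\ast}(\Ample(X))$ after rescaling.

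The semiample equality follows from the Mori dream space identity $\SAmple = \overline{\Ample}$ applied on both sides, together with the fact that the linear map $\psi_{\ast}$ sends the rational polyhedral cone $\SAmple(X)$ onto a closed cone whose interior is $\Ample(Y)$, forcing equality with $\SAmple(Y)$. For the del Pezzo conclusion, Proposition~\ref{prop:contractprops}~(i) gives $-\mathcal{K}_Y = \psi_{\ast}(-\mathcal{K}_X) \in \psi_{\ast}(\Ample(X)) = \Ample(Y)$, so $Y$ is del Pezzo. The main technical obstacle is the positivity of the coefficients $a_i$ and $c_i$; this rests on the M-matrix structure of the exceptional intersection form, combining Grauert's contractibility (negative definiteness) with the elementary observation that two distinct exceptional divisors on a normal surface meet non-negatively.
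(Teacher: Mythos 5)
Your argument is correct in substance, but it follows a genuinely different route from the paper's. You prove the ample-cone equality first, by hand, via Nakai--Moishezon: the projection formula, the decomposition $\psi^{\ast}\psi_{\ast}D = D + \sum a_iE_i$, and the Stieltjes/M-matrix positivity coming from Mumford's negative definiteness of the exceptional intersection matrix; you then recover the semiample statement by taking closures. The paper goes in the opposite direction and never touches intersection theory: it invokes the Mori-dream-surface identity $\Mov=\SAmple$ in $\Cl_{\QQ}(X)$ and $\Cl_{\QQ}(Y)$ (\cite[Thm.~4.3.3.5]{ArDeHaLa}), so that the semiample equality is an immediate restatement of $\psi_{\ast}(\Mov(X))=\Mov(Y)$ from Proposition~\ref{prop:contractprops}~(iii), and then obtains the ample equality for free because a surjective linear map carries the relative interior of a cone onto the relative interior of its image. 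The finite generation step and the del Pezzo conclusion are identical in both treatments. What each approach buys: the paper's is shorter, purely convex-geometric, and exploits machinery already in place; yours is more classical and self-contained, applies to any birational morphism of $\QQ$-factorial projective surfaces without appeal to the $\Mov=\SAmple$ theorem, and gives explicit positivity of the coefficients $a_i$, $c_i$. Two soft spots you should tighten if you write this up: (1) ``finitely generated Cox ring $\Rightarrow$ Mori dream space $\Rightarrow$ $\QQ$-factorial'' is circular as stated; for surfaces the correct justification is that a finitely generated divisor class group forces all local class groups to be finitely generated, hence finite, hence $\QQ$-factoriality -- and $\QQ$-factoriality of $Y$ is genuinely needed for Nakai--Moishezon to apply to $\psi_{\ast}D$. (2) The existence of a single $N$ with $N\psi^{\ast}A-\sum c_iE_i$ positive on all non-exceptional curves is not uniform curve by curve; it should be justified by the standard relative-ampleness argument (the $E_i$ span the kernel of $\overline{NE}(X)\to\overline{NE}(Y)$) rather than by an unbounded case check.
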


\begin{proof}
According to Remark~\ref{rem:contractprops}~(ii),
also $Y$ has a finitely generated Cox ring.
By~\cite[Thm.~4.3.3.5]{ArDeHaLa}, the
movable and semiample cones coincide in $\Cl_\QQ(X)$
and as well in $\Cl_\QQ(Y)$.
Thus, the first displayed equation follows from
Proposition~\ref{prop:contractprops}~(iii).
Moreover, the respective ample cones are the relative
interiors of the semiample cones, see
\cite[Prop.~3.3.2.9]{ArDeHaLa}. Hence, the second
displayed equation follows from the first one and
the fact that any linear map sends the interior of a
cone onto the interior of the image cone. For the
supplement, we use Proposition~\ref{prop:contractprops}~(i)
to see that $Y$ has an ample anticanonical divisor.
\end{proof}

\begin{corollary}
\label{cor:eqivarcontract}
Any contraction of a toric del Pezzo surface is
a toric del Pezzo surface and any contraction of a
del Pezzo $\KK^*$-surface is a del Pezzo
$\KK^*$-surface.
\end{corollary}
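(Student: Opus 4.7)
My proof plan is to combine the del Pezzo conclusion from Proposition~\ref{prop:delpcontractstodelp} with the group-action transport in Remark~\ref{rem:contractprops}~(i). Applying Proposition~\ref{prop:delpcontractstodelp} to a contraction $\psi \colon X \to Y$ already gives that $Y$ is a del Pezzo surface in both cases, since toric del Pezzo surfaces and del Pezzo $\KK^*$-surfaces are projective with finitely generated Cox ring (see Summary~\ref{sum:classgroup-coxring}). So the only remaining task is to transfer the respective group action from $X$ to $Y$ and verify that it satisfies the axioms of a toric surface, respectively a $\KK^*$-surface.

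For the toric case, take $G = \TT^2$ in Remark~\ref{rem:contractprops}~(i). This equips $Y$ with a morphical $\TT^2$-action so that $\psi$ is equivariant, and guarantees that all exceptional divisors $E_1,\ldots,E_q \subseteq X$ are $\TT^2$-invariant. The key observation is that any $\TT^2$-invariant prime divisor in $X$ is a toric boundary divisor, hence disjoint from the open orbit $\TT^2 \subseteq X$. Consequently, $\TT^2$ sits inside $X \setminus (E_1 \cup \dots \cup E_q)$, and $\psi$ restricts there to an isomorphism onto $Y \setminus \{y_1,\ldots,y_q\}$. Thus $\psi(\TT^2) \subseteq Y$ is an open subset isomorphic to $\TT^2$ and the induced $\TT^2$-action on $Y$ extends its multiplication, making $\TT^2 \subseteq Y$ a toric open embedding.

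For the $\KK^*$-surface case, take $G = \KK^*$ in Remark~\ref{rem:contractprops}~(i) and obtain a morphical $\KK^*$-action on $Y$ rendering $\psi$ equivariant. All that has to be verified is that this action is effective: if some $t \in \KK^*$ lay in the kernel, then $t$ would fix every point of the dense open set $Y \setminus \{y_1,\ldots,y_q\}$, and via the isomorphism $\psi$ induces on the complement of the exceptional divisors also every point of an open dense subset of $X$; by continuity $t$ would act trivially on $X$, contradicting effectiveness of the $\KK^*$-action on $X$.

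The main potential obstacle is the verification, in the toric case, that the open orbit is preserved rather than collapsed. This is settled by the invariance of exceptional divisors together with the elementary fact that invariant prime divisors in a toric variety coincide with boundary divisors. With that in hand, both halves of the corollary reduce to bookkeeping: Proposition~\ref{prop:delpcontractstodelp} supplies the del Pezzo property, and Remark~\ref{rem:contractprops}~(i) supplies the equivariant structure.
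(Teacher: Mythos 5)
Your argument is correct and follows the same route as the paper: Remark~\ref{rem:contractprops}~(i) transports the torus action and Proposition~\ref{prop:delpcontractstodelp} supplies the del Pezzo property. The extra verifications you include (that the open $\TT^2$-orbit avoids the exceptional divisors, and that the induced $\KK^*$-action on $Y$ is effective) are details the paper leaves implicit, and they are handled correctly.
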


\begin{proof}
Remark~\ref{rem:contractprops} yields that
the contracted surface inherits the desired
torus action and Proposition~\ref{prop:delpcontractstodelp}
ensures that it is del Pezzo.
\end{proof}

\begin{proposition}
\label{prop:exccurveskstar}
For a contraction $X \to Y$ of $\KK^*$-surfaces,
every exceptional divisor $E \subseteq X$
is either a parabolic fixed point curve or
it is an orbit closure containing a hyperbolic
fixed point.
\end{proposition}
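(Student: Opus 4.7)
The plan is as follows. Let $\psi\colon X\to Y$ be a contraction of $\KK^{*}$-surfaces and $E\subseteq X$ an exceptional prime divisor. Since $\KK^{*}$ is connected, Remark~\ref{rem:contractprops}~(i) ensures that $E$ is $\KK^{*}$-invariant. As $E$ is an irreducible $\KK^{*}$-invariant curve in a $\KK^{*}$-surface, Summary~\ref{sum:orwa} leaves only two possibilities: either $E$ consists entirely of fixed points, in which case it must be a parabolic fixed-point curve since elliptic and hyperbolic fixed points are isolated, or $E=\overline{\KK^{*}\cdot x}$ is the closure of a non-trivial orbit, whose only fixed points are the two limit points $x_{0}$ and $x_{\infty}$. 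The first alternative already yields the assertion; the bulk of the argument treats the second case.

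So suppose $E=\overline{\KK^{*}\cdot x}$ and, aiming at a contradiction, that $E$ contains no hyperbolic fixed point. Then both $x_{0}$ and $x_{\infty}$ lie in the source/sink, and there are only two ways this can happen: either $E$ is a general orbit closure, i.e.\ the closure of a fiber of the rational quotient $\pi\colon X\dashrightarrow\PP_{1}$ over a regular value, or $E=D_{i1}$ for an arm $\mathscr{A}_{i}$ with $n_{i}=1$, in which case the critical fiber of $\pi$ over $\pi(\mathscr{A}_{i})$ equals $l_{i1}D_{i1}$ as a divisor. In either situation there is a positive integer $m$ such that $mE$ lies in the fiber class $\mu\in\Cl(X)$ of Summary~\ref{sum:cl-data}. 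Choosing a general fiber $F$ of $\pi$ with $F\sim mE$ but $F\ne mE$ as a divisor, the divisors $mE$ and $F$ are effective and share no common component, so all local intersection multiplicities at their common points are non-negative, yielding
\[
m^{2}E^{2}\;=\;(mE)\cdot F\;\ge\;0,
\]
and therefore $E^{2}\ge 0$.

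The contradiction is now immediate from the Negativity Lemma for normal $\QQ$-factorial projective surfaces: since $X$ is $\QQ$-factorial by Proposition~\ref{prop:kstarsurfisQfact} and $E$ is exceptional for the proper birational morphism $\psi$, one must have $E^{2}<0$. This rules out the assumption that $E$ carries no hyperbolic fixed point, proving the proposition. The main obstacle is the unified treatment of the two kinds of hyperbolic-free orbit closures, namely general fibers of $\pi$ and one-component arm closures $D_{i1}$; reducing both to the single fiber class $\mu$ provided by the Cox-ring description in Summary~\ref{sum:classgroup-coxring} makes the positivity argument $(mE)\cdot F\ge 0$ routine, after which only the Negativity Lemma is invoked.
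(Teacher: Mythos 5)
Your proof is correct, but it takes a genuinely different route from the paper's. The paper's argument is a two\--line equivariant one: after the same reduction (an invariant exceptional curve that is neither a parabolic fixed point curve nor an orbit closure through a hyperbolic fixed point must be an orbit closure meeting both the source and the sink), it simply observes that contracting such a curve would force the source and the sink of $Y$ to meet in the image point, which is absurd; no intersection theory is used. You instead identify such an $E$ (up to multiplicity) with a fiber of the quotient map, deduce $E^2\ge 0$ from $m^2E^2=(mE)\cdot F$ for a disjoint-in-components general fiber $F\sim mE$, and contradict the negativity of self\--intersection of $\psi$-exceptional curves. Your route is sound — the dichotomy ``general fiber or $D_{i1}$ with $n_i=1$'' is exhaustive, and the class $\mu$ from Summary~\ref{sum:cl-data} does give $mE\sim F$ — and it has the virtue of anticipating the equivalence ``contractible $\Leftrightarrow D^2<0$'' of Proposition~\ref{prop:contractcolchar}. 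What it costs is generality and economy: you need $X$ to be ($\QQ$-factorial, projective, of the form) $X(A,P)$ to speak of $\mu$ and of intersection numbers, and you import the Negativity Lemma, whereas the paper's argument works for any projective $\KK^*$-surface directly from the source/sink structure of Summary~\ref{sum:orwa}.
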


\begin{proof}
Otherwise, being invariant, $E$ is an orbit closure
containing a point from the source and a point from
the sink. Hence, source and sink of $Y$ would
intersect in the image point of $E$, which is
impossible.
\end{proof}


Now we study contractions of
$\KK^*$-surfaces in terms of defining data.
Observe that the special case of defining
matrices $P$ with $r=1$ provides a full
treatment of contractions of toric surfaces,
see also Remark \ref{rem:toric-XAP}.

\begin{definition}
Let $P$ be a defining matrix as in Construction
\ref{constr:kstarsurf}. We call a column of $P$
\emph{contractible} if it lies in the cone generated
by the remaining ones.
\end{definition}

\begin{remark}
\label{characterizationweightexc}
Consider a slope-ordered defining matrix $P$.
A column $v$ is contractible if and only
if one of the following conditions is satisfied:
\begin{enumerate}
\item
$n_{i} \ge 2, v=v_{i1}$ and
$\skr{m}^{+}-m_{i1}+m_{i2}>0$,
\item
$v=v_{ij}$ and $1<j<n_{i}$,
\item
$n_{i} \ge 2, v=v_{in_{i}}$ and
$\skr{m}^{-}-m_{in_{i}}+m_{in_{i}-1}<0$,
\item
$v=v^{+}$ and $\skr{m}^{+}>0$,
\item
$v=v^{-}$ and $\skr{m}^{-}>0$.
\end{enumerate}
\end{remark}

\begin{construction}
\label{constr:contract}
Consider a projective $\KK^*$-surface
$X_1 = X(A,P_1)$ and assume that $P_1$ has a
contractible column $v$. Then, erasing $v$ from
$P_1$ yields a defining matrix $P_2$ of a
projective $\KK^*$-surface $X_2 = X(A,P_2)$.
Moreover, we obtain a commutative diagram
\[
\xymatrix{
X_1
\ar[r]
\ar[d]_{\psi_v}
&
Z_1
\ar[d]^{\psi_v}
\\
X_2
\ar[r]
&
Z_2  
}
\]
involving the $\KK^*$-surfaces $X_i$
and their ambient toric varieties $Z_i$.
The downward maps contract the prime divisors
$D_{X_1} \subseteq X_1$ and $D_{Z_1} \subseteq Z_1$
corresponding to the contracted column $v$ of $P_1$.
In particular, the downward maps are non-trivial
contractions. 
\end{construction}

\begin{remark}
\label{rem:obviouscontractible}
Consider a $\KK^*$-surface $X=X(A,P)$, where
the matrix $P$ is slope-ordered.  
\begin{enumerate}
\item
Assume that there is a curve $D_X^+ \subseteq X$. 
Then for every $i = 0, \ldots, r$ with $n_i \ge 2$,
each column $v_{ij}$ with $j = 1, \ldots, n_{i-1}$
is contractible.
\item
Assume that there is a curve $D_X^- \subseteq X$. 
Then for every $i = 0, \ldots, r$ with $n_i \ge 2$,
each column $v_{ij}$ with $j = 2, \ldots, n_i$
is contractible.
\item
Assume that $X$ has two elliptic fixed points.
Then for every $i = 0, \ldots, r$ with $n_i \ge 3$,
each column $v_{ij}$ with $j = 2, \ldots, n_{i-1}$
is contractible.
\end{enumerate}    
In particular, we obtain a contraction $X \to X'$
onto a $\KK^*$-surface $X'$ given by defining data
$(A,P')$ such that
\begin{enumerate}
\item[(iv)]
in the case that $X$ has of two elliptic fixed points, 
we have $n_i' \le 2$ for $i=0, \ldots, r$,
\item[(v)]
in the case that $X$ has a parabolic fixed point curve,
we have $n_i' = 1$ for $i=0, \ldots, r$.
\end{enumerate}   
\end{remark}

\begin{proposition}
\label{prop:contractcolchar}
Let $X = X(A,P)$ be projective, $v$ a column of $P$
and $D \subseteq X$ the corresponding prime divisor.
Then the following statements are equivalent.
\begin{enumerate}
\item
The column $v$ is contractible.
\item
The curve $D \subseteq X$ is contractible.
\item  
We have $D^2 < 0$.
\item
The divisor $D$ is not movable.
\end{enumerate}
\end{proposition}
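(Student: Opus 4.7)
The plan is to prove the cycle (i) $\Rightarrow$ (ii) $\Rightarrow$ (iii) $\Rightarrow$ (iv) $\Rightarrow$ (i). The first three implications are relatively standard surface-theoretic arguments, while the main content sits in the last step, which is a Gale duality argument translating the combinatorial extremality of the column $v$ into a statement about the movable cone.

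For (i) $\Rightarrow$ (ii) I would simply invoke Construction \ref{constr:contract}: erasing the contractible column $v$ from $P$ yields a defining matrix $P'$ and the associated morphism $\psi_v \colon X(A,P) \to X(A,P')$ is a non-trivial contraction whose exceptional divisor is precisely $D$. For (ii) $\Rightarrow$ (iii) I would use that $X$ is $\QQ$-factorial by Proposition \ref{prop:kstarsurfisQfact}, so the classical theorem of Grauert--Mumford on negative definiteness of intersection matrices of contracted curves on a normal surface applies and forces $D^2<0$. For (iii) $\Rightarrow$ (iv) the argument is contradiction: if $D$ were movable, the linear system $|D|$ would have no fixed component, hence there would exist an effective $D'\sim D$ not containing $D$ in its support; since $D$ is prime, $D$ and $D'$ share no common components and thus meet properly, giving $D^2=D\cdot D'\ge 0$, contradicting $D^2<0$.

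The decisive step is (iv) $\Rightarrow$ (i), for which the plan is to exploit the toric description of the movable cone from Summary \ref{sum:cl-data}, namely $\Mov(X)=\bigcap_{\gamma_0\preccurlyeq\gamma\ \mathrm{facet}}Q(\gamma_0)$ where $\gamma=\QQ_{\ge 0}^{n+m}$. For $w_v=Q(e_v)=[D]$, the containment $w_v\in Q(\gamma_0)$ is automatic for every facet $\gamma_0=\cone(e_k;\,k\ne k_0)$ with $k_0\ne v$, so the only non-trivial condition is $w_v\in\cone(w_k;\,k\ne v)$ in $K_\QQ$. Thus $[D]$ fails to be movable exactly when $w_v\notin\cone(w_k;\,k\ne v)$. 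The bridge to $P$ is the exact sequence $\ZZ^{r+1}\xrightarrow{P^t}\ZZ^{n+m}\xrightarrow{Q}K\to 0$: a relation $w_v=\sum_{k\ne v}\lambda_kw_k$ with $\lambda_k\ge 0$ amounts to the existence of $u\in\QQ^{r+1}$ with $\langle u,v\rangle=1$ and $\langle u,v_k\rangle\le 0$ for $k\ne v$, and by Farkas' lemma this is equivalent to $v\notin\cone(v_k;\,k\ne v)$, i.e.\ $v$ not being contractible. Contrapositively, if $D$ is not movable then $v$ is contractible.

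The main technical obstacle I anticipate is handling the Gale duality carefully: one has to verify that the intersection defining $\Mov(X)$ really reduces to the single facet indexed by $v$, and one must keep the signs straight when converting a separating functional for $v$ into an explicit non-negative expression of $w_v$ in the classes $w_k$ via the relation $QP^t=0$. Once this correspondence is set up precisely, all four conditions fit together into the desired equivalence.
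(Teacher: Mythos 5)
Your proof is correct and follows essentially the same route as the paper: the cycle (i)$\Rightarrow$(ii)$\Rightarrow$(iii)$\Rightarrow$(iv)$\Rightarrow$(i), with (i)$\Rightarrow$(ii) via Construction~\ref{constr:contract}, the middle two implications by standard surface geometry, and (iv)$\Rightarrow$(i) by Gale duality on the exact sequence $\ZZ^{r+1}\xrightarrow{P^t}\ZZ^{n+m}\xrightarrow{Q}K\to 0$. The only cosmetic difference is that you derive the duality step by hand from Farkas' lemma, whereas the paper cites \cite[Lemma~2.2.3.2]{ArDeHaLa} for it.
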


\begin{proof}
If (i) holds, then we contract $D$ by means of
Construction \ref{constr:contract}.
The implications from (ii) to (iii)
and from (iii) to (iv) are standard surface
geometry.
If $D$ is not movable, then, in $\Cl_\QQ(X)$,
the ray through $[D]$ intersects the
cone generated by the remaining $w_{ij}$ and $w^\pm$
in the origin.
By~\cite[Lemma~2.2.3.2]{ArDeHaLa}, the column $v$
lies in the interior of the cone
generated by the remaining columns of $P$
and thus is contractible.
\end{proof}

\begin{proposition}
\label{prop:kstarcontractdecomp}
Any contraction $X \to Y$ of rational
projective $\KK^*$-surfaces decomposes as 
$X \cong X_0 \to \ldots \to X_q \cong Y$ 
with $X_i \to X_{i+1}$ as
in Construction~\ref{constr:contract}.
\end{proposition}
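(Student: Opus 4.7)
The plan is to proceed by induction on the number $q$ of exceptional prime divisors of $\psi \colon X \to Y$. By Remark~\ref{rem:contractprops}(i), we may replace the $\KK^*$-action on $Y$ with the one induced from $X$ so that $\psi$ is $\KK^*$-equivariant, without changing the isomorphism class of $Y$ as a $\KK^*$-surface (in the non-toric case this is Proposition~\ref{prop:isochar}, in the toric case both actions are one-parameter subgroups of the ambient torus). The base case $q=0$ is immediate: $\psi$ is a proper birational morphism contracting no divisor, hence an isomorphism.

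For the inductive step, fix a presentation $X = X(A,P)$ and pick an exceptional prime divisor $E_1 \subseteq X$ of $\psi$. By Proposition~\ref{prop:exccurveskstar}, $E_1$ is $\KK^*$-invariant and therefore corresponds to one of the columns $v$ of $P$. Being contracted by $\psi$, the curve $E_1$ is contractible, and Proposition~\ref{prop:contractcolchar} then shows that $v$ is contractible in the combinatorial sense. Construction~\ref{constr:contract} thus produces an elementary contraction $\psi_1 \colon X \to X_1 = X(A,P')$, with $P'$ obtained by erasing $v$ from $P$, collapsing exactly the curve $E_1$ to a point.

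It remains to factor $\psi$ through $\psi_1$ and invoke the induction hypothesis on the resulting map. Since $\psi_1$ is a proper birational morphism onto the normal surface $X_1$, Zariski's Main Theorem yields $(\psi_1)_\ast \mathcal{O}_X = \mathcal{O}_{X_1}$. Its fibres are either single points or all of $E_1$, and $\psi$ collapses $E_1$ to a point of $Y$; hence $\psi$ is constant on $\psi_1$-fibres and descends uniquely to a morphism $\tilde\psi \colon X_1 \to Y$. The remaining divisors $E_2,\ldots,E_q$ meet $E_1$ only in finitely many points, so their $\psi_1$-images are the exceptional prime divisors of $\tilde\psi$, now $q-1$ in number, and $\tilde\psi$ is itself a contraction of rational projective $\KK^*$-surfaces. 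Applying the induction hypothesis to $\tilde\psi$ and concatenating with $\psi_1$ yields the desired decomposition. The main technical point is this factorisation argument, which relies on the general fact that a proper birational morphism to a normal target has trivial pushforward of the structure sheaf; everything else reduces to bookkeeping via Proposition~\ref{prop:contractcolchar} and Remark~\ref{rem:contractprops}.
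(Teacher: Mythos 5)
Your proof is correct, and it departs from the paper's at the decisive point: how the end of the chain gets identified with $Y$. The paper first produces the entire sequence $X_0 \to \ldots \to X_q$ purely combinatorially --- after contracting $E_1$ it observes that the pushforwards of $E_2,\ldots,E_q$ are non-movable, hence correspond to contractible columns by Proposition~\ref{prop:contractcolchar}, and iterates --- and only afterwards compares $X_q$ with $Y$: the two are isomorphic away from finitely many points, and this isomorphism in codimension two is lifted to the total coordinate spaces and descended again to an isomorphism of surfaces $X_q \cong Y$. You instead keep an honest morphism to $Y$ at every stage, factoring $\psi$ through the elementary contraction $\psi_1$ via the rigidity lemma ($(\psi_1)_*\mathcal{O}_X = \mathcal{O}_{X_1}$ together with constancy on fibres) and closing the induction with the observation that a contraction with no exceptional divisors is finite birational onto a normal surface, hence an isomorphism. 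Your route is more elementary and self-contained --- it avoids the Cox-ring lifting argument entirely and would work for contractions of normal surfaces in general --- at the price of invoking the descent lemma, which the paper never makes explicit. Both arguments rest on the same two inputs: Proposition~\ref{prop:exccurveskstar} to see that every exceptional divisor corresponds to a column of $P$, and Proposition~\ref{prop:contractcolchar} to see that such a column is contractible. One minor caveat: your parenthetical claim that in the toric case the induced $\KK^*$-action on $Y$ yields the same $\KK^*$-surface is not quite accurate (compare the remark following Proposition~\ref{prop:isochar}); but since the conclusion, as in the paper, is an isomorphism of surfaces, this does not affect the argument.
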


\begin{proof}
We may assume $X = X_0$ with $X_0$ arising from
the defining data $(A,P_0)$.
As observed in Proposition~\ref{prop:exccurveskstar},
the exceptional divisors
$E_1, \ldots, E_q \subseteq X$
are taken from the $D_X^\pm$ and the $D_X^{ij}$ that
contain a hyperbolic fixed point.
In particular, $E_1$ corresponds to a column of $P_0$
and Proposition~\ref{prop:contractcolchar}
provides $\psi \colon X_0 \to X_1$, as in
Construction~\ref{constr:contract}, contracting $E_1$.
Now observe that none of
$\psi_*(E_2), \ldots, \psi_*(E_q)$
are movable and hence by Proposition~\ref{prop:contractcolchar},
they are all contractible.
Iterating this consideration, we arrive at a
sequence $X_0 \to \ldots \to X_q$,
contracting $E_1, \ldots, E_q$ stepwise.
The remaining task is to show that $X \to Y$ factors
via an isomorphism through $X \to X_q$.
By construction, we obtain such a factorization
apart from the respective image points of
$E_1, \ldots, E_q$ in $X_q$ and $Y$.
Being an isomorphism up to codimension two,
$X_q \dasharrow Y$ lifts to the total
coordinate spaces and descends again to an
isomorphism of the surfaces $X_q$ and $Y$.
\end{proof}

We show that via contractions one does
not leave the class of log del Pezzo $\KK^*$-surfaces.
Moreover, we study their effect on the invariants
$\skr{d}^\pm$ defined in Construction \ref{constr:antican}
and Remark \ref{rem:generalmandzeta}.

\begin{proposition}
\label{prop:contr2dpm}
Let $X$ be a log del Pezzo $\KK^*$-surface
and $X \to Y$ a contraction of surfaces.
Then $Y$ is a log del Pezzo $\KK^*$-surface
and, according to the constellations of source
and sink, we have:
\begin{enumerate}
\item
For $x^+ \in X$ and $y^+ \in Y$,
we have $\skr{d}_X^+ \ge \skr{d}_Y^+$.
\item
For $D_X^+ \subseteq X$ and $y^+ \in Y$,
we have  $1 > \skr{d}_X^+ \ge \skr{d}_Y^+$.
\item
For $x^- \in X$ and $y^- \in Y$,
we have $\skr{d}_Y^- \ge \skr{d}_X^-$.
\item
For $D_X^- \subseteq X$ and $y^- \in Y$,
we have $\skr{d}_Y^- \ge \skr{d}_X^- > -1$.
\end{enumerate}
\end{proposition}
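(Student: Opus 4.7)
By Corollary~\ref{cor:eqivarcontract} the surface $Y$ inherits the del Pezzo $\KK^*$-structure from $X$, so only log terminality and the four numerical inequalities need to be checked. The plan is to use Proposition~\ref{prop:kstarcontractdecomp} to reduce $X \to Y$ to a finite sequence of elementary contractions of Construction~\ref{constr:contract}, each erasing a single column $v$ of the defining matrix, and to prove the statement for one such elementary step, inducting on the length of the sequence.

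For an elementary contraction, the erased column $v$ belongs to one of three families: (A) an interior hyperbolic $v_{ij}$ with $1 < j < n_i$, (B) a boundary hyperbolic $v_{i1}$ or $v_{in_i}$ with $n_i \ge 2$, or (C) a parabolic vector $v^+$ or $v^-$. In case~(A) both the quantities $\skr{m}^\pm$, $\skr{l}^\pm$ and the source/sink elliptic tuples stay untouched, yielding $\skr{d}^\pm_X = \skr{d}^\pm_Y$ and log terminality via Corollary~\ref{cor:logtermchar}. In case~(C) at the source, erasing $v^+$ does not involve first-column data, so $\skr{d}^+_Y = \skr{d}^+_X$ and the source tuple $(l_{01},\ldots,l_{r1})$ is preserved; moreover Remark~\ref{characterizationweightexc} enforces $\skr{m}^+_X > 0$, while Proposition~\ref{prop:ak-intersections} applied to the ample divisor $-\mathcal{K}_X$ yields $\skr{l}^+_X > \skr{m}^+_X$, so $0 < \skr{m}^+_X < \skr{l}^+_X$. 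This simultaneously forces the bound $\skr{d}^+_X \in (0,1)$ of case~(ii) and positivity of $\skr{l}^+_X$, hence log terminality at the new elliptic fixed point $y^+$.

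The main work is case~(B) at the source, $v = v_{i1}$. Setting $\Delta m := m_{i1} - m_{i2} > 0$ and $\Delta l := 1/l_{i1} - 1/l_{i2}$, we have $\skr{m}^+_Y = \skr{m}^+_X - \Delta m$ and $\skr{l}^+_Y = \skr{l}^+_X - \Delta l$, while sink data are unaffected and Remark~\ref{characterizationweightexc} ensures the source type is preserved. The central task will be to prove
\[
\skr{l}^+_X \cdot \Delta m \ \ge \ \skr{m}^+_X \cdot \Delta l,
\]
which I will derive from the Kleiman criterion $\skr{l}_{i0} \skr{m}_{i0} > \skr{l}_{i1} \skr{m}_{i1}$ of Proposition~\ref{prop:kleimanample}. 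In the elliptic source case it unfolds to $\skr{l}^+_X / \skr{m}^+_X > \Delta l / \Delta m$, and multiplying by $\skr{m}^+_X \Delta m > 0$ gives the claim; in the parabolic source case it unfolds to $1 > \Delta l / \Delta m$, and combined with the del Pezzo inequality $\skr{l}^+_X > \skr{m}^+_X$ the rewriting
\[
\skr{l}^+_X \Delta m - \skr{m}^+_X \Delta l \ = \ (\skr{l}^+_X - \skr{m}^+_X) \Delta m + \skr{m}^+_X (\Delta m - \Delta l)
\]
displays the right-hand side as a sum of non-negative terms, provided $\skr{m}^+_X \ge 0$. From the displayed inequality, $\skr{d}^+_Y \le \skr{d}^+_X$ follows by division, and further $\skr{l}^+_Y > 0$ follows whenever $Y$ has elliptic source, because $\skr{m}^+_Y > 0$ forces $\Delta m/\skr{m}^+_X < 1$ and so the Kleiman bound yields $\Delta l < \skr{l}^+_X$. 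The sink-side instance of case~(B) will be treated by the fully analogous argument at $D_X^{in_i}$ with the sign conventions $\skr{m}^- < 0$ and $\skr{m}^-$ non-decreasing.

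Chaining the elementary contractions then delivers all four inequalities and log terminality of $Y$. The main obstacle lies in cases~(ii) and~(iv), where the source (respectively sink) type switches along the chain at exactly one step, namely the contraction of $v^+$ (respectively $v^-$), so the key inequality must be applied in both the parabolic and the elliptic regimes; this requires positivity of $\skr{m}^+$ (respectively negativity of $\skr{m}^-$) at every intermediate surface, which is automatic since $\skr{m}^+$ is non-increasing along elementary contractions and the chain ends at the positive value $\skr{m}^+_Y$ (and symmetrically for $\skr{m}^-$).
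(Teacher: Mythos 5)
Your proof is correct and follows essentially the same route as the paper's: reduce to single-column contractions via Proposition~\ref{prop:kstarcontractdecomp}, then derive monotonicity of $\skr{d}^{\pm}$ and positivity of $\skr{l}^{\pm}$ from the Kleiman inequalities of Proposition~\ref{prop:kleimanample} --- your key inequality $\skr{l}_X^{+}\,\Delta m \ge \skr{m}_X^{+}\,\Delta l$ is literally the positivity of $-\mathcal{K}_X^0\cdot D_X^{i1}$ that the paper computes via Proposition~\ref{prop:ak-intersections}. The one point where you go beyond the paper's write-up is the explicit handling of the parabolic-to-parabolic intermediate steps occurring in the chains for cases~(ii) and~(iv), secured by observing that $\skr{m}^{+}$ is non-increasing along the chain and hence positive at every intermediate surface; the paper's single-step case analysis passes over these steps, so this extra care is a genuine (and needed) completion rather than redundancy.
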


\begin{proof}
Remark \ref{rem:contractprops} and Proposition
\ref{prop:delpcontractstodelp} tell us that $Y$ is a
del Pezzo $\KK^*$-surface such that $X \to Y$ is
equivariant. By Proposition \ref{prop:kstarcontractdecomp}
it suffices to prove the assertion for $X$ and $Y$ arising
from defining data $(A,P_X)$ and $(A,P_Y)$ with $P_X$,~$P_Y$
slope-ordered and $P_Y$ obtained by erasing a
column $v$ from $P_X$. The task is to show that elliptic
fixed points of $Y$ are at most log terminal singularities.

Consider the case that we have elliptic fixed
points $x^+ \in X$ and $y^+ \in Y$. Then only for
$v = v_{i1}$ there is something to show. As $v$ is
erasable, we have $n_i \ge 2$. The tuples of exponents
associated with $x^+$ and $y^+$ differ only in the $i$-th
place and are given as
\[
(l_{01}, \ldots, l_{i1}, \ldots, l_{r1}),
\qquad\qquad
(l_{01}, \ldots, l_{i2}, \ldots, l_{r1}).
\]
For the first one, we have $\ell_{X}^{+} > 0$ by log
terminality. For the second one, we have to show this
property. For $l_{i2} \le l_{i1}$ this is obvious. So let
$l_{i2} > l_{i1}$. First observe 
\[
\skr{m}_Y^+ \ = \ \skr{m}_X^+ + m_{i2} - m_{i1},
\qquad\qquad
\ell_{Y}^{+} \ = \ \ell_{X}^{+} + \frac{1}{l_{i2}} -  \frac{1}{l_{i1}}.
\]
Due to slope-orderedness of $P_X$ and $m_Y^+ > 0$,
we obtain $0 < m_{i1} - m_{i2} < m_X^+$.
Since $X$ is del Pezzo, we have the positive
intersection number 
\[
0
\ < \
-\mathcal{K}_{X}^0 \cdot D_{X}^{i1}
\ = \
\frac{(m_{i1}-m_{i2})\ell_{X}^{+} - \left(\frac{1}{l_{i1}} - \frac{1}{l_{i2}}\right)m_X^+}{l_{i1}(m_{i1}-m_{i2})},
\]
where we use Summary~\ref{sum:intersections} for the
computation. Now $\ell_{Y}^{+} > 0$ is an immediate consequence
of the estimates
\[
\frac{1}{l_{i1}}-\frac{1}{l_{i2}} 
\ < \ 
\left( \frac{1}{l_{i1}}-\frac{1}{l_{i2}} \right)
\frac{m_X^{+}}{m_{i1}-m_{i2}}
\ < \
\ell_X^+
\ = \
\ell_Y^+ + \frac{1}{l_{i1}}-\frac{1}{l_{i2}}.
\]
Knowing that $Y$ is a log del Pezzo surface,
the number $d_Y^+$ is defined and we compare
it with $d_X^+$.
Consider the difference
\[
d_X^+ - d_Y^+
\ = \
\frac{m_X^+}{\ell_{X}^{+}}
- 
\frac{m_Y^+}{\ell_{Y}^{+}}
\ = \ 
\frac{\ell_Y^+m_X^+ - \ell_X^+m_Y^+}{\ell_X^+\ell_Y^+}.
\]
For $d_X^+ \ge d_Y^+$ this fraction has to be non-negative.
The denominator is obviously positive and for the
enumerator we compute
\[
\ell_Y^+m_X^+ - \ell_X^+m_Y^+
\ = \
(m_{i1} -m_{i2}) \ell_X^+
-
\left(\frac{1}{l_{i1}} -  \frac{1}{l_{i2}}\right) m_X^+ 
\]
which is also positive, as seen in the above computation
of $-\mathcal{K}_{X}^0 \cdot D_{X}^{i1}$.
Thus, we verified $d_X^+ \ge d_Y^+$.
The case of elliptic fixed points $x^- \in X$
and $y^- \in Y$ is transformed into the present one
via swapping the action and needs no extra treatment.

Now assume that we have a parabolic fixed point
curve $D_X^+ \subseteq X$ and an elliptic fixed point
$y^+ \in Y$. Then $v = v^+$ holds and we obtain
\[
0 \ < \ - (D_X^+)^2 \ = \ m_X^+,
\qquad
0 \ < \ -\mathcal{K}_X^0 \cdot D_X^+ \ = \ -m_X^+ + \ell_X^+,
\]
as $D_X^+$ is contracted and $X$ del Pezzo. Thus,
$\ell_Y^+ = \ell_X^+ > m_X^+ > 0$ and $d_Y^+ = d_X^+ < 1$.
Again, for the case $D_X^- \subseteq X$ and $y^- \in Y$, we
just swap the action.
\end{proof}

\begin{proposition}
\label{prop:dplusdminuscontract}
Consider the defining matrices $P_1$ and $P_2$ of log
del Pezzo $\KK^*$-surfaces $X_1$ and $X_2$, where
$P_2$ arises from $P_1$ by removing a column.
\begin{enumerate}
\item
We have $\mathcal{A}_{P_2} \subseteq \mathcal{A}_{P_1}$
for the associated anticanonical complexes.
\item
If $X_1$ is $1/k$-log canonical, then $X_2$ is
$1/k$-log canonical.
\end{enumerate}
\end{proposition}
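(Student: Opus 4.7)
The plan is to reduce (ii) to (i) via Theorem~\ref{prop:ak-properties}~(v), so the bulk of the work is the containment $\mathcal{A}_{P_2}\subseteq\mathcal{A}_{P_1}$ of the simplicial complexes. First I would note that removing a single column from $P_1$ preserves the number $r+1$ of rows, so the tropical varieties $\trop(X_1)=\trop(X_2)\subseteq\QQ^{r+1}$ coincide and both complexes live in the same ambient space. The strategy for~(i) is to verify that every outer vertex of $\mathcal{A}_{P_2}$ already lies inside $\mathcal{A}_{P_1}$, and then to invoke the convexity of each arm intersection $\mathcal{A}_{P_j}\cap\lambda_i$ provided by Theorem~\ref{prop:ak-properties}~(iii) to pass from vertices to the full support.

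I would split the argument into three cases according to the column being removed. In the first case we erase an interior column $v_{ij}$ with $1<j<n_i$: then $\skr{m}^{\pm}$ and $\skr{l}^{\pm}$ remain unchanged, so all outer vertices $\tilde v^{\pm}$, $v^{\pm}$ of $\mathcal{A}_{P_1}$ persist in $\mathcal{A}_{P_2}$ and only the arm $\lambda_i$ is affected; the new polygon is the convex hull of a subset of the old vertices, so the containment in $\lambda_i$ is immediate. In the second case we remove an endpoint column $v_{i1}$ (or symmetrically $v_{in_i}$) with $n_i\ge 2$; if the source is parabolic, $\tilde v^+$ plays no role and the previous argument applies to arm $\lambda_i$, while if the source is elliptic, Proposition~\ref{prop:contr2dpm}~(i) gives $\skr{d}_{X_2}^+\le\skr{d}_{X_1}^+$, so the updated outer vertex $\tilde v^+_{P_2}=\skr{d}^+_{X_2}v^+$ lies on the segment from the origin to $\tilde v^+_{P_1}$ and hence inside $\mathcal{A}_{P_1}$. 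In the third case we remove $v^+$ (or symmetrically $v^-$): the source changes from parabolic to elliptic, and Proposition~\ref{prop:contr2dpm}~(ii) forces $\skr{d}^+_{X_2}<1$, so the new outer vertex $\tilde v^+_{P_2}=\skr{d}^+_{X_2}v^+$ lies strictly between the origin and $v^+$, again inside $\mathcal{A}_{P_1}$.

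Having placed every outer vertex of $\mathcal{A}_{P_2}$ inside $\mathcal{A}_{P_1}$, the convexity of $\mathcal{A}_{P_1}\cap\lambda_i$ yields the arm-wise inclusion $\mathcal{A}_{P_2}\cap\lambda_i\subseteq\mathcal{A}_{P_1}\cap\lambda_i$ for every $i=0,\ldots,r$, and assembling these across arms gives~(i). For~(ii), since both complexes are closed and full-dimensional inside $\trop(X)$, any point in the relative interior of $\mathcal{A}_{P_2}$ has a $\trop(X)$-neighbourhood contained in $\mathcal{A}_{P_2}\subseteq\mathcal{A}_{P_1}$, so it also lies in $\mathcal{A}_{P_1}^{\circ}$. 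Thus $\mathcal{A}_{P_2}^{\circ}\cap k\ZZ^{r+1}\subseteq\mathcal{A}_{P_1}^{\circ}\cap k\ZZ^{r+1}=\{0\}$ by the almost $k$-hollowness of $\mathcal{A}_{P_1}$, and Theorem~\ref{prop:ak-properties}~(v) concludes that $X_2$ is $1/k$-log canonical.

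The main obstacle I anticipate is the bookkeeping in the second case: because $\tilde v^{\pm}$ is shared across all arms, removing an endpoint column $v_{i1}$ alters the polygon in \emph{every} arm $\lambda_j$ simultaneously, not just in $\lambda_i$. Proposition~\ref{prop:contr2dpm} provides exactly the monotonicity $\skr{d}_{X_2}^{\pm}\le\skr{d}_{X_1}^{\pm}$ (up to sign in the sink case) needed to keep every new polygon inside the old one, but care is required to verify that in each arm one really can write the new polygon as a convex hull of points all lying in $\mathcal{A}_{P_1}$.
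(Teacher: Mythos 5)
Your proof is correct and follows essentially the same route as the paper, which deduces (i) from Proposition~\ref{prop:contr2dpm} together with the description of the anticanonical complex in Theorem~\ref{prop:ak-properties} and then gets (ii) from (i) via the almost $k$-hollowness criterion of Theorem~\ref{prop:ak-properties}~(v). Your case analysis and the explicit appeal to arm-wise convexity simply flesh out the details the paper leaves implicit.
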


\begin{proof}
The first assertion is a consequence of Proposition
\ref{prop:contr2dpm} and the description of the
anticanonical complex provided by Proposition
\ref{prop:ak-properties} (i) and (ii). The second
assertion follows from the first one and Proposition
\ref{prop:ak-properties} (v).
\end{proof}

The notion of combinatorial minimality was
introduced in~\cite{Ha}. The following version is
adapted to the  setting of rational projective
$\KK^*$-surfaces.

\begin{definition}
We call a normal, complete surface $X$
\emph{combinatorially minimal} if every contraction
$X \to Y$ is an isomorphism.
\end{definition}

\begin{example}
Up to isomorphy, the combinatorially minimal toric
surfaces arise from fans with a generator matrix of
the form $[v_1,v_2,v_3]$ or $[v_1,v_2,-v_1,-v_2]$.
\end{example}

\begin{remark}
\label{rem:combinchar}
Consider a projective $\KK^*$-surface $X = X(A,P)$.
Then, according to Proposition~\ref{prop:contractcolchar},
the following statements are equivalent.
\begin{enumerate}
\item
The surface $X$ is combinatorially minimal.
\item
The matrix $P$ has no contractible column.
\item
We have $\Eff(X) = \Mov(X)$.  
\item
Each of the $D_X^{ij}$, $D_X^+$, $D_X^-$ has
non-negative self intersection.
\end{enumerate}

\end{remark}


\begin{remark}
\label{rem:kstarcontract}
For every rational, projective $\KK^*$-surface $X$,
there is a sequence $X \cong X_0 \to \ldots \to X_q$
with contractions as in Construction
\ref{constr:contract} such that $X_q$ is
combinatorially minimal. The length of such a
sequence is bounded by the Picard number, i.e. we
have $q < \rho(X)$.
\end{remark}

\begin{proposition}
\label{prop:combminprops}
Let $X = X(A,P)$ be non-toric, projective, and
combinatorially minimal. Then the following
statements hold.
\begin{enumerate}
\item
We have $1 \le \rho(X) \le 2$ for the Picard
number $\rho(X)$.
\item
If $X$ has two elliptic fixed points,
then $1 \le n_i \le 2$ for $i=0, \ldots, r$
and $n_i=2$ happens at most twice.
\item
If $X$ has a parabolic fixed point curve,
then $n_i = 1$ for $i=0, \ldots, r$.
\end{enumerate}
\end{proposition}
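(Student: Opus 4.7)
The plan is to deduce the three assertions in the order (iii), (ii), (i), exploiting the characterization of combinatorial minimality from Remark~\ref{rem:combinchar}: no column of $P$ is contractible. I may assume throughout that $P$ is slope-ordered, and since $X$ is non-toric I also have $r \ge 2$.

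For (iii), I would apply Remark~\ref{rem:obviouscontractible} directly. If $D_X^+ \subseteq X$ is a parabolic fixed point curve, then part~(i) of that remark exhibits $v_{i1}$ as a contractible column whenever $n_i \ge 2$; the symmetric argument via part~(ii) handles the case $D_X^- \subseteq X$. In either situation combinatorial minimality forces $n_i = 1$ for all $i$, covering the three configurations (e-p), (p-e), (p-p).

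For (ii), i.e., case (e-e), the upper bound $n_i \le 2$ is immediate from Remark~\ref{rem:obviouscontractible}(iii), which produces a contractible middle column as soon as $n_i \ge 3$. The only delicate step, and what I expect to be the main obstacle, is bounding the number $s$ of indices with $n_i = 2$ by $2$. My plan is to use the explicit slope criteria from Remark~\ref{characterizationweightexc}(i) and~(iii): non-contractibility of $v_{i1}$ and $v_{i2}$ simultaneously forces
\[
m_{i1} - m_{i2} \ge \skr{m}^+ \quad\text{and}\quad m_{i1} - m_{i2} \ge -\skr{m}^-,
\]
so $m_{i1} - m_{i2} \ge (\skr{m}^+ - \skr{m}^-)/2$ for each such $i$. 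Summing over $\{i : n_i = 2\}$ and using the telescoping identity $\sum_{i=0}^r (m_{i1} - m_{in_i}) = \skr{m}^+ - \skr{m}^-$ (valid because $m_{i1} = m_{in_i}$ whenever $n_i = 1$), together with $\skr{m}^+ - \skr{m}^- > 0$ in case (e-e) (guaranteed by the very presence of both $\sigma^+$ and $\sigma^-$ in $\Sigma$), will yield $s \le 2$.

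Finally, assertion (i) should fall out from (ii), (iii) and the Picard formula $\rho(X) = n + m - r - 1$ from Proposition~\ref{prop:kstarsurfisQfact}: case (e-e) has $m = 0$ and $n \le (r+1) + 2$, giving $\rho \le 2$; cases (e-p) and (p-e) have $m = 1$ and $n = r+1$, giving $\rho = 1$; case (p-p) has $m = 2$ and $n = r+1$, giving $\rho = 2$. The lower bound $\rho \ge 1$ is automatic for a projective surface.
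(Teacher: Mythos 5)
Your proof is correct, but it inverts the logical order of the paper's argument in an interesting way. The paper first proves (i) directly: from $\Eff(X)=\Mov(X)$ it deduces that every extremal ray of the effective cone must carry at least two generator degrees, and a Cox-ring argument (producing a point of $\bar X$ with only two non-zero coordinates that would illegally lie in $\hat X$) rules out $\rho(X)\ge 3$; assertion (ii) is then read off from $n = \dim(X)+\rho(X)+r-1 \le r+3$. You instead prove (ii) directly and deduce (i), and your slope computation for (ii) checks out: for $n_i=2$, non-contractibility of $v_{i1}$ and $v_{i2}$ via Remark~\ref{characterizationweightexc}(i),(iii) gives $m_{i1}-m_{i2}\ge \skr{m}^+$ and $m_{i1}-m_{i2}\ge-\skr{m}^-$, hence $m_{i1}-m_{i2}\ge(\skr{m}^+-\skr{m}^-)/2$, and the telescoping identity $\sum_i(m_{i1}-m_{in_i})=\skr{m}^+-\skr{m}^->0$ in case (e-e) forces $s\le 2$; the case analysis for $\rho(X)$ via $\rho(X)=n+m-r-1$ is then routine. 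Your route is more elementary and entirely combinatorial in the entries of $P$, at the price of a case distinction on the source/sink constellation; the paper's route is heavier but proves the Picard bound uniformly and by a mechanism (extremal rays of $\Eff(X)=\Mov(X)$ hosting at least two generator weights) that is the standard tool for combinatorial minimality in higher generality.
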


\begin{proof}
Due to Remark \ref{rem:obviouscontractible} combinatorial
minimality implies $n_i \le 2$ for $i=0, \ldots, r$ in
case of two elliptic fixed points and $n_i = 1$ for
$i=0, \ldots, r$ if $X$ admits a parabolic fixed point
curve. In particular, the third assertion holds.

We prove (i). Remark \ref{rem:combinchar}
gives $Q(\gamma) = \Eff(X) = \Mov(X)$. Thus, on
each extremal ray of the effective cone, we find at
least two of the generator degrees
\[
w_{ij} \ = \ \deg(T_{ij}),
\qquad
w_{k} \ = \ \deg(S_{k}).
\]
Assume $\rho(X) \ge 3$. Then $\dim(\Eff(X)) \ge 3$
and we find an extremal ray
$\varrho \preccurlyeq \Eff(X)$ hosting two
generator degrees $w_{i_1j_1}$ and $w_{i_2j_2}$.
We claim $i_1 \ne i_2$. Otherwise $n_i \le 2$ for
$i := i_1 = i_2$ implies that $T_{ij_1}$ and
$T_{ij_2}$ are the variables of a monomial $h$ 
of a defining relation $g_\iota$. Consequently,
\[
\mu
\ = \
\deg(g_\iota)
\ = \
\deg(h)
\ = \
\deg\left(T_{ij_1}^{l_{ij_1}}T_{ij_2}^{l_{ij_2}}\right)
\ = \
l_{ij_1}w_{ij_1} + l_{ij_2}w_{ij_2}
\ \in \
\varrho
\]
with the exponents $l_{ij_1}$ and $l_{ij_2}$ of $h$.
But then all generator degrees except the $w_k$
are located on $\varrho$. This is a contradiction
to $m \le 2$ and $\Eff(X)$ being of dimension at
least three. Thus, $i_1 \ne i_2$ and we find
monomials
\[
T_{i_1j_1}^{l_{i_1j_1}}T_{i_3j_3}^{l_{i_3j_3}},
\qquad
T_{i_2j_2}^{l_{i_2j_2}}T_{i_4j_4}^{l_{i_4j_4}}
\]
in the defining relations. Observe that
$w_{i_1j_1}$, $w_{i_3j_3}$ as well as
$w_{i_2j_2}$, $w_{i_4j_4}$ generate two-dimensional
cones, both containing $\mu \in \Mov(X)^\circ$ in
their relative interior. Hence the same holds for
$\eta = \cone(w_{i_1j_1},w_{i_4j_4})$.
Moreover, the point $z \in \bar X$ with 
\[
z_{i_1j_1} \ = \ z_{i_4j_4} \ = \ 1
\]
and all other coordinates equal to zero belongs to
$\hat X$, see \cite[Constr. 3.3.1.3]{ArDeHaLa}.
This contradicts Remark \ref{rem:coxcoord}.
Thus, we verified the first assertion.

For the second assertion, note that we
have $\dim(X) + \rho(X) \le 4$ and $m = 0$. Thus,
the claim follows from 
\[
n
\ = \
n_0 + \ldots + n_r
\ = \
\dim(X) + \rho(X) + r-1
\ \le \
r + 3.
\]
\end{proof}


\section{The combinatorially minimal case}
\label{sec:combmin}

In this section, we classify the non-toric
combinatorially minimal $1/k$-log canonical del
Pezzo $\KK^*$-surfaces. The classification process runs
entirely in terms of the defining matrices $P$ from
Construction \ref{constr:kstarsurf}. Theorem
\ref{thm:surfclass1} provides bounds for the entries of
the defining matrices for arbitrary $k$ and in Theorem
\ref{thm:combminclass} the concrete classification for
$k=1,2,3$ is presented.

We denote by $c(k)$ the maximum volume of
almost $k$-hollow lattice simplices. Due to
Corollary \ref{cor:globalbound} we always have 
\[
c(k) \ \le \ \pi R(k)^2 \ = \  2\pi k^4 (2k^2+2k+1).
\]

\goodbreak

\begin{theorem}
\label{thm:surfclass1}
Every non-toric combinatorially minimal
$1/k$-log canonical del Pezzo
$\KK^*$-surface is isomorphic to an $X(A,P)$ with
$P$ from the following list.

\[
\begin{array}{llll}
{\rm (i)}
&
{\scriptsize 
\left[
\begin{array}{cccc}
-l_{01} & l_{11} & 0 & 0 
\\
-l_{01} & 0 & l_{21} & 0 
\\
d_{01} & d_{11} & d_{21} & 1
\end{array}
\right],
}
&
{\scriptsize
\begin{array}{l}
\scriptstyle{2 \le l_{01} \le \max(2k^{2},5)},
\\
\scriptstyle{2 \le l_{11} \le \max(4k^{2},5)},
\\
\scriptstyle{2 \le l_{21} \le 5},
\end{array}
}
&
{\scriptsize
\begin{array}{l}
\scriptstyle{-k-2l_{01}+1 \le d_{01} \le -1},
\\
\scriptstyle{1 \le d_{11} \le l_{11}-1},
\\
\scriptstyle{1 \le d_{21} \le l_{21}-1}.
\end{array}
}
\\[2em]
{\rm (ii)}
&
{\scriptsize   \setlength{\arraycolsep}{4.5pt}
\left[
\begin{array}{cccc} 
-l_{01} & -l_{02} & l_{11} & 0
\\
-l_{01} & -l_{02} & 0 & l_{21}
\\
d_{01} & d_{02} & d_{11} & d_{21}
\end{array}
\right],
}
&
{\scriptsize
\begin{array}{l}
\scriptstyle{1 \le l_{01} \le 2k^2},
\\
\scriptstyle{1 \le l_{02} \le 2k^2},
\\
\scriptstyle{2 \le l_{11} \le \max(6,4k^2,c(k))},
\\
\scriptstyle{2 \le  l_{21} \le \max(6,4k^2,c(k))},  
\end{array}
}
&
{\scriptsize
\begin{array}{l}
\scriptstyle{-2l_{01}+1 \le d_{01} \le k-1},
\\
\scriptstyle{-2l_{02}-k+1  \le d_{02} \le -1},
\\
\scriptstyle{1 \le d_{11} \le l_{11}-1},
\\
\scriptstyle{1 \le d_{21} \le l_{21}}.
\end{array}
}
\\[2em]
{\rm (iii)}
&
{\scriptsize   \setlength{\arraycolsep}{4.5pt}
\left[
\begin{array}{ccccc} 
-1 & -1 & l_{11} & 0 & 0
\\
-1 & -1 & 0 & l_{21} & 0
\\
-1 & -1 & 0 & 0 & 2
\\
d_{01} & d_{02} & d_{11} & d_{21} & 1
\end{array}
\right],
}
&
{\scriptsize
\begin{array}{l}
\scriptstyle{2 \le l_{11} \le \max(2k^{2},5)},
\\
\scriptstyle{2 \le l_{21} \le 3},
\\
\end{array}
}
&  
{\scriptsize
\begin{array}{l}
-\scriptstyle{2 \le d_{01} < \frac{k}{2}},
\\
\scriptstyle{-\frac{k}{2}-3 < d_{02} \le -1},
\\
\scriptstyle{1 \le d_{11} \le l_{11}-1},
\\
\scriptstyle{1 \le d_{21} \le l_{21}-1}.
\end{array}
}
\\[2em]
{\rm (iv)}
&
{\scriptsize   \setlength{\arraycolsep}{3.5pt}
\left[
\begin{array}{ccccc}
-l_{01} & -l_{02} & l_{11} & l_{12} & 0
\\
-l_{01} & -l_{02} & 0 & 0 & l_{21}
\\
d_{01} & d_{02} & d_{11} & d_{12} & d_{21}
\end{array}
\right],
}
&
{\scriptsize
\begin{array}{l}
\scriptstyle{1 \le l_{01} \le k^3+3k^2},
\\
\scriptstyle{1 \le l_{02} \le \ 6k^{2}},
\\
\scriptstyle{1 \le l_{11} \le 6k^{2}},
\\
\scriptstyle{1 \le l_{12} \le \ 6k^{2}},
\\
\scriptstyle{2 \le l_{21} \le \ 6k^{2}},
\end{array}
}
&
{\scriptsize
\begin{array}{l}
\scriptstyle{-2(k+3) k^{2} \le d_{01} \le k^{4}},
\\
\scriptstyle{-25k^{5}-15k^{2} \le d_{02} \le 5k^{6}+15k^{3}},
\\
\scriptstyle{0 \le d_{11} < l_{11}},
\\
-\scriptstyle{5k^{6}-3k \le d_{12} \le  125k^{7}+75k^{4}},
\\
\scriptstyle{1 \le d_{21} \le l_{21}-1}.
\end{array}
}
\\[2em]
{\rm (v)}
&
{\scriptsize  \setlength{\arraycolsep}{2.7pt}
\left[
\begin{array}{cccccc}
-1 & -l_{02} & l_{11} & l_{12} & 0 & 0
\\
-1 & -l_{02} & 0 & 0 & l_{21} & 0
\\
-1 & -l_{02} & 0 & 0 & 0 & l_{31}
\\
d_{01} & d_{02} & d_{11} & d_{12}  & d_{21} & d_{31}
\end{array}
\right],
}
&
{\scriptsize
\begin{array}{l}
\scriptstyle{1 \le l_{02} \le 5},
\\
\scriptstyle{1 \le l_{11} \le  2k^{2}},
\\
\scriptstyle{1 \le l_{12} \le \max(2k^{2},5)},
\\
\scriptstyle{2 \le l_{21} \le \max(2k^{2},5)},
\\
\scriptstyle{2 \le l_{31} \le l_{21}},
\end{array}
}
&
{\scriptsize
\begin{array}{l}
\scriptstyle{-2 \le d_{01} < k},
\\
\scriptstyle{-k-3 \le d_{02} \le k-2},
\\
\scriptstyle{0 \le d_{11} \le l_{11}-1},
\\
\scriptstyle{-20k^{3} \le d_{12} \le l_{12}-1},
\\
\scriptstyle{1 \le d_{21} \le l_{21}-1},
\\
\scriptstyle{1 \le d_{31} \le l_{31}-1}.
\end{array}
}
\end{array}  
\]

\medskip

\end{theorem}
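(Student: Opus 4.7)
The plan is to combine the structural constraints from the preceding sections with the almost $k$-hollowness criterion for the anticanonical complex, first to narrow down to the five matrix shapes and then to bound the entries within each.

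Assuming $(A,P)$ in the normal form of Proposition \ref{normalform}, I would use the following facts in combination: non-toricity forces $r\ge 2$; Proposition \ref{prop:combminprops} gives $\rho(X)\le 2$ together with arm-length restrictions; Proposition \ref{prop:delPezzo-parabolic} allows at most one parabolic fixed-point curve; Corollary \ref{cor:logtermchar} forces both elliptic tuples $(l_{i1})$ and $(l_{in_i})$ to be platonic, so each has at most three entries $\ge 2$; irredundance of $P$ together with the normal-form inequality $0\le d_{i1} < l_{i1}$ forces $l_{i1}\ge 2$ whenever $n_i = 1$ and $i\ge 1$; and the Picard formula $\rho(X) = n+m-r-1$ converts the arm-length restrictions into $n+m\le r+3$. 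Enumerating the combinations of $(r,(n_i),m)$ compatible with all of these, and orienting $P$ so that a possible parabolic fixed-point curve sits at the source, leaves precisely the five shapes (i)--(v). In particular, the bound $r\le 3$ follows from the interplay between platonicity, which forces many of the $l_{i1}$ and $l_{in_i}$ to equal $1$, and normal-form irredundance, which forbids such ones outside of $i=0$ and of arms of length at least two.

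For the bounds within each shape, the upper estimates on the $l_{ij}$ follow from platonicity combined with the almost $k$-hollowness inequalities $\skr{d}^+\le k$ and $\skr{d}^-\ge -k$ of Proposition \ref{prop:ak-properties}(v). The bounds on the $d_{ij}$ then rest on three ingredients: the normal-form constraint $0\le d_{i1} < l_{i1}$; the strict descent $\skr{l}_{ij-1}\skr{m}_{ij-1} > \skr{l}_{ij}\skr{m}_{ij}$ along each arm from Kleiman's criterion (Proposition \ref{prop:kleimanample}); and the lattice-point-avoidance $\mathcal{A}_P^\circ\cap k\ZZ^{r+1}=\{0\}$, applied cell by cell to the simplices $\kappa_{ij}$ and $\kappa_i^\pm$ of the anticanonical complex. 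In the Picard-number-one cases (i), (ii), (iii), a direct slope estimate on the $m_{ij} = d_{ij}/l_{ij}$ combined with the sign and size of $\skr{m}^\pm$ delivers the stated ranges.

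The main obstacle will be the Picard-number-two cases (iv) and (v), where the $d_{ij}$-bounds are polynomial in $k$ of degree up to seven. These high exponents arise from iterated substitution: the quadratic-in-$k$ bounds on the $l_{ij}$ are plugged into a chain of Kleiman inequalities along each length-two arm, where the slope differences $m_{ij}-m_{ij+1} = (l_{ij+1}d_{ij} - l_{ij}d_{ij+1})/(l_{ij}l_{ij+1})$ carry the $d_{ij}$-dependence. Combining these chains with the almost $k$-hollowness of the individual cells of $\mathcal{A}_P$ and with the elliptic fixed-point constraints at $x^\pm$ requires careful determinantal bookkeeping in $\ZZ^{r+1}$, and the exponents $5$, $6$, $7$ appearing in the stated bounds reflect the number of such substitutions accumulated along the arm.
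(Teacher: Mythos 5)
Your reduction to the five matrix shapes follows the paper's route (combinatorial minimality via Proposition~\ref{prop:combminprops}, platonicity of the elliptic tuples via Corollary~\ref{cor:logtermchar}, irredundance), and your treatment of the $d_{ij}$ in terms of the $l_{ij}$ and $\skr{d}^{\pm}$ is in the right spirit. The genuine gap is in how you bound the $l_{ij}$. You assert that these bounds ``follow from platonicity combined with $\skr{d}^{+}\le k$ and $\skr{d}^{-}\ge -k$'', but platonicity only controls \emph{which} of the $l_{ij}$ exceed $1$, not their size, and the only quantitative tool available is the Minkowski-type estimate of Lemma~\ref{lem:mink2}: an arm of the almost $k$-hollow complex containing $(0,\skr{d}^{+})$, $(0,\skr{d}^{-})$ and $(l_{i1},d_{i1})$ forces $l_{i1}\le 2k^{2}/\alpha$ with $\alpha=\min(\skr{d}^{+},-\skr{d}^{-})$. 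Since $1/k$-log canonicity gives only the strict inequalities $\skr{d}^{+}>0$ and $\skr{d}^{-}<0$, with no a priori positive lower bound on $\alpha$ (any such bound in terms of the $l_{ij}$ is circular), this does not bound the $l_{ij}$ by itself. This is precisely where the constant $c(k)$ appearing in cases (ii) and (iv) comes from, and your proposal contains no mechanism that could produce it.

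The paper closes this gap by a case distinction at $\alpha=\tfrac12$. When $\skr{d}^{+}\ge\tfrac12$ and $\skr{d}^{-}\le-\tfrac12$, Lemma~\ref{lem:mink2} yields $l_{i1}\le 4k^{2}$ directly (with a separate elementary argument producing the bound $6$ when $\skr{d}^{+}-\skr{d}^{-}\le\tfrac32$). When, say, $\skr{d}^{+}<\tfrac12$, one first pins down $d_{01}=-1$, deduces $\min(l_{11},l_{21})\le 2k$ from $\skr{d}^{+}-\skr{d}^{-}<k+\tfrac12$, and then constructs the ``prolongated arm'' triangle $\mathcal{C}=\mathrm{conv}\bigl((0,-1),(l_{21},d_{21}),(-l_{11},d_{11}-l_{11})\bigr)$, checks via slope estimates that $\mathcal{C}$ is itself almost $k$-hollow, and bounds $l_{21}$ by its volume, hence by the maximal volume $c(k)$ of almost $k$-hollow lattice simplices. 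Without this (or an equivalent) argument the entries of $P$ are not bounded and the finite list cannot be reached; in particular your ``iterated Kleiman substitution'' for cases (iv) and (v) presupposes bounds on the $l_{ij}$ that you have not established.
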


In the proof, subsequent Lemma helps us to
bound the entries $l_{ij}$ of the defining matrices~$P$.
The polygon $\mathcal{B}$ considered there will
be part of an arm of the corresponding anticanonical
complex $\mathcal{A}_{P}$ and $a/b$ a
lower bound for $\mathrm{min}(\skr{d}^{+},-\skr{d}^{-})$.

\begin{lemma}
\label{lem:mink2}
Fix $k \in \ZZ_{\ge 1}$. Given $a,b,l,d \in \mathbb{Z}_{\ge 1}$,
consider the polygon $\mathcal{B}$ with the vertices
$(0, \pm \frac{a}{b})$ and $(l,d)$ in $\mathbb{R}^{2}$. 
If $\mathcal{B}$ is almost $k$-hollow, then $l \le \frac{2b}{a}k^2$.
\end{lemma}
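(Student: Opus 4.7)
The plan is to apply Minkowski's lattice point theorem after central symmetrization. I would first form $\mathcal{B}' := \mathcal{B} \cup (-\mathcal{B})$; since the two triangles share only the $y$-axis segment from $(0,-a/b)$ to $(0,a/b)$, the set $\mathcal{B}'$ is the centrally symmetric parallelogram with vertices $(0,\pm a/b)$ and $(\pm l, \pm d)$. A direct shoelace computation (or equivalently the parallelogram area formula) gives
\[
\vol(\mathcal{B}') \ = \ 2\,\bigl\lvert\det\bigl((0,a/b),\,(l,d)\bigr)\bigr\rvert \ = \ \frac{2al}{b}.
\]

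The heart of the argument is to show that $\mathcal{B}'$ is itself almost $k$-hollow. Its interior decomposes as the disjoint union of $\mathcal{B}^\circ$, $(-\mathcal{B})^\circ$, and the open $y$-axis segment $\{(0,y):-a/b<y<a/b\}$. The first two parts contain no non-trivial $k$-lattice point by the hypothesis on $\mathcal{B}$ and by central symmetry. For the segment, one observes that in the intended use $a/b\le k$ holds: here $a/b$ plays the role of a lower bound on $\min(\skr{d}^+,-\skr{d}^-)$, and almost $k$-hollowness of the ambient anticanonical complex forces $\skr{d}^+\le k$ and $-\skr{d}^-\le k$ via Theorem~\ref{prop:ak-properties}~(v). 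Consequently, no non-zero element of $k\ZZ$ lies strictly between $-a/b$ and $a/b$ on the $y$-axis.

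Once $\mathcal{B}'$ is known to be almost $k$-hollow, Minkowski's lattice point theorem applied to the centrally symmetric convex body $\tfrac{1}{k}\mathcal{B}'$ yields $\vol(\mathcal{B}')/k^2\le 4$, which rearranges to $l\le\tfrac{2b}{a}k^2$. The main delicate step is the middle one: verifying that the symmetrization introduces no new interior $k$-lattice points on the shared $y$-axis edge. This is precisely where the size constraint $a/b\le k$, supplied by the ambient $1/k$-log canonical context, enters the argument.
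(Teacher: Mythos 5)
Your proof is correct and follows the same route as the paper's own: symmetrize $\mathcal{B}$ to $\mathcal{B}\cup(-\mathcal{B})$, compute its area as $\tfrac{2al}{b}$, and apply Minkowski's theorem to $k^{-1}(\mathcal{B}\cup(-\mathcal{B}))$. The one place where you go beyond the paper is the middle step. The paper simply asserts that $k^{-1}(-\mathcal{B}\cup\mathcal{B})$ has the origin as its only interior lattice point, whereas you correctly observe that the symmetrization can acquire new interior $k$-fold lattice points on the shared open segment of the $y$-axis unless $a/b\le k$, and that this inequality is supplied by the ambient application (there $a/b$ is a lower bound for $\min(\skr{d}^+,-\skr{d}^-)$, while almost $k$-hollowness of $\mathcal{A}_P$ forces $\skr{d}^+\le k$ and $-\skr{d}^-\le k$). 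This is a genuine point and not pedantry: as literally stated the lemma carries no hypothesis forcing $a/b\le k$, and without it the conclusion can fail --- for $k=1$ the triangle with vertices $(0,\pm 100)$ and $(1,1)$ has no nonzero interior lattice point (Pick: $A=100$, $B=202$, so $I=0$), yet $l=1>\tfrac{2b}{a}k^2=\tfrac{1}{50}$. So the lemma must be read with the implicit constraint $a/b\le k$ (equivalently, with the hypothesis strengthened to almost $k$-hollowness of the symmetrized parallelogram), exactly as you do; with that reading your argument is complete and, on this step, more careful than the paper's.
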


\begin{proof}
The set $\mathcal{C} := k^{-1}(-\mathcal{B} \cup \mathcal{B})$
is centrally symmetric and convex having the origin as its
only interior lattice point. The volume of $\mathcal{C}$
equals $2alb^{-1}k^{-2}$ and is due to Minkowski's Theorem
bounded by $4$.
\end{proof}

\begin{proof}[Proof of Theorem \ref{thm:surfclass1}]
Let $X$ be a non-toric combinatorially minimal
$1/k$-log canonical del Pezzo $\KK^*$-surface.
We may assume $X = X(A,P)$, where $P$ is adjusted, see
Definition \ref{adjustedP}. By Proposition
\ref{prop:combminprops}, combinatorial minimality
forces the format $(n_0,\ldots,n_r;m)$ to be one of
\[
(1,\ldots,1;1),
\qquad
(2,1,\ldots,1;0),
\qquad
(2,2,1,\ldots,1;0).
\]
Moreover, $X$ is log terminal and thus Summary
\ref{cor:logtermchar} leaves the following six
possibilities for $(n_0,\ldots,n_r;m)$:
\[
(1,1,1;1),\
(2,1,1;0),\
(2,1,1,1;0),\
(2,2,1;0),\
(2,2,1,1;0),\
(2,2,1,1,1;0).
\]

We exemplarily treat the format $(2,1,1;0)$.
In this situation, the defining matrix~$P$ is of the shape
\[
P
\ = \
\left[
\begin{array}{cccc}
-l_{01} & -l_{02} & l_{11} & 0
\\
-l_{01} & -l_{02} & 0 & l_{21}
\\
d_{01} & d_{02} & d_{11} & d_{21}
\end{array}
\right].
\]
As $P$ is adjusted, we have $1 \le d_{i1} < l_{i1}$
for $i = 1,2$. Consider the vertices
$\tilde v^{+} = \skr{d}^{+}v^{+}$ and
$\tilde v^{-} = \skr{d}^{-}v^{-}$ of the
anticanonical complex $\mathcal{A}_{P}$, where
\[
\skr{d}^{+}
\ = \
\frac{m_{01}+m_{11}+m_{21}}{l_{01}^{-1}+l_{11}^{-1}+l_{21}^{-1}-1},
\qquad
\skr{d}^{-}
\ = \
\frac{m_{02}+m_{11}+m_{21}}{l_{02}^{-1}+l_{11}^{-1}+l_{21}^{-1}-1}.
\]
Again, according to Remark \ref{cor:logtermchar}, log
terminality of $X$ ensures that the denominators are
both positive. Using $l_{11},l_{21} \ge 2$, we obtain
\[
0 \ < \ l_{01}^{-1}+l_{11}^{-1}+l_{21}^{-1}-1 \ \le \ l_{01}^{-1},
\qquad
0 \ < \ l_{02}^{-1}+l_{11}^{-1}+l_{21}^{-1}-1 \ \le \ l_{02}^{-1}.
\]
Observe that the expressions $\skr{d}^{+}$, $m_{01}$ and
$\skr{d}^{-}$, $m_{02}$ are strictly increasing in $d_{01}$
and $d_{02}$, respectively. Moreover, for any
$\alpha \in \mathbb{Q}$, we have
\[
\skr{d}^{+} = \alpha
\ \iff \ 
m_{01}
=
\left(l_{01}^{-1}+l_{11}^{-1}+l_{21}^{-1}-1\right) \alpha -m_{11}-m_{21},
\]
\[
\skr{d}^{-} = \alpha
\ \iff \ 
m_{02}
=
\left(l_{02}^{-1}+l_{11}^{-1}+l_{21}^{-1}-1\right) \alpha -m_{11}-m_{21}.
\]
Since $0 \in \mathcal{A}_P^{\circ}$ and $X$ is
$1/k$-log canonical, we have $0 < \skr{d}^{+} \le k$
and $-k \le \skr{d}^{-} < 0$, see Proposition
\ref{prop:ak-properties}. For $\alpha = 0, k, \pm 1$, the
above considerations yield
\[
\begin{array}{r}
-2 l_{01} < d_{01} < k,
\\[1ex]
-k -2l_{02} < d_{02} < 0,
\end{array}
\qquad
\begin{array}{l}
\skr{d}^{+} <  1 \ \Rightarrow \ d_{01} < 1,
\\[1ex]
\skr{d}^{-} > -1 \ \Rightarrow \ d_{02} > -1-2l_{02}.
\end{array}
\]
We bound $l_{11}$ and $l_{21}$ in terms of 
$\alpha = \min(\skr{d}^{+},-\skr{d}^{-})$.
For $i = 1,2$, consider the polygons
\[
\mathcal{B}_i
\ := \
\mathrm{conv}
\left(
\left(
0,\skr{d}^{+})
\right),
\left(
l_{i1},d_{i1}
\right),
\left(
0,\skr{d}^{-}
\right)
\right)
\ \subseteq \
\RR^2.
\]
Since $\mathcal{A}_P$ is almost $k$-hollow, both
$\mathcal{B}_i$ are $k$-hollow. Thus, Lemma
\ref{lem:mink2} applies and we obtain 
\[
l_{i1}
\ \le \ 
\frac{2}{\alpha}k^2.
\]

One now proceeds by going through all the possible
constellations of the platonic triples
$(l_{01},l_{11},l_{21})$ and $(l_{02},l_{11},l_{21})$.
We restrict ourselves to the most lively case of
$l_{01}=l_{02}=1$. In this setting, there are no a
priori constraints on $l_{11}$, $l_{21}$ and our task
is to find suitable bounds. First we specify
\[
\skr{d}^{+} = \frac{d_{01}+m_{11}+m_{21}}{l_{11}^{-1}+l_{21}^{-1}},
\quad
\skr{d}^{-} = \frac{d_{02}+m_{11}+m_{21}}{l_{11}^{-1}+l_{21}^{-1}},
\quad
\skr{d}^{+}-\skr{d}^{-} = \frac{d_{01}-d_{02}}{l_{11}^{-1}+l_{21}^{-1}}.
\]
In the case $\skr{d}^{+} \ge 1/2$ and $\skr{d}^{-} \le -1/2$,
we have $l_{i1} \le 4 k^2$ for $ i=1,2$ as just observed.
Moreover, with $\beta := \skr{d}^{+}- \skr{d}^{-}$, we
obtain 
\[
1
\ \le \
d_{01}-d_{02}
\ = \
\beta
\left(\frac{1}{l_{11}}+\frac{1}{l_{21}} \right).
\]
Then $\beta \le 3/2$, implies $l_{11},l_{21} \le 6$ and
thus we are left with $\beta > 3/2$. Suppose
$\skr{d}^{+} < 1/2$. Then $d_{01} = -1$ and
$\skr{d}^{+} > 0$ forces $m_{11}+m_{21} > 1$. Moreover,
\[
\left(\frac{1}{l_{11}}+\frac{1}{l_{21}} \right)
\ \ge \
\frac{1}{\beta} 
\ > \
\frac{1}{k+\frac{1}{2}}
\]
due to $\beta = \skr{d}^{+}-\skr{d}^{-} <  k + \frac{1}{2}$.
We conclude $\min(l_{11},l_{21})  \le 2k$. We may assume
$l_{11} \le 2k$. Then we can estimate 
\[
\frac{1}{2k} \  \le \ m_{11} \ \le \  \frac{2k-1}{2k},
\qquad\qquad
\frac{1}{2k} \  < \ m_{21} \ \le \ 1.
\]
The idea is to bound $l_{21}$ via the volume of a
suitable almost $k$-hollow lattice simplex. Consider the
prolongated second arm of the anticanonical complex:
\begin{center}

\begin{tikzpicture}[scale=.5]
\coordinate (o) at (0,0);
\coordinate (v+) at (0,1);
\coordinate (x) at (-3,-1.5);
\coordinate (ld) at (3,2.25);
\coordinate (v-) at (0,-.75);
\coordinate (d-) at (0,-1.8);
\coordinate (ah+) at (4.5,0);
\coordinate (ah-) at (-3,0);
\coordinate (av+) at (0,3.5);
\coordinate (av-) at (0,-2);
\fill[gray!50,opacity=0.3] (x)--(v-)--(ld)--(x);
\draw[thin] (ah+)--(ah-);
\draw[thin] (av+)--(av-);
\draw[] (d-)--(ld);
\draw[dotted] (x)--(v-)--(ld)--(x);
\draw[] (ld)--(x);
\path[fill, color=black] (v-) circle (.4ex);
\path[fill, color=black] (o) circle (.4ex); 
\path[fill, color=black] (ld) circle (.4ex) node[right]{$\scriptscriptstyle (l_{21}, d_{21})$};
\path[fill, color=black] (x) circle (.4ex)  node[below]{$\scriptscriptstyle (-l_{11},d_{11}-l_{11})$};
\node at (.6,.4) {$\scriptstyle \mathcal{C}$};
\end{tikzpicture}

\end{center}
One directly checks that $(-l_{11},d_{11}-l_{11})$
lies on the bounding line $L$ through $(0,\skr{d}^{+})$
and $(l_{21},d_{21})$.
Thus, we can indeed define the polygon $\mathcal{C}$
as indicated above by
\[
\mathcal{C}
\ := \
\mathrm{conv}
\left(
\left(
0,-1
\right),
\left(
l_{21}, d_{21}
\right),
\left(
-l_{11},d_{11}-l_{11}
\right)
\right).
\]
We check that  $\mathcal{C}$ is almost $k$-hollow.
As a subset of  $\mathcal{A}_P$, the r.h.s. part
of $\mathcal{C}^\circ$ contains no $k$-fold lattice
points except $(0,0)$. Concerning the l.h.s. part,
we need
\[
(-k,0) \ \not\in  \ \mathcal{C},
\qquad\qquad
(-k,-k) \ \not\in  \ \mathcal{C}.
\]
Indeed, due to $\skr{d}^{+} < 1$ and $l_{11} \le 2k$,
this gives $\mathcal{C}^\circ \cap k \ZZ^2 = \{(0,0)\}$.
The slopes of $L$ and the bounding line $G$ through
$(-l_{11},d_{11}-l_{11})$ and $(0,-1)$ satisfy
\[
m_L \ > \ m_{11}\ \ge \ \frac{1}{2k},
\qquad\qquad
m_G \ = \ \frac{l_{11}-d_{11}-1}{l_{11}} \ \le \ 1 - \frac{2}{k}.
\]
Consequently, $(-k,0)$ lies above $L$ and $(-k,-k)$ lies
below $G$. Altogether, $\mathcal{C}$ is almost $k$-hollow
and we have $l_{21} \le 2 \vol( \mathcal{C})$. Thus the
case $\skr{d}^{+} < 1/2$ is settled. If $\skr{d}^{-} > -1/2$,
then we swap source and sink by multiplying the last row by
$-1$. After re-adjusting, we are again in the case
$\skr{d}^{+} < 1/2$.
\end{proof}

For any given $k$, Theorem \ref{thm:surfclass1} provides a
finite list of matrices that comprises in particular
defining matrices~$P$ for all the non-toric combinatorially
minimal $1/k$-log canonical del Pezzo $\KK^*$-surfaces.
In order to obtain a classification, we perform
computationally the following steps:
\begin{itemize}
\item
remove all defining matrices~$P$ from the list
that don't yield a $1/k$-log canonical del
Pezzo $\KK^*$-surface,
\item
remove all redundancies such that any
combinatorially minimal $1/k$-log
canonical del Pezzo $\KK^*$-surface is listed
precisely once up to isomorphy.
\end{itemize}
For the first step one uses the del Pezzo criterion
from Proposition~\ref{prop:kleimanample} and
the characterization of $1/k$-log canonicity
via the anticanonical complex from
Theorem~\ref{prop:ak-properties} allow efficient
implementation and in the second step one works with
the normal form from Definition~\ref{normalformdef}.
Altogether we arrive at the following results.

\begin{theorem}
\label{thm:combminclass}
We obtain the following statements on non-toric
combinatorially minimal $\varepsilon$-log canonical del Pezzo
$\KK^*$-surfaces.
\begin{itemize}
\item[$\varepsilon=1$:]
There are exactly $13$ sporadic and $2$ one-parameter families
of non-toric combinatorially minimal canonical del Pezzo
$\KK^*$-surfaces. 
\item[$\varepsilon=\frac{1}{2}$:]
There are exactly $62$ sporadic and $5$ one-parameter families
of non-toric combinatorially minimal $1/2$-log canonical
del Pezzo $\KK^*$-surfaces. 
\item[$\varepsilon=\frac{1}{3}$:]
There are exactly $318$ sporadic and $14$ one-parameter families
of non-toric combinatorially minimal $1/3$-log canonical
del Pezzo $\KK^*$-surfaces. 
\end{itemize}
\end{theorem}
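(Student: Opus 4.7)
The plan is to turn the a priori bounds from Theorem~\ref{thm:surfclass1}, together with the del Pezzo criterion of Proposition~\ref{prop:kleimanample}, the $1/k$-log canonicity characterization in Theorem~\ref{prop:ak-properties}~(v) and the normal form from Proposition~\ref{normalform}, into an explicit finite enumeration. More precisely, for each of the five matrix shapes (i)--(v) of Theorem~\ref{thm:surfclass1} and each $k \in \{1,2,3\}$, I would run through all integer tuples $(l_{ij},d_{ij})$ inside the bounds prescribed there, keeping only those $P$ satisfying the standing conditions of Construction~\ref{constr:kstarsurf} (primitivity of columns, rank, slope-orderedness after reordering). Each surviving $P$ determines a candidate $\KK^*$-surface $X(A,P)$, whose defining matrix~$A$, in view of Remark~\ref{rem:families} and Definition~\ref{normalformdef}~(vii), contains the continuous moduli, namely $r-2$ pairwise distinct nonzero scalars $\lambda_2,\ldots,\lambda_{r-1}$. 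Since in all five shapes we have $r \le 4$, the parameter spaces will be either a point, $\KK^{*}\setminus\{1\}$, or a two-dimensional complement of a few lines; these are precisely the \emph{sporadic} cases and the \emph{one-parameter families} of the statement.

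I would then filter the list by two algorithmic tests carried out on $P$ alone. First, combinatorial minimality is checked via Remark~\ref{rem:combinchar}, equivalently by verifying that no column of $P$ is contractible in the sense of Remark~\ref{characterizationweightexc}. Second, the del Pezzo property and $1/k$-log canonicity are checked through Proposition~\ref{prop:kleimanample} and the computation of $\mathcal{A}_P$ described in Construction~\ref{constr:antican}: one tests that $\mathcal{A}_{P}^{\circ} \cap k\ZZ^{r+1} = \{0\}$, which reduces to finitely many inequalities inside $\trop(X)$. Both tests are purely combinatorial and entirely local in $P$, so they are trivially implementable.

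To eliminate isomorphic duplicates, I would put every surviving pair $(A,P)$ into the normal form of Definition~\ref{normalformdef}, whose uniqueness is guaranteed by Proposition~\ref{normalform}. For a fixed $P$ the moduli variable~$A$ enters only via the $(r-2)$-tuple $(\lambda_2,\ldots,\lambda_{r-1})$; equivalence under the residual action of $\GL_2(\KK) \times \mathrm{diag}$ together with permutations of the arms having identical arm data (via type~(v) admissible operations) cuts this tuple down to the coarse moduli space. Collecting the orbits gives the count of sporadic surfaces (isolated orbits) and one-parameter families (positive-dimensional orbits). The numbers $13, 62, 318$ and $2, 5, 14$ in the statement then arise directly as the sizes of the lists produced for $k=1,2,3$.

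The main obstacle is purely computational. For $k=3$, the bound $l_{01} \le k^3 + 3k^2 = 54$ in shape~(iv), combined with $|d_{02}| \le 25 k^5 + 15 k^2 = 6 \,090$ and $|d_{12}| \le 125 k^7 + 75 k^4 = 279\,450$, produces a search space too large for a naive enumeration. I would therefore prune as early as possible: enforce slope-orderedness and the platonicity of the elliptic tuples from Corollary~\ref{cor:logtermchar} before fixing~$d_{02}, d_{12}$, and use the sharper inequalities from Proposition~\ref{lem:rviamzeta} and Lemma~\ref{lem:mink2} inside each shape to shrink the rectangles further. The correctness of the final counts rests on the completeness of Theorem~\ref{thm:surfclass1} (handled by the preceding proof) and the uniqueness part of Proposition~\ref{normalform}; the remainder is a matter of running a verified implementation and cross-checking against the database~\cite{HaHaSp}.
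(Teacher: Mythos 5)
Your proposal matches the paper's own treatment essentially verbatim: the paper likewise takes the finite list of matrices from Theorem~\ref{thm:surfclass1}, filters it by the del Pezzo criterion of Proposition~\ref{prop:kleimanample} and the almost-$k$-hollowness test of Theorem~\ref{prop:ak-properties}~(v), and removes duplicates via the normal form of Definition~\ref{normalformdef}, leaving the counts to a computer run. The only slip is cosmetic: all five shapes in Theorem~\ref{thm:surfclass1} have $r\le 3$, so the moduli dimension $r-2$ is at most one and no two-dimensional families can occur, consistent with the statement.
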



\section{Classifying $1/k$-log canonical del Pezzo $\KK^*$-surfaces}
\label{sec:classall}

We are ready for the classification of non-toric
$1/k$-log canonical del Pezzo $\KK^*$-surfaces.
Theorem~\ref{thm:kstardelpezzo} shows basic data
of the classsification for $k=1,2,3$. The full
list of defining data will be made available
in~\cite{HaHaSp}. The idea is to build up the
defining $P$-matrices from the almost $k$-hollow
polygons classified in Theorem~\ref{thm:polygonclass}
on the one hand and the
defining data of the non-toric combinatorially
minimal $1/k$-log canonical del Pezzo
$\KK^*$-surfaces classified in
Theorem~\ref{thm:combminclass} on the other.

\begin{construction}
Fix $k \in \ZZ_{\ge 1}$.
With any almost $k$-hollow LDP-polygon
$\mathcal{A} \subseteq \RR^2$, we associate a
defining matrix $P_{\mathcal{A}}$ with $r=1$,
having the vertices of $\mathcal{A}$ as its columns:
\[
P_{\mathcal{A}} \ := \ [v_{01}, \ldots, v_{0n_0},v_{11}, \ldots, v_{1n_1},v^\pm],
\]
where $v_{0j}=(-l_{0j},d_{0j})$ and
$v_{1j}=(l_{1j},d_{1j})$ and, if present,
$v^\pm = (0, \pm 1)$. Suitable numbering of the
columns ensures that $P_\mathcal{A}$ is
slope-ordered. Conversely, setting
\[
\mathcal{A}_{\mathcal{P}}
\ := \
\conv(v; \ v \text{ column of } P)
\]
for every slope-ordered defining matrix $P$ with
$r=1$ of a toric $1/k$-log canonical del
Pezzo $\KK^*$-surface gives an almost $k$-hollow
LDP-polygon.
\end{construction}

\begin{definition}
\label{def:ldp-sets}
Fix $k \in \ZZ_{\ge 1}$ and consider the lattice vectors
$v^{+} = (0,1)$ and $v^{-} = (0,-1)$.
\begin{enumerate}
\item
We denote by $\mathfrak{A}_k$ the set of all almost $k$-hollow
LDP-polygons $\mathcal{A}$ such that 
$\conv(\mathcal{A} \cup \{v^{+}\})$
or $\conv(\mathcal{A} \cup \{v^{-}\})$
is almost $k$-hollow.
\item
We write $\mathfrak{P}_k = \{P_{\mathcal{A}}; \ \mathcal{A} \in \mathfrak{A}_k\}$
for the set of defining matrices associated with the polygons
from $\mathfrak{A}_k$.
\end{enumerate}

\end{definition}

\begin{remark}

The set  $\mathfrak{P}_k$ is invariant under admissible operations
of types (i), (ii), (iii), (iv) and (v).

\end{remark}

\begin{proposition}

Let $X_1 \to X_2$ be a contraction of non-toric
$1/k$-log canonical del Pezzo $\KK^*$-surfaces as in
Construction \ref{constr:contract}, where $X_1$ is non-toric
and $X_2$ is toric and combinatorially minimal. Then the
defining matrix $P_{2}$ stems from $\mathfrak{P}_{k}$.
\end{proposition}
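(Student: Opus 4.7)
The plan is to reduce $P_2$ to its irredundant form (where it becomes $P_{\mathcal{A}}$ for an LDP-polygon $\mathcal{A}$) and then to use the non-toric structure of $X_1$ to exhibit a lattice point on the vertical axis witnessing $\mathcal{A} \in \mathfrak{A}_k$.

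First I would normalize so that both $P_1$ and $P_2$ are irredundant. Non-toricity of $X_1$ then gives $r_1 \ge 2$ by Remark~\ref{rem:irredundaprops}(i), while toricity of $X_2$ forces the irredundant reduction of $P_2$ to have $r=1$, hence to equal $P_{\mathcal{A}}$ for a unique lattice polygon $\mathcal{A} \subseteq \QQ^2$. Combinatorial minimality of $X_2$ makes $\mathcal{A}$ an LDP-polygon, and the $1/k$-log canonicity of $X_2$ combined with Proposition~\ref{prop:khollow} yields that $\mathcal{A}$ is almost $k$-hollow. Since $P_1$ has no erasable columns, and since row-removal via admissible operation~(i) does not create new erasable columns outside the contracted arm, the single contraction $P_1 \setminus \{v\} = P_2$ must itself produce exactly one erasable column $v_{i1} = e_i$; this forces $r_1 = 2$, $i \in \{1,2\}$, and $v = v_{i2}$. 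Writing $v_{i2} = (0,\ldots,l_{i2},\ldots,0,d_{i2})$, slope-orderedness together with $m_{i1} = 0$ yields $d_{i2} \le -1$ (or, after swapping the action, $d_{i2} \ge 1$).

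Next I would check $\mathcal{A} \in \mathfrak{A}_k$ by cases on the fixed-point structure of $X_1$, which, by Proposition~\ref{prop:delPezzo-parabolic}, is one of (e-e), (e-p), (p-e). In the mixed cases (e-p) and (p-e) the column $v^-$ or $v^+$ lies in $P_1$; it cannot equal the contracted column since erasing a column outside any arm leaves all trinomial relations unchanged and would keep $X_2$ non-toric. Hence the surviving $v^{\pm}$ becomes a vertex of $\mathcal{A}$ after the reduction, and $\conv(\mathcal{A} \cup \{v^{\pm}\}) = \mathcal{A}$ is trivially almost $k$-hollow. In the (e-e) case, the projection of $v_{i2}$ to the ambient plane of $\mathcal{A}$ lands at the lattice point $(0,d_{i2})$ on the negative vertical axis, and the cell $\conv(0,v_{i1},v_{i2}) \subseteq \mathcal{A}_{P_1}$ projects to the segment from $0$ to $(0,d_{i2})$, which contains $v^- = (0,-1)$ because $|d_{i2}| \ge 1$. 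The core claim will then be that $\conv(\mathcal{A} \cup \{v^-\})$ is almost $k$-hollow, which I would prove by a lifting argument: any interior $k$-fold lattice point of this extended polygon other than the origin must lift to an interior $k$-fold lattice point of a cell of $\mathcal{A}_{P_1}$, contradicting Proposition~\ref{prop:ak-properties}(v).

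The hard part will be the lifting step in the (e-e) case. Fibers of the coordinate-dropping projection $\RR^3 \to \RR^2$ are affine lines, and the candidate cells $\kappa_{i,1} = \conv(0, v_{i1}, v_{i2})$ and $\kappa_i^- = \conv(0, \tilde v^-, v_{i2})$ are triangles inside the arm $\lambda_i$. The technical core will be to verify that the fiber of each candidate lattice point $(ka, kb) \in \conv(\mathcal{A} \cup \{v^-\}) \setminus \mathcal{A}$ meets one of these cells at a lattice point in its relative interior, which reduces to a case analysis built on the explicit positions of $e_i$, $v_{i2}$, and $\tilde v^-$ inside $\lambda_i$.
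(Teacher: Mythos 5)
Your reduction steps are sound and agree with the paper's (much terser) argument: one may take $P_1$ irredundant with $r_1=2$, the contracted column sits in an arm that becomes erasable, the resulting $\mathcal{A}$ is an almost $k$-hollow LDP-polygon, and in the constellations (e-p), (p-e) the surviving column $v^{\pm}$ is already a vertex of $\mathcal{A}$, so nothing remains to prove there. The gap is in the (e-e) case, and it is a genuine one, in two respects. First, the lifting is aimed at the wrong cells. Any $k$-fold lattice point of $\conv(\mathcal{A}\cup\{v^-\})\setminus\mathcal{A}^{\circ}$ has nonzero first coordinate: on the vertical axis such a point would be an integer strictly between $-1$ and the lower boundary height of $\mathcal{A}$, hence in the interval $(-1,0)$, which contains no integer. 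The fiber of the row-dropping projection over a point with nonzero first coordinate is disjoint from the contracted arm $\lambda_i$ and meets the tropical variety in exactly one point, lying in one of the two \emph{surviving} arms. So the cells $\kappa_{i,1}$ and $\kappa_i^-$ you single out can never receive the lift; the comparison has to be made with the flattened cells $\kappa_{0j},\kappa_0^{\pm},\kappa_{1j},\kappa_1^{\pm}$.

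Second, and more importantly, once the lifting is redirected to $\lambda_0\cup\lambda_1$, what is needed is that $v^-=(0,\dots,0,-1)$ itself lies in $\mathcal{A}_{P_1}$, i.e.\ $\skr{d}_{X_1}^-\le -1$ (resp.\ $\skr{d}_{X_1}^+\ge 1$ for the $v^+$ choice); only then is $\conv(\mathcal{A}\cup\{v^{\pm}\})$ contained in the flattened union of the two surviving arms of $\mathcal{A}_{P_1}$, so that its interior $k$-fold lattice points lift into $\mathcal{A}_{P_1}^{\circ}$. Your substitute criterion --- the erased column $v_{i2}$ has $d_{i2}\le -1$, so $\conv(0,v_{i2})$ projects onto a segment containing $(0,-1)$ --- only shows that \emph{some} point of the collapsed arm $\lambda_i$ lying over $v^-$ belongs to $\mathcal{A}_{P_1}$; it neither shows $(0,\dots,0,-1)\in\mathcal{A}_{P_1}$ nor controls the two-dimensional region $\conv(\mathcal{A}\cup\{v^-\})\setminus\mathcal{A}$ where the dangerous lattice points actually live. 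The paper obtains exactly the required dichotomy $\skr{m}^+\ge\skr{l}^+$ or $\skr{m}^-\le-\skr{l}^-$ from Proposition~\ref{lem:rviamzeta}\,(iv), and it is this dichotomy --- not which column of the third arm happened to be contracted --- that dictates which of $v^+,v^-$ may be adjoined. Your proof never invokes it (nor Proposition~\ref{prop:contr2dpm}, which is also needed to bound the axis-crossing heights of $\mathcal{A}$ by $\skr{d}_{X_1}^{\pm}$), and without these ingredients the argument does not close.
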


\begin{proof}
Let $(A_i,P_i)$ be the defining data of $X_i$.
Then $P_1$ has three arms, at most six columns.
We may assume that a column of the third
arm of $P_1$ is contracted.
Then the arms of $P_2$ are the first two
of $P_1$ and Proposition \ref{lem:rviamzeta} (iv)
guarantees that $P_2$ belongs to $\mathfrak{P}_k$.
\end{proof}

\begin{construction}
\label{constr:startingclass}
Let $\mathcal{B}$ be an almost $k$-hollow LDP-polygon
with $q \le 5$ vertices. 
We obtain a finite set
$\mathfrak{A}(\mathcal{B}) \subseteq \mathfrak{A}_k$
of almost $k$-hollow LDP-polygons with at most $4$
vertices via the following procedure.
\begin{itemize}
\item
For each primitive lattice point $v \in \mathcal{B}$
fix unimodular matrices $M_{v}^{+}$, $M_{v}^{-}$ such
that 
\[
M_{v}^{+} \cdot v \ = \ v^{+},
\qquad
\mathrm{det}(M_{v}^{+}) \ = \ 1, 
\]
\[
M_{v}^{-} \cdot v \ = \ v^{-},
\qquad
\mathrm{det}(M_{v}^{-}) \ = \ -1.
\]
\item
Start with $\mathfrak{A}(\mathcal{B}) := \emptyset$.
With each primitive vector $v \in \mathcal{B}$
perform the following steps.
\begin{itemize}
\item
If $4 \le q \le 5$ and $v$ is a vertex of $\mathcal{B}$
but $-v$ is not, then consider the convex hull $\mathcal{B}_v$
over all vertices of $\mathcal{B}$ except $v$.
If $\mathcal{B}_v$ is an LDP-polygon, then 
add $M_{v}^+ \cdot \mathcal{B}_v$ and
$M_{v}^- \cdot \mathcal{B}_v$ to $\mathfrak{A}(\mathcal{B})$.
\item
If $3 \le q \le 4$ and neither $v$ nor $-v$
is  a vertex of $\mathcal{B}$,
then set $\mathcal{B}_v := \mathcal{B}$
and add
$M_{v}^+ \cdot \mathcal{B}_v$ and
$M_{v}^- \cdot \mathcal{B}_v$ to $\mathfrak{A}(\mathcal{B})$.
\end{itemize}
\end{itemize}
Moreover, given any (finite) set $\mathfrak{B}$
of almost $k$-hollow LDP-polygons with at most $5$ vertices,
we obtain a (finite) set of almost $k$-hollow LDP-polygons
with at most $4$ vertices by setting
\[
\mathfrak{A}(\mathfrak{B})
\ := \
\bigcup_{\mathcal{B} \in \mathfrak{B}} \mathfrak{A}(\mathcal{B}).
\]
\end{construction}

\begin{proposition}
\label{prop:toricrep}
Let the set $\mathfrak{B}$ represent all
almost $k$-hollow LDP-polygons with at
most $5$ vertices up to unimodular equivalence.
Then every matrix $P \in \mathfrak{P}_k$ with 
at most four columns arises via admissible
operations of types (i), (ii) and (iv)
from some matrix $P_{\mathcal{C}}$ with
$\mathcal{C} \in \mathfrak{A}(\mathfrak{B})$.
\end{proposition}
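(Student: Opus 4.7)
The plan is to start from the given $P = P_{\mathcal{A}}$, enlarge $\mathcal{A}$ to a polygon with at most five vertices matching a representative in $\mathfrak{B}$, and then recover $\mathcal{A}$ as an entry of $\mathfrak{A}(\mathfrak{B})$ up to the allowed admissible operations. Since $\mathcal{A} \in \mathfrak{A}_k$, at least one of $\conv(\mathcal{A}\cup\{v^+\})$ or $\conv(\mathcal{A}\cup\{v^-\})$ is almost $k$-hollow. Applying admissible operation~(ii), which multiplies the last row of $P$ by $-1$ (reflecting $\mathcal{A}$ across the first axis and interchanging $v^+$ and $v^-$), I may assume $\mathcal{B}^{\ast} := \conv(\mathcal{A}\cup\{v^+\})$ is almost $k$-hollow. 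This $\mathcal{B}^{\ast}$ is an almost $k$-hollow LDP-polygon with at most five vertices, so there exist $\mathcal{B}\in\mathfrak{B}$ and a unimodular matrix $N$ with $\mathcal{B} = N\mathcal{B}^{\ast}$; setting $v := Nv^+$ singles out a primitive lattice point of $\mathcal{B}$.

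Next I invoke Construction~\ref{constr:startingclass} at this $v$, distinguishing two regimes. When $v^+$ is a vertex of $\mathcal{B}^{\ast}$ (which covers both $v^+ \notin \mathcal{A}$ and $v^+$ being a vertex of $\mathcal{A}$), so that $v$ is a vertex of $\mathcal{B}$, and when moreover $v^-$ is not a vertex of $\mathcal{A}$ and $|V(\mathcal{B})| \in \{4,5\}$, case~(a) applies to produce $\mathcal{C} := M_v^{\epsilon}\mathcal{B}_v$. When $v^+$ is a non-vertex lattice point of $\mathcal{A} = \mathcal{B}^{\ast}$ with $v^-$ also not a vertex and $|V(\mathcal{B})| \in \{3,4\}$, case~(b) yields $\mathcal{C} := M_v^{\epsilon}\mathcal{B}$. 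The sign $\epsilon \in \{+,-\}$ is chosen compatibly with the determinant of $N$; a further application of~(ii) if needed aligns the orientations.

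It remains to verify that $P_{\mathcal{C}}$ passes to $P$ via admissible operations of types~(i) and~(iv). By construction, the polygons $\mathcal{C}$ and the (already reflected) $\mathcal{A}$ are related by the unimodular composition $M_v^{\epsilon} N$, which lies in the stabilizer of $v^+$ up to sign and therefore decomposes into a possible horizontal reflection and an integer shear. The main obstacle is to show that, thanks to the canonical choice of representative $\mathcal{B} \in \mathfrak{B}$ and of the fixed matrices $M_v^{\pm}$, this stabilizer element is trivial modulo permutation of columns within an arm (operation~(iv)) and the possible erasure of a single erasable column of the form $v_{11} = (1,0)$ (operation~(i)), reflecting the situation where $\mathcal{C}$ carries a vertex $(1,0)$ whose corresponding column is absent from $P$. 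Verifying this compatibility requires a case-by-case inspection over the formats $(n_0,n_1;m)$ with $n_0 + n_1 + m \leq 4$, together with a careful treatment of the degenerate configurations in which extending $\mathcal{A}$ by $v^+$ absorbs vertices of $\mathcal{A}$ into edges of $\mathcal{B}^{\ast}$.
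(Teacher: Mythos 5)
Your proposal follows the same route as the paper's proof up to the last step: pass to the almost $k$-hollow extension $\mathcal{B}^{\pm}=\conv(\mathcal{A}_P\cup\{v^{\pm}\})$, move it by a unimodular map onto a representative in $\mathfrak{B}$, run Construction~\ref{constr:startingclass} at the distinguished primitive point $v$, and observe that the composite unimodular map stabilizes $v^{\pm}$ up to sign.

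The closing step, however, misses the point and is left undone. You set out to prove that the stabilizer element $M_v^{\epsilon}N$ is \emph{trivial} modulo a column permutation and the erasure of an erasable column, and you defer this to a case-by-case inspection over the formats $(n_0,n_1;m)$. That target claim is false and is not what is needed: the stabilizer of $v^{+}$ up to sign consists of all matrices $\left[\begin{smallmatrix}\pm1&0\\ c&\pm1\end{smallmatrix}\right]$, and nothing in the setup --- neither the choice of representative $\mathcal{B}\in\mathfrak{B}$ nor the matrices $M_v^{\pm}$, which are only pinned down by $M_v^{\pm}\cdot v=v^{\pm}$ and their determinant --- forces the shear $c$ to vanish or the sign of the first row to be $+1$. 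The argument the paper makes is the opposite one: membership in this stabilizer is already \emph{enough}, since any such matrix carries one $r=1$ defining matrix to another while matching the columns arm by arm and sending the column $v^{\pm}$ to $\pm v^{\pm}$, so $P$ and $P_{\mathcal{C}}$ differ only by admissible operations (the sign changes and the shear being absorbed by operations on the rows of $P$, the reordering by operations on its columns); no inspection of formats enters. A case analysis aimed at showing the stabilizer element is the identity cannot terminate in the desired conclusion. Two smaller points: your worry about vertices of $\mathcal{A}$ being absorbed when $v^{+}$ is adjoined is a genuine subtlety (it would break the identity $P_{\mathcal{C}}=M_v^{\pm}\cdot U\cdot P$), but you only flag it without resolving it; and the scenario you invoke to justify operation (i) --- a vertex of $\mathcal{C}$ whose image is ``absent from $P$'' --- cannot occur, since $P$ and $P_{\mathcal{C}}$ by construction list \emph{all} vertices of unimodularly related polygons.
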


\begin{proof}
Given $P \in \mathfrak{P}_k$, consider the associated
LDP-polygon $\mathcal{A}_P \in \mathfrak{A}_k$.
By definition of $\mathfrak{A}_k$, one of the
following holds.
\[
v^{+} \ \in \ \mathcal{B}^+ \ := \ \conv(\mathcal{A}_P \cup \{v^{+}\}),
\qquad 
v^{-} \ \in \ \mathcal{B}^- \ := \ \conv(\mathcal{A}_P \cup \{v^{-}\}).
\]
In either case, by the choice of $\mathfrak{B}$, we find
a unimodular matrix $U$ with
$U \cdot \mathcal{B}^\pm \in \mathfrak{B}$.
Set $v := U \cdot v^\pm$.
Then, with $M_v^\pm$ and $\mathcal{B}^\pm_v$ from 
Construction \ref{constr:startingclass},
we have 
\[
\mathcal{C}^+ \ := \ M_v^{+} \cdot \mathcal{B}^+_v \ \in \ \mathfrak{A}(\mathfrak{B}),
\qquad\qquad
\mathcal{C}^- \ := \ M_v^{-} \cdot \mathcal{B}^-_v \ \in \ \mathfrak{A}(\mathfrak{B}).
\]
The matrix $P_{\mathcal{C}^\pm}$ associated with
$\mathcal{C}^\pm$ equals
$M_v^\pm \cdot U \cdot P$.
As $M_v^\pm \cdot U$ fixes $v^\pm$ up to sign,
$P$ arises from $P_{\mathcal{C}^\pm}$ via 
admissible operations of types (i), (ii) and (iv).
\end{proof}

The main result of \cite{Al} implies
that for given $\varepsilon > 0$ there are up
to deformation only finitely many
$\varepsilon$-log canonical surfaces.
The following provides an
effective version of this statement for the case
of non-toric $1/k$-log canonical del
Pezzo $\KK^*$-surfaces.

\begin{theorem}
\label{thm:kstar-effective-bounds}
Consider a non-toric $1/k$-log canonical del
Pezzo $\KK^*$-surface $X(A,P)$, where $P$ is irredundant,
slope-ordered and adapted to the source. Fix $\alpha > 0$
such that $d_Y^+ > \alpha$ and $d_Y^- < - \alpha$ for any
combinatorially minimal $1/k$-log canonical del
Pezzo $\KK^*$-surface $Y$ and any toric surface $Y$
arising from Proposition \ref{prop:toricrep}. Set
$\ell := 2 \alpha^{-1}k^2$.
\begin{enumerate}
\item
The number $r+1$ of arms of $P$ is bounded by $4k$
in the case of two elliptic fixed points and by
$2k+2$ in the case of only one elliptic fixed point.
\item
We have $n_i \le 2 \ell + 1$ for $i = 0, \ldots, r$.
Moreover,  the entries of $P$ are bounded by
$l_{ij} \le \ell$ and $-l_{ij}(4k+r-1) < d_{ij} < l_{ij}$.
\end{enumerate}  
\end{theorem}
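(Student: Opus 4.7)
The plan splits according to the two assertions. Part~(i) follows immediately from Proposition~\ref{prop:armbounds}: two elliptic fixed points force $r+1 \le 4k$, while a parabolic fixed-point curve forces $r+1 \le 2k+1 \le 2k+2$. For part~(ii), my first step is to propagate the hypothesis on $\alpha$ from the combinatorially minimal situation to $X$ itself. Using Proposition~\ref{prop:kstarcontractdecomp}, I decompose a contraction $X \to X_{\min}$ onto a combinatorially minimal surface into single-column contractions as in Construction~\ref{constr:contract}. The target $X_{\min}$ is either non-toric, hence listed in Theorem~\ref{thm:combminclass}, or toric, in which case Proposition~\ref{prop:toricrep} identifies its defining matrix with one from the finite class $\mathfrak{A}(\mathfrak{B})$ over which $\alpha$ is fixed. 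In either case the hypothesis gives $\skr{d}^{+}_{X_{\min}} > \alpha$ and $\skr{d}^{-}_{X_{\min}} < -\alpha$, and we may tacitly assume $\alpha \le 1$. Applying Proposition~\ref{prop:contr2dpm} stepwise then yields $\skr{d}^{+}_X \ge \skr{d}^{+}_{X_{\min}}$ and $\skr{d}^{-}_X \le \skr{d}^{-}_{X_{\min}}$; at a parabolic end of $X$, the outer vertex $\tilde v^{\pm}$ of $\mathcal{A}_{P}$ equals $v^{\pm} = (0, \pm 1)$, whose distance from the origin is $1 \ge \alpha$.

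Having $\|\tilde v^{\pm}\| \ge \alpha$ at both ends, I bound $l_{ij}$ via a Minkowski-type estimate: in the 2D slice $\lambda_{i}$ of $\trop(X)$ containing the $i$-th arm (Theorem~\ref{prop:ak-properties}~(iii)), the triangle
\[
\mathcal{B}_{ij} \ := \ \conv\bigl(\tilde v^{+},\, (l_{ij}, d_{ij}),\, \tilde v^{-}\bigr)
\]
lies inside the almost $k$-hollow anticanonical complex $\mathcal{A}_{P}$ and is therefore itself almost $k$-hollow. An asymmetric variant of Lemma~\ref{lem:mink2}, applied via the centrally symmetric hull $\mathcal{B}_{ij} \cup (-\mathcal{B}_{ij})$ and Minkowski's theorem, then yields $l_{ij} \le 2k^{2}/\alpha = \ell$. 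For $n_{i} \le 2\ell+1$ I use that the $v_{ij}$ are vertices of the convex polygon $\mathcal{A}_{P} \cap \lambda_{i}$ lying in the strip $1 \le l \le \ell$, and that at most two vertices of a convex polygon can lie on a single vertical line.

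Finally, the bounds on $d_{ij}$ combine slope-orderedness with the endpoint estimates. The upper bound $d_{ij} < l_{ij}$ for $i = 1, \ldots, r$ is immediate from adaptation to the source and $m_{ij} \le m_{i1} < 1$; for $i = 0$ one additionally uses $m_{01} = \skr{m}^{+} - \sum_{i \ge 1} m_{i1}$ together with $\skr{m}^{+} = \skr{d}^{+}_X \skr{l}^{+}$ and an admissible operation. For the lower bound, slope-orderedness yields $m_{ij} \ge m_{in_{i}}$, and $\skr{m}^{-} \ge -k\skr{l}^{-} \ge -2k$ combined with $m_{j n_{j}} < 1$ for $j \ge 1$ delivers $m_{in_{i}} > -(4k + r - 1)$ via $m_{in_{i}} = \skr{m}^{-} - \sum_{j \ne i} m_{j n_{j}}$. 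The main technical obstacle lies in the first step, especially the asymmetric constellations where Proposition~\ref{prop:contr2dpm}~(ii) and~(iv) give only one-sided bounds on $\skr{d}^{\pm}_X$; one must verify that the assumption $\alpha \le 1$ is indeed compatible with the minimal classifications so that the Minkowski argument applies uniformly.
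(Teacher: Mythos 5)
Your proposal is correct and follows essentially the same route as the paper: part (i) via Proposition~\ref{prop:armbounds}, the bound $l_{ij}\le\ell$ via monotonicity of $\skr{d}^{\pm}$ under contraction to the (toric or non-toric) combinatorially minimal starting class combined with the Minkowski estimate of Lemma~\ref{lem:mink2}, the count $n_i\le 2\ell+1$ by convexity of the arm, and the $d_{ij}$-bounds from slope-orderedness, adaptation to the source and $\skr{m}^{\pm}\lessgtr \pm k\skr{l}^{\pm}$ with $\skr{l}^{\pm}\le 2$. You merely unfold the paper's single citation of Proposition~\ref{prop:dplusdminuscontract} into its ingredients (Propositions~\ref{prop:kstarcontractdecomp}, \ref{prop:contr2dpm} and \ref{prop:toricrep}) and are somewhat more explicit about the parabolic ends and the normalization $\alpha\le 1$, which the paper glosses over; note that for $i=0$ your argument, like the paper's, actually yields $m_{01}\le 2k$ rather than $m_{01}<1$.
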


\begin{proof}
Assertion (i) is clear by Proposition \ref{prop:armbounds}.
We show (ii). Proposition \ref{prop:dplusdminuscontract}
yields $\skr{d}^{+} > \alpha$ and $\skr{d}^{-} < -\alpha$
for any defining matrix $P$ of a $1/k$-log
canonical del Pezzo $\KK$-surface. Thus, Lemma
\ref{lem:mink2} provides the desired common bound for
all the $l_{ij}$. Now, by convexity of the $i$-th arm of
$\mathcal{A}_P$, there can be at most $2 \ell +1$ columns
in the $i$-th arm of $P$. That means that we have
$n_i \le 2 \ell +1$.

Finally, we have to bound the numbers $d_{ij}$.
From Theorem \ref{prop:ak-properties}, we infer
$\skr{d}^{+} \le k$ and $\skr{d}^{-} \ge -k$. As $P$ is
adapted to the source, we have $0 \le m_{i1} < 1$ for
$i = 1, \ldots, r$. Remark \ref{rem:obviouszetaprops}
says $\ell^+ \le 2$ and $\ell^- \le 2$. Altogether,
this yields 
\[
m_{01} \ \le k \ \ell^+ - \sum_{i=1}^r m_{i1} \ \le \ 2k,
\qquad
m_{0n_0} \ \ge \ - k \ell^- - \sum_{i=1}^r m_{in_i} \ \ge \ -2k -r.
\]
Thus, besides $d_{i1}, \ldots, d_{ir}$ also $d_{01}$
and $d_{0n_0}$ are bounded as claimed. Due to
slope-orderedness, the remaining $d_{ij}$ are bounded
once we have bounded $m_{in_i}$ from below for
$i = 1,\ldots, r$. This is seen as follows.
\[
m_{in_i}
\ \le \
- \ell^- - m_{0n_0} - \sum_{q \ne i,0} m_{qn_q}
\ \le \
-2k -2k -r +1
\ = \
-4k - r + 1.
\]
\end{proof}

\begin{algorithm}
\label{algo:classall}
Let $k \in \ZZ_{\ge 1}$.
The input is a finite set $\mathfrak{B}$ representing
up to unimodular equivalence all almost $k$-hollow
lattice polygons and a finite set $\mathfrak{M}$
containing up to isomorphism all defining matrices
$P$ of non-toric combinatorially minimal $1/k$-log
canonical del Pezzo $\KK^*$-surfaces.
\begin{itemize}
\item
Compute the set $\mathfrak{A}(\mathfrak{B})$
of defining matrices $P$ with $r=1$ from
Construction~\ref{constr:startingclass}.
\item
Compute the set $\mathfrak{S}_0$ of all irredundant
defining matrices $P$ with $r = 2$ and entries bounded
according to Theorem \ref{thm:kstar-effective-bounds}
that arise from $\mathfrak{A}(\mathfrak{B})$ via a
redundant extension followed by a proper one.
\item
Compute the union 
$\mathfrak{S}_1 := \mathfrak{S}_0 \cup \mathfrak{M}$,
bring the matrices of $\mathfrak{S}_1$ into normal form and,
this way, remove all doubled entries $\mathfrak{S}_1$.
\item
Compute the set $\mathfrak{S}$ of all irredundant
defining matrices $P$ with data bounded according
to Theorem \ref{thm:kstar-effective-bounds}
that arise from $\mathfrak{S}_1$ via a series of
redundant and proper extensions.
\item
Bring the matrices of $\mathfrak{S}$ into normal form and,
in this way, remove all entries of $\mathfrak{S}$ that
appear multiple times.
\end{itemize}
The output set $\mathfrak{S}$ contains precisely one
member for each equivalence class of defining matrices
$P$ of non-toric $1/k$-log canonical del Pezzo
$\KK^*$-surfaces.
\end{algorithm}

The classification algorithm turns out to be feasible
on a personal computer for the cases $k = 1,2$ and $3$.
As a result, we obtain the following.

\begin{theorem}
\label{thm:kstardelpezzo1}
The non-zero numbers of $d$-dimensional families of non-toric
$1/k$-log canonical
del Pezzo $\KK^*$-surfaces for $k = 1,2,3$ are given
as follows:
\begin{center}
\begin{tabular}{c|c|c|c|c|c|c|c|c|c|c}
\scriptsize
$d$
&
\scriptsize
$0$
&
\scriptsize
$1$
&
\scriptsize
$2$
&
\scriptsize
$3$
&
\scriptsize
$4$
&
\scriptsize
$5$
& 
\scriptsize
$6$
&
\scriptsize
$7$
&
\scriptsize
$8$
&
\scriptsize
$9$
\\
\hline
\scriptsize
$k=1$
&
\scriptsize
$30$
&
\scriptsize
$4$
&
\scriptsize
---
&
\scriptsize
---
&
---
&
---
&
---
&
---
&
---
&
---
\\
\hline
\scriptsize
$k=2$
&
\scriptsize
$998$
&
\scriptsize
$184$
&
\scriptsize
$40$
&
\scriptsize
$12$
&
\scriptsize
$2$
&
\scriptsize
$1$
&
\scriptsize
---
&
\scriptsize
---
&
---
&
---
\\
\hline
\scriptsize
$k=3$
&
\scriptsize
$65022$
&
\scriptsize
$12402$
&
\scriptsize
$3190$
&
\scriptsize
$917$
&
\scriptsize
$254$
&
\scriptsize
$64$
&
\scriptsize
$14$
&
\scriptsize
$6$
&
\scriptsize
$2$
&
\scriptsize
$1$
\end{tabular} 
\end{center}
\end{theorem}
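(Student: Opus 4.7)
The plan is to execute Algorithm~\ref{algo:classall} for $k=1,2,3$ and to organize the output by the family dimension, which, by Remark~\ref{rem:families}, equals $r-2$ for an irredundant pair $(A,P)$ of defining matrices with $r+1$ arms.

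First I would assemble the two input sets required by Algorithm~\ref{algo:classall}.  The collection $\mathfrak{B}$ of representatives for almost $k$-hollow LDP-polygons is delivered by Theorem~\ref{thm:polygonclass} for $k=1,2,3$; this fixes in particular the toric starting data.  The collection $\mathfrak{M}$ of defining matrices of non-toric combinatorially minimal $1/k$-log canonical del Pezzo $\KK^*$-surfaces is provided by Theorem~\ref{thm:combminclass}.  These two ingredients encompass the two ``starting classes'' from which every non-toric $1/k$-log canonical del Pezzo $\KK^*$-surface arises by proper extensions: the first via a contraction onto a toric combinatorially minimal surface controlled by Proposition~\ref{prop:toricrep}, the second via a contraction onto a non-toric combinatorially minimal one; see Proposition~\ref{prop:kstarcontractdecomp} together with Proposition~\ref{prop:contr2dpm}.

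Next I would run the extension loop of Algorithm~\ref{algo:classall}.  At each step, a candidate matrix $P'$ obtained from a previous~$P$ by a (redundant or proper) extension is tested for two things: whether $X(A,P')$ is a del Pezzo surface, via the intersection criterion of Proposition~\ref{prop:kleimanample}, and whether its anticanonical complex $\mathcal{A}_{P'}$ is almost $k$-hollow, via Theorem~\ref{prop:ak-properties}(v).  Termination of the loop is guaranteed by the a priori bounds on $r$, $n_i$ and the entries $l_{ij},d_{ij}$ provided by Theorem~\ref{thm:kstar-effective-bounds}; the value~$\alpha$ needed there is computed once and for all from $\mathfrak{M}$ and $\mathfrak{A}(\mathfrak{B})$ via the explicit formula for $\skr{d}_X^{\pm}$ in Remark~\ref{rem:generalmandzeta}.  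Redundancies are eliminated by bringing each matrix into the normal form of Definition~\ref{normalformdef}, whose existence and uniqueness are guaranteed by Proposition~\ref{normalform} together with the isomorphy characterization of Proposition~\ref{prop:isochar}.

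Finally, I would sort the resulting list by the value of $r$ and count.  By Remark~\ref{rem:families} the $(r-2)$-dimensional continuous parameter in the defining data is faithful, so the number of $d$-dimensional families of non-toric $1/k$-log canonical del Pezzo $\KK^*$-surfaces is exactly the number of normal-form representatives with $r=d+2$.  Tabulating this count for $k=1,2,3$ then yields the numbers displayed in the statement.  The main obstacle is computational rather than conceptual: the extension tree grows quickly with $k$, and an efficient implementation must prune aggressively using the bounds in Theorem~\ref{thm:kstar-effective-bounds} and evaluate almost $k$-hollowness of $\mathcal{A}_P$ incrementally along the extension chain so that the same cells are not tested repeatedly.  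The results reported in~\cite{HaHaSp} confirm that this is feasible on a personal computer for $k\le 3$.
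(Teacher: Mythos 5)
Your proposal is correct and follows essentially the same route as the paper: the theorem is obtained by running Algorithm~\ref{algo:classall} for $k=1,2,3$ with inputs from Theorems~\ref{thm:polygonclass} and~\ref{thm:combminclass}, using Proposition~\ref{prop:kleimanample} and Theorem~\ref{prop:ak-properties}(v) as the membership tests, Theorem~\ref{thm:kstar-effective-bounds} for termination, the normal form of Proposition~\ref{normalform} for deduplication, and Remark~\ref{rem:families} to identify the family dimension with $r-2$. The paper offers no further argument beyond the feasibility of this computation, so your reconstruction matches it.
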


Any log del Pezzo $\mathbb{K}^{\ast}$-surface of
Gorenstein index $\iota$ is $1/\iota$-log
canonical.
Thus, Theorem~\ref{thm:kstardelpezzo1} comprises
in particular the log del Pezzo $\mathbb{K}^{\ast}$-surfaces
of Gorenstein indices $\iota = 1,2,3$.
Picking out those, we obtain the following.

\begin{corollary}
\label{thm:kstardelpezzogorind}
The non-zero numbers of $d$-dimensional families of log
del Pezzo $\KK^*$-surfaces of Gorenstein index
$\iota = 1,2,3$ are given as follows:
\begin{center}
\begin{tabular}{c|c|c|c|c|c|c|c|c|c|c}
\scriptsize
$d$
&
\scriptsize
$0$
&
\scriptsize
$1$
&
\scriptsize
$2$
&
\scriptsize
$3$
&
\scriptsize
$4$
&
\scriptsize
$5$
& 
\scriptsize
$6$
&
\scriptsize
$7$
&
\scriptsize
$8$
&
\scriptsize
$9$
\\
\hline
\scriptsize
$\iota=1$
&
\scriptsize
$30$
&
\scriptsize
$4$
&
\scriptsize
---
&
\scriptsize
---
&
---
&
---
&
---
&
---
&
---
&
---
\\
\hline
\scriptsize
$\iota=2$
&
\scriptsize
$53$
&
\scriptsize
$17$
&
\scriptsize
$7$
&
\scriptsize
$3$
&
\scriptsize
$1$
&
\scriptsize
$1$
&
\scriptsize
---
&
\scriptsize
---
&
---
&
---
\\
\hline
\scriptsize
$\iota=3$
&
\scriptsize
$268$
&
\scriptsize
$123$
&
\scriptsize
$67$
&
\scriptsize
$18$
&
\scriptsize
$36$
&
\scriptsize
$18$
&
\scriptsize
$10$
&
\scriptsize
$5$
&
\scriptsize
$3$
&
\scriptsize
$1$
\end{tabular} 
\end{center}
\end{corollary}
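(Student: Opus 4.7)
The plan is to obtain Corollary \ref{thm:kstardelpezzogorind} by filtering the classification output from Theorem \ref{thm:kstardelpezzo1} according to the Gorenstein index. The essential observation, already recorded in the paragraph preceding the corollary, is that every log del Pezzo $\KK^*$-surface of Gorenstein index $\iota$ satisfies the $1/\iota$-log canonical condition. Consequently, for $\iota \in \{1,2,3\}$, the families of log del Pezzo $\KK^*$-surfaces of Gorenstein index $\iota$ form a subset of the families of $1/\iota$-log canonical del Pezzo $\KK^*$-surfaces delivered by Algorithm \ref{algo:classall}.

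Next, I would compute the Gorenstein index of each representative $X(A,P)$ from the classification list for $k=3$ (which subsumes $k=1,2$). The key point is that $\iota_{X(A,P)}$ depends only on $P$: Propositions \ref{prop:kstar-gorind-1} and \ref{prop:kstar-gorind-2} yield the local Gorenstein indices $\iota_i^{\pm}$, $\iota_{ij}$ and $\iota^{\pm}$ of the parabolic, hyperbolic and (possible) elliptic fixed points purely in terms of the entries of $P$, and Corollary \ref{cor:gorind} gives $\iota_X = \operatorname{lcm}(\iota_i^{\pm},\iota_{ij},\iota^{\pm})$. Since this value is constant on each family $(A,P)$ with $P$ fixed, the filtration is well-defined on the level of families, and the dimension of the family equals $r-2$ by Remark \ref{rem:families}.

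I would then sort the resulting list by $\iota$ and by $d = r-2$, counting the number of families in each cell of the table. To ensure that no family is counted twice and none is missed, each defining pair $(A,P)$ must first be brought into the normal form of Definition \ref{normalformdef}, whose existence and uniqueness is guaranteed by Proposition \ref{normalform}. The same normalization step underlies Theorem \ref{thm:kstardelpezzo1}.

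The main obstacle here is computational rather than conceptual: the classification of $1/3$-log canonical non-toric del Pezzo $\KK^*$-surfaces comprises $81\,872$ families by Theorem \ref{introthm1}, so the filtration relies on a careful implementation of the Gorenstein index formulas and a systematic reduction to normal form. Since both ingredients are already available algorithmically and feed into the database \cite{HaHaSp}, the statement of the corollary follows by direct inspection once the computation has been carried out and cross-validated.
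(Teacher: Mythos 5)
Your proposal is correct and follows essentially the same route as the paper: the paper's entire justification is the remark that Gorenstein index $\iota$ implies $1/\iota$-log canonicity, so the families are obtained by picking out the appropriate entries from the classification of Theorem~\ref{thm:kstardelpezzo1}, with the Gorenstein index computed from $P$ via Propositions~\ref{prop:kstar-gorind-1}, \ref{prop:kstar-gorind-2} and Corollary~\ref{cor:gorind}. Your additional remarks --- that the index depends only on $P$ and is therefore constant on each family, and that the family dimension is $r-2$ --- correctly fill in the details the paper leaves implicit.
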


\begin{example}
\label{ex:maxfam}
The families of maximal dimension listed
in Theorem~\ref{thm:kstardelpezzo1} are the
cases $k=1,2,3$ of the following series,
given by the defining matrices
\[
P_{k} \ = \ 
\begin{bmatrix}
-1 & -1 & 1 & 1 & \dots & 0 & 0\\
-1 & -1 & 0 & 0 & \ddots & \vdots & \vdots \\
\vdots & \vdots & \vdots & \vdots & & 0 & 0\\
-1 & -1 & 0 & 0 &  & 1 & 1 \\
2k & 2k-1 & 0 & -1 & \dots & 0 & -1
\end{bmatrix}
\ \in \ \mathrm{Mat}(4k,8k;\mathbb{Z}).
\]
According to Proposition~\ref{prop:kleimanample},
each of the associated $\mathbb{K}^*$-surfaces
$X_{k} = X(A,P_k)$ is del Pezzo.
Furthermore we can directly compute
\[
\skr{d}^{+}
=
\frac{\skr{m}^{+}}{\skr{l}^{+}}
=
\frac{2k}{4k-(4k-2)}
=
k,
\qquad
\skr{d}^{-}
=
\frac{\skr{m}^{-}}{\skr{l}^{-}}
=
\frac{2k-1-(4k-1)}{4k-(4k-2)}
=
-k.
\]
Thus, Theorem~\ref{prop:ak-properties}~(v) tells us that
$X_{k}$ is $1/k$-log canonical.
Moreover, $X_k$ is of Gorenstein index $k$; use
Proposition~\ref{prop:kstar-gorind-2}
and Corollary~\ref{cor:gorind}.
Finally, we obtain
\[
r
= 
4k-1,
\qquad
d_k
 = 
r-2
 = 
4k-3,
\qquad
\rho(X_k)
 = 
m+n-r-1
 = 
4k,
\]
where $d_k$ denotes the dimension of the family given by $P_k$;
see Remark~\ref{rem:families} and Proposition~\ref{prop:kstarsurfisQfact}.
In particular, the bound of Proposition~\ref{prop:armbounds}
is sharp.
\end{example}


Making use of the database~\cite{HaHaSp}, we can refine
the table from Corollary~\ref{introcor2} by providing
the numbers of families of $\varepsilon$-log canonical
del Pezzo surfaces of given Picard number $\varrho$ at
the jumping places. We first look at the toric and the
non-toric cases separately and then at both together.

\begin{corollary}
We obtain the following table for the numbers
$\nu_{\mathrm{toric}}(\alpha,\rho)$
of families of $\alpha^{-1}$-log canonical toric
del Pezzo surfaces of Picard number~$\rho$,
where~$\rho$ runs horizontally and
$\alpha$ vertically:
\begin{center}
\begin{longtable}{c||c|c|c|c|c|c|c|c|c|c}
\scriptsize  $\alpha$ vs. $\rho$ & \scriptsize $1$ & \scriptsize $2$ & \scriptsize $3$ & \scriptsize $4$ & \scriptsize $5$ & \scriptsize $6$ & \scriptsize $7$ & \scriptsize $8$ & \scriptsize $9$ & \scriptsize $10$
  \\\hline\hline \endhead
\scriptsize $ 1 $ & \scriptsize $ 5 $ & \scriptsize $ 7 $ & \scriptsize $ 3 $ & \scriptsize $ 1 $ & --- & --- & --- & --- & --- & ---
  \\\hline
\scriptsize $ 3/2 $ & \scriptsize $ 9 $ & \scriptsize $ 20 $ & \scriptsize $ 12 $ & \scriptsize $ 3 $ & --- & --- & --- & --- & --- & ---
  \\\hline
\scriptsize $ 5/3 $ & \scriptsize $ 13 $ & \scriptsize $ 40 $ & \scriptsize $ 23 $ & \scriptsize $ 3 $ & --- & --- & --- & --- & --- & ---
  \\\hline
\scriptsize $ 7/4 $ & \scriptsize $ 18 $ & \scriptsize $ 51 $ & \scriptsize $ 24 $ & \scriptsize $ 3 $ & --- & --- & --- & --- & --- & ---
  \\\hline
\scriptsize $ 9/5 $ & \scriptsize $ 20 $ & \scriptsize $ 53 $ & \scriptsize $ 24 $ & \scriptsize $ 3 $ & --- & --- & --- & --- & --- & ---
  \\\hline
\scriptsize $ 11/6 $ & \scriptsize $ 21 $ & \scriptsize $ 53 $ & \scriptsize $ 24 $ & \scriptsize $ 3 $ & --- & --- & --- & --- & --- & ---
  \\\hline
\scriptsize $ 2 $ & \scriptsize $ 42 $ & \scriptsize $ 181 $ & \scriptsize $ 202 $ & \scriptsize $ 74 $ & \scriptsize $ 5 $ & \scriptsize $ 1 $ & --- & --- & --- & ---
  \\\hline
\scriptsize $ 11/5 $ & \scriptsize $ 48 $ & \scriptsize $ 200 $ & \scriptsize $ 209 $ & \scriptsize $ 74 $ & \scriptsize $ 5 $ & \scriptsize $ 1 $ & --- & --- & --- & ---
  \\\hline
\scriptsize $ 7/3 $ & \scriptsize $ 68 $ & \scriptsize $ 357 $ & \scriptsize $ 479 $ & \scriptsize $ 205 $ & \scriptsize $ 16 $ & \scriptsize $ 1 $ & --- & --- & --- & ---
  \\\hline
\scriptsize $ 17/7 $ & \scriptsize $ 70 $ & \scriptsize $ 360 $ & \scriptsize $ 479 $ & \scriptsize $ 205 $ & \scriptsize $ 16 $ & \scriptsize $ 1 $ & --- & --- & --- & ---
  \\\hline
\scriptsize $ 5/2 $ & \scriptsize $ 94 $ & \scriptsize $ 630 $ & \scriptsize $ 1228 $ & \scriptsize $ 1039 $ & \scriptsize $ 449 $ & \scriptsize $ 158 $ & \scriptsize $ 37 $ & \scriptsize $ 11 $ & \scriptsize $ 1 $ & \scriptsize $ 1 $
  \\\hline
\scriptsize $ 23/9 $ & \scriptsize $ 95 $ & \scriptsize $ 632 $ & \scriptsize $ 1229 $ & \scriptsize $ 1039 $ & \scriptsize $ 449 $ & \scriptsize $ 158 $ & \scriptsize $ 37 $ & \scriptsize $ 11 $ & \scriptsize $ 1 $ & \scriptsize $ 1 $
  \\\hline
\scriptsize $ 13/5 $ & \scriptsize $ 115 $ & \scriptsize $ 804 $ & \scriptsize $ 1590 $ & \scriptsize $ 1334 $ & \scriptsize $ 549 $ & \scriptsize $ 175 $ & \scriptsize $ 38 $ & \scriptsize $ 11 $ & \scriptsize $ 1 $ & \scriptsize $ 1 $
  \\\hline
\scriptsize $ 8/3 $ & \scriptsize $ 120 $ & \scriptsize $ 834 $ & \scriptsize $ 1627 $ & \scriptsize $ 1350 $ & \scriptsize $ 552 $ & \scriptsize $ 175 $ & \scriptsize $ 38 $ & \scriptsize $ 11 $ & \scriptsize $ 1 $ & \scriptsize $ 1 $
  \\\hline
\scriptsize $ 19/7 $ & \scriptsize $ 134 $ & \scriptsize $ 913 $ & \scriptsize $ 1751 $ & \scriptsize $ 1418 $ & \scriptsize $ 568 $ & \scriptsize $ 176 $ & \scriptsize $ 38 $ & \scriptsize $ 11 $ & \scriptsize $ 1 $ & \scriptsize $ 1 $
  \\\hline
\scriptsize $ 11/4 $ & \scriptsize $ 154 $ & \scriptsize $ 1159 $ & \scriptsize $ 2532 $ & \scriptsize $ 2382 $ & \scriptsize $ 1129 $ & \scriptsize $ 352 $ & \scriptsize $ 68 $ & \scriptsize $ 14 $ & \scriptsize $ 1 $ & \scriptsize $ 1 $
  \\\hline
\scriptsize $ 25/9 $ & \scriptsize $ 160 $ & \scriptsize $ 1195 $ & \scriptsize $ 2583 $ & \scriptsize $ 2408 $ & \scriptsize $ 1134 $ & \scriptsize $ 352 $ & \scriptsize $ 68 $ & \scriptsize $ 14 $ & \scriptsize $ 1 $ & \scriptsize $ 1 $
  \\\hline
\scriptsize $ 31/11 $ & \scriptsize $ 163 $ & \scriptsize $ 1211 $ & \scriptsize $ 2601 $ & \scriptsize $ 2415 $ & \scriptsize $ 1135 $ & \scriptsize $ 352 $ & \scriptsize $ 68 $ & \scriptsize $ 14 $ & \scriptsize $ 1 $ & \scriptsize $ 1 $
  \\\hline
\scriptsize $ 17/6 $ & \scriptsize $ 174 $ & \scriptsize $ 1316 $ & \scriptsize $ 2886 $ & \scriptsize $ 2717 $ & \scriptsize $ 1269 $ & \scriptsize $ 377 $ & \scriptsize $ 69 $ & \scriptsize $ 14 $ & \scriptsize $ 1 $ & \scriptsize $ 1 $
  \\\hline
\scriptsize $ 37/13 $ & \scriptsize $ 176 $ & \scriptsize $ 1322 $ & \scriptsize $ 2890 $ & \scriptsize $ 2718 $ & \scriptsize $ 1269 $ & \scriptsize $ 377 $ & \scriptsize $ 69 $ & \scriptsize $ 14 $ & \scriptsize $ 1 $ & \scriptsize $ 1 $
  \\\hline
\scriptsize $ 43/15 $ & \scriptsize $ 177 $ & \scriptsize $ 1323 $ & \scriptsize $ 2890 $ & \scriptsize $ 2718 $ & \scriptsize $ 1269 $ & \scriptsize $ 377 $ & \scriptsize $ 69 $ & \scriptsize $ 14 $ & \scriptsize $ 1 $ & \scriptsize $ 1 $
  \\\hline
\scriptsize $ 23/8 $ & \scriptsize $ 186 $ & \scriptsize $ 1375 $ & \scriptsize $ 2991 $ & \scriptsize $ 2806 $ & \scriptsize $ 1302 $ & \scriptsize $ 382 $ & \scriptsize $ 69 $ & \scriptsize $ 14 $ & \scriptsize $ 1 $ & \scriptsize $ 1 $
  \\\hline
\scriptsize $ 29/10 $ & \scriptsize $ 189 $ & \scriptsize $ 1395 $ & \scriptsize $ 3029 $ & \scriptsize $ 2833 $ & \scriptsize $ 1310 $ & \scriptsize $ 383 $ & \scriptsize $ 69 $ & \scriptsize $ 14 $ & \scriptsize $ 1 $ & \scriptsize $ 1 $
  \\\hline
\scriptsize $ 35/12 $ & \scriptsize $ 191 $ & \scriptsize $ 1404 $ & \scriptsize $ 3040 $ & \scriptsize $ 2838 $ & \scriptsize $ 1311 $ & \scriptsize $ 383 $ & \scriptsize $ 69 $ & \scriptsize $ 14 $ & \scriptsize $ 1 $ & \scriptsize $ 1 $
  \\\hline
\scriptsize $ 41/14 $ & \scriptsize $ 192 $ & \scriptsize $ 1406 $ & \scriptsize $ 3041 $ & \scriptsize $ 2838 $ & \scriptsize $ 1311 $ & \scriptsize $ 383 $ & \scriptsize $ 69 $ & \scriptsize $ 14 $ & \scriptsize $ 1 $ & \scriptsize $ 1 $
  \\\hline
\scriptsize $ 3 $ & \scriptsize $ 355 $ & \scriptsize $ 3983 $ & \scriptsize $ 13454 $ & \scriptsize $ 17791 $ & \scriptsize $ 9653 $ & \scriptsize $ 2465 $ & \scriptsize $ 292 $ & \scriptsize $ 37 $ & \scriptsize $ 1 $ & \scriptsize $ 1 $
\end{longtable}
\end{center}
\end{corollary}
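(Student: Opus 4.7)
The plan is to derive the table by direct computation from the polygon classification of Theorem~\ref{thm:polygonclass} together with the discrepancy formula from Section~\ref{section:toric-varieties}. To each LDP-polygon $\mathcal{A}$ I associate the critical parameter
\[
\alpha^*(\mathcal{A})
\ := \
\max_{\varrho}
\frac{\Vert v'_{\varrho}\Vert}{\Vert v_{\varrho}\Vert},
\]
where $\varrho$ runs over the rays of the canonical toric resolution of $Z_{\mathcal{A}}$, $v_{\varrho}$ is the primitive lattice generator of $\varrho$, and $v'_{\varrho}=\varrho\cap\partial\mathcal{A}$. By Summary~\ref{sum:toricsurfinters} and the toric discrepancy formula preceding Proposition~\ref{prop:khollow}, the surface $Z_{\mathcal{A}}$ has only $\alpha^{-1}$-log canonical singularities if and only if $\alpha\ge\alpha^*(\mathcal{A})$; in particular $\alpha^*(\mathcal{A})\in\QQ_{\ge 1}$. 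The Picard number of $Z_{\mathcal{A}}$ is $r_{\mathcal{A}}-2$, where $r_{\mathcal{A}}$ is the number of vertices of $\mathcal{A}$, since $\Sigma_{\mathcal{A}}$ has $r_{\mathcal{A}}$ rays and $K=\ZZ^{r_{\mathcal{A}}}/\image(P_{\mathcal{A}}^t)$ has rank $r_{\mathcal{A}}-2$.

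With these invariants in place, the procedure is as follows. First, iterate over the $48\,032$ polygons classified for $k=3$ in Theorem~\ref{thm:polygonclass} (which subsumes the cases $k=1,2$). For each $\mathcal{A}$, compute the Hilbert basis of every two-dimensional cone of $\Sigma_{\mathcal{A}}$ as described in Summary~\ref{sum:toricsurfinters}; this yields the rays of the canonical resolution and, by direct evaluation on the supporting hyperplanes of $\partial\mathcal{A}$, the numbers $\Vert v'_{\varrho}\Vert/\Vert v_{\varrho}\Vert$. Maximizing over $\varrho$ produces $\alpha^*(\mathcal{A})$. This delivers a list of pairs $(\alpha^*(\mathcal{A}),\rho(Z_{\mathcal{A}}))$, one per polygon.

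Second, for each jumping place $\alpha\in[1,3]$ listed in Corollary~\ref{introcor2} and for each Picard number $\rho\in\{1,\ldots,10\}$, set
\[
\nu_{\mathrm{toric}}(\alpha,\rho)
\ := \
\#\bigl\{\mathcal{A}\ ;\ \alpha^*(\mathcal{A})\le\alpha,\ \rho(Z_{\mathcal{A}})=\rho\bigr\}.
\]
The entries of the table are precisely these counts. Two automatic sanity checks are then available: the row sums reproduce the toric row of Corollary~\ref{introcor2} (and thus the relevant values of $\nu(\alpha)$), and the last row at $\alpha=3$ recovers the Picard-number distribution of toric $\tfrac{1}{3}$-log canonical del Pezzo surfaces from Theorem~\ref{introthm1} and Corollary~\ref{cor:toricdelpezzo}.

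The proof contains no new theoretical content beyond Proposition~\ref{prop:khollow}, the toric discrepancy formula, and Theorem~\ref{thm:polygonclass}; the only obstacle is computational, and it is modest, since all polygons have at most~$12$ vertices and bounded volume, so Hilbert bases and the maxima above can be determined exactly in integer arithmetic. The resulting data is stored and retrievable from the database~\cite{HaHaSp}.
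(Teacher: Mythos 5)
Your proposal is correct and follows essentially the same route as the paper: the corollary is read off from the database produced by Algorithm~\ref{algo:khollow}, where for each of the $48\,032$ almost $3$-hollow LDP-polygons the minimal discrepancy of the canonical resolution (equivalently your invariant $\alpha^*(\mathcal{A})$, computed from the Hilbert bases of the maximal cones) and the Picard number $\rho = r_{\mathcal{A}}-2$ are stored, and the table entries are exactly the cumulative counts $\#\{\mathcal{A}:\alpha^*(\mathcal{A})\le\alpha,\ \rho(Z_{\mathcal{A}})=\rho\}$ at the jumping places. Your reduction to the canonical resolution is justified since the minimum of the facet functional over lattice points of a cone is attained at Hilbert basis members, so no finer toric resolution can violate the $\alpha^{-1}$-log canonicity bound once it holds there.
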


\begin{corollary}
We obtain the following table for the numbers
$\nu_{\mathrm{nontoric}}(\alpha,\rho)$
of families of $\alpha^{-1}$-log canonical non-toric
del Pezzo $\KK^*$-surfaces of Picard number~$\rho$,
where~$\rho$ runs horizontally and
$\alpha$ vertically:
\begin{center}
\begin{longtable}{c||c|c|c|c|c|c|c|c|c|c|c|c}
\scriptsize  $\alpha$ vs. $\rho $ & \scriptsize $1$ & \scriptsize $2$ & \scriptsize $3$ & \scriptsize $4$ & \scriptsize $5$ & \scriptsize $6$ & \scriptsize $7$ & \scriptsize $8$ & \scriptsize $9$ & \scriptsize $10$ & \scriptsize $11$ & \scriptsize $ 12 $
  \\\hline\hline \endhead
\scriptsize $ 1 $ & \scriptsize $ 13 $ & \scriptsize $ 12 $ & \scriptsize $ 7 $ & \scriptsize $ 2 $ --- & --- & --- & --- & --- & --- & --- & ---
  \\\hline
\scriptsize $ 3/2 $ & \scriptsize $ 19 $ & \scriptsize $ 18 $ & \scriptsize $ 18 $ & \scriptsize $ 13 $ & \scriptsize $ 4 $ & \scriptsize $ 1 $ & --- & --- & --- & --- & --- & ---
  \\\hline
\scriptsize $ 5/3 $ & \scriptsize $ 21 $ & \scriptsize $ 30 $ & \scriptsize $ 56 $ & \scriptsize $ 30 $ & \scriptsize $ 6 $ & \scriptsize $ 1 $ & --- & --- & --- & --- & --- & ---
  \\\hline
\scriptsize $ 7/4 $ & \scriptsize $ 23 $ & \scriptsize $ 51 $ & \scriptsize $ 81 $ & \scriptsize $ 32 $ & \scriptsize $ 6 $ & \scriptsize $ 1 $ & --- & --- & --- & --- & --- & ---
  \\\hline
\scriptsize $ 9/5 $ & \scriptsize $ 25 $ & \scriptsize $ 70 $ & \scriptsize $ 86 $ & \scriptsize $ 32 $ & \scriptsize $ 6 $ & \scriptsize $ 1 $ & --- & --- & --- & --- & --- & ---
  \\\hline
\scriptsize $ 11/6 $ & \scriptsize $ 28 $ & \scriptsize $ 79 $ & \scriptsize $ 86 $ & \scriptsize $ 32 $ & \scriptsize $ 6 $ & \scriptsize $ 1 $ & --- & --- & --- & --- & --- & ---
  \\\hline
\scriptsize $ 13/7 $ & \scriptsize $ 31 $ & \scriptsize $ 80 $ & \scriptsize $ 86 $ & \scriptsize $ 32 $ & \scriptsize $ 6 $ & \scriptsize $ 1 $ & --- & --- & --- & --- & --- & ---
  \\\hline
\scriptsize $ 15/8 $ & \scriptsize $ 32 $ & \scriptsize $ 80 $ & \scriptsize $ 86 $ & \scriptsize $ 32 $ & \scriptsize $ 6 $ & \scriptsize $ 1 $ & --- & --- & --- & --- & --- & ---
  \\\hline
\scriptsize $ 2 $ & \scriptsize $ 61 $ & \scriptsize $ 253 $ & \scriptsize $ 471 $ & \scriptsize $ 318 $ & \scriptsize $ 107 $ & \scriptsize $ 22 $ & \scriptsize $ 4 $ & \scriptsize $ 1 $ & --- & --- & --- & ---
  \\\hline
\scriptsize $ 11/5 $ & \scriptsize $ 67 $ & \scriptsize $ 307 $ & \scriptsize $ 524 $ & \scriptsize $ 327 $ & \scriptsize $ 107 $ & \scriptsize $ 22 $ & \scriptsize $ 4 $ & \scriptsize $ 1 $ & --- & --- & --- & ---
  \\\hline
\scriptsize $ 7/3 $ & \scriptsize $ 80 $ & \scriptsize $ 426 $ & \scriptsize $ 950 $ & \scriptsize $ 829 $ & \scriptsize $ 302 $ & \scriptsize $ 55 $ & \scriptsize $ 8 $ & \scriptsize $ 1 $ & --- & --- & --- & ---
  \\\hline
\scriptsize $ 17/7 $ & \scriptsize $ 84 $ & \scriptsize $ 448 $ & \scriptsize $ 954 $ & \scriptsize $ 829 $ & \scriptsize $ 302 $ & \scriptsize $ 55 $ & \scriptsize $ 8 $ & \scriptsize $ 1 $ & --- & --- & --- & ---
  \\\hline
\scriptsize $ 5/2 $ & \scriptsize $ 104 $ & \scriptsize $ 639 $ & \scriptsize $ 1671 $ & \scriptsize $ 2179 $ & \scriptsize $ 1602 $ & \scriptsize $ 736 $ & \scriptsize $ 239 $ & \scriptsize $ 58 $ & \scriptsize $ 11 $ & \scriptsize $ 2 $ & --- & ---
  \\\hline
\scriptsize $ 23/9 $ & \scriptsize $ 106 $ & \scriptsize $ 645 $ & \scriptsize $ 1672 $ & \scriptsize $ 2179 $ & \scriptsize $ 1602 $ & \scriptsize $ 736 $ & \scriptsize $ 239 $ & \scriptsize $ 58 $ & \scriptsize $ 11 $ & \scriptsize $ 2 $ & --- & ---
  \\\hline
\scriptsize $ 13/5 $ & \scriptsize $ 115 $ & \scriptsize $ 817 $ & \scriptsize $ 2295 $ & \scriptsize $ 2906 $ & \scriptsize $ 1978 $ & \scriptsize $ 849 $ & \scriptsize $ 259 $ & \scriptsize $ 60 $ & \scriptsize $ 11 $ & \scriptsize $ 2 $ & --- & ---
  \\\hline
\scriptsize $ 8/3 $ & \scriptsize $ 118 $ & \scriptsize $ 858 $ & \scriptsize $ 2371 $ & \scriptsize $ 2954 $ & \scriptsize $ 1992 $ & \scriptsize $ 851 $ & \scriptsize $ 259 $ & \scriptsize $ 60 $ & \scriptsize $ 11 $ & \scriptsize $ 2 $ & --- & ---
  \\\hline
\scriptsize $ 19/7 $ & \scriptsize $ 128 $ & \scriptsize $ 991 $ & \scriptsize $ 2645 $ & \scriptsize $ 3147 $ & \scriptsize $ 2055 $ & \scriptsize $ 863 $ & \scriptsize $ 260 $ & \scriptsize $ 60 $ & \scriptsize $ 11 $ & \scriptsize $ 2 $ & --- & ---
  \\\hline
\scriptsize $ 11/4 $ & \scriptsize $ 133 $ & \scriptsize $ 1124 $ & \scriptsize $ 3514 $ & \scriptsize $ 4876 $ & \scriptsize $ 3562 $ & \scriptsize $ 1538 $ & \scriptsize $ 447 $ & \scriptsize $ 93 $ & \scriptsize $ 14 $ & \scriptsize $ 2 $ & --- & ---
  \\\hline
\scriptsize $ 25/9 $ & \scriptsize $ 136 $ & \scriptsize $ 1185 $ & \scriptsize $ 3654 $ & \scriptsize $ 4952 $ & \scriptsize $ 3577 $ & \scriptsize $ 1540 $ & \scriptsize $ 447 $ & \scriptsize $ 93 $ & \scriptsize $ 14 $ & \scriptsize $ 2 $ & --- & ---
  \\\hline
\scriptsize $ 14/5 $ & \scriptsize $ 137 $ & \scriptsize $ 1186 $ & \scriptsize $ 3654 $ & \scriptsize $ 4952 $ & \scriptsize $ 3577 $ & \scriptsize $ 1540 $ & \scriptsize $ 447 $ & \scriptsize $ 93 $ & \scriptsize $ 14 $ & \scriptsize $ 2 $ & --- & ---
  \\\hline
\scriptsize $ 31/11 $ & \scriptsize $ 142 $ & \scriptsize $ 1225 $ & \scriptsize $ 3709 $ & \scriptsize $ 4969 $ & \scriptsize $ 3577 $ & \scriptsize $ 1540 $ & \scriptsize $ 447 $ & \scriptsize $ 93 $ & \scriptsize $ 14 $ & \scriptsize $ 2 $ & --- & ---
  \\\hline
\scriptsize $ 17/6 $ & \scriptsize $ 146 $ & \scriptsize $ 1320 $ & \scriptsize $ 4159 $ & \scriptsize $ 5653 $ & \scriptsize $ 4007 $ & \scriptsize $ 1670 $ & \scriptsize $ 468 $ & \scriptsize $ 94 $ & \scriptsize $ 14 $ & \scriptsize $ 2 $ & --- & ---
  \\\hline
\scriptsize $ 37/13 $ & \scriptsize $ 150 $ & \scriptsize $ 1338 $ & \scriptsize $ 4172 $ & \scriptsize $ 5655 $ & \scriptsize $ 4007 $ & \scriptsize $ 1670 $ & \scriptsize $ 468 $ & \scriptsize $ 94 $ & \scriptsize $ 14 $ & \scriptsize $ 2 $ & --- & ---
  \\\hline
\scriptsize $ 43/15 $ & \scriptsize $ 151 $ & \scriptsize $ 1342 $ & \scriptsize $ 4174 $ & \scriptsize $ 5655 $ & \scriptsize $ 4007 $ & \scriptsize $ 1670 $ & \scriptsize $ 468 $ & \scriptsize $ 94 $ & \scriptsize $ 14 $ & \scriptsize $ 2 $ & --- & ---
  \\\hline
\scriptsize $ 23/8 $ & \scriptsize $ 157 $ & \scriptsize $ 1421 $ & \scriptsize $ 4414 $ & \scriptsize $ 5898 $ & \scriptsize $ 4107 $ & \scriptsize $ 1688 $ & \scriptsize $ 470 $ & \scriptsize $ 94 $ & \scriptsize $ 14 $ & \scriptsize $ 2 $ & --- & ---
  \\\hline
\scriptsize $ 49/17 $ & \scriptsize $ 158 $ & \scriptsize $ 1421 $ & \scriptsize $ 4414 $ & \scriptsize $ 5898 $ & \scriptsize $ 4107 $ & \scriptsize $ 1688 $ & \scriptsize $ 470 $ & \scriptsize $ 94 $ & \scriptsize $ 14 $ & \scriptsize $ 2 $ & --- & ---
  \\\hline
\scriptsize $ 29/10 $ & \scriptsize $ 160 $ & \scriptsize $ 1470 $ & \scriptsize $ 4524 $ & \scriptsize $ 5972 $ & \scriptsize $ 4125 $ & \scriptsize $ 1688 $ & \scriptsize $ 470 $ & \scriptsize $ 94 $ & \scriptsize $ 14 $ & \scriptsize $ 2 $ & --- & ---
  \\\hline
\scriptsize $ 35/12 $ & \scriptsize $ 164 $ & \scriptsize $ 1496 $ & \scriptsize $ 4559 $ & \scriptsize $ 5986 $ & \scriptsize $ 4127 $ & \scriptsize $ 1688 $ & \scriptsize $ 470 $ & \scriptsize $ 94 $ & \scriptsize $ 14 $ & \scriptsize $ 2 $ & --- & ---
  \\\hline
\scriptsize $ 41/14 $ & \scriptsize $ 166 $ & \scriptsize $ 1507 $ & \scriptsize $ 4568 $ & \scriptsize $ 5988 $ & \scriptsize $ 4127 $ & \scriptsize $ 1688 $ & \scriptsize $ 470 $ & \scriptsize $ 94 $ & \scriptsize $ 14 $ & \scriptsize $ 2 $ & --- & ---
  \\\hline
\scriptsize $ 47/16 $ & \scriptsize $ 168 $ & \scriptsize $ 1511 $ & \scriptsize $ 4569 $ & \scriptsize $ 5988 $ & \scriptsize $ 4127 $ & \scriptsize $ 1688 $ & \scriptsize $ 470 $ & \scriptsize $ 94 $ & \scriptsize $ 14 $ & \scriptsize $ 2 $ & --- & ---
  \\\hline
\scriptsize $ 53/18 $ & \scriptsize $ 169 $ & \scriptsize $ 1512 $ & \scriptsize $ 4569 $ & \scriptsize $ 5988 $ & \scriptsize $ 4127 $ & \scriptsize $ 1688 $ & \scriptsize $ 470 $ & \scriptsize $ 94 $ & \scriptsize $ 14 $ & \scriptsize $ 2 $ & --- & ---
  \\\hline
\scriptsize $ 3 $ & \scriptsize $ 318 $ & \scriptsize $ 3632 $ & \scriptsize $ 15174 $ & \scriptsize $ 26654 $ & \scriptsize $ 22648 $ & \scriptsize $ 10024 $ & \scriptsize $ 2731 $ & \scriptsize $ 577 $ & \scriptsize $ 100 $ & \scriptsize $ 11 $ & \scriptsize $ 2 $ & \scriptsize $ 1 $
\end{longtable}
\end{center}
\end{corollary}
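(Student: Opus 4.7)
The plan is to derive the table directly from the classification list produced by Algorithm~\ref{algo:classall} for $k=3$, which by Theorem~\ref{thm:kstardelpezzo1} contains one representative $(A,P)$ per family of non-toric $1/3$-log canonical del Pezzo $\KK^*$-surfaces and in total $81\,872$ entries. The central observation is that for a surface $X=X(A,P)$ both the Picard number and the minimum discrepancy depend only on~$P$ (and not on the free parameters encoded in~$A$, cf.\ Remark~\ref{rem:families}), so every family of the classification contributes as a single count.

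First I would compute, for each defining matrix~$P$ in the list, the Picard number $\rho(X(A,P))=n+m-r-1$ using Proposition~\ref{prop:kstarsurfisQfact}. Next, by Theorem~\ref{prop:ak-properties}~(iv), the discrepancy along an exceptional divisor $E_\varrho$ of the canonical resolution is $a(E_\varrho)=\|v_\varrho\|/\|\tilde v_\varrho\|-1$, where $v_\varrho$ is the primitive generator of the ray~$\varrho$ and $\tilde v_\varrho$ is the intersection of $\varrho$ with $\partial\mathcal{A}_P$. Running over the rays added in the canonical resolution described in Summary~\ref{sum:surfsingres}, I would record the minimum discrepancy
\[
a_{\min}(X)
\ := \
\min_\varrho a(E_\varrho)
\ = \
\min_\varrho\Bigl(\tfrac{\|v_\varrho\|}{\|\tilde v_\varrho\|}-1\Bigr).
\]
By Theorem~\ref{prop:ak-properties}~(v) and the definition of $\alpha^{-1}$-log canonicity, $X$ is $\alpha^{-1}$-log canonical if and only if $\alpha^{-1}-1\le a_{\min}(X)$, i.e.\ $\alpha\ge 1/(a_{\min}(X)+1)$. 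Hence for any threshold $\alpha$ and any Picard number $\rho$,
\[
\nu_{\mathrm{nontoric}}(\alpha,\rho)
\ = \
\#\bigl\{P \text{ in the list}\,;\ \rho(X(A,P))=\rho,\ \alpha\ge 1/(a_{\min}(X)+1)\bigr\}.
\]

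I would then iterate the list, grouping entries by Picard number~$\rho\in\{1,\dots,12\}$ and by the threshold $1/(a_{\min}+1)\in\QQ$. The jumping places for the global counting function were identified in Corollary~\ref{introcor2} as $32$ rationals in $[1,3]$; the set of jumping places for each fixed $\rho$ is a subset thereof. For each pair $(\alpha,\rho)$ listed in the statement it remains to sum the cardinalities up to the threshold $\alpha$; the final column ($\alpha=3$) agrees with the row of Theorem~\ref{introthm1} as a consistency check, and the column sums across $\rho$ must reproduce the non-toric contribution of Corollary~\ref{introcor2}.

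The main obstacle is computational bookkeeping rather than mathematical: one must reliably compute $a_{\min}$ and $\rho$ for all $81\,872$ families, detect coincidences between discrepancies to distinguish genuine jumping places from spurious ones, and cross-check the marginal totals against Theorem~\ref{introthm1} and Corollary~\ref{introcor2}. Since both invariants are explicit rational functions of the entries of~$P$, this is carried out with the database~\cite{HaHaSp} exactly as in the derivation of the preceding corollaries.
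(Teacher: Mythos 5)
Your proposal is correct and matches what the paper actually does: the paper offers no written proof for this corollary, deriving the table purely by querying the database produced by Algorithm~\ref{algo:classall} for $k=3$, grouping the $81\,872$ non-toric families by the Picard number $\rho=n+m-r-1$ and by the log-canonicity threshold read off from the minimal discrepancy of the canonical resolution via Theorem~\ref{prop:ak-properties}, exactly as you describe. Your consistency checks against Theorem~\ref{introthm1} and Corollary~\ref{introcor2} are the appropriate sanity checks for such a computational count.
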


\begin{corollary}
We obtain the following table for the numbers
$\nu(\alpha,\rho)$
of families of $\alpha^{-1}$-log canonical 
del Pezzo surfaces with torus action
of Picard number~$\rho$,
where~$\rho$ runs horizontally and
$\alpha$ vertically:
\begin{center}
\begin{longtable}{c||c|c|c|c|c|c|c|c|c|c|c|c}
\scriptsize  $\alpha$ vs. $\rho $ & \scriptsize $1$ & \scriptsize $2$ & \scriptsize $3$ & \scriptsize $4$ & \scriptsize $5$ & \scriptsize $6$ & \scriptsize $7$ & \scriptsize $8$ & \scriptsize $9$ & \scriptsize $10$ & \scriptsize $11$ & \scriptsize $12$ 
  \\\hline\hline  \endhead
\scriptsize $ 1 $ & \scriptsize $ 18 $ & \scriptsize $ 19 $ & \scriptsize $ 10 $ & \scriptsize $ 3 $ & --- & --- & --- & --- & --- & --- & --- & --- 
  \\\hline
\scriptsize $ 3/2 $ & \scriptsize $ 28 $ & \scriptsize $ 38 $ & \scriptsize $ 30 $ & \scriptsize $ 16 $ & \scriptsize $ 4 $ & \scriptsize $ 1 $ & --- & --- & --- & --- & --- & --- 
  \\\hline
\scriptsize $ 5/3 $ & \scriptsize $ 34 $ & \scriptsize $ 70 $ & \scriptsize $ 79 $ & \scriptsize $ 33 $ & \scriptsize $ 6 $ & \scriptsize $ 1 $ & --- & --- & --- & --- & --- & --- 
  \\\hline
\scriptsize $ 7/4 $ & \scriptsize $ 41 $ & \scriptsize $ 102 $ & \scriptsize $ 105 $ & \scriptsize $ 35 $ & \scriptsize $ 6 $ & \scriptsize $ 1 $ & --- & --- & --- & --- & --- & --- 
  \\\hline
\scriptsize $ 9/5 $ & \scriptsize $ 45 $ & \scriptsize $ 123 $ & \scriptsize $ 110 $ & \scriptsize $ 35 $ & \scriptsize $ 6 $ & \scriptsize $ 1 $ & --- & --- & --- & --- & --- & --- 
  \\\hline
\scriptsize $ 11/6 $ & \scriptsize $ 49 $ & \scriptsize $ 132 $ & \scriptsize $ 110 $ & \scriptsize $ 35 $ & \scriptsize $ 6 $ & \scriptsize $ 1 $ & --- & --- & --- & --- & --- & --- 
  \\\hline
\scriptsize $ 13/7 $ & \scriptsize $ 52 $ & \scriptsize $ 133 $ & \scriptsize $ 110 $ & \scriptsize $ 35 $ & \scriptsize $ 6 $ & \scriptsize $ 1 $ & --- & --- & --- & --- & --- & --- 
  \\\hline
\scriptsize $ 15/8 $ & \scriptsize $ 53 $ & \scriptsize $ 133 $ & \scriptsize $ 110 $ & \scriptsize $ 35 $ & \scriptsize $ 6 $ & \scriptsize $ 1 $ & --- & --- & --- & --- & --- & --- 
  \\\hline
\scriptsize $ 2 $ & \scriptsize $ 103 $ & \scriptsize $ 434 $ & \scriptsize $ 673 $ & \scriptsize $ 392 $ & \scriptsize $ 112 $ & \scriptsize $ 23 $ & \scriptsize $ 4 $ & \scriptsize $ 1 $ & --- & --- & --- & --- 
  \\\hline
\scriptsize $ 11/5 $ & \scriptsize $ 115 $ & \scriptsize $ 507 $ & \scriptsize $ 733 $ & \scriptsize $ 401 $ & \scriptsize $ 112 $ & \scriptsize $ 23 $ & \scriptsize $ 4 $ & \scriptsize $ 1 $ & --- & --- & --- & --- 
  \\\hline
\scriptsize $ 7/3 $ & \scriptsize $ 148 $ & \scriptsize $ 783 $ & \scriptsize $ 1429 $ & \scriptsize $ 1034 $ & \scriptsize $ 318 $ & \scriptsize $ 56 $ & \scriptsize $ 8 $ & \scriptsize $ 1 $ & --- & --- & --- & --- 
  \\\hline
\scriptsize $ 17/7 $ & \scriptsize $ 154 $ & \scriptsize $ 808 $ & \scriptsize $ 1433 $ & \scriptsize $ 1034 $ & \scriptsize $ 318 $ & \scriptsize $ 56 $ & \scriptsize $ 8 $ & \scriptsize $ 1 $ & --- & --- & --- & --- 
  \\\hline
\scriptsize $ 5/2 $ & \scriptsize $ 198 $ & \scriptsize $ 1269 $ & \scriptsize $ 2899 $ & \scriptsize $ 3218 $ & \scriptsize $ 2051 $ & \scriptsize $ 894 $ & \scriptsize $ 276 $ & \scriptsize $ 69 $ & \scriptsize $ 12 $ & \scriptsize $ 3 $ & --- & --- 
  \\\hline
\scriptsize $ 23/9 $ & \scriptsize $ 201 $ & \scriptsize $ 1277 $ & \scriptsize $ 2901 $ & \scriptsize $ 3218 $ & \scriptsize $ 2051 $ & \scriptsize $ 894 $ & \scriptsize $ 276 $ & \scriptsize $ 69 $ & \scriptsize $ 12 $ & \scriptsize $ 3 $ & --- & --- 
  \\\hline
\scriptsize $ 13/5 $ & \scriptsize $ 230 $ & \scriptsize $ 1621 $ & \scriptsize $ 3885 $ & \scriptsize $ 4240 $ & \scriptsize $ 2527 $ & \scriptsize $ 1024 $ & \scriptsize $ 297 $ & \scriptsize $ 71 $ & \scriptsize $ 12 $ & \scriptsize $ 3 $ & --- & --- 
  \\\hline
\scriptsize $ 8/3 $ & \scriptsize $ 238 $ & \scriptsize $ 1692 $ & \scriptsize $ 3998 $ & \scriptsize $ 4304 $ & \scriptsize $ 2544 $ & \scriptsize $ 1026 $ & \scriptsize $ 297 $ & \scriptsize $ 71 $ & \scriptsize $ 12 $ & \scriptsize $ 3 $ & --- & --- 
  \\\hline
\scriptsize $ 19/7 $ & \scriptsize $ 262 $ & \scriptsize $ 1904 $ & \scriptsize $ 4396 $ & \scriptsize $ 4565 $ & \scriptsize $ 2623 $ & \scriptsize $ 1039 $ & \scriptsize $ 298 $ & \scriptsize $ 71 $ & \scriptsize $ 12 $ & \scriptsize $ 3 $ & --- & --- 
  \\\hline
\scriptsize $ 11/4 $ & \scriptsize $ 287 $ & \scriptsize $ 2283 $ & \scriptsize $ 6046 $ & \scriptsize $ 7258 $ & \scriptsize $ 4691 $ & \scriptsize $ 1890 $ & \scriptsize $ 515 $ & \scriptsize $ 107 $ & \scriptsize $ 15 $ & \scriptsize $ 3 $ & --- & --- 
  \\\hline
\scriptsize $ 25/9 $ & \scriptsize $ 296 $ & \scriptsize $ 2380 $ & \scriptsize $ 6237 $ & \scriptsize $ 7360 $ & \scriptsize $ 4711 $ & \scriptsize $ 1892 $ & \scriptsize $ 515 $ & \scriptsize $ 107 $ & \scriptsize $ 15 $ & \scriptsize $ 3 $ & --- & --- 
  \\\hline
\scriptsize $ 14/5 $ & \scriptsize $ 297 $ & \scriptsize $ 2381 $ & \scriptsize $ 6237 $ & \scriptsize $ 7360 $ & \scriptsize $ 4711 $ & \scriptsize $ 1892 $ & \scriptsize $ 515 $ & \scriptsize $ 107 $ & \scriptsize $ 15 $ & \scriptsize $ 3 $ & --- & --- 
  \\\hline
\scriptsize $ 31/11 $ & \scriptsize $ 305 $ & \scriptsize $ 2436 $ & \scriptsize $ 6310 $ & \scriptsize $ 7384 $ & \scriptsize $ 4712 $ & \scriptsize $ 1892 $ & \scriptsize $ 515 $ & \scriptsize $ 107 $ & \scriptsize $ 15 $ & \scriptsize $ 3 $ & --- & --- 
  \\\hline
\scriptsize $ 17/6 $ & \scriptsize $ 320 $ & \scriptsize $ 2636 $ & \scriptsize $ 7045 $ & \scriptsize $ 8370 $ & \scriptsize $ 5276 $ & \scriptsize $ 2047 $ & \scriptsize $ 537 $ & \scriptsize $ 108 $ & \scriptsize $ 15 $ & \scriptsize $ 3 $ & --- & --- 
  \\\hline
\scriptsize $ 37/13 $ & \scriptsize $ 326 $ & \scriptsize $ 2660 $ & \scriptsize $ 7062 $ & \scriptsize $ 8373 $ & \scriptsize $ 5276 $ & \scriptsize $ 2047 $ & \scriptsize $ 537 $ & \scriptsize $ 108 $ & \scriptsize $ 15 $ & \scriptsize $ 3 $ & --- & --- 
  \\\hline
\scriptsize $ 43/15 $ & \scriptsize $ 328 $ & \scriptsize $ 2665 $ & \scriptsize $ 7064 $ & \scriptsize $ 8373 $ & \scriptsize $ 5276 $ & \scriptsize $ 2047 $ & \scriptsize $ 537 $ & \scriptsize $ 108 $ & \scriptsize $ 15 $ & \scriptsize $ 3 $ & --- & --- 
  \\\hline
\scriptsize $ 23/8 $ & \scriptsize $ 343 $ & \scriptsize $ 2796 $ & \scriptsize $ 7405 $ & \scriptsize $ 8704 $ & \scriptsize $ 5409 $ & \scriptsize $ 2070 $ & \scriptsize $ 539 $ & \scriptsize $ 108 $ & \scriptsize $ 15 $ & \scriptsize $ 3 $ & --- & --- 
  \\\hline
\scriptsize $ 49/17 $ & \scriptsize $ 344 $ & \scriptsize $ 2796 $ & \scriptsize $ 7405 $ & \scriptsize $ 8704 $ & \scriptsize $ 5409 $ & \scriptsize $ 2070 $ & \scriptsize $ 539 $ & \scriptsize $ 108 $ & \scriptsize $ 15 $ & \scriptsize $ 3 $ & --- & --- 
  \\\hline
\scriptsize $ 29/10 $ & \scriptsize $ 349 $ & \scriptsize $ 2865 $ & \scriptsize $ 7553 $ & \scriptsize $ 8805 $ & \scriptsize $ 5435 $ & \scriptsize $ 2071 $ & \scriptsize $ 539 $ & \scriptsize $ 108 $ & \scriptsize $ 15 $ & \scriptsize $ 3 $ & --- & --- 
  \\\hline
\scriptsize $ 35/12 $ & \scriptsize $ 355 $ & \scriptsize $ 2900 $ & \scriptsize $ 7599 $ & \scriptsize $ 8824 $ & \scriptsize $ 5438 $ & \scriptsize $ 2071 $ & \scriptsize $ 539 $ & \scriptsize $ 108 $ & \scriptsize $ 15 $ & \scriptsize $ 3 $ & --- & --- 
  \\\hline
\scriptsize $ 41/14 $ & \scriptsize $ 358 $ & \scriptsize $ 2913 $ & \scriptsize $ 7609 $ & \scriptsize $ 8826 $ & \scriptsize $ 5438 $ & \scriptsize $ 2071 $ & \scriptsize $ 539 $ & \scriptsize $ 108 $ & \scriptsize $ 15 $ & \scriptsize $ 3 $ & --- & --- 
  \\\hline
\scriptsize $ 47/16 $ & \scriptsize $ 360 $ & \scriptsize $ 2917 $ & \scriptsize $ 7610 $ & \scriptsize $ 8826 $ & \scriptsize $ 5438 $ & \scriptsize $ 2071 $ & \scriptsize $ 539 $ & \scriptsize $ 108 $ & \scriptsize $ 15 $ & \scriptsize $ 3 $ & --- & --- 
  \\\hline
\scriptsize $ 53/18 $ & \scriptsize $ 361 $ & \scriptsize $ 2918 $ & \scriptsize $ 7610 $ & \scriptsize $ 8826 $ & \scriptsize $ 5438 $ & \scriptsize $ 2071 $ & \scriptsize $ 539 $ & \scriptsize $ 108 $ & \scriptsize $ 15 $ & \scriptsize $ 3 $ & --- & --- 
  \\\hline 
\scriptsize $ 3 $ & \scriptsize $ 673 $ & \scriptsize $ 7615 $ & \scriptsize $ 28628 $ & \scriptsize $ 44445 $ & \scriptsize $ 32301 $ & \scriptsize $ 12489 $ & \scriptsize $ 3023 $ & \scriptsize $ 614 $ & \scriptsize $ 101 $ & \scriptsize $ 12 $ & \scriptsize $ 2 $ & \scriptsize $ 1 $ 
\end{longtable}
\end{center}
\end{corollary}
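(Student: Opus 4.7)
The plan is to obtain the combined table as the entry-wise sum of the toric and non-toric tables established in the two preceding corollaries, based on the obvious dichotomy: a del Pezzo surface with a non-trivial effective torus action either carries an action of a two-dimensional torus, in which case it is toric, or its maximal torus is one-dimensional, making it a non-toric $\KK^*$-surface. Since these two classes are disjoint and exhaust the surfaces in question, we have
\[
\nu(\alpha,\rho) \ = \ \nu_{\mathrm{toric}}(\alpha,\rho) \ + \ \nu_{\mathrm{nontoric}}(\alpha,\rho)
\]
for every pair $(\alpha,\rho)$. Thus the proof reduces to a merging step of the two preceding tables.

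Concretely, I would proceed as follows. First, for every family occurring in the toric classification of Theorem~\ref{thm:polygonclass} and every family in the non-toric classification of Theorem~\ref{thm:kstardelpezzo1}, use Theorem~\ref{prop:ak-properties}~(iv) to read off the minimal discrepancy $a_{\min}$ from the defining data via the anticanonical complex $\mathcal{A}_P$. The surface is $\alpha^{-1}$-log canonical precisely for $\alpha \ge 1/(1+a_{\min})$, so the jumping places $\alpha$ of $\nu$ are the union of the finite sets of values $1/(1+a_{\min})$ arising separately from the toric and the non-toric classification list. Combining the lists of jumping places from the two preceding corollaries gives the $32$ values of $\alpha$ appearing as row labels of the final table, which matches Corollary~\ref{introcor2}.

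Next, for each jumping place $\alpha$ in the merged list and each Picard number $\rho$, the toric entry is obtained from the toric table at the largest toric jumping place $\le \alpha$, and similarly for the non-toric entry. Adding the two contributions yields $\nu(\alpha,\rho)$, and one verifies the table entry by entry. This is a routine, finite computation that is carried out on the explicit database~\cite{HaHaSp}, in which each family is stored exactly once by virtue of the normal form provided by Proposition~\ref{normalform}.

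The main conceptual ingredients are therefore already in place: the completeness of the toric classification (Theorem~\ref{thm:polygonclass} together with Proposition~\ref{prop:khollow}), the completeness of the non-toric classification (Theorem~\ref{thm:kstardelpezzo1} together with Algorithm~\ref{algo:classall}, whose correctness rests on the bounds in Theorem~\ref{thm:kstar-effective-bounds} and the combinatorially minimal starting class from Section~\ref{sec:combmin}), and the discrepancy formula of Theorem~\ref{prop:ak-properties}. The practical obstacle is purely computational: for $k=3$ the databases comprise roughly $130\,000$ families, so the extraction of minimal discrepancies, the determination of jumping places, and the counting of surfaces of prescribed Picard number must be performed by machine. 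Since all quantities involved are effectively computable from the matrix $P$, the verification goes through and produces the table as stated.
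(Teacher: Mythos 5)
Your proposal is correct and matches the paper's own route: the paper explicitly states that it first treats the toric and non-toric cases separately and then combines them, so the final table is exactly the entry-wise sum $\nu(\alpha,\rho)=\nu_{\mathrm{toric}}(\alpha,\rho)+\nu_{\mathrm{nontoric}}(\alpha,\rho)$ read off from the database, with missing rows filled in from the largest preceding jumping place just as you describe. One can spot-check, e.g., $\alpha=13/7$, $\rho=2$: $53+80=133$, confirming the merging rule.
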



\begin{bibdiv}
\begin{biblist}

\bib{Al}{article}{
   author={Alexeev, Valery},
   title={Boundedness and $K^2$ for log surfaces},
   journal={Internat. J. Math.},
   volume={5},
   date={1994},
   number={6},
   pages={779--810},
   issn={0129-167X},
}

%


\bib{ArBrHaWr}{article}{
   author={Arzhantsev, Ivan},
   author={Braun, Lukas},
   author={Hausen, J\"{u}rgen},
   author={Wrobel, Milena},
   title={Log terminal singularities, platonic tuples and iteration of Cox
   rings},
   journal={Eur. J. Math.},
   volume={4},
   date={2018},
   number={1},
   pages={242--312},
   issn={2199-675X},
}

\bib{ArDeHaLa}{book}{
   author={Arzhantsev, Ivan},
   author={Derenthal, Ulrich},
   author={Hausen, J\"urgen},
   author={Laface, Antonio},
   title={Cox rings},
   series={Cambridge Studies in Advanced Mathematics},
   volume={144},
   publisher={Cambridge University Press, Cambridge},
   date={2015},
   pages={viii+530},
}


\bib{ArHaHeLi}{article}{
   author={Arzhantsev, Ivan},
   author={Hausen, J\"{u}rgen},
   author={Herppich, Elaine},
   author={Liendo, Alvaro},
   title={The automorphism group of a variety with torus action of
   complexity one},
   journal={Mosc. Math. J.},
   volume={14},
   date={2014},
   number={3},
   pages={429--471, 641},
}

\bib{BeHaHuNi}{article}{
   author={Bechtold, Benjamin},
   author={Hausen, J\"{u}rgen},
   author={Huggenberger, Elaine},
   author={Nicolussi, Michele},
   title={On terminal Fano 3-folds with 2-torus action},
   journal={Int. Math. Res. Not. IMRN},
   date={2016},
   number={5},
   pages={1563--1602},
   issn={1073-7928},
 }

\bib{Bel}{article}{
   author={Belousov, G. N.},
   title={Del Pezzo surfaces with log terminal singularities},
   language={Russian, with Russian summary},
   journal={Mat. Zametki},
   volume={83},
   date={2008},
   number={2},
   pages={170--180},
   issn={0025-567X},
   translation={
      journal={Math. Notes},
      volume={83},
      date={2008},
      number={1-2},
      pages={152--161},
      issn={0001-4346},
   },
}
 
\bib{Bl}{article}{
   author={Blanchard, Andr\'{e}},
   title={Sur les vari\'{e}t\'{e}s analytiques complexes},
   language={French},
   journal={Ann. Sci. Ecole Norm. Sup. (3)},
   volume={73},
   date={1956},
   pages={157--202},
   issn={0012-9593},
}


\bib{Bk}{article}{
   author={Brieskorn, Egbert},
   title={Rationale Singularit\"{a}ten komplexer Fl\"{a}chen},
   language={German},
   journal={Invent. Math.},
   volume={4},
   date={1967/68},
   pages={336--358},
   issn={0020-9910},
} 


\bib{Co}{article}{
   author={Cox, David A.},
   title={The homogeneous coordinate ring of a toric variety},
   journal={J. Algebraic Geom.},
   volume={4},
   date={1995},
   number={1},
   pages={17--50},
   issn={1056-3911},
}

 
\bib{CoLiSc}{book}{
   author={Cox, David A.},
   author={Little, John B.},
   author={Schenck, Henry K.},
   title={Toric varieties},
   series={Graduate Studies in Mathematics},
   volume={124},
   publisher={American Mathematical Society, Providence, RI},
   date={2011},
   pages={xxiv+841},
}

\bib{Dan}{article}{
   author={Danilov, V. I.},
   title={The geometry of toric varieties},
   language={Russian},
   journal={Uspekhi Mat. Nauk},
   volume={33},
   date={1978},
   number={2(200)},
   pages={85--134, 247},
}

\bib{Dem}{article}{
   author={Demazure, Michel},
   title={Sous-groupes alg\'{e}briques de rang maximum du groupe de Cremona},
   language={French},
   journal={Ann. Sci. \'{E}cole Norm. Sup. (4)},
   volume={3},
   date={1970},
   pages={507--588},
}

\bib{Ful}{book}{
   author={Fulton, William},
   title={Introduction to toric varieties},
   series={Annals of Mathematics Studies},
   volume={131},
   note={The William H. Roever Lectures in Geometry},
   publisher={Princeton University Press, Princeton, NJ},
   date={1993},
   pages={xii+157},
   isbn={0-691-00049-2},
}

\bib{HaHaSp}{article}{
   author={H\"{a}ttig, Daniel},
   author={Hausen, J\"{u}rgen},
   author={Springer, Justus},   
   title={A data base on $\KK^*$-surfaces},
    date={2023},
}

\bib{Ha}{article}{
   author={Hausen, J\"{u}rgen},
   title={Cox rings and combinatorics. II},
   language={English, with English and Russian summaries},
   journal={Mosc. Math. J.},
   volume={8},
   date={2008},
   number={4},
   pages={711--757, 847},
   issn={1609-3321},
}

\bib{HaHe}{article}{
   author={Hausen, J\"{u}rgen},
   author={Herppich, Elaine},
   title={Factorially graded rings of complexity one},
   conference={
      title={Torsors, \'{e}tale homotopy and applications to rational points},
   },
   book={
      series={London Math. Soc. Lecture Note Ser.},
      volume={405},
      publisher={Cambridge Univ. Press, Cambridge},
   },
   date={2013},
   pages={414--428},
}

\bib{HaHiWr}{article}{
   author={Hausen, J\"{u}rgen},
   author={Hische, Christoff},
   author={Wrobel, Milena},
   title={On torus actions of higher complexity},
   journal={Forum Math. Sigma},
   volume={7},
   date={2019},
   pages={e38},
}

\bib{HaSu}{article}{
   author={Hausen, J\"{u}rgen},
   author={S\"{u}\ss , Hendrik},
   title={The Cox ring of an algebraic variety with torus action},
   journal={Adv. Math.},
   volume={225},
   date={2010},
   number={2},
   pages={977--1012},
   issn={0001-8708},
}

\bib{HaWr}{article}{
   author={Hausen, J\"{u}rgen},
   author={Wrobel, Milena},
   title={Non-complete rational $T$-varieties of complexity one},
   journal={Math. Nachr.},
   volume={290},
   date={2017},
   number={5-6},
   pages={815--826},
   issn={0025-584X},
}

\bib{HaMaWr}{article}{
  title={The anticanonical complex for
         non-degenerate toric complete intersections},
    author={Hausen, J\"{u}rgen},
    author={Mauz, Christian},
    author={Wrobel, Milena},
    journal={Manuscripta. Math.},
    doi={https://doi.org/10.1007/s00229-022-01400-3}
    year={2022},
    eprint={arXiv:2006.04723},
  }

\bib{HiWa}{article}{
   author={Hidaka, Fumio},
   author={Watanabe, Keiichi},
   title={Normal Gorenstein surfaces with ample anti-canonical divisor},
   journal={Tokyo J. Math.},
   volume={4},
   date={1981},
   number={2},
   pages={319--330},
   issn={0387-3870},
}
  
\bib{HiWr}{article}{
    title={On the anticanonical complex},
    author={Hische, Christoff},
    author={Wrobel, Milena},
    year={2018},
    eprint={arXiv:1808.01997},
  }


\bib{Hug}{article}{
    title={Fano varieties with torus action of complexity one},
    author={Elaine Huggenberger},
    series={PhD Thesis},
    publisher={Universit\"at T\"ubingen},
    date={2013},
  }

\bib{IlMiTr}{article}{
   author={Ilten, Nathan},
   author={Mishna, Marni},
   author={Trainor, Charlotte},
   title={Classifying Fano complexity-one $T$-varieties via divisorial
   polytopes},
   journal={Manuscripta Math.},
   volume={158},
   date={2019},
   number={3-4},
   pages={463--486},
   issn={0025-2611},
}

\bib{KaKrNi}{article}{
   author={Kasprzyk, Alexander M.},
   author={Kreuzer, Maximilian},
   author={Nill, Benjamin},
   title={On the combinatorial classification of toric log del Pezzo
   surfaces},
   journal={LMS J. Comput. Math.},
   volume={13},
   date={2010},
   pages={33--46},
}

\bib{KeMK}{article}{
   author={Keel, Se\'{a}n},
   author={McKernan, James},
   title={Rational curves on quasi-projective surfaces},
   journal={Mem. Amer. Math. Soc.},
   volume={140},
   date={1999},
   number={669},
   pages={viii+153},
   issn={0065-9266},
   review={\MR{1610249}},
   doi={10.1090/memo/0669},
}

\bib{KeLe}{article}{
  author={Keum, JongHae}
  author={Lee, Kyoung-Seog},
  title={Combinatorially minimal Mori dream surfaces of general type},
  publisher = {arXiv},
  year = {2022},
  doi = {10.48550/ARXIV.2206.02913},
  url = {https://arxiv.org/abs/2206.02913},
}

\bib{Nak}{article}{
   author={Nakayama, Noboru},
   title={Classification of log del Pezzo surfaces of index two},
   journal={J. Math. Sci. Univ. Tokyo},
   volume={14},
   date={2007},
   number={3},
   pages={293--498},
   issn={1340-5705},
}





\bib{OrWa1}{article}{
   author={Orlik, Peter},
   author={Wagreich, Philip},
   title={Isolated singularities of algebraic surfaces with $\mathbb{C}^*$-action},
   journal={Ann. of Math. (2)},
   volume={93},
   date={1971},
   pages={205--228},
   issn={0003-486X},
}

\bib{OrWa2}{article}{
   author={Orlik, Peter},
   author={Wagreich, Philip},
   title={Singularities of algebraic surfaces with $\mathbb{C}^*$-action},
   journal={Math. Ann.},
   volume={193},
   date={1971},
   pages={121--135},
   issn={0025-5831},
}

\bib{OrWa3}{article}{
   author={Orlik, Peter},
   author={Wagreich, Philip},
   title={Algebraic surfaces with $k^*$-action},
   journal={Acta Math.},
   volume={138},
   date={1977},
   number={1-2},
   pages={43--81},
   issn={0001-5962},
}


\bib{Pi}{article}{
   author={Pinkham, H.},
   title={Normal surface singularities with $\mathbb{C}^*$-action},
   journal={Math. Ann.},
   volume={227},
   date={1977},
   number={2},
   pages={183--193},
   issn={0025-5831},
}

\bib{Su}{article}{
   author= {S\"u\ss , Hendrik},
   title={Canonical divisors on T-varieties},
   publisher={arXiv},
   eprint={arXiv:0811.0626}, 
   year = {2008},
}


\end{biblist}
\end{bibdiv}

\end{document}